\documentclass[a4paper,10pt]{article}
\usepackage[margin=0.80in,bottom=0.90in,top=0.85in]{geometry}

\usepackage[english]{babel}
\usepackage{chngcntr}
\usepackage[utf8]{inputenc}
\usepackage{amsmath,mathrsfs}
\usepackage{indentfirst}
\usepackage{fancyhdr}
\usepackage{amssymb}
\usepackage{amsthm}
\usepackage[dvips]{graphicx}
\usepackage{color}
\usepackage{xcolor}
\usepackage{cancel}
\usepackage{subcaption}
\usepackage[dvipsnames]{xcolor}
\usepackage{eucal}
\usepackage{latexsym}
\usepackage{chngcntr}
\usepackage{faktor}
\usepackage{microtype}
\usepackage{newlfont}
\usepackage{enumitem}
\usepackage{geometry}
\usepackage{nicematrix}
\usepackage{tikz}
\usepackage{todonotes}

\usepackage{amsmath, amsfonts, amsthm,amssymb,  bbm}

\usepackage{tikz-cd}
\usepackage{multicol}
\usepackage{cancel}
\usepackage{mathtools}
\usepackage{graphicx}

\usepackage{xcolor}
\definecolor{linkblue}{RGB}{20,40,90}
\usepackage[
  colorlinks=true,
  linkcolor=linkblue,
  citecolor=linkblue,
  urlcolor=linkblue
]{hyperref}

\newcommand{\e}{\varepsilon}
\newcommand{\cO}{\mathcal{O}}
\newcommand{\ch}{c_{\tth}}
\newcommand{\chk}{c_{\tth, \kappa}}
\newcommand{\rhkb}{r_{\tth,\kappa,b}}

\newcommand{\ckb}{c_{\kappa,b}}

\newcommand{\tth}{\mathtt{h}}
\newcommand{\ttf}{\mathtt{f}_\e}
\newcommand{\pa}{\partial}
\newcommand{\cH}{\mathcal{H}}
\newcommand{\de}{\textrm{d}}
\newcommand{\tf}{\mathtt{f}}
\newcommand{\cA}{{\mathcal A}}
\newcommand{\cJ}{{\mathcal J}}

\newcommand{\N}{{\mathbb{N}}}
\newcommand{\R}{{\mathbb{R}}}
\newcommand{\Z}{{\mathbb{Z}}}
\newcommand{\T}{{\mathbb{T}}}

\newcommand{\vet}[2]{\begin{bmatrix}#1 \\ #2 \end{bmatrix}}
\newcommand{\cB}{\mathcal{B}}

\newcommand{\fR}{\mathfrak{R}}

\newcommand{\im}{\textup{i}}

\newcommand{\cL}{ L}

\newcommand{\vertiii}[1]{{\left\vert\kern-0.25ex\left\vert\kern-0.25ex\left\vert #1 
    \right\vert\kern-0.25ex\right\vert\kern-0.25ex\right\vert}}
\newcommand{\opnorm}[1]{{\vert\kern-0.25ex\vert\kern-0.25ex\vert #1 
    \vert\kern-0.25ex\vert\kern-0.25ex\vert}}

\theoremstyle{plain}
\newtheorem{lemma}{Lemma}
\newtheorem{theorem}[lemma]{Theorem}
\newtheorem{proposition}[lemma]{Proposition}
\newtheorem{remark}[lemma]{Remark}
\newtheorem{definition}[lemma]{Definition}

\theoremstyle{definition} 

\numberwithin{equation}{section}
\counterwithin{lemma}{section}

\title{Benjamin--Feir spectrum of hydroelastic Stokes waves}

\begin{document}

\author{Ting-Yang Hsiao\footnote{International School for Advanced Studies (SISSA), Via Bonomea 265, 34136, Trieste, Italy. \newline
	\textit{Emails:} \texttt{thsiao@sissa.it}, \texttt{yezhang@sissa.it}},  Zirui Li\footnote{University of Illinois Urbana-Champaign,
1409 W Green St, Urbana, 61801, Illinois, USA.\newline
	\textit{Emails:} \texttt{ziruili2@illinois.edu}, \texttt{cz43@illinois.edu}}, Ye Zhang$^*$,  Chengbin Zhu}
 

\maketitle
\begin{abstract}
    We determine the complete Benjamin-Feir spectrum near the origin for
small-amplitude hydroelastic Stokes waves of the two-dimensional
finite-depth irrotational Euler equations with surface tension and elastic
bending. For the non-resonant Stokes branch and away from an intrinsic
characteristic-collision surface $\mathfrak D$, we resolve all four Bloch
eigenvalues bifurcating from the origin in the long-wave Floquet regime.
Exploiting the Hamiltonian and reversible structure of the problem, we
reduce the linearized Bloch operator to the four-dimensional spectral
subspace bifurcating from the generalized kernel at the origin and
conjugate the resulting matrix to the direct sum of a Benjamin-Feir block
and a long-wave block. The long-wave pair remains purely imaginary,
whereas the Benjamin-Feir pair is governed by an explicit closed-form
instability index $\operatorname{Ind}(\mathtt{h},\kappa,b)$: a positive index
produces a local figure-eight spectral curve with nonzero real part, while
a negative index implies that all four small eigenvalues remain purely
imaginary. Together with the Wilton-type resonance loci and the
characteristic-collision surface $\mathfrak D$, this index yields a
three-parameter spectral-stability diagram in the depth $\mathtt{h}$, surface
tension $\kappa$, and bending rigidity $b$. The diagram recovers the
classical pure-gravity critical-depth limit and, on the zero-bending
boundary, the gravity--capillary stability diagram. It also reveals a
genuinely hydroelastic phenomenon: all Wilton-type resonances disappear
whenever $b\geq 1/14$ or $\kappa\geq 1/2$. This provides the first
complete rigorous characterization of the local Benjamin-Feir spectrum
for a hydroelastic free-boundary problem.
\end{abstract}

\tableofcontents

\section{Introduction} \label{sec1}
Hydroelastic waves arise when the motion of an inviscid fluid is coupled to the deformation of an elastic interface or an elastic structure in contact with the fluid. Classical examples include waves propagating beneath a continuous ice cover, waves interacting with very large floating flexible structures, and laboratory waves on water covered by thin elastic sheets, as illustrated in Figure~\ref{Figure 1}. In each of these settings, elasticity contributes a restoring mechanism that acts alongside gravity and, depending on the model, capillarity or membrane tension. As a result, the dispersion relation is altered in a fundamental way, often producing wave regimes that do not occur in the classical free-surface problem. This interplay between fluid motion and elasticity has long made hydroelasticity a natural meeting point of fluid mechanics, elasticity, and nonlinear wave theory \cite{SDWRL, Squire, KPVB, To2008}.

From the physical side, the subject has a long history. Early studies of wave propagation in ice already recognized that floating elastic media support flexural–gravity phenomena rather than purely gravitational waves \cite{EC}. Later work on waves generated by moving loads and moving sources on floating ice clarified how elastic bending and gravity combine to determine the far-field response, and established a paradigm that still shapes modern wave–ice theory \cite{DHS}. Over the last three decades, the subject has broadened considerably. Review articles by Squire and coauthors consolidated the sea-ice and wave–ice literature and emphasized the geophysical importance of wave propagation, attenuation, breakup, and fracture in ice-covered seas \cite{SDWRL, Squire}. In parallel, laboratory experiments on fluids covered by elastic sheets made it possible to observe hydroelastic dispersion, nonlinear frequency shifts, three-wave interactions, and wake generation in controlled conditions \cite{DBF, DBeF, OTLLRDS}. Recent field and remote-sensing work has further highlighted the relevance of flexural–gravity waves in sea ice \cite{DJMF}, while the recent work of \cite[Prasad--Behera]{prasad2026hydroelastic} further illustrates how currents and
elastic rigidity affect the response of flexural--gravity waves to disturbances, reinforcing the relevance of
stability questions for hydroelastic wavetrains. Together, these developments make the modulational stability of periodic hydroelastic wavetrains a physically compelling issue. Once elasticity changes the dispersion relation, it is natural to ask how it changes the Benjamin--Feir mechanism.

The Benjamin--Feir instability is among the central instability mechanisms in dispersive wave theory. In the classical deep-water theory of Benjamin and Feir, a nearly monochromatic Stokes wavetrain is unstable with respect to long-wave sideband perturbations \cite[Benjamin--Feir]{BF}. Closely related instability criteria were obtained in broader nonlinear dispersive settings by \cite[Benjamin]{Benjamin}, \cite[Lighthill]{Li}, and \cite[Whitham]{Whitham}, and were later recast in Hamiltonian and modulation-theoretic terms \cite[Zakharov]{Zak68,Zak1}, \cite[Zakharov--Kharitonov]{ZK}. In the water-wave context, the question may be formulated spectrally as follows: after linearization about a small-amplitude periodic traveling wave and Floquet decomposition in the longitudinal variable, do eigenvalues near the origin leave the imaginary axis as the Floquet exponent is turned on? The difficulty, already visible in the classical problem, is that the relevant eigenvalues bifurcate from a defective eigenvalue at the origin. This makes the instability a genuinely delicate small-spectrum problem rather than a routine perturbative calculation.

For the full water-wave equations, rigorous progress came much later than the original physics. A decisive milestone was achieved by Bridges and Mielke, who gave the first rigorous proof of Benjamin--Feir instability in finite depth by combining Hamiltonian structure with center-manifold reduction \cite[Bridges--Mielke]{BrM}. Subsequent work deepened the picture in several directions. Numerical investigations by Deconinck and Oliveras gave a detailed spectral view of periodic gravity waves in the full Euler equations \cite[Deconinck--Oliveras]{DO}. Nguyen and Strauss later proved modulational instability in deep water by a different and self-contained method \cite[Nguyen--Strauss]{NS}. Most relevant for the present paper, Berti, Maspero, and Ventura obtained complete descriptions of the Benjamin--Feir spectrum in deep water and in finite depth, identifying the full quartet of small eigenvalues and clarifying the exact local spectral geometry near the origin \cite[Berti--Maspero--Ventura]{BMV1,BMV3}. More recently, Hur and Yang developed an Evans-function approach that not only recovers the Benjamin--Feir instability near the origin but also detects instability away from the origin and in related capillary-gravity settings \cite[Hur--Yang]{HY,HY2}. Further developments in the instability theory for water waves include the study of transverse perturbations \cite{CNS, CNS2,  JRSY, HTW}, nonlinear instability \cite{ChenSu}. Complementary spectral investigations have also addressed high-frequency instabilities of two-dimensional pure-gravity Stokes waves \cite{HY, BMV4, BCMV} and the numerical structure of the spectrum of periodic water waves \cite{Mc1, Mc2, CD, BDS, CDT}. Further dynamical and stability questions for hydrodynamic waves have
been investigated in \cite{BMP2025, LMMT, BGMS, H2024, HW2026}. Taken together, these works provide a substantial theory of modulational and spectral instability for classical water waves.

Hydroelastic waves followed a different mathematical trajectory. The earliest rigorous hydroelastic work concentrated on modeling and existence rather than on modulational spectral theory. Toland introduced a fully nonlinear hydroelastic model with a Hamiltonian flavor and proved the existence of steady periodic hydroelastic waves in \cite[Toland]{To2008}. Baldi and Toland developed a related existence theory for steady periodic waves under nonlinear elastic membranes \cite[Baldi--Toland]{BTo}. Plotnikov and Toland subsequently emphasized the modeling side, placing hydroelastic wave theory in the framework of nonlinear shell theory and later extending it to strain-gradient settings \cite[Plotnikov--Toland]{PTo, PTo2}. Further
existence theories include global bifurcation and ripple results for interfacial hydroelastic waves
\cite[Akers--Ambrose--Sulon]{AAS1,AAS2}, solitary-wave existence results
\cite[Milewski--Vanden-Broeck--Wang]{MVBW}, \cite[Guyenne--P{\u a}r{\u a}u]{GPa},
\cite[Groves--Hewer--Wahl{\'e}n]{GHW}, and spatial-dynamics constructions of solitary
hydroelastic surface waves \cite[Ahmad--Groves]{AG2024}. More recently, rigorous existence theory has been
extended to rotational hydroelastic flows in finite depth and to flows with stagnation points \cite[Martin--P{\u a}r{\u a}u]{MP2026}. In parallel, numerical and asymptotic studies began to examine the stability of hydroelastic traveling waves. The paper of Trichtchenko, Milewski, Părău, and Vanden-Broeck is
particularly relevant to the present study: it studies the stability of periodic traveling flexural–gravity waves in two dimensions using a combination of asymptotic reductions and numerical spectral computations \cite{TMPVB}.

\begin{figure}[t] 
\centering
\resizebox{\textwidth}{!}{%
\begin{tikzpicture}[
    >=Stealth,
    axis/.style={->, thick},
    label/.style={font=\small}
]

\def\L{12}
\def\h{1.8}
\def\ticevis{0.10}

\fill[cyan!35]
    plot[domain=0:\L, samples=200, smooth]
        (\x,{0.35*sin(70*\x)+0.08*sin(140*\x)})
    -- (\L,-\h)
    -- (0,-\h)
    -- cycle;

\fill[cyan!60!blue, opacity=0.9]
    plot[domain=0:\L, samples=200, smooth]
        (\x,{0.35*sin(70*\x)+0.08*sin(140*\x)+\ticevis})
    --
    plot[domain=\L:0, samples=200, smooth]
        (\x,{0.35*sin(70*\x)+0.08*sin(140*\x)})
    -- cycle;

\draw[thick, cyan!80!blue]
    plot[domain=0:\L, samples=200, smooth]
        (\x,{0.35*sin(70*\x)+0.08*sin(140*\x)+\ticevis});

\draw[thin, cyan!80!blue]
    plot[domain=0:\L, samples=200, smooth]
        (\x,{0.35*sin(70*\x)+0.08*sin(140*\x)});

\draw[thick] (0,-\h) -- (\L,-\h);
\node[label, below] at (6,-\h) {$y=-\tth$};

\draw[thin, gray] (0,0) -- (\L,0);

\draw[axis] (-0.3,0) -- (\L+0.7,0) node[right] {$x$};
\draw[axis] (0,-\h-0.4) -- (0,1.25) node[above] {$y$};

\draw[->, black, thick] (5.9,0.95) -- (5.9,0.37);
\node[label, black, above] at (6.15,0.95) {$y=\eta(t,x)$};

\draw[->, very thick] (0.9,0.92) -- (2.7,0.92);
\node[label, above] at (1.8,0.92) {Hydroelastic Wave};

\draw[<->, thick] (-0.55,0) -- (-0.55,-\h);
\node[label, left] at (-0.55,-0.9) {$\tth$};

\node[label, anchor=west] at (9.3,1.02) {Thin Ice Cover};

\begin{scope}[shift={(8.05,0.87)}]
    \fill[cyan!60!blue, opacity=0.9]
        plot[domain=0:1.15, samples=80, smooth]
            (\x,{0.02*sin(360*\x)+\ticevis})
        --
        plot[domain=1.15:0, samples=80, smooth]
            (\x,{0.02*sin(360*\x)})
        -- cycle;

    \draw[thick, cyan!80!blue]
        plot[domain=0:1.15, samples=80, smooth]
            (\x,{0.02*sin(360*\x)+\ticevis});

    \draw[thin, cyan!80!blue]
        plot[domain=0:1.15, samples=80, smooth]
            (\x,{0.02*sin(360*\x)});
\end{scope}

\end{tikzpicture}%
}
\caption{Schematic illustration of a finite-depth hydroelastic fluid domain with bottom $y=-\tth$ and upper free surface $y=\eta(t,x)$.}
\label{Figure 1}
\end{figure}

What remained missing, however, was a theorem for hydroelastic waves that
plays the same role as the complete Benjamin--Feir spectral theorems now
available for pure water waves.  The hydroelastic results discussed above
establish existence, formulate models, investigate well-posedness, or analyze
instability through asymptotic or numerical methods, but they do not provide a
complete rigorous resolution of the four small Floquet eigenvalues for a
hydroelastic Stokes wave together with a sharp criterion for the appearance of
nonzero real part.  A recent preprint by \cite[Wan--Yang]{WY} proves nonlinear
modulational instability in deep pure hydroelasticity through a focusing cubic
NLS approximation.  This is an important complementary development, but it
does not replace a complete linear spectral theorem of the type established
here.  The present paper fills this gap for finite-depth hydroelastic Stokes
waves with both capillarity and elastic bending.

\paragraph{Main results and spectral geometry.}
We work away from two types of exceptional parameters.  First, for each
integer $n \geq 2$, we define the $n$-th Wilton-type resonant set by
\begin{equation}\label{def:fRn}
    \fR_n
    :=
    \left\{
        (\tth,\kappa,b) \in \R_{>0}\times\R_{\geq 0} \times \R_{>0}
        \colon \kappa+\frac{n^4-\rho_n(\tth)}{n^2-\rho_n(\tth)} b=\frac{\rho_n(\tth)-1}{n^2-\rho_n(\tth)}
    \right\},
    \quad \rho_n(\tth):=\frac{n\tanh(\tth)}{\tanh(n\tth)},\quad n \geq 2,
\end{equation}
and we set
\begin{equation}\label{def:fR}
    \fR
    :=
    \bigcup_{\substack{n\in\N\,, n\geq 2}} \fR_n .
\end{equation}
At parameter values in $\fR$, the fundamental Fourier mode resonates with a higher harmonic, so that the small-amplitude Stokes branch is no longer generated by a simple eigenvalue of the linearized traveling-wave problem. Conversely, away from $\fR$, we establish a real-analytic one-parameter family $\varepsilon \mapsto (\eta_\varepsilon,\psi_\varepsilon,c_\varepsilon)$ of small-amplitude hydroelastic Stokes waves (see Appendix~\ref{sec:AppG}). This analytic dependence on the amplitude parameter provides the nonlinear foundation for the Bloch–Kato perturbation analysis developed below.

Second, we define the block-decoupling degeneracy set by
\begin{equation}\label{def:fD}
    \mathfrak D
    :=
    \left\{
        (\tth,\kappa,b) \in \R_{>0}\times\R_{\geq 0} \times \R_{>0}
        \colon
        \textup{D}_{\tth,\kappa,b}=0
    \right\},
    \qquad
    \textup{D}_{\tth,\kappa,b}:=\tth-\frac{1}{4}\mathsf{e}^2_{12}.
\end{equation}
The coefficient $\mathsf{e}_{12}$ is an explicit closed-form function of
$(\tth,\kappa,b)$, defined in \eqref{e11 f11}. Indeed, $\textup{D}_{\tth,\kappa,b}=0$ is equivalent to the coincidence of the Benjamin--Feir transport velocity $c_0-\frac{1}{2}\mathsf{e}_{12}$ with one of the long-wave characteristic velocities $c_0 \mp \sqrt{\tth}$; hence $\mathfrak{D}$ marks an intrinsic leading-order spectral resonance at which the Sylvester equation underlying the $2+2$ block separation loses invertibility. Accordingly, throughout the main spectral result we work on the parameter region
\begin{equation}\label{eq:intro-admissible-set}
    \mathscr P
    :=
    \bigl(\R_{>0}\times\R_{\geq 0}\times\R_{>0}\bigr)
    \setminus
    \bigl(\fR\cup\mathfrak D\bigr).
\end{equation}
It is important to distinguish the excluded sets $\fR$ and $\mathfrak D$
from the Benjamin--Feir stability boundary.  The former consist of
parameter values at which either the construction of the underlying Stokes
branch or the finite-dimensional spectral reduction degenerates, whereas
the latter is determined, on $\mathscr P$, by the sign of an
explicitly computable Benjamin--Feir instability index.

For $(\tth,\kappa,b)\in\mathscr P$, the four spectral curves issuing
from the origin separate into a Benjamin--Feir pair
$\lambda_1^{\pm}(\mu,\varepsilon)$ and a long-wave pair
$\lambda_0^{\pm}(\mu,\varepsilon)$.  In the modulational scaling
$\mu=\varepsilon\nu$, with $\nu$ bounded and $\varepsilon>0$ sufficiently
small, Theorem~\ref{Complete BF thm} yields
\begin{align}
\lambda_1^{\pm}(\varepsilon\nu,\varepsilon)
    ={}& \im\left(c_0-\frac{1}{2}\mathsf{e}_{12}\right) \varepsilon\nu
    \pm \frac{\varepsilon^2\nu}{8}
    \sqrt{
        8\mathsf{e}_{22}\mathsf{e}_{\mathrm{WB}}- \mathsf{e}^2_{22}\nu^2
        + \mathcal O(\varepsilon)
    }
    + \im\cO(\nu\varepsilon^3),                                      \label{eq:intro-lambda1}
\\
\lambda_0^{\pm}(\varepsilon\nu,\varepsilon)
    ={}& \im\bigl(c_0 \mp \sqrt{\tth}\bigr) \varepsilon\nu
    + \im\mathcal O(\nu\varepsilon^2),
    \qquad
    \lambda_0^{\pm}(\varepsilon\nu,\varepsilon)\in \im \mathbb R,      \label{eq:intro-lambda0}
\end{align}
where $c_0:=\sqrt{(1+\kappa+b)\tanh(\tth)}$ denotes the phase velocity of the hydroelastic Stokes waves, while $\frac{1}{2}\mathsf{e}_{12}$ corresponds to the group velocity. Here $\mathsf{e}_{12}$, $\mathsf{e}_{22}$ and $\mathsf{e}_{\mathrm{WB}}$ are explicit closed-form functions of
$(\tth,\kappa,b)$, recorded in \eqref{e11 f11} and \eqref{eWB}. Formulae~\eqref{eq:intro-lambda1}--\eqref{eq:intro-lambda0}
show that the long-wave pair remains purely imaginary and moves at order
$\mathcal O(\varepsilon)$, while the Benjamin--Feir pair has imaginary part of
order $\mathcal O(\varepsilon)$ and, in the unstable regime $\{\mathrm{Ind}(\tth,\kappa,b)>0\}$ develops real
part of order $\mathcal O(\varepsilon^2)$; see \eqref{eq:intro-BF-index}.

\begin{figure}[h!] 
  \centering
  \includegraphics[scale=0.9]{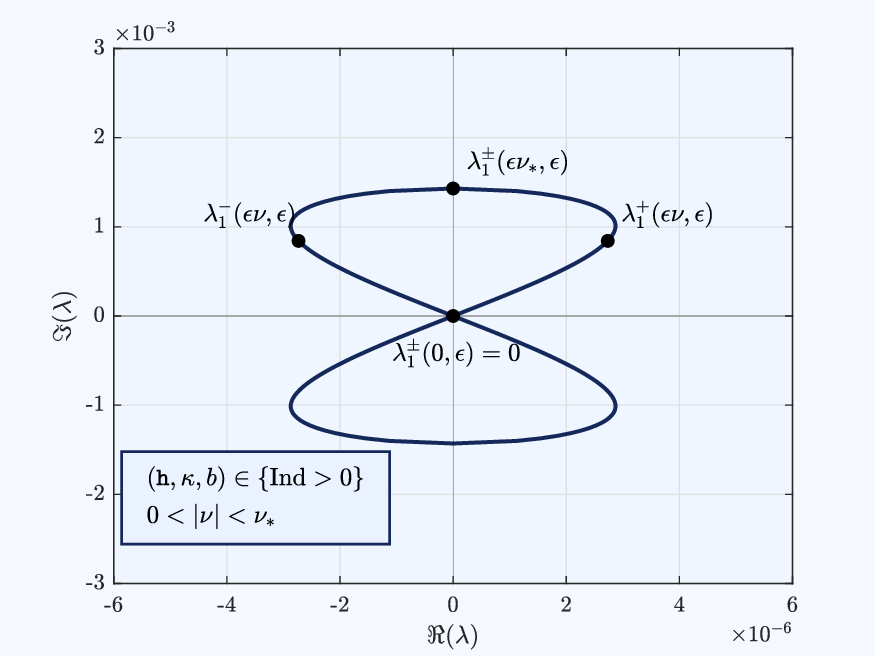}
  \caption{Local structure of the unstable spectrum near $\lambda=0$. The picture illustrates the eigenvalues $\lambda_{1}^{\pm}(\e\nu,\e)$ in the regime $(\tth,\kappa,b)\in \{\mathrm{Ind}(\tth,\kappa,b)>0\}$,
    showing the complex-plane trajectories of $\lambda_{1}^{\pm}(\e\nu,\e)$ for fixed $|\e| \ll 1$ as $\nu$ varies.
    The ``figure-eight'' shape depends on the depth $\tth>0$, capillarity parameter $\kappa\geq 0$, and bending parameter $b>0$.}
    \label{hydroelaticfigureeigth}
\end{figure}

More precisely, the Benjamin--Feir discriminant satisfies
\begin{equation}\label{eq:intro-BF-discriminant}
\Delta_{ \mathrm{BF}}(\tth,\kappa,b;\varepsilon\nu,\varepsilon)
    = \varepsilon^2
      \left(
        8\mathsf{e}_{22}(\tth,\kappa,b)\mathsf{e}_{ \mathrm{WB}}(\tth,\kappa,b)
        - \mathsf{e}_{22}(\tth,\kappa,b)^2\nu^2
        +  \cO(\varepsilon)
      \right).
\end{equation}
It is therefore natural to introduce the leading-order Benjamin--Feir index
\begin{equation}\label{eq:intro-BF-index}
    \mathrm{Ind}(\tth,\kappa,b)
    := 8\mathsf{e}_{22}(\tth,\kappa,b)\mathsf{e}_{
    \mathrm{WB}}(\tth,\kappa,b).
\end{equation}
If $\mathrm{Ind}(\tth,\kappa,b)>0$, then for all sufficiently small
$\varepsilon>0$ there is a nontrivial sideband interval of unstable Floquet
exponents, whose leading-order width is described by
\begin{equation}\label{eq:intro-unstable-band}
    0<|\nu|<\nu_*(\tth,\kappa,b;\e),
    \qquad
    \nu_*(\tth,\kappa,b;\e)
    := \frac{\sqrt{8\mathsf{e}_{22}(\tth,\kappa,b)\mathsf{e}_{ \mathrm{WB}}(\tth,\kappa,b)}}{|\mathsf{e}_{22}(\tth,\kappa,b)|}\left(1+\cO(\e)\right).
\end{equation}
Within this interval, $\lambda_1^{\pm}$ possess opposite nonzero real parts.
As the Floquet exponent varies across the unstable interval, these branches
leave the imaginary axis and return to it at the endpoints; together with the
Hamiltonian and reversible spectral symmetries, they form a local
figure-eight spectral curve near the origin, as displayed in
Figure~\ref{hydroelaticfigureeigth}.  In contrast, if
$\mathrm{Ind}(\tth,\kappa,b)<0$, the four eigenvalues remain purely
imaginary in the corresponding small-amplitude modulational regime.  On the
zero set of $\mathrm{Ind}(\tth,\kappa,b)$ the quadratic leading-order criterion is
degenerate and higher-order information is required.

\begin{figure}[p]
    \centering
    \begin{subfigure}[t]{1\textwidth}
        \centering
        \includegraphics[width=\linewidth]{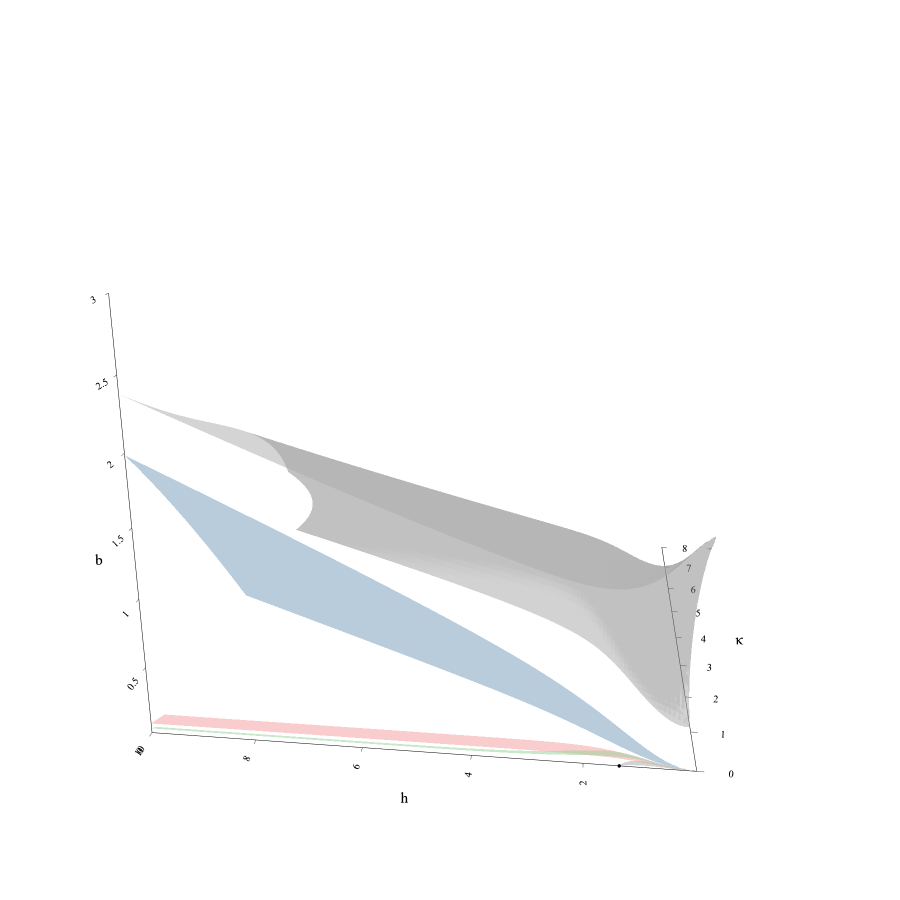}
    \end{subfigure}
    \caption{
The Benjamin--Feir transition surface
$\Sigma_{\mathrm{BF}}$
in the three-dimensional parameter space $(\tth,\kappa,b)$. The green and gray surfaces represent,
respectively, the regular components
$\{\mathsf{e}_{22}(\tth,\kappa,b)=0\}$ and
$\{\mathsf{e}_{\mathrm{WB}}(\tth,\kappa,b)=0\}$.
The blue surface is the block-decoupling degeneracy set
$\mathfrak D=\{\textup{D}_{\tth,\kappa,b}=0\}$, while the red surface is the
second-harmonic Wilton-type resonance set
$\mathfrak R_{2}=\{r_{\tth,\kappa,b}=0\}$ (cf. \eqref{rhkb}). The black point at the intersection of the boundary faces $b=0$ and
$\kappa=0$ marks the pure-gravity critical-depth limit
$(\tth,\kappa,b)=(1.3627827\ldots,0,0)$, recovering the classical
Whitham--Benjamin critical depth
$\tth_{\mathrm{WB}}=1.3627827\ldots$
identified by \cite[Berti--Maspero--Ventura]{BMV3,BMV_ed}.
Moreover, the continuous boundary limit as $b\rightarrow 0^+$ recovers the
gravity--capillary stability diagram; see, for instance, \cite[Hur--Yang]{HY2}, \cite[Sun--Wahlén]{SW}, and \cite[Hsiao--Maspero]{HM2025}.
}
\label{fig:3d-stability-diagrams}
\end{figure}

\begin{figure}[p]
    \centering
    \begin{subfigure}[t]{0.49\textwidth}
        \centering
        \includegraphics[width=\linewidth]{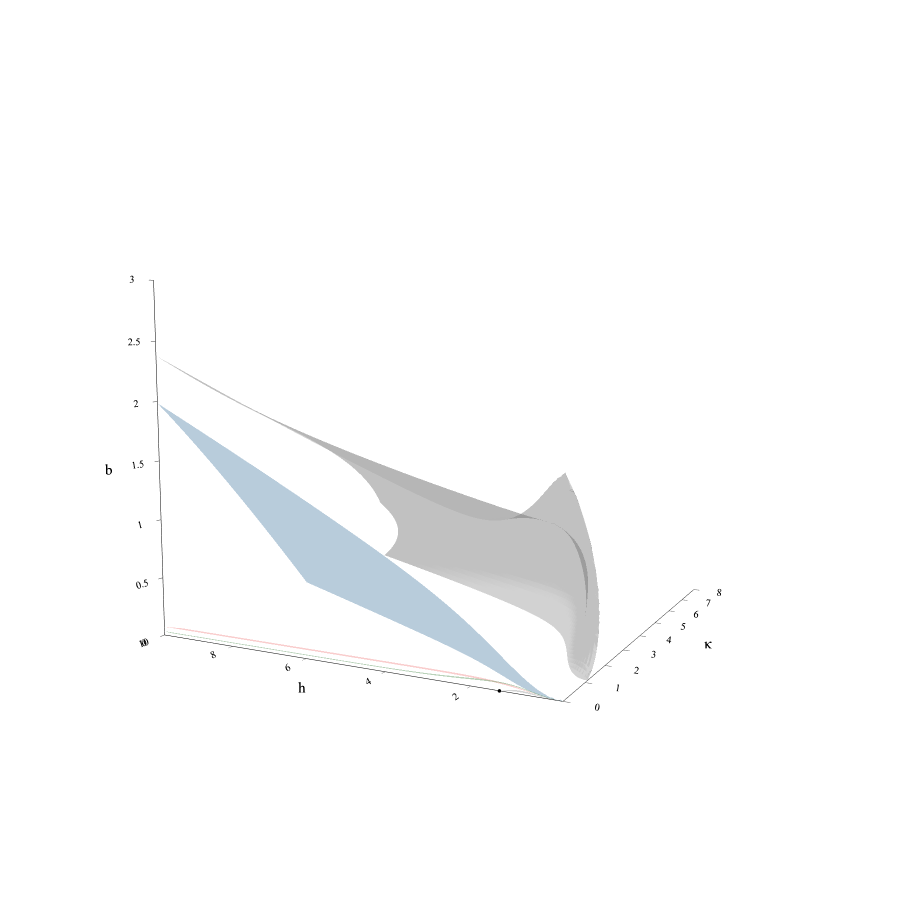}
        \label{fig:3d-stability-5}
    \end{subfigure}
    \hfill
    \begin{subfigure}[t]{0.49\textwidth}
        \centering
        \includegraphics[width=\linewidth]{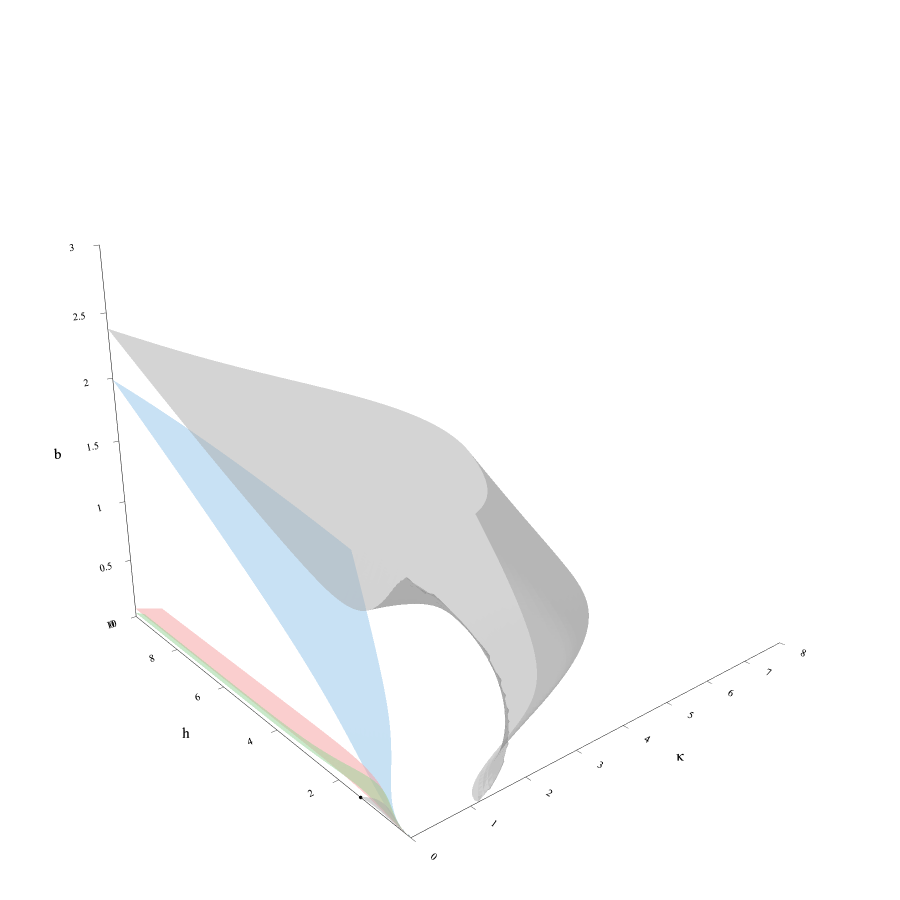}
        \label{fig:3d-stability-6}
    \end{subfigure}
    \begin{subfigure}[t]{0.49\textwidth}
        \centering
        \includegraphics[width=\linewidth]{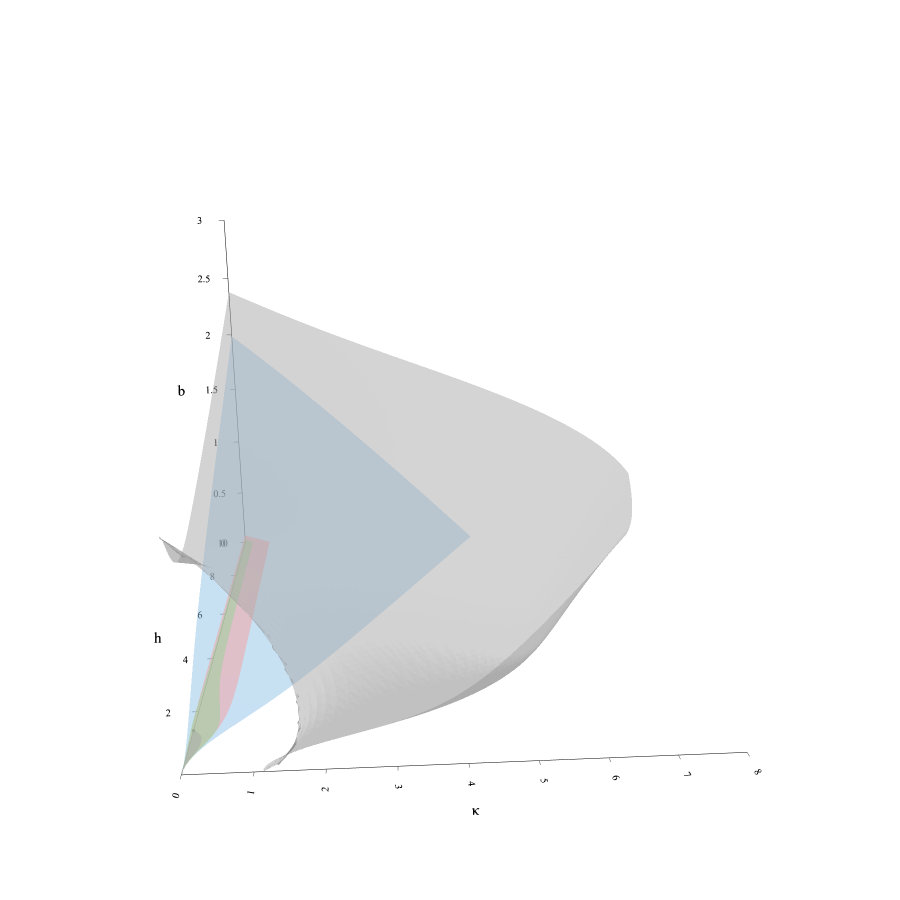}
        \label{fig:3d-stability-7}
    \end{subfigure}
    \hfill
    \begin{subfigure}[t]{0.49\textwidth}
        \centering
        \includegraphics[width=\linewidth]{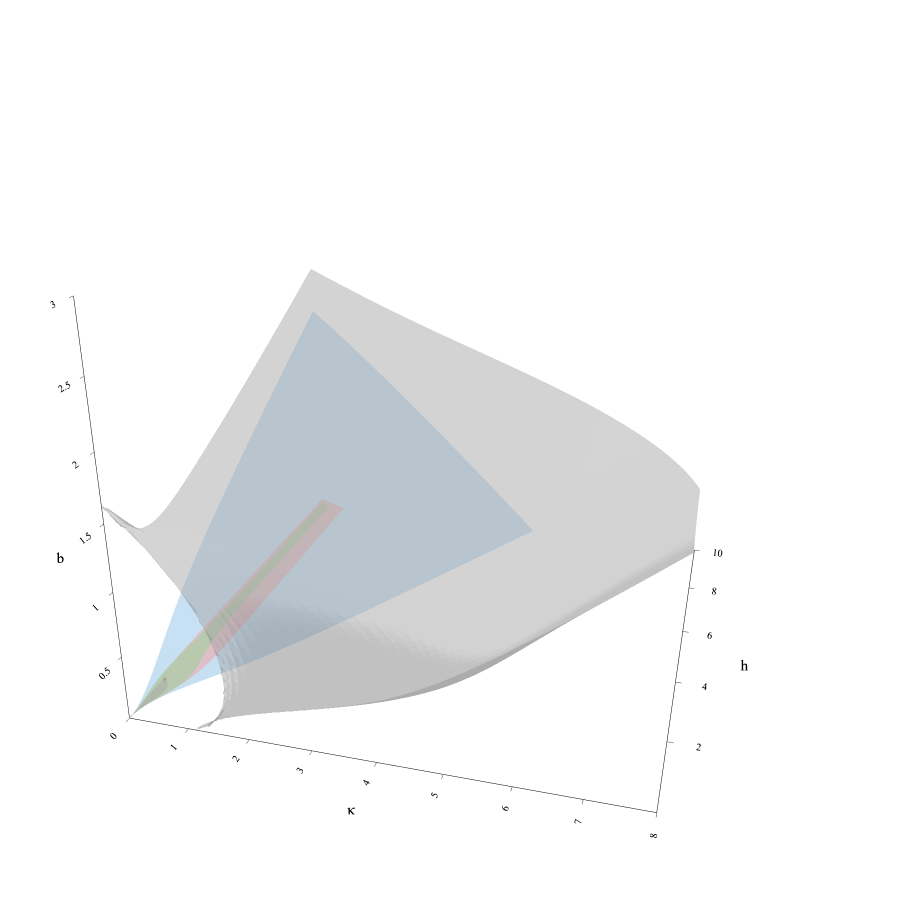}
        \label{fig:3d-stability-8}
    \end{subfigure}
    \caption{The portion of the Benjamin--Feir transition surface $\Sigma_{\mathrm{BF}}$ contained in the truncated physical parameter region $\{(\tth,\kappa,b):0<\tth\le 10,\;0\le\kappa\le 8,\;0<b\le 3\}$. All four panels display the same transition diagram from different
viewing angles. The region exterior to the outer gray component ($\{\mathsf{e}_{\mathrm{WB}}(\tth,\kappa,b)=0\}$) is Benjamin--Feir unstable,
whereas the region between the outer gray component and the blue surface ($\mathfrak D=\{\textup{D}_{\tth,\kappa,b}=0\}$) is stable.
The region bounded by the inner gray component and the blue surface contains both stable and unstable subregions, due to the intersection of the red surface ($\mathfrak R_{2}=\{r_{\tth,\kappa,b}=0\}$) and
green surface ($\{\mathsf{e}_{22}(\tth,\kappa,b)=0\}$).
Finally, the region interior to the inner gray component is stable.}
\label{fig:3d-stability-diagrams2}
\end{figure}

\begin{figure}[p]
    \centering
    \begin{subfigure}[t]{0.49\textwidth}
        \centering
        \includegraphics[width=\linewidth]{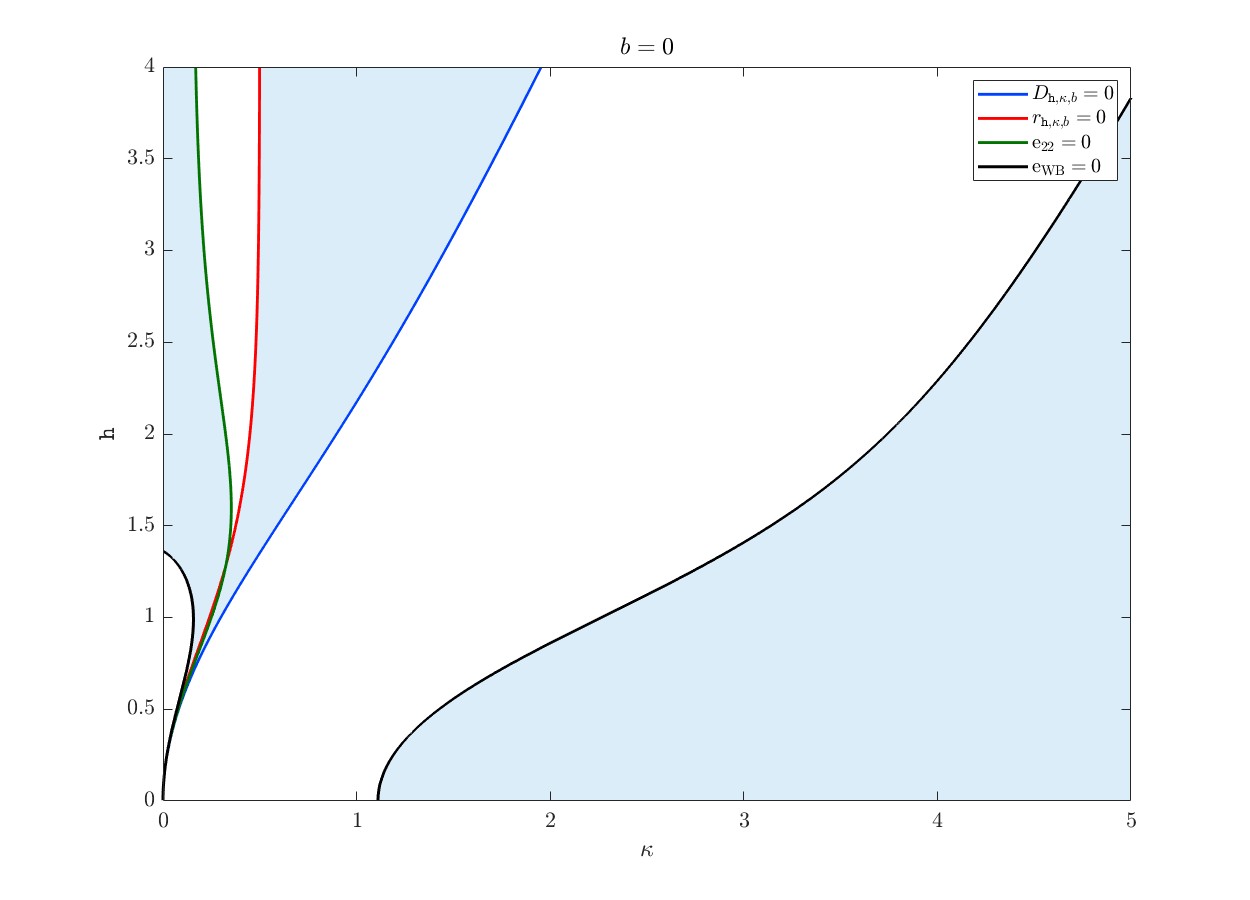}
    \end{subfigure}
    \hfill
    \begin{subfigure}[t]{0.49\textwidth}
        \centering
        \includegraphics[width=\linewidth]{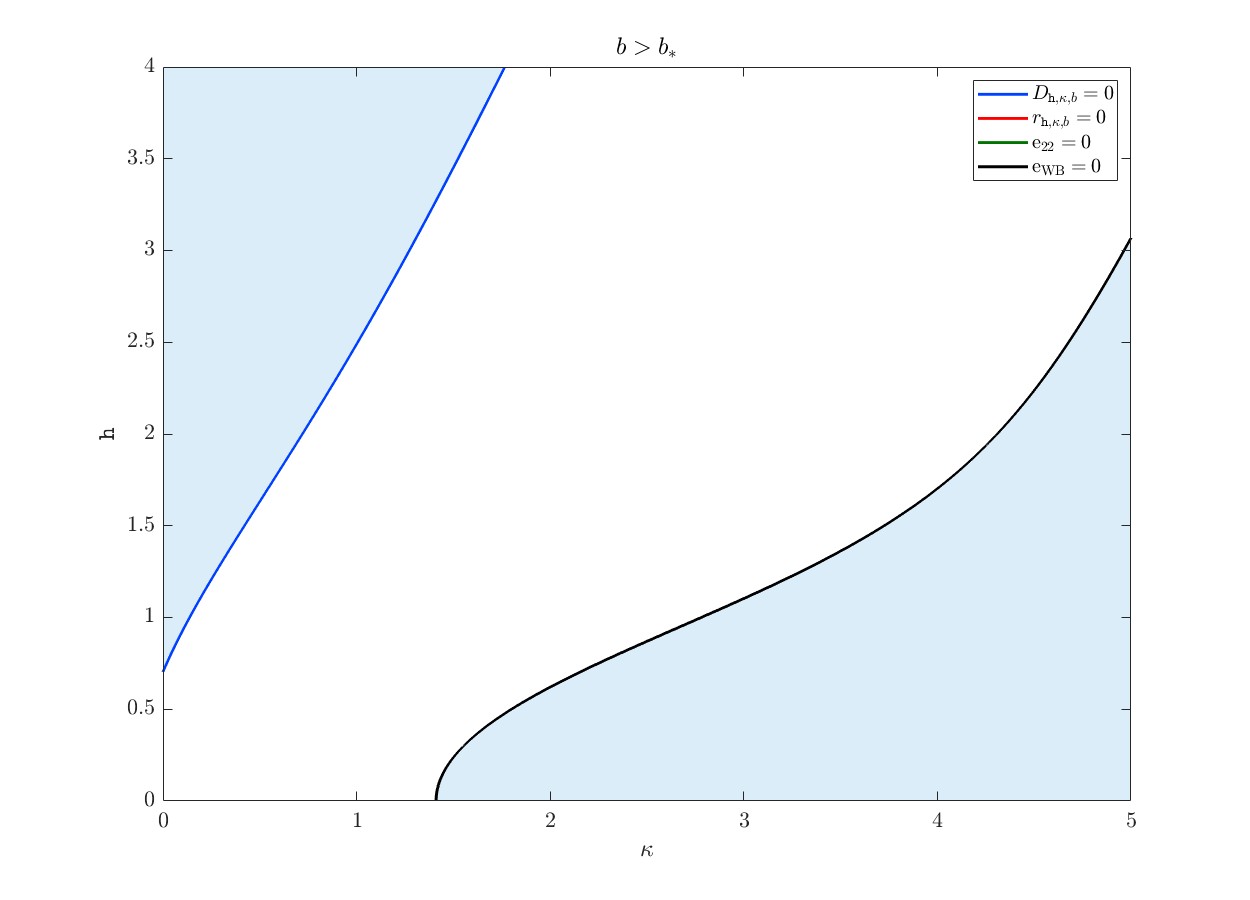}
    \end{subfigure}
    \begin{subfigure}[t]{0.49\textwidth}
        \centering
        \includegraphics[width=\linewidth]{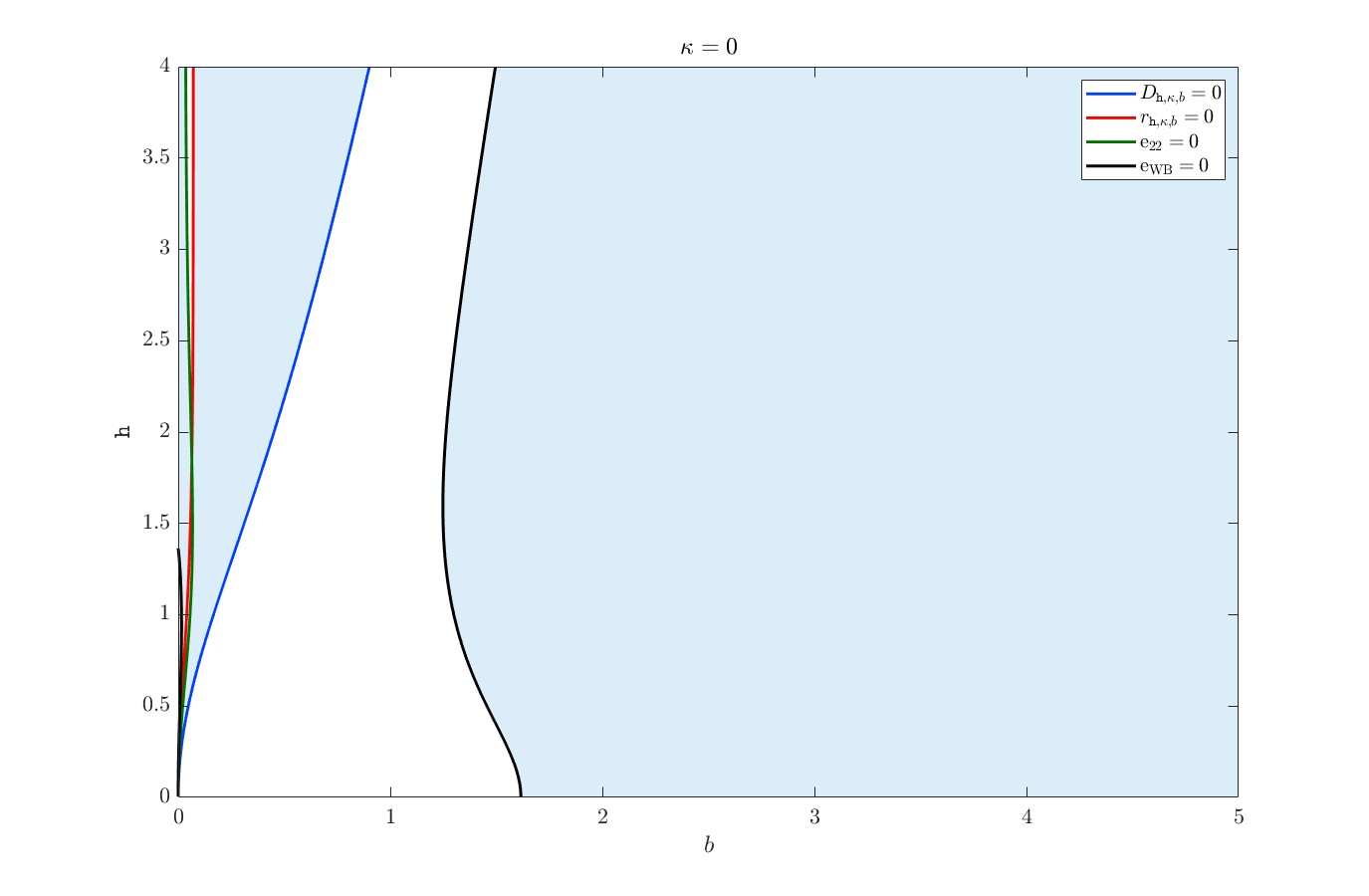}
    \end{subfigure}
    \hfill
    \begin{subfigure}[t]{0.49\textwidth}
        \centering
        \includegraphics[width=\linewidth]{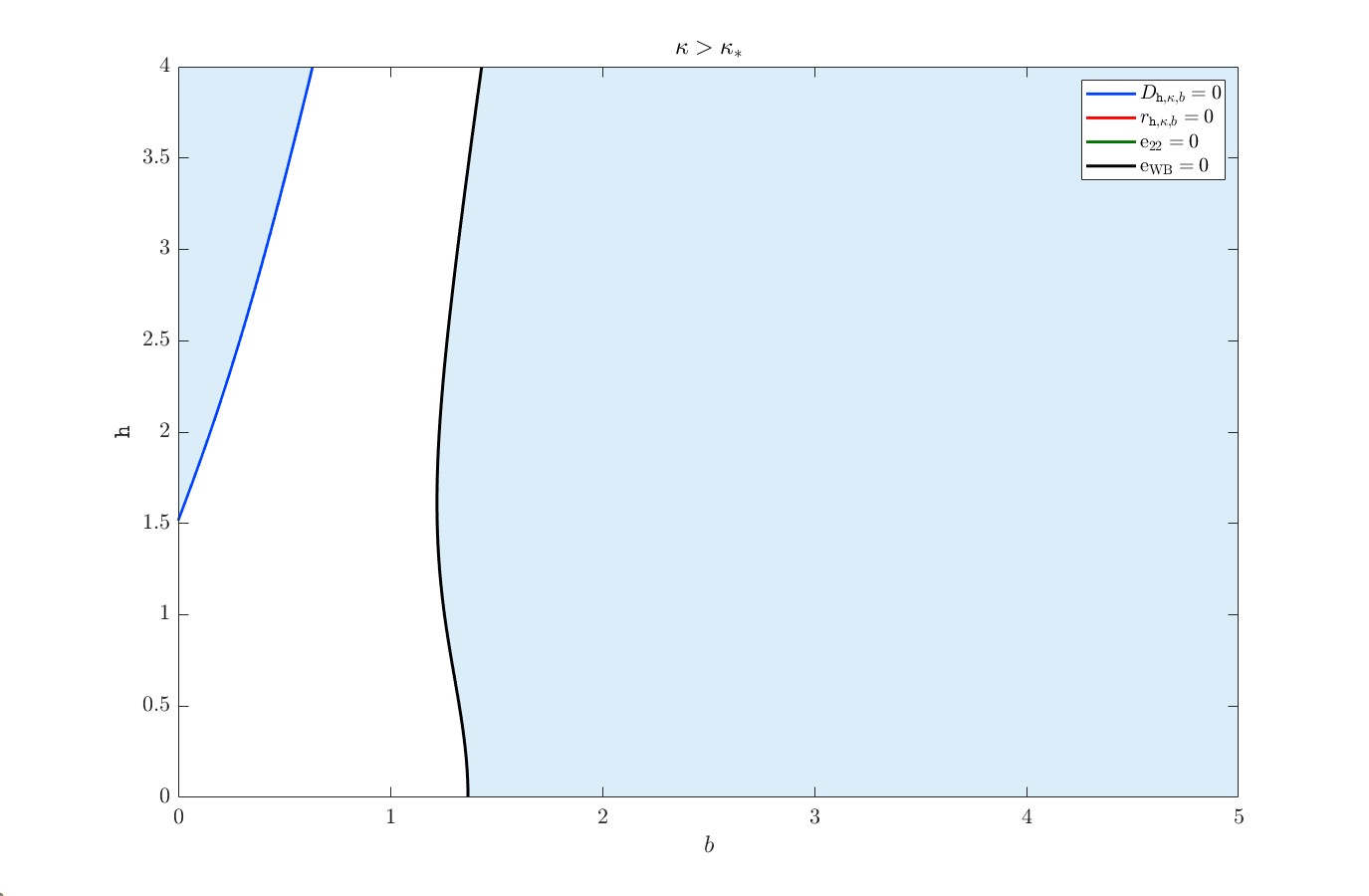}
    \end{subfigure}

    \hfill
 
    \begin{subfigure}[t]{0.49\textwidth}
        \centering
        \includegraphics[width=\linewidth]{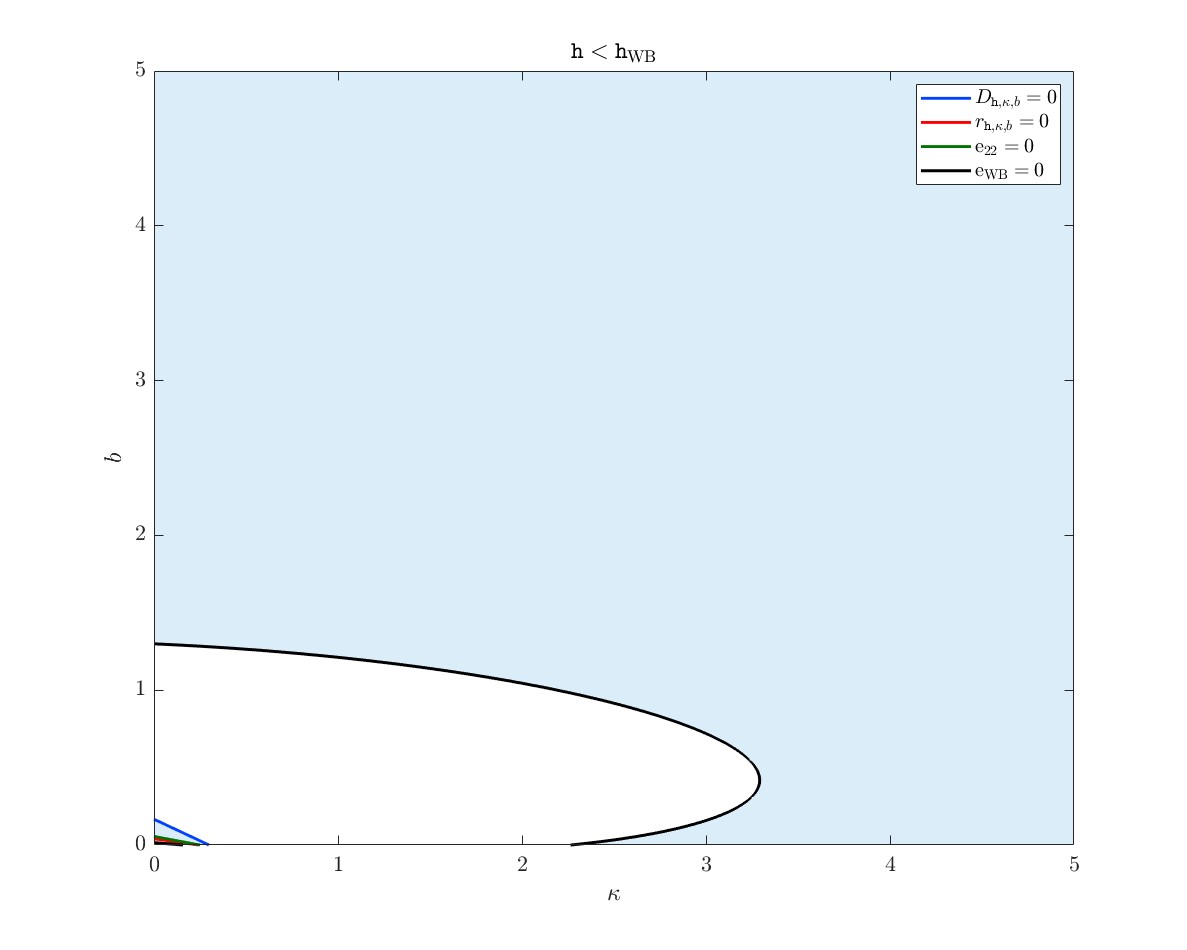}
    \end{subfigure}
    \hfill
    \begin{subfigure}[t]{0.49\textwidth}
        \centering
        \includegraphics[width=\linewidth]{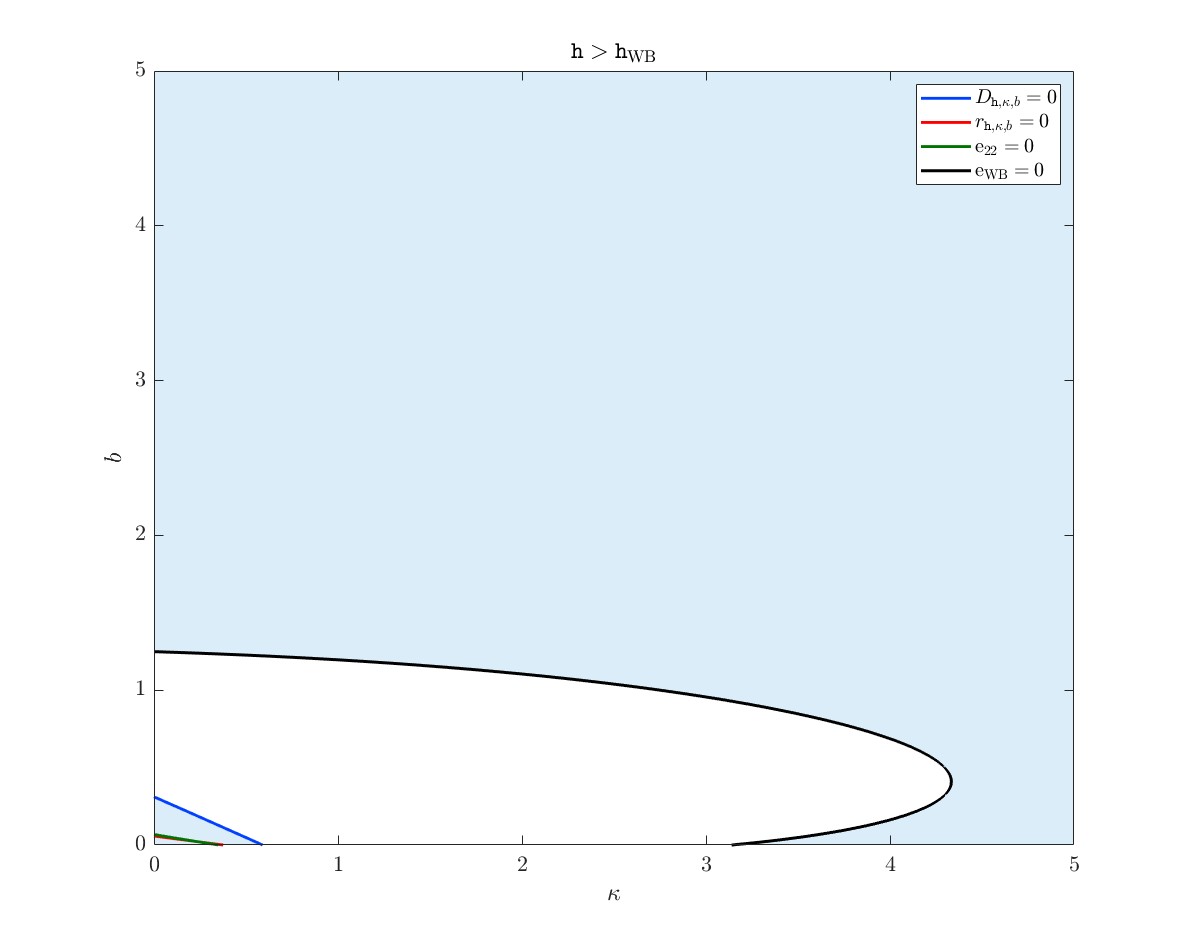}
    \end{subfigure}
    \caption{
Two-dimensional slices of the leading-order Benjamin--Feir stability diagram
together with the exceptional parameter sets.
The light-blue regions indicate Benjamin--Feir instability,
$\operatorname{Ind}(\tth,\kappa,b)=8\mathsf{e}_{22}(\tth,\kappa,b)\mathsf{e}_{\mathrm{WB}}(\tth,\kappa,b)>0$,
whereas the white regions indicate
$\operatorname{Ind}(\tth,\kappa,b)<0$.
The green and black curves denote, respectively, the regular transition curves
$\{\mathsf{e}_{22}=0\}$ and $\{\mathsf{e}_{\mathrm{WB}}=0\}$.
The blue curves denote the block-decoupling degeneracy set
$\mathfrak D=\{\textup{D}_{\tth,\kappa,b}=0\}$, while the red curves denote the
second-harmonic Wilton-type resonance set
$\mathfrak R_2=\{r_{\tth,\kappa,b}=0\}$.
The upper-left panel is the boundary-limit slice $b=0$, which recovers the
gravity--capillary stability diagram (see, for instance, \cite[Figure 1]{HM2025}).
The upper-right panel is a fixed-bending slice
$b=9/100>b_{*}:=1/14$ (cf. \eqref{bc kc}), for which all resonance curves $\mathfrak{R}$ are absent.
The middle-left panel shows the pure flexural--gravity slice $\kappa=0$,
while the middle-right panel is a fixed-capillarity slice
$\kappa=3/5>\kappa_{*}:=1/2$ (cf. \eqref{bc kc}), for which no resonance occurs.
Finally, the bottom panels are fixed-depth slices with
$\tth=1<\tth_{\mathrm{WB}}$ and $\tth=1.5>\tth_{\mathrm{WB}}$, respectively,
where $\tth_{\mathrm{WB}}=1.3627827\ldots$ is the classical
Whitham--Benjamin critical depth.
}
\label{fig:3d-stability-diagrams3}
\end{figure}

We define the Benjamin--Feir unstable set in the admissible parameter
region by
\[
    \mathcal U_{\mathrm{BF}}
    :=
    \left\{
    (\mathtt h,\kappa,b)\in\mathscr P:
    \operatorname{Ind}(\mathtt h,\kappa,b)>0
    \right\}.
\]
Its regular transition boundary is
\[
    \Sigma_{\mathrm{reg}}
    :=
    \partial_{\mathscr P}\mathcal U_{\mathrm{BF}}.
\]
For the purpose of describing the global parameter-space geometry, let us introduce the Benjamin--Feir transition surface
$\Sigma_{\mathrm{BF}}$
\[
    \Sigma_{\mathrm{BF}}
    :=
\overline{\Sigma_{\mathrm{reg}}}^{\mathscr{R}}
    \cup \mathfrak D \cup \mathfrak R_2,
    \qquad
    \mathscr{R}
    :=
    \mathbb R_{>0}\times\mathbb R_{\geq0}\times\mathbb R_{>0}.
\]
Here $\mathfrak D \not\subset \mathscr{P}$ and $\mathfrak R_2\not\subset \mathscr{P}$ are exceptional
parameter surfaces at which, respectively, the finite-dimensional
block-decoupling or the second-harmonic Stokes expansion degenerates.
Nevertheless, they form part of the boundary structure of the stability
diagram and are therefore included in $\Sigma_{\mathrm{BF}}$. 

Since $\mathrm{Ind}(\tth,\kappa,b)$ is available in closed form,
$\Sigma_{\mathrm{BF}}$ can be plotted as a surface in the three-dimensional
$(\tth,\kappa,b)$-space, separating the Benjamin--Feir unstable and stable
regions in the small-amplitude limit; see
Figure~\ref{fig:3d-stability-diagrams} and Figure~\ref{fig:3d-stability-diagrams2}. To illustrate the geometry of the stability and exceptional sets, we
display in Figure~\ref{fig:3d-stability-diagrams3} representative two-dimensional slices
of the three-parameter diagram obtained by fixing, respectively, the bending parameter $b>0$,
the capillarity parameter $\kappa\geq 0$, or the depth $\tth>0$. The boundary-limit slice $b=0$ recovers the gravity--capillary
diagram \cite[Figure 1]{HM2025}, whereas the fixed-bending and fixed-capillarity slices reveal
a genuinely hydroelastic phenomenon: the entire Wilton-type resonant
regime disappears once $b\geq b_*:=1/14$ or
$\kappa\geq\kappa_*:=1/2$. Finally, fixed-depth slices on either side of
$\tth_{\mathrm{WB}}$ display the interaction between capillarity and bending
before and after the classical pure-gravity stability transition.

\paragraph{Main analytical ingredients and comparison with earlier work.}
The analysis developed in this paper builds on the Hamiltonian and reversible
approach used in the complete descriptions of the Benjamin--Feir spectrum for
classical Stokes waves in deep and finite depth~\cite{BMV1,BMV3},
and in the gravity--capillary setting~\cite{HM2025}.
Already in finite depth, the small-spectrum problem is considerably more
delicate than a regular eigenvalue perturbation problem. The four Bloch
eigenvalues issuing from the defective eigenvalue at the origin have the same
leading size $\mathcal{O}(\mu)$. Consequently, the Benjamin--Feir pair cannot
be isolated from the long-wave pair by a direct perturbative expansion: one
must first reduce the Bloch operator to a four-dimensional invariant subspace
and subsequently separate two interacting $2\times 2$ Hamiltonian blocks. This
finite-depth coupling mechanism persists in the hydroelastic problem.

The elastic bending term introduces a further layer of difficulty. The
restoring force generated by the bending energy is
\[
-b\left(\partial_s^2 \sigma(\eta)+\frac{1}{2}\sigma(\eta)^3\right).
\]
Thus, whereas the capillary contribution gives rise to a second-order
geometric operator, elastic bending produces a fourth-order nonlinear
operator. After linearization about a non-flat Stokes wave and conformal
flattening of the free boundary, this term becomes a variable-coefficient
fourth-order operator. In particular, the hydroelastic calculation is not
obtained by formally inserting the parameter $b$ into the gravity--capillary
formulae: the Stokes-wave expansion, the flattened linearized operator, the
Bloch family, and the reduced matrix all acquire additional contributions
that must be tracked while preserving the Hamiltonian and reversible
structures. The bending term also affects the nonlinear traveling-wave problem itself.
Before carrying out the spectral analysis, we establish, in a scale of
exponentially weighted Sobolev spaces $H^{\alpha,m}$, a real-analytic
family
\begin{align*}
\varepsilon\longmapsto
(\eta_\varepsilon,\psi_\varepsilon,c_\varepsilon)
\in
H^{\alpha,m}_{\mathtt{ev}} (\T)\times H^{\alpha,m}_{\mathtt{odd}}(\T)\times \R
\end{align*}
of small-amplitude hydroelastic Stokes waves for parameters outside the
Wilton-type resonance set $\fR$.
 The exclusion of $\fR$ has an intrinsic
bifurcation-theoretic meaning: on $\fR$, the fundamental mode resonates with
a higher harmonic, so that the simple-kernel mechanism underlying the
standard local bifurcation argument breaks down. As an analytic foundation for this construction, we prove a tame
analyticity result for the finite-depth Dirichlet--Neumann operator in
exponentially weighted Sobolev spaces. This provides the analytic
dependence on $\varepsilon$ needed for the subsequent expansions of the
Bloch family and the reduced matrix. 

At the linear level, bending also modifies the mode-dependent
phase velocities according to
\[
c_n^2=\frac{(1+\kappa n^2+b n^4)\tanh(n\tth)}{n},
\]
and therefore reshapes the Wilton-type resonance loci. The coefficients entering the final instability index consequently retain their full dependence on the three parameters $(\tth,\kappa,b)$.

Our result is complementary to the existing stability theory for
hydroelastic waves. Previous work on periodic flexural--gravity waves derived
nonlinear Schr\"odinger approximations and compared their modulational
predictions with numerical Floquet computations~\cite{TMPVB},
while the recent work~\cite{WY} proves nonlinear modulational instability
in deep pure hydroelasticity through a focusing cubic NLS approximation. Here
we instead work directly with the linearized finite-depth Euler equations in
the presence of both capillarity and bending, and resolve all four Bloch
eigenvalues bifurcating from the origin. After a symplectic Kato reduction, we perform a singular symplectic
rescaling under which the reduced matrix extends analytically to the zero
Floquet exponent. A first symplectic and reversibility-preserving Lie
transform, determined by a Sylvester equation, lowers the off-diagonal
coupling from $\mathcal{O}(\mu\varepsilon)$ to
$\mathcal{O}(\mu\varepsilon^3)$. We then eliminate the residual coupling
exactly by a second analytic, structure-preserving conjugation. Within the non-resonant regime, the obstruction to this exact block
separation is precisely the characteristic-collision surface
$\mathfrak D$. Thus, the exclusion of $\mathfrak D$
is not merely technical: on this surface the Benjamin--Feir transport
velocity coincides with one of the long-wave characteristic velocities, and
the normalized Sylvester operator loses invertibility.

The bending term also creates degeneracies that are absent from the
pure-gravity and gravity--capillary problems: a coefficient entering the
reduced Benjamin--Feir block, which is non-vanishing in those settings, may
vanish in the hydroelastic regime. The resulting three-parameter diagram
therefore records not only the sign change of the Benjamin--Feir index, but
also the interaction between modulational instability, Wilton-type
resonances, and an intrinsic characteristic collision.

\section{The full Benjamin--Feir spectrum of hydroelastic waves} \label{sec2 full}
In this section we formulate the two-dimensional hydroelastic water-wave problem in
Zakharov variables, construct the small-amplitude hydroelastic Stokes branch, and
introduce the Bloch--Floquet spectral problem associated with its modulational
stability. The formulation is the finite-depth analogue of the Hamiltonian
free-surface description for classical water waves, with the additional restoring
forces generated by capillarity and elastic bending of the upper interface.

\paragraph{The problem of hydroelastic waves.} 
Let $\tth>0$ denote the fluid depth and let $\mathbb T := \mathbb R / 2\pi \mathbb Z$.
At time $t$, the fluid occupies the domain
\begin{align*}
    \Omega_{\tth,\eta}
    :=
    \bigl\{ (x,y)\in \mathbb T\times \mathbb R :
    -\tth < y < \eta(t,x) \bigr\},
\end{align*}
whose upper boundary is the graph
\[
    S_t := \bigl\{ (x,y)\in \mathbb T\times\mathbb R :
    y=\eta(t,x) \bigr\}.
\]
The function $\eta=\eta(t,x)$ describes the vertical displacement of a thin
elastic cover lying on the upper surface of the fluid.

We consider an inviscid, incompressible, homogeneous fluid of constant density
equal to one. If $\mathbf v=(v_1,v_2)$ and $p$ denote respectively the fluid
velocity and pressure, then the motion is governed by the free-boundary Euler
equations
\begin{equation*}
    (\partial_t+\mathbf v\cdot\nabla)\mathbf v
    =
    -\nabla p - g e_2,
    \qquad
    \nabla\cdot \mathbf v =0,
    \qquad
    (x,y)\in \Omega_{\tth,\eta},
\end{equation*}
where $g>0$ is the gravitational constant and $e_2=(0,1)$. We impose the
irrotationality condition
\begin{equation}
\label{eq:irrotationality}
    \operatorname{curl}\mathbf v =0
    \qquad\text{in }\Omega_{\tth,\eta},
\end{equation}
as well as the impermeable bottom condition
\begin{equation*}
    v_2(t,x,-\tth)=0.
\end{equation*}
The upper interface moves with the normal component of the fluid velocity.
Equivalently, the kinematic boundary condition reads
\begin{equation*}
    \partial_t \eta
    =
    v_2-v_1\eta_x
    \qquad\text{on }S_t.
\end{equation*}
The elastic cover is assumed to be massless and to contribute both capillary
and bending energies. We introduce the arclength derivative and the curvature
of the graph by
\begin{equation}
\label{eq:curvature-arclength}
    \partial_s
    :=
    \frac{1}{\sqrt{1+\eta_x^2}}\partial_x,
    \qquad
    \sigma(\eta)
    :=
    \partial_x
    \left(
        \frac{\eta_x}{\sqrt{1+\eta_x^2}}
    \right).
\end{equation}
The surface energy is
\begin{equation}
\label{eq:surface-energy}
    \mathcal E(\eta)
    :=
    \kappa
    \underbrace{\int_{\mathbb T}
        \bigl(\sqrt{1+\eta_x^2}-1\bigr)\,dx}_{\mathcal{E}_\kappa(\eta)}
    +
    b
    \underbrace{\int_{\mathbb T}
        \frac{1}{2}\sigma(\eta)^2 \sqrt{1+\eta_x^2}\,dx}_{\mathcal{E}_b(\eta)},
\end{equation}
where $\kappa\geq 0$ is the surface-tension coefficient and $b>0$ is the
bending rigidity.

Thus, after normalizing the constant atmospheric pressure to zero, the dynamic
boundary condition can be written as
\begin{equation}
\label{eq:dynamic-pressure-condition}
    p\big|_{S_t}
    = -\kappa \sigma(\eta)
    +b\left(
        \partial_s^2\sigma(\eta)
        +
        \frac12 \sigma(\eta)^3
    \right).
\end{equation}

The elastic contribution in \eqref{eq:dynamic-pressure-condition} is the geometric
quadratic-bending specialization of the nonlinear membrane law derived by \cite[Toland, Section 1]{To2008}. More precisely, Toland's steady hydroelastic boundary condition contains
the term
\[
    b_0 \sigma
    -
    b_2 \partial_s^2 \sigma
    -
    \frac{b_2}{2}\sigma^3
\]
when the stored bending energy density is quadratic in the curvature. Taking
$b_0=\kappa$ and $b_2=b$ gives precisely the capillary--bending restoring force
in \eqref{eq:dynamic-pressure-condition}. In contrast with Toland's steady infinite-depth
setting, here we retain the full time-dependent finite-depth formulation required
for the spectral analysis of periodic traveling waves.

By \eqref{eq:irrotationality}, there exists a velocity potential
$\Psi=\Psi(t,x,y)$ such that
\[
    \mathbf v = \nabla_{x,y}\Psi.
\]
For a given surface profile $\eta$ and boundary trace
\[
    \psi(t,x):=\Psi(t,x,\eta(t,x)),
\]
the potential $\Psi$ is determined by
\begin{equation}
\label{eq:harmonic-extension}
\left\{
\begin{aligned}
    \Delta \Psi &=0
        &&\text{in }\Omega_{\tth,\eta},\\
    \Psi(x,\eta(t,x)) &= \psi(t,x)
        &&\text{on }S_t,\\
    \partial_y\Psi(x,-\tth)&=0
        &&\text{on }y=-\tth.
\end{aligned}
\right.
\end{equation}
The associated Dirichlet--Neumann operator is defined by
\begin{equation}
\label{eq:DN-definition}
    G(\eta)\psi:=G(\eta;\tth)\psi
    :=\Psi_y(x,\eta(x))-\Psi_x(x,\eta(x))\eta_x(x).
\end{equation}
The equations of motion reduce to a
system of equations determined by the non-local quasi-linear equations (see, \cite[Zakharov]{Zak68} and \cite[Craig--Sulem]{CS})
\begin{equation} \label{Craig-Sulem formula}
\left\{\begin{aligned}
    \eta_t&=G(\eta)\psi\\
    \psi_t&=-g\eta-\frac{\psi_x^2}{2}+\frac{1}{2(1+\eta^2_x)}\left(G(\eta)\psi+\eta_x\psi_x\right)^2+\kappa\sigma(\eta)-b\left(\partial^2_s \sigma(\eta)+\frac{1}{2}\sigma(\eta)^3\right).
\end{aligned}\right.
\end{equation}
We pause to remark that the Dirichlet--Neumann operator $[G(\eta)\psi](x):=\Psi_y(x,\eta(x))-\Psi_x(x,\eta(x))\eta_x(x)$ is symmetric (see Appendix \ref{sec:AppG}); in the sequel, without loss of generality, we set the gravity constant $g=1$.

System \eqref{Craig-Sulem formula} is Hamiltonian and can be written as
\begin{equation} \label{pa_t eta psi}
    \begin{aligned}
        \partial_t \begin{bmatrix}
\eta\\
\psi 
\end{bmatrix}=
\mathcal{J}\begin{bmatrix}
\nabla_\eta \mathcal{H}\\
\nabla_\psi \mathcal{H} 
\end{bmatrix}
    \end{aligned},~~\mathcal{J}:=
    \begin{bmatrix}
0& \mathrm{Id}\\
-\mathrm{Id} &0
\end{bmatrix},
\end{equation}
where $\nabla$ denotes the $L^2$-gradient, namely
\begin{align}\label{defL2gra}
    \de_\eta \mathcal{H}[\tilde{\eta}] = \int_\T \nabla_\eta \mathcal{H}  \, \tilde{\eta} \, \de x, \qquad 
    \de_\psi \mathcal{H} [\tilde{\psi}] = \int_\T \nabla_\psi \mathcal{H}  \, \tilde{\psi} \, \de x,
\end{align}
and the Hamiltonian 
\begin{align} \label{mathcal H}
\mathcal{H}(\eta,\psi):=\int_{\mathbb{T}} \left(\frac{1}{2}\psi G(\eta)\psi+\frac{g}{2}\eta^2\right) \de x +\mathcal{E}(\eta)    
\end{align}
is the sum of the kinetic energy of the fluid and the
gravitational, capillary, and elastic bending potential energies of the
fluid--elastic-interface system. Note that we can choose the function space of $\eta$ and $\psi$. For example, we can choose $(\eta,\psi) \in H^{\alpha,m}_{\mathtt{ev}} (\T)\times H^{\alpha,m}_{\mathtt{odd}}(\T)$ with $\alpha \ge 0$ and $m > \frac{11}{2}$.
In addition,  the water waves system \eqref{Craig-Sulem formula} is reversible with respect to the involution
\begin{equation} \label{rho involution}
    \begin{aligned}
        \rho \begin{bmatrix}
            \eta(x)\\
            \psi(x)
        \end{bmatrix}:=
        \begin{bmatrix}
            \eta(-x)\\
            -\psi(-x)
        \end{bmatrix},~~\mathrm{i.e.}~\mathcal{H}\circ \rho=\mathcal{H},
    \end{aligned}
\end{equation}
and it is space invariant.

 \paragraph{Hydroelastic Stokes waves.} In a moving reference frame with constant speed $c$ (and with normalized gravity $g=1)$, the water waves system \eqref{Craig-Sulem formula} becomes
 \begin{equation} \label{in the reference frame}
\left\{\begin{aligned}
    \eta_t&=c\eta_x+G(\eta)\psi\\
    \psi_t&=c\psi_x-\eta-\frac{\psi_x^2}{2}+\frac{1}{2(1+\eta^2_x)}\left(G(\eta)\psi+\eta_x\psi_x\right)^2+\kappa\sigma(\eta)-b\left(\partial^2_s \sigma(\eta)+\frac{1}{2}\sigma(\eta)^3\right).
\end{aligned}\right.
\end{equation}
We consider small-amplitude hydroelastic Stokes waves, namely stationary solutions of \eqref{in the reference frame} which we further require to be  $2\pi$-periodic in space.
 
\begin{theorem} [Hydroelastic Stokes waves] \label{Thm: Stokes expansion} 
 Let $(\tth,\kappa,b) \in (\R_{>0}\times\R_{\geq 0} \times\R_{> 0}) \setminus \fR$, with  $\fR$  in \eqref{def:fR}. There 
 exist $\e_*:=\e_*(\tth,\kappa,b) >0$ and a unique family  of real analytic 
 solutions $(\eta_{\e}(x), \psi_{\e}(x), c_{\e})$, parameterized by the amplitude $|\e| \leq \e_*$, to  
\begin{equation}\label{travelingWWstokes}
c \, \eta_x+G(\eta)\psi = 0 \, , \quad 
c \, \psi_x -  \eta - \dfrac{\psi_x^2}{2} + 
\dfrac{1}{2(1+\eta_x^2)} \big( G(\eta) \psi + \eta_x \psi_x \big)^2  +\kappa\sigma(\eta)-b\left(\partial^2_s \sigma(\eta)+\frac{1}{2}\sigma(\eta)^3\right)= 0 \, , 
\end{equation}
  such that
 $ \eta_\e (x), \psi_\e (x) $ are $2\pi$-periodic;  $\eta_\e (x) $ is even
and $\psi_\e (x) $ is odd. Moreover, they admit the expansions where the remainder terms are understood in the corresponding
function-space norms.
 \begin{equation}\label{exp:Sto}
 \begin{aligned}
  & \eta_{\e} (x) = \e \cos(x)+\e^2\left(\eta_{2}^{[0]}+\eta_{2}^{[2]}\cos(2x)\right) +\cO(\e^3), \\ 
  & \psi_{\e} (x)  =  \e c_{\tth}^{-1}\ckb \sin(x)+\e^2 \psi_{2}^{[2]}\sin(2x)+\cO(\e^3),  \\
  & c_{\e} = c_{0}  +\e^2 c_{2}+\cO(\e^3)\quad \text{where} \quad c_{0} := \ckb c_{\tth}, \quad \ckb:=\sqrt{1+\kappa+b},\quad c_{\tth}:=\sqrt{\tanh(\tth)} \, . 
   \end{aligned}
  \end{equation}
\end{theorem}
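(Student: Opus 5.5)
Our plan is to prove Theorem~\ref{Thm: Stokes expansion} by analytic Crandall--Rabinowitz bifurcation from a simple eigenvalue, posed on the reversible subspace $X:=H^{\alpha,m}_{\mathtt{ev}}(\T)\times H^{\alpha,m}_{\mathtt{odd}}(\T)$ with $m>\frac{11}{2}$.

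\emph{Setting up the map.} Write \eqref{travelingWWstokes} as $\mathcal F(\eta,\psi;c)=0$ with
\[
\mathcal F(\eta,\psi;c):=\begin{bmatrix} c\,\eta_x+G(\eta)\psi \\ c\,\psi_x-\eta-\frac{\psi_x^2}{2}+\frac{(G(\eta)\psi+\eta_x\psi_x)^2}{2(1+\eta_x^2)}+\kappa\sigma(\eta)-b\bigl(\partial_s^2\sigma(\eta)+\tfrac12\sigma(\eta)^3\bigr)\end{bmatrix},
\]
viewed as a map $X\times\R\to Y:=H^{\alpha,m-1}_{\mathtt{odd}}\times H^{\alpha,m-4}_{\mathtt{ev}}$. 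The reversibility \eqref{rho involution} and translation invariance guarantee that $\mathcal F$ preserves these parities. We need $\mathcal F$ to be real analytic near $(0,0;c)$, and this is the main obstacle. The curvature and bending terms are explicit compositions of analytic functions of $\eta_x$ (such as $(1+\eta_x^2)^{-1/2}$) with derivatives, so they are analytic in the weighted algebra $H^{\alpha,m-4}$ once $m-4>\frac12$. The delicate part is the finite-depth Dirichlet--Neumann operator $\eta\mapsto G(\eta)$. For it we use the tame analyticity in $H^{\alpha,m}$ proved in the appendix. If we had to argue it directly, we would flatten the domain, expand $G(\eta)=\sum_k G_k(\eta)$ into homogeneous multilinear pieces (Craig--Sulem recursion), and bound $\|G_k(\eta)\psi\|_{\alpha,m-1}\le C^k\|\eta\|^k_{\alpha,m}\|\psi\|_{\alpha,m}$.

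\emph{Linearization and kernel.} At the flat state we have
\[
d\mathcal F(0,0;c)[\eta,\psi]=\bigl(c\eta_x+|D|\tanh(\tth|D|)\psi,\; c\psi_x-(1-\kappa\partial_x^2+b\partial_x^4)\eta\bigr).
\]
On the Fourier mode $(\cos nx,\sin nx)$ this becomes a $2\times2$ matrix with determinant proportional to $c^2n^2-n\tanh(n\tth)(1+\kappa n^2+bn^4)$, so it is singular iff $c=c_n$. At $c=c_0=\sqrt{(1+\kappa+b)\tanh\tth}$ the mode $n=1$ lies in the kernel, with kernel vector $(\cos x,\,c_\tth^{-1}c_{\kappa,b}\sin x)$. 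The condition $c_n\neq c_1$ for all $n\ge2$ is, after dividing by $n\tanh(n\tth)$ and rearranging, exactly $(\tth,\kappa,b)\notin\fR_n$; this is how \eqref{def:fRn} enters. The mode $n=0$ contributes only the constant $\eta$, on which the operator acts as $-1$, so it is invertible there. Since $c_n^2\sim bn^3\to\infty$, the resonance can occur for only finitely many $n$, and away from $\fR$ the kernel is one-dimensional. The operator is Fredholm of index zero because the symbol is elliptic of order $(1,4)$ after the Douglis--Nirenberg weighting. The transversality condition $\partial_c d\mathcal F[\cdot]\notin\mathrm{Range}$ reduces to $\partial_c\det\neq0$ at $c_0$, which holds since $\partial_c\det=2c_0\ne0$.

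\emph{Conclusion and expansion.} Analytic Crandall--Rabinowitz (or Lyapunov--Schmidt with the analytic implicit function theorem) then gives a unique analytic branch $\e\mapsto(\eta_\e,\psi_\e,c_\e)$. We normalize it by $\langle\eta_\e,\cos x\rangle=\e\pi$, which yields the order-$\e$ terms. The symmetry $\e\mapsto-\e$, realized by the translation $x\mapsto x+\pi$, forces $c_\e$ to be even in $\e$, so $c_\e=c_0+\e^2c_2+\cO(\e^3)$. The $\e^2$ coefficients come from solving $d\mathcal F\,[u_2]=-\frac12 d^2\mathcal F[u_1,u_1]$ on modes $0$ and $2$. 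On mode $2$ this is invertible precisely because $(\tth,\kappa,b)\notin\fR_2$, and this gives $\eta_2^{[0]}$, $\eta_2^{[2]}$ and $\psi_2^{[2]}$. The value of $c_2$ is then fixed by the solvability condition at order $\e^3$.
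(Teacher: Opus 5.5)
Your proposal is correct and follows essentially the paper's route. You apply Crandall--Rabinowitz on $H^{\alpha,m}_{\mathtt{ev}}\times H^{\alpha,m}_{\mathtt{odd}}\to H^{\alpha,m-1}_{\mathtt{odd}}\times H^{\alpha,m-4}_{\mathtt{ev}}$ using the appendix's tame analyticity of $G(\eta)$, do a mode-by-mode kernel and range computation in which $(\tth,\kappa,b)\notin\fR$ gives a one-dimensional kernel, check transversality at mode $1$, and expand order by order with $c_2$ fixed by the $\e^3$ solvability condition; the one minor difference is that you get $c_1=0$ (indeed evenness of $c_\e$) from the $x\mapsto x+\pi$ symmetry plus uniqueness of the branch, whereas the paper reads it off from the orthogonality condition at order $\e^2$.
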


For any  $ \alpha \geq  0 $ and $m + \frac12 \in \N$, $m > \frac{11}{2}$, there exists $ \e_*>0 $ such that
the map $\e \mapsto (\eta_\e, \psi_\e, c_\e)$ is analytic from $B(\e_*) \to H^{\alpha,m}_{\mathtt{ev}} (\T)\times H^{\alpha,m}_{\mathtt{odd}}(\T)\times \R$, where 
$ H^{\alpha,m}_{\mathtt{ev}}(\T) $, respectively $ H^{\alpha,m}_{\mathtt{odd}}(\T) $, denote the  space of even, respectively odd, 
 real valued $ 2 \pi $-periodic analytic functions
$ u(x) = \sum_{k \in \mathbb{Z}} u_k e^{\im k x} $
such that $ \| u \|_{\alpha,m}^2 := \sum_{k \in \mathbb{Z}} |u_k|^2 \langle k \rangle^{2m} 
e^{2 \alpha |k|} < + \infty$. We pause to remark that the bifurcation of small-amplitude Stokes waves from the trivial solution was first studied for pure gravity water waves by \cite[Stokes]{stokes}, \cite[Levi-Civita]{LC}, \cite[Nekrasov]{Nek}, and \cite[Struik]{Struik}. 
 In our setting, the existence and analyticity of a bifurcating branch of traveling waves follows from the Crandall--Rabinowitz theorem. Also, we denote $\mathbb{T}:=\mathbb{R}/ 2\pi\mathbb{Z}$ and $B(r):=\{x\in\mathbb{R}: |x|<r\}$ the open ball of radius $r$ centered at zero. 

The existence of the hydroelastic Stokes waves and their analytic dependence on $\e$ are established in Appendix~\ref{sec:AppG}. The expansions in 
\eqref{exp:Sto} are proved in Appendix \ref{sec:App2}. 
The condition $(\tth,\kappa,b) \not \in \fR$ is used in Lemma \ref{lem:B0} to ensure that the kernel of the linearized operator at the flat surface is one-dimensional. 
The condition $(\mathtt{h},\kappa,b)\notin\mathfrak R$ is equivalent to
requiring that the phase velocity of the fundamental mode is distinct
from that of every higher integer mode, namely
\begin{align*}
    \frac{(1+\kappa n^2+b n^4)\tanh(n\tth)}{n}
    \neq
    (1+\kappa+b)\tanh(\tth),
    \qquad n\ge2.
\end{align*}
This is precisely the non-resonance condition ensuring that the kernel
associated with the fundamental Fourier mode is simple in the
Crandall--Rabinowitz bifurcation argument. We also note that the excluded set $\mathfrak R$ is not merely a technical artifact.  When
$(\tth,\kappa,b)\in\mathfrak R$, the interaction between finite depth, surface tension, and the
higher-order bending term may produce higher-order resonances among the linear modes.  In this
regime one expects the emergence of Wilton-type traveling waves, in analogy with the classical
capillary-gravity Wilton ripples~\cite[Wilton]{Wilton}.  It would be an interesting problem to construct
such resonant hydroelastic periodic waves rigorously, possibly by combining the present
hydroelastic formulation with the Lyapunov--Schmidt-type bifurcation method developed in
\cite[Reeder--Shinbrot]{reeder}.

We pause to remark that, recalling \eqref{def:fRn}, for each fixed $\tth>0$ and integer $n\geq2$, one has
\[
1<\rho_n(\tth)<n<n^2.
\]
Consequently, the resonance condition
\[
\kappa+\frac{n^4-\rho_n(\tth)}{n^2-\rho_n(\tth)}\,b
=
\frac{\rho_n(\tth)-1}{n^2-\rho_n(\tth)}
\]
defines a straight line in the $(\kappa,b)$-plane with negative slope and positive $\kappa$- and $b$-intercepts. Hence, the resonance set forms a line segment in the first quadrant:
\[
0< b\leq \frac{\rho_n(\tth)-1}{n^4-\rho_n(\tth)}.
\]
In particular, when $n=2$, the resonance set is given by
\begin{align} \label{rhkb}
\mathfrak R_2
=
\left\{
r_{\tth,\kappa,b}=0
\right\}, \qquad r_{\tth,\kappa,b}
:=\ch^4(1+\kappa+b)-3\kappa-15b.
\end{align}
Moreover, in the deep-water limit $\tth\to\infty$, one has $\rho_n(\tth)\to n$, and therefore the resonance condition becomes
\[
\kappa+(n^2+n+1)b=\frac1n.
\]
Finally, for every $n \geq 2$, the resonance condition in \eqref{def:fRn} and the
inequalities $1<\rho_n(\tth)<n$ imply
\[
\kappa
\leq
\frac{\rho_n(\tth)-1}{n^2-\rho_n(\tth)}
<
\frac{n-1}{n^2-n}
=
\frac{1}{n}
\leq
\frac{1}{2},
\]
and
\[
b
\leq
\frac{\rho_n(\tth)-1}{n^4-\rho_n(\tth)}
<
\frac{n-1}{n^4-n}
=
\frac{1}{n(n^2+n+1)}
\leq
\frac{1}{14}.
\]
Consequently, every Wilton-type resonance is confined to the region
\[
0 \leq \kappa < \frac{1}{2},
\qquad
0 < b < \frac{1}{14}.
\]
In particular,
\begin{align} \label{bc kc}
b \geq b_*:=\frac{1}{14}
\quad\text{or}\quad
\kappa \geq \kappa_*:=\frac{1}{2}
\qquad\text{implies}\qquad
(\tth,\kappa,b)\notin \mathfrak{R}
\end{align}
for every finite depth $\tth>0$.

\paragraph{Linearization.}
We linearize system
\eqref{in the reference frame} at the Stokes waves $(\eta_\varepsilon(x),\psi_\varepsilon(x))$ given in Theorem \ref{Thm: Stokes expansion} and evaluate $c$ at $c_\varepsilon$. 
Using the shape derivative formula \cite[Lannes]{LD, LD book} 
$
\mathrm{d}_\eta G(\eta)[\hat \eta]\psi = - G(\eta)(B\hat \eta) - \pa_x( V \hat \eta), 
$
where
the functions $(V(x),B(x))$ are the horizontal and vertical components of the velocity field $(\Psi_x,\Psi_y)$ at the free surface and are given by
\begin{align} \label{espV}
   V:= V(x)&:=-B(\eta_\varepsilon)_x+(\psi_\varepsilon)_x,\\ \label{espB}
  B:=  B(x)&:=\frac{G(\eta_\varepsilon)\psi_\varepsilon+(\psi_\varepsilon)_x(\eta_\varepsilon)_x}{1+(\eta_\varepsilon)^2_x}=\frac{(\psi_\varepsilon)_x-c_\varepsilon}{1+(\eta_\varepsilon)^2_x} (\eta_\varepsilon)_x,
\end{align}
one obtains the real, autonomous linearized  system
\small
\begin{equation} \label{first linear eq}
    \begin{aligned}
        \begin{bmatrix}
            \hat{\eta}_t\\
            \hat{\psi}_t
        \end{bmatrix}=
        \left[\begin{array}{c|c} 
	 -G(\eta_\varepsilon)B-\partial_x\circ(V-c_\varepsilon) & G(\eta_\varepsilon)  \\ 
	\hline 
	-1+B(V-c_\varepsilon)\partial_x-B\partial_x\circ(V-c_\varepsilon)-BG(\eta_\varepsilon)\circ B+\kappa\mathrm{d}\sigma(\eta_\e)-b\mathrm{d}F(\eta_\e) & -(V-c_\varepsilon)\partial_x+B G(\eta_\varepsilon)
\end{array}\right] 
        \begin{bmatrix}
            \hat{\eta}\\
            \hat{\psi}
        \end{bmatrix},
    \end{aligned}
\end{equation}
\normalsize
where
\begin{align} \label{esp l}
   F(\eta):=\partial^2_s \sigma(\eta)+\frac{1}{2}\sigma(\eta)^3,\qquad \mathrm{d} F(\eta_\e)=\pa^4_x+\e^2\left(F^{[1]}_2\pa_x+F^{[2]}_2\pa^2_x+F^{[3]}_2\pa^3_x+F^{[4]}_2\pa^4_x\right)+\cO(\e^3).
\end{align}
and
\begin{equation} \label{F12 F22 F32 F42}
\begin{aligned}
    &F^{[1]}_2(x):=-5(\eta_1)_x(\eta_1)_{xxxx}-10(\eta_1)_{xx}(\eta_1)_{xxx}=\frac{15}{2}\sin(2x),\\
    &F^{[2]}_2(x):=-\frac{15}{2}(\eta_1)^2_{xx}-10(\eta_1)_x(\eta_1)_{xxx}=\frac{5}{4}-\frac{35}{4}\cos(2x),\\
    &F^{[3]}_2(x):=-10(\eta_1)_x(\eta_1)_{xx}=-5\sin(2x),\qquad F^{[4]}_2(x):=-\frac{5}{2}(\eta_1)^2_{x}=\frac{5}{4}\cos(2x)-\frac{5}{4}.
\end{aligned}
\end{equation}
We remark that the operator $\mathrm{d} F$ is symmetric. In fact, from \eqref{defL2gra} we know
\[
\de \mathcal{E}_b(\eta)[\tilde{\eta}] = \int_\T F(\eta)  \, \tilde{\eta} \, \de x \qquad\text{where}\qquad \mathcal{E}_b(\eta):=\frac{1}{2}\int_{\mathbb{T}} \sigma(\eta)^2\sqrt{1+\eta^2_x} \,\de x \quad (cf. \,\eqref{eq:surface-energy}).   
\]
Taking another differential w.r.t. $\eta$ we obtain
\[
\de^2 \mathcal{E}_b(\eta)[\tilde{\eta},\hat{\eta}] = \int_\T \de F(\eta)[\hat{\eta}]  \, \tilde{\eta} \, \de x.
\]
As a result, 
\[
\int_\T \de F(\eta)[\hat{\eta}]  \, \tilde{\eta} \, \de x = \de^2 \mathcal{E}_b(\eta)[\tilde{\eta},\hat{\eta}] = \de^2 \mathcal{E}_b(\eta)[\hat{\eta},\tilde{\eta}] = \int_\T \de F(\eta)[\tilde{\eta}]  \, \hat{\eta} \, \de x.
\]

Similarly, the linearized curvature operator $\mathrm{d}\sigma(\eta_\varepsilon)[\hat{\eta}] = \partial_x \left( (1+(\eta_\varepsilon)_x^2)^{-3/2} \hat{\eta}_x \right)$ expands as
\begin{align} \label{dSigma expansion}
    \mathrm{d}\sigma(\eta_\varepsilon) = \partial_x^2 + \varepsilon^2 \left( \Sigma_2^{[1]} \partial_x + \Sigma_2^{[2]} \partial_x^2 \right) + \mathcal{O}(\varepsilon^3),
\end{align}
where the coefficients are given by
\begin{equation} \label{Sigma coefficients}
\begin{aligned}
    \Sigma_2^{[1]}(x) := -3(\eta_1)_x(\eta_1)_{xx} = -\frac{3}{2}\sin(2x), \quad \Sigma_2^{[2]}(x) := -\frac{3}{2}(\eta_1)_x^2 = -\frac{3}{4} + \frac{3}{4}\cos(2x).
\end{aligned}
\end{equation}
As before, $\de \sigma$ is symmetric. The map  $\e \to (V, B)$ is analytic as a map $B(\e_0) \to H^{\alpha, m-1}_{\mathtt{ev}}(\T) \times H^{\alpha, m-1}_{\mathtt{odd}}(\T)$. 
The real system \eqref{first linear eq} is Hamiltonian, i.e. of the form
$\cJ \cA$ with $\cA$ symmetric with respect to the real scalar product of $L^2(\T, \R^2) = L^2(\T, \R) \times L^2(\T, \R)$.
Moreover the linear operator in \eqref{first linear eq} is reversible, namely it anti-commutes with the involution $\rho$ in \eqref{rho involution}. 

Next, we conjugate \eqref{first linear eq} by using the time-independent ``good unknown of Alinhac'' linear transformation 
\begin{align}\label{alinhac}
    \begin{bmatrix}
        \hat{\eta}\\
        \hat{\psi}
    \end{bmatrix}:= Z
    \begin{bmatrix}
        u\\
        v
    \end{bmatrix},~~Z=\begin{bmatrix}
        1&0\\
        B&1
    \end{bmatrix},
    ~~Z^{-1}=\begin{bmatrix}
        1&0\\
        -B&1
    \end{bmatrix},
\end{align}
yielding the linear system 
\begin{align} \label{second linear eq}
    \begin{bmatrix}
        u_t\\
        v_t
    \end{bmatrix}=  \widetilde{\cL}_\e 
    \begin{bmatrix}
        u\\
        v
    \end{bmatrix} \ , \qquad 
    \widetilde{\cL}_\e := \begin{bmatrix}
        -\partial_x\circ (V-c_\varepsilon)& G(\eta_\varepsilon)\\
        -1-(V-c_\varepsilon)B_x+\kappa\,\mathrm{d}\sigma(\eta_\e)-b\,\mathrm{d}F(\eta_\e) ~~& -(V-c_\varepsilon)\partial_x
    \end{bmatrix} \ , 
\end{align}
which  is Hamiltonian and reversible since the transformation $Z$ is symplectic, $\mathrm{i.e.}$ $Z^T\mathcal{J} Z=\mathcal{J}$, and satisfies $Z\circ \rho=\rho\circ Z$.

Next, 
we perform a conformal change of variables to flatten 
the water surface. 
By \cite[Appendix A]{BBHM}, 
 there exists a diffeomorphism of $\mathbb{T}$,
 $ x\mapsto x+\mathfrak{p}(x)$, with a small $2\pi$-periodic  function $\mathfrak{p}(x)$, 
 and a small constant $\ttf $, such that, by defining the associated composition operator $ (\mathfrak{P}u)(x) := u(x+\mathfrak{p}(x))$, the Dirichlet--Neumann operator writes as \cite[Lemma A.5]{BBHM}
\begin{equation}\label{Gneta}
 G(\eta_\e) = \pa_x \circ \mathfrak{P}^{-1} \circ {\mathfrak H} \circ
 \tanh\big((\tth+\ttf)|D| \big)
 \circ \mathfrak{P} \,,\quad\text{where}\quad D:=\frac{1}{\im} \,\pa_x. 
\end{equation}
where $ {\mathfrak H} $ is the Hilbert transform, i.e. the  Fourier multiplier operator
$$
 \mathfrak{H}(e^{\im j x}):= \frac{1}{\im}\, \textup{sign}(j) e^{\im j x} \, , 
 \quad  \forall j \in \Z \setminus \{0\} \, , 
 \quad \mathfrak{H}(1) := 0 \, . 
$$
The function $\mathfrak p(x)$ and the constant $\ttf $ are  determined as a fixed point  of 
(see  \cite[formula (A.15)]{BBHM})
\begin{equation} \label{def:ttf}
\mathfrak{p}  =  \frac{\mathfrak{H}}{\tanh \big((\tth + \ttf)|D| \big)}[\eta_\e ( x + \mathfrak{p}(x))] \, , 
 \qquad 
 \ttf:= \frac{1}{2\pi} \int_\T \eta_\e (x +  \mathfrak{p}(x)) \de x \, . 
  \end{equation}
  As proved in \cite{BMV3}, the map $\e \to (\mathfrak{p}, \ttf)$ is analytic as a map $B(\e_0) \to H^m_{\mathtt{odd}}(\T) \times \R$.
In addition, in  Appendix \ref{sec:App2 2} we prove the expansion
\begin{equation}
 \label{expfe}
 \begin{aligned}
&  \mathfrak p(x)  = \e \ch^{-2} \sin(x) +\e^2 \frac{\left(\ch^4+1\right)\,\left(b\,\ch^4-3\kappa -27 b+\ch^4 \kappa +\ch^4+3\right)}{8\ch^4\,r_{\tth,\kappa,b}}\sin(2x)+\cO(\e^3) \, ,\\  
&   \ttf =
   \e^2 \frac{\ckb^2(\ch^4-1)-2}{4\ch^2} +
   \cO(\e^3) \, .
   \end{aligned}
 \end{equation}
Under the symplectic and reversibility-preserving change of variables 
\begin{align}\label{LC}
    h=\mathcal{P}_\e\begin{bmatrix}
        u\\
        v
\end{bmatrix},~~\mathcal{P}_\e=\begin{bmatrix}
    (1+\mathfrak{p}_x)\mathfrak{P} & 0\\
    0 & \mathfrak{P}
\end{bmatrix} \ , 
\end{align}
one  transforms the system \eqref{second linear eq} into the linear system $h_t=L_\varepsilon h$ where $L_\varepsilon$ is the Hamiltonian and reversible real operator
\begin{equation} \label{mathcal L e}
\begin{aligned}
    L_\varepsilon:=& L_{\varepsilon}(\tth,\kappa,b) :=\mathcal{P}_\e \widetilde{\cL}_\e \mathcal{P}_\e^{-1} \\=& \begin{bmatrix}
\partial_x\circ(c_0+q_\varepsilon(x)) &  |D| \tanh((\tth + \ttf)|D|)\\
        -(1+a_\varepsilon(x))+\kappa\tau_{\e}-b\,\beta_\e & ~~(c_0+q_\varepsilon(x))\partial_x
    \end{bmatrix}\\
    =&\begin{bmatrix}
        0& \mathrm{Id}\\
        -\mathrm{Id}& 0
    \end{bmatrix}\underbrace{\begin{bmatrix}
 (1+a_\varepsilon(x))-\kappa\tau_{\e}+b\,\beta_\e& ~~-(c_0+q_\varepsilon(x))\partial_x\\
 \partial_x\circ(c_0+q_\varepsilon(x))& ~~|D| \tanh((\tth + \ttf)|D|)
    \end{bmatrix}}_{=: K_\e}, 
\end{aligned}    
\end{equation}
where the functions $q_\e(x)$ and $a_\e(x)$ are given by
\begin{equation}\label{def:pa}
c_0+q_\e(x) :=  \displaystyle{\frac{ c_\e-V(x+\mathfrak{p}(x))}{ 1+\mathfrak{p}_x(x)}} \, , \quad 1+a_\e(x):=   \displaystyle{\frac{1+ (V(x + \mathfrak{p}(x)) - c_\e)
 B_x(x + \mathfrak{p}(x))  }{1+\mathfrak{p}_x(x)}} \,, 
\end{equation}
and the operators $\beta_\e:=\mathfrak{P}\circ\mathrm dF(\eta_\e)\circ\mathfrak{P}^{-1}\circ\frac{1}{1+\mathfrak{p}_x}$ and $\tau_\e:=\mathfrak{P}\circ\mathrm d\sigma(\eta_\e)\circ\mathfrak{P}^{-1}\circ\frac{1}{1+\mathfrak{p}_x}$ are given by
\begin{equation} \label{def:Sigma g}
\begin{aligned}
     \beta_\e =&\frac{1}{1+\mathfrak{p}_x}\left(\widetilde{\pa_x}\right)^4+\e^2 \sum_{j=1}^4 \frac{F^{[j]}_2(x+\mathfrak{p}(x))}{1+\mathfrak{p}_x}\left(\widetilde{\pa_x}\right)^j+\cO(\e^3)\,,
\end{aligned}    
\end{equation}
and
\begin{equation} \label{tau operator}
\begin{aligned}
    \tau_\varepsilon &= \frac{1}{1+\mathfrak{p}_x} \left(\widetilde{\pa_x}\right)^2 + \varepsilon^2 \sum_{j=1}^2 \frac{\Sigma_2^{[j]}(x+\mathfrak{p}(x))}{1+\mathfrak{p}_x} \left(\widetilde{\pa_x}\right)^j + \mathcal{O}(\varepsilon^3)\,,
\end{aligned}
\end{equation}
where $\widetilde{\pa_x}:=\pa_x\circ\frac{1}{1+\mathfrak{p}_x}$. The operators $\beta_\varepsilon$ and $\tau_\varepsilon$ are also symmetric. To see this we apply a change of variable to obtain 
\[
\int_\T \mathfrak{P} f(y)  g(y) \de y = \int_\T f(z)  \mathfrak{P}^{-1}\left[\frac{g}{1 + \mathfrak{p}_x} \right](z) \de y.
\]
By the analyticity result of the map $\e \mapsto (V, B)$ given above,  the map
$\e \to (q_\e, a_\e)$ is analytic as a map $B(\e_0) \to H^{m - 1}_{\mathtt{ev}}(\T)\times H^{m - 2}_{\mathtt{ev}}(\T)$.
In Appendix \ref{sec:App2 2}, we provide their Taylor expansions, that we collect in the following lemmas.

\begin{lemma}\label{lem:pa.exp}
The analytic functions $q_\e (x) $ and $a_\e (x) $  in \eqref{def:pa} 
are even in $ x $, and
\begin{equation}\label{SN1}
q_\e (x)  
= \e q_1 (x) + \e^2 q_2 (x)  + \cO(\e^3) \, , \qquad   
a_\e (x)  
= \e a_1(x) +\e^2 a_2 (x) + \cO(\e^3) \, , 
\end{equation}
where
\begin{align}\label{pino1fd}
    & q_1(x)  =   q_1^{[1]} \cos(x)\, , \qquad \quad q_1^{[1]} := -2\ch^{-1}\ckb,\\
 \label{pino2fd}
    & q_2(x) =q_2^{[0]}+q_2^{[2]}\cos(2x),\, q_2^{[0]} := c_{2}-\frac{\left(\ch^4-3\right)\ckb}{2\ch^3},\, q_2^{[2]} :=\frac{\ckb \left(14 b \ch^4+2\ch^4\kappa-18 b-6\kappa-\ch^4-3\right)}{2\ch^3 \rhkb},
\end{align}
and 
\begin{align} 
& a_1(x) \label{aino1fd} = a_1^{[1]}\cos(x)\, , \qquad \qquad 
a_1^{[1]}:= -(\ch^{-2}+c_0^2)\, , \\
& a_2(x) \label{aino2fd} = a_2^{[0]}+a_2^{[2]}\cos(2x)\, ,\quad  \, a_2^{[0]}:=\frac{3\ckb^2\ch^4+1}{2\ch^4}, \\
\label{a[2]2}
& a_2^{[2]} :=\frac{6\ch^4\left(4 b-1\right)+(22\ch^4-72 b\ch^4-\ch^8-3)\ckb^2+(10\ch^4\left(\ch^4-3\right))\ckb^4}{4\ch^4\rhkb}.
\end{align}  
\end{lemma}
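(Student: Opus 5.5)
The plan is a direct Taylor expansion of the explicit formulas \eqref{def:pa} in powers of $\e$. The needed inputs are the Stokes expansion \eqref{exp:Sto}, including the second-order coefficients $\eta_2^{[0]},\eta_2^{[2]},\psi_2^{[2]},c_2$ obtained in Appendix~\ref{sec:App2}, and the expansion \eqref{expfe} of $\mathfrak p$ and $\ttf$.

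\emph{Parity.} Since $\eta_\e$ is even and $\psi_\e$ is odd, \eqref{espB} shows that $B$ is odd, and then $V=-B(\eta_\e)_x+(\psi_\e)_x$ is even. By \eqref{def:ttf}, $\mathfrak p$ is odd, so $\mathfrak p_x$ is even. It follows that $V(x+\mathfrak p(x))$ is even, $B_x(x+\mathfrak p(x))$ is even, and $1+\mathfrak p_x$ is even. Hence $q_\e$ and $a_\e$ are even. Analyticity in $\e$ follows from the analyticity of $\e\mapsto(V,B,\mathfrak p,c_\e)$ already recalled, since compositions and quotients by $1+\mathfrak p_x$, which is close to $1$, preserve it.

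\emph{Expansion of $V$ and $B$.} Write $\eta_1=\cos x$ and $\psi_1=\ch^{-1}\ckb\sin x$. From \eqref{espB},
\[
B=\bigl((\psi_\e)_x-c_\e\bigr)(\eta_\e)_x\bigl(1-(\eta_\e)_x^2+\dots\bigr).
\]
This gives $B_1=c_0\sin x$ at order $\e$, and at order $\e^2$ the terms $(\psi_1)_x(\eta_1)_x-c_0(\eta_2)_x$. Then $V=(\psi_\e)_x-B(\eta_\e)_x$ gives $V_1=\ch^{-1}\ckb\cos x$ and $V_2=(\psi_2)_x-B_1(\eta_1)_x$.

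\emph{Composition with the diffeomorphism.} Use
\[
f(x+\mathfrak p)=f+\mathfrak p f_x+\tfrac12\mathfrak p^2 f_{xx}+\dots,
\qquad
(1+\mathfrak p_x)^{-1}=1-\mathfrak p_x+\mathfrak p_x^2-\dots,
\]
with $\mathfrak p_1=\ch^{-2}\sin x$ and $\mathfrak p_2$ from \eqref{expfe}. Then collect terms. At order $\e$,
\[
q_1=-V_1-c_0\mathfrak p_{1,x}=-\ch^{-1}\ckb\cos x-\ch\ckb\,\ch^{-2}\cos x=-2\ch^{-1}\ckb\cos x,
\]
which matches \eqref{pino1fd}. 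Similarly,
\[
a_1=-c_0B_{1,x}-\mathfrak p_{1,x}=-(c_0^2+\ch^{-2})\cos x,
\]
using $c_0\ckb=\ckb^2\ch$ and $c_0=\ckb\ch$, which matches \eqref{aino1fd}. At order $\e^2$ one collects, for each of $q$ and $a$, the constant and $\cos(2x)$ parts of:
\begin{itemize}
\item the quadratic Stokes corrections ($\psi_2$, $\eta_2$, $c_2$);
\item the cross terms $\mathfrak p_1 V_{1,x}$ and $\mathfrak p_{1,x}V_1$;
\item $\mathfrak p_{1,x}^2$;
\item $\mathfrak p_{2,x}$.
\end{itemize}
The coefficient $c_2$ enters $q_2^{[0]}$ linearly and is kept symbolic, as in \eqref{pino2fd}.

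\emph{Main obstacle.} The hard part is the second-order algebra producing $q_2^{[2]}$ and $a_2^{[2]}$. Both $\psi_2^{[2]}$, $\eta_2^{[2]}$ and $\mathfrak p_2$ carry the denominator $\rhkb$ coming from the second-harmonic resonance, and the bending contributions must combine correctly. I would first express all of them over the common denominator $8\ch^4\rhkb$. I would then simplify using the identities $c_0^2=\ckb^2\ch^2$ and $\ckb^2=1+\kappa+b$, which rewrite $\kappa$-dependence in terms of $\ckb$ and $b$ and so give the mixed form of \eqref{a[2]2}.

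As a check, I would verify that setting $b=0$ reproduces the gravity--capillary formulas of \cite{HM2025}, and that setting $\kappa=b=0$ reproduces those of \cite{BMV3}. This should be done with computer algebra. No new analytic difficulty arises, because the only invertibility used, that of $\rhkb$, is ensured by $(\tth,\kappa,b)\notin\fR$.
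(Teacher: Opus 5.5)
Your proposal follows the paper's proof: expand $B$ and $V$ from the Stokes coefficients, Taylor-expand the composition with $x+\mathfrak p(x)$ and the factor $(1+\mathfrak p_x)^{-1}$ in \eqref{def:pa}, and collect the constant and $\cos(2x)$ modes. Your $B_1,B_2,V_1,V_2,q_1,a_1$ and the parity argument are correct. One bookkeeping correction: your list of second-order contributions is the list for $q_2$ only, and it is not complete for $a_2$. Writing $1+a_\e=(1+\mathfrak p_x)^{-1}-(c_0+q_\e)B_x(x+\mathfrak p)$, as the paper does, you must also include $-c_0\mathfrak p_1 B_{1,xx}$, $V_1B_{1,x}$ and $c_0\mathfrak p_{1,x}B_{1,x}$, alongside $-c_0B_{2,x}$, $\mathfrak p_{1,x}^2$ and $-\mathfrak p_{2,x}$. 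These three extra terms supply the entire $\tfrac32\ckb^2$ in $a_2^{[0]}$, so omitting them would give the wrong constant.
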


\begin{lemma} \label{tau}
    The symmetric operator $\tau_\varepsilon$, defined in \eqref{tau operator}, admits the expansion
 \begin{align} \label{taueps}
    \tau_\varepsilon = \partial_x^2 + \varepsilon \tau_1 + \varepsilon^2 \tau_2 + \mathcal{O}(\varepsilon^3),
\end{align}
where
\begin{align} \label{tauj}
     \tau_j :=& \tau_{j,0}(x) + \tau_{j,1}(x)\partial_x + \tau_{j,2}(x)\partial_x^2, \qquad j = 1, 2,
\end{align}
with coefficients given by
\begin{align} \label{tau10}
    \tau_{1,0} &= \tau_{1,0}^{[1]}\cos(x), \qquad \tau_{1,0}^{[1]} := \ch^{-2},  \\
    \tau_{1,1} &= \tau_{1,1}^{[1]}\sin(x), \qquad \tau_{1,1}^{[1]} := 3\ch^{-2}, \label{tau11} \\
    \tau_{1,2} &= \tau_{1,2}^{[1]}\cos(x), \qquad \tau_{1,2}^{[1]} := -3\ch^{-2}, \label{tau12}
\end{align}
and at second order:
\begin{align}
    &\tau_{2,0} = \tau_{2,0}^{[0]} + \tau_{2,0}^{[2]}\cos(2x), \qquad \tau_{2,0}^{[0]} := -\frac{1}{2}\ch^{-4}, \\
    &\tau_{2,0}^{[2]} := \frac{51 b+15 \kappa -59 b\ch^4+2 b\ch^8-11\ch^4 \kappa +2\ch^8\kappa +\ch^4+2\ch^8+6}{2\ch^4\rhkb}, \label{tau20} \\
    &\tau_{2,1} = \tau_{2,1}^{[2]}\sin(2x), \qquad \tau_{2,1}^{[2]} := \frac{99 b + 27\kappa - 45 b\ch^4 - 9\ch^4\kappa + 9}{2\ch^4\rhkb}, \label{tau21} \\
    &\tau_{2,2} = \tau_{2,2}^{[0]} + \tau_{2,2}^{[2]}\cos(2x), \qquad \tau_{2,2}^{[0]} := \frac{12-3\ch^4}{4\ch^4}, \qquad \tau_{2,2}^{[2]} := \frac{45 b\ch^4 - 27\kappa - 99 b + 9\ch^4\kappa - 9}{4\ch^4\rhkb}. \label{tau22}
\end{align}
\end{lemma}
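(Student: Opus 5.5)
The expansion of $\tau_\e$ follows from writing out the definition \eqref{tau operator} with \eqref{dSigma expansion} and the expansion of $\mathfrak p$ in \eqref{expfe}. The plan is to expand everything to second order in $\e$ and collect coefficients of $\partial_x^0,\partial_x^1,\partial_x^2$.

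\textbf{Step 1: expand the ingredients.} Write $\mathfrak p=\e\mathfrak p_1+\e^2\mathfrak p_2+\cO(\e^3)$, with $\mathfrak p_1=\ch^{-2}\sin x$ and $\mathfrak p_2=p_2^{[2]}\sin 2x$, where $p_2^{[2]}$ is the coefficient in \eqref{expfe}. Then
\[
\frac{1}{1+\mathfrak p_x}=1-\e\mathfrak p_1'+\e^2\bigl((\mathfrak p_1')^2-\mathfrak p_2'\bigr)+\cO(\e^3),
\]
with $\mathfrak p_1'=\ch^{-2}\cos x$ and $\mathfrak p_2'=2p_2^{[2]}\cos 2x$. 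Since $\Sigma_2^{[j]}$ already carries a factor $\e^2$, in the second summand of \eqref{tau operator} we may replace $x+\mathfrak p$ by $x$, $1/(1+\mathfrak p_x)$ by $1$ and $\widetilde{\pa_x}$ by $\pa_x$, up to $\cO(\e^3)$. That summand therefore contributes exactly $\Sigma_2^{[1]}\pa_x+\Sigma_2^{[2]}\pa_x^2$ to $\e^2\tau_2$.

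\textbf{Step 2: expand the leading term.} Set $w:=1/(1+\mathfrak p_x)$. The leading term is $w\,\pa_x w\,\pa_x w$, where multiplication operators act on everything to their right. Using Leibniz,
\[
w\pa_x w\pa_x w=w^3\pa_x^2+\bigl(w(w^2)'+w^2w'\bigr)\pa_x+w(ww')',
\]
that is,
\[
w\pa_x w\pa_x w=w^3\pa_x^2+3w^2w'\pa_x+w\bigl((w')^2+ww''\bigr).
\]
At order $\e$ this gives $\tau_{1,2}=-3\mathfrak p_1'$, $\tau_{1,1}=-3\mathfrak p_1''$ and $\tau_{1,0}=-\mathfrak p_1'''$. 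Substituting $\mathfrak p_1=\ch^{-2}\sin x$ yields $-3\ch^{-2}\cos x$, $3\ch^{-2}\sin x$ and $\ch^{-2}\cos x$, which are \eqref{tau10}--\eqref{tau12}.

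At order $\e^2$, insert $w=1+\e w_1+\e^2w_2$ with $w_1=-\mathfrak p_1'$ and $w_2=(\mathfrak p_1')^2-\mathfrak p_2'$. The coefficients are:
\begin{itemize}[label={}]
\item $\pa_x^2$: $3w_2+3w_1^2$;
\item $\pa_x$: $3w_2'+6w_1w_1'$;
\item $\pa_x^0$: $w_2''+w_1w_1''+(w_1')^2+w_1w_1''$.
\end{itemize}
Reduce each product of trigonometric functions to the modes $1,\cos 2x,\sin 2x$, then add $\Sigma_2^{[j]}$ from \eqref{Sigma coefficients}.

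As a check on the constant modes, the $\pa_x^2$ coefficient has constant part $3\cdot\frac12\ch^{-4}\cdot 2-\frac34$, which equals $\frac{12-3\ch^4}{4\ch^4}$ as stated. The $\pa_x^0$ coefficient has constant part $-\frac12\ch^{-4}$.

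\textbf{Step 3: the $\cos 2x$ and $\sin 2x$ coefficients.} These involve $p_2^{[2]}$, whose denominator contains $8\ch^4 r_{\tth,\kappa,b}$. Putting each coefficient over the common denominator $2\ch^4 r_{\tth,\kappa,b}$, or $4\ch^4 r_{\tth,\kappa,b}$ for $\tau_{2,2}^{[2]}$, should reproduce \eqref{tau20}--\eqref{tau22}. As a sanity check, symmetry of $\tau_\e$ forces $\tau_{2,1}=\tau_{2,2}'$, and the stated formulas satisfy $\tau_{2,1}^{[2]}=-2\tau_{2,2}^{[2]}$.

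\textbf{Main obstacle and justification.} The main obstacle is the rational-function bookkeeping in Step 3, in particular the numerator of $\tau_{2,0}^{[2]}$. I would carry it out symbolically, using $\ckb^2=1+\kappa+b$ and the definition of $r_{\tth,\kappa,b}$ in \eqref{rhkb}. The $\cO(\e^3)$ remainder, and its validity as an operator expansion between Sobolev spaces, follows from the analyticity of $\e\mapsto(\mathfrak p,\ttf)$ and of $\eta_\e$.
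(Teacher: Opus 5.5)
Your proposal is correct and follows essentially the same route as the paper, which also expands \eqref{tau operator} directly and records the same $\e^2$ coefficients you obtain, namely $\Sigma_2^{[2]}-3\mathfrak p_2'+6(\mathfrak p_1')^2$, $\Sigma_2^{[1]}-3\mathfrak p_2''+12\mathfrak p_1'\mathfrak p_1''$ and $3(\mathfrak p_1'')^2+4\mathfrak p_1'\mathfrak p_1'''-\mathfrak p_2'''$, before inserting $\mathfrak p_1,\mathfrak p_2,\Sigma_2^{[j]}$. Carrying out your Step 3 gives $\tau_{2,2}^{[2]}=\tfrac34+3\ch^{-4}-6p_2^{[2]}=-\tfrac12\tau_{2,1}^{[2]}$ and $\tau_{2,0}^{[2]}=8p_2^{[2]}-\tfrac72\ch^{-4}$, and these reduce to the stated rational expressions.
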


\begin{lemma} \label{beta}
    The symmetric operator $\beta_\varepsilon$, defined in \eqref{def:Sigma g}, admits the expansion
 \begin{align} \label{betaeps}
\beta_\e=\pa^4_x+\e\beta_1+\e^2\beta_2+\cO(\e^3),
\end{align}
where
\begin{align} \label{betaj}
    \beta_j:=&b_{j,0}(x)+b_{j,1}(x)\pa_x+b_{j,2}(x)\pa^2_x+b_{j,3}(x)\pa^3_x+b_{j,4}(x)\pa^4_x,\qquad j=1,2,
\end{align}
with coefficients given by
\begin{align} \label{b110}
    &b_{1,0}=b^{[1]}_{1,0}\cos(x),\qquad b^{[1]}_{1,0}:=-\ch^{-2},\\ \label{b111}
    &b_{1,1}=b^{[1]}_{1,1}\sin(x),\qquad  b^{[1]}_{1,1}:=-5\ch^{-2},\\ \label{b112}
    &b_{1,2}=b^{[1]}_{1,2}\cos(x),\qquad  b^{[1]}_{1,2}:=10\ch^{-2},\\ \label{b113}
    &b_{1,3}=b^{[1]}_{1,3}\sin(x),\qquad  b^{[1]}_{1,3}:=10\ch^{-2},\\ \label{b114}
    &b_{1,4}=b^{[1]}_{1,4}\cos(x),\qquad  b^{[1]}_{1,4}:=-5\ch^{-2},
\end{align}
and at second order:
\begin{align}
&b_{2,0}:=b_{2,0}^{[0]}+b_{2,0}^{[2]}\cos(2x),\qquad b^{[0]}_{2,0}:=\frac{1}{2} \ch^{-4}, \\ 
    & b^{[2]}_{2,0}:=\frac{239 b \ch^4-69\kappa-249 b-8 b\ch^8+47\ch^4\kappa-8\ch^8\kappa-\ch^4-8\ch^8-24}{2\ch^4 \rhkb},\\
&b_{2,1}:=b^{[2]}_{2,1}\sin(2x),\qquad b^{[2]}_{2,1}:=\frac{385 b \ch^4-210\kappa - 810 b - 5 b\ch^8+85\ch^4\kappa-5\ch^8\kappa + 10\ch^4 - 5\ch^8 - 60}{2\ch^4 \rhkb},\\
&b_{2,2}:=b^{[0]}_{2,2}+b^{[2]}_{2,2}\cos(2x),\qquad b^{[0]}_{2,2}:=\frac{5\,\ch^4-30}{4\,\ch^4}\,, \\
&b^{[2]}_{2,2}:=\frac{2070b+510\kappa-725 b \ch^4+5 b \ch^8-185\ch^4\kappa+5\ch^8\kappa-50\ch^4+5\ch^8+120}{4\ch^4\rhkb},\\
&b_{2,3}:=b^{[2]}_{2,3}\sin(2x),\qquad b^{[2]}_{2,3}:=\frac{315 b+75\kappa-85 b\ch^4-25\ch^4\kappa-10 \ch^4+15}{\ch^4\rhkb},\\ \label{b224}
&b_{2,4}:=b^{[0]}_{2,4}+b^{[2]}_{2,4}\cos(2x),\quad b^{[0]}_{2,4}:=-\frac{5\,\ch^4-30}{4\,\ch^4},\quad b^{[2]}_{2,4}:=\frac{85 b\ch^4-75\kappa-315 b+25\ch^4\kappa+10\ch^4-15}{4\ch^4\rhkb}.
\end{align}
\end{lemma}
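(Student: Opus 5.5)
The expansion of $\beta_\e$ comes from composing three explicit expansions. These are the expansion of $\mathrm dF(\eta_\e)$ in \eqref{esp l}--\eqref{F12 F22 F32 F42}, the expansion of $\mathfrak p$ in \eqref{expfe}, and the conjugation by $\mathfrak P$ implicit in $\beta_\e=\mathfrak P\circ \mathrm dF(\eta_\e)\circ\mathfrak P^{-1}\circ\frac{1}{1+\mathfrak p_x}$. Symmetry of $\beta_\e$ has already been shown, so only the Taylor coefficients are needed.

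First, I write $\mathfrak p=\e\mathfrak p_1+\e^2\mathfrak p_2+\cO(\e^3)$ with $\mathfrak p_1=\ch^{-2}\sin x$ and $\mathfrak p_2=\mathfrak p_2^{[2]}\sin(2x)$, and set $\frac{1}{1+\mathfrak p_x}=1-\e\mathfrak p_{1,x}+\e^2(\mathfrak p_{1,x}^2-\mathfrak p_{2,x})+\cO(\e^3)$. By \eqref{def:Sigma g}, the leading part is the operator $\frac{1}{1+\mathfrak p_x}(\widetilde{\pa_x})^4$ with $\widetilde{\pa_x}=\pa_x\circ\frac{1}{1+\mathfrak p_x}$. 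I expand $(\widetilde{\pa_x})^4$ by iterated Leibniz rule, moving every multiplication operator to the left of the derivatives. Writing $w:=\frac1{1+\mathfrak p_x}$, one has $\widetilde{\pa_x}=w\pa_x+w_x$. Then $(\widetilde{\pa_x})^4$ is a fourth-order operator whose coefficients are polynomials in $w$ and its derivatives up to order four, and its leading coefficient is $w^5$ after the outer multiplication by $w$. Collecting terms of order $\e$ produces $\beta_1$. The leading coefficient should be $-5\mathfrak p_{1,x}=-5\ch^{-2}\cos x$, which matches $b_{1,4}$. The lower coefficients come out as $10\mathfrak p_{1,xx}$-type terms and so on, and I check each against \eqref{b110}--\eqref{b114}. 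In particular $b_{1,0}=-\mathfrak p_{1,xxxx}=-\ch^{-2}\cos x$, which is consistent.

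At order $\e^2$ there are two contributions. The first is the quadratic part of the expansion of $w^5(\ldots)$, which involves $\mathfrak p_1^2$-type products and the linear term in $\mathfrak p_2$; the products generate the constant Fourier modes $b_{2,2}^{[0]},b_{2,4}^{[0]},b_{2,0}^{[0]}$. The second is the term $\e^2\sum_j F_2^{[j]}(x+\mathfrak p)w(\widetilde{\pa_x})^j$, which at this order reduces to $\sum_j F_2^{[j]}(x)\pa_x^j$, since the corrections from $\mathfrak p$ are $\cO(\e^3)$. Adding the two, projecting onto $1$, $\cos 2x$ and $\sin 2x$, and inserting $\mathfrak p_2^{[2]}$ from \eqref{expfe} gives the $\rhkb$ denominators in the $[2]$-coefficients. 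The parity pattern serves as a check: even coefficients multiply even-order derivatives and odd coefficients multiply odd ones, which follows from reversibility since $\mathfrak p$ is odd. Analyticity in $\e$ of the whole expansion follows from the analyticity of $\e\mapsto(\eta_\e,\mathfrak p)$ stated earlier, so the remainder is $\cO(\e^3)$ in operator norm from $H^{m}$ to $H^{m-4}$.

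The main obstacle is purely computational. It is the bookkeeping of the order-$\e^2$ terms in $(\widetilde{\pa_x})^4$, where many products such as $w_x w_{xxx}$ and $w_{xx}^2$ appear, together with the rational simplification involving $\mathfrak p_2^{[2]}$. I would organise the computation symbolically, writing $w=1+\e w_1+\e^2w_2$ and expanding $(w\pa_x+w_x)^4$ to second order. Two independent consistency checks are available: symmetry of $\beta_\e$ forces relations such as $b_{2,3}=2\pa_x b_{2,4}$ at the level of coefficients, and $b^{[0]}_{2,2}=-b^{[0]}_{2,4}$ appears as such a constraint. I would verify these relations before trusting the individual coefficients.
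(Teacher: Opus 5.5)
Your proposal is correct and takes the same route as the paper. The paper also expands $\frac{1}{1+\mathfrak p_x}(\widetilde{\pa_x})^4$ to second order, recording $\beta_1,\beta_2$ as explicit polynomials in derivatives of $\mathfrak p_1,\mathfrak p_2$; it then adds $\sum_{j}F^{[j]}_2(x)\pa_x^j$ and inserts $\mathfrak p_1=\ch^{-2}\sin x$ and $\mathfrak p_2$.

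There are two slips, neither of which affects the argument:
\begin{itemize}
\item You wrote $b_{1,0}=-\pa_x^4\mathfrak p_1$; it should be $b_{1,0}=-\pa_x^5\mathfrak p_1$, since $-\pa_x^4\mathfrak p_1=-\ch^{-2}\sin x$.
\item Symmetry of a fourth-order operator only forces $b_{2,3}=2\pa_x b_{2,4}$ and $b_{2,1}=\pa_x b_{2,2}-\pa_x^3 b_{2,4}$, so it cannot constrain constant Fourier modes. The identity $b^{[0]}_{2,2}=-b^{[0]}_{2,4}$ holds for a different reason: $\eta_1$ and $\mathfrak p_1$ are pure first harmonics, so $\int_{\mathbb{T}} f_x^2=\int_{\mathbb{T}} f_{xx}^2$ for both. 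Note also that these constant modes receive the $\pm\frac54$ contributions from $F^{[2]}_2$ and $F^{[4]}_2$, not only from the $\mathfrak p_1$-products.
\end{itemize}
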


\paragraph{Bloch--Floquet expansions.} 

In the following we regard $L_\e$ as an operator on $L^2(\R,\mathbb{C}^2)$ instead of on $L^2(\T,\mathbb{C}^2)$. To be more precise, we interpret the $D$ as the multiplier $\xi$ on the Fourier transform, namely the operator $m(D)$ is given by the following formula
\[
m(D) u := (m(\xi) \widehat{u}(\xi))^\vee,
\]
where $\widehat{u}$ denotes the Fourier transform of $u$ and $\vee$ denotes the inverse Fourier transform. The following Bloch transform gives an isometry between $L^2(\R,\mathbb{C}^2)$ and $L^2(Q,L^2(\T,\mathbb{C}^2)) \cong L^2(Q) \otimes L^2(\T,\mathbb{C}^2)$:
\begin{align}
    \label{Blocht}
    \mathcal{U} f(\mu,x) = \sqrt{2\pi} \sum_{k \in \Z} f(x + 2\pi k) e^{-\im \mu (2 \pi k + x)} = \frac{1}{\sqrt{2\pi}} \sum_{k \in \Z} \widehat{f}(k + \mu) e^{\im kx},
\end{align}
where $\mu \in Q := \left[-\frac{1}{2},\frac{1}{2}\right)$, $x \in \T$. Here the second equality in \eqref{Blocht} is given by Poisson summation formula first by smooth functions with compact support then by density (see the isometry below). In fact, we have 
\[
\int_Q \int_\T |\mathcal{U} f(\mu,x)|^2 \de \mu \de x = \frac{1}{2\pi} \int_Q \sum_{k \in \Z} |\widehat{f}(k + \mu)|^2 \de \mu  = \frac{1}{2\pi} \int_\R |\widehat{f}(\mu)|^2 \de \mu = \int_\R |f(x)|^2 \de x.
\]

Now for fixed $\mu \in Q$, we define the following operator $L_{\mu,\e}:=e^{-\im\mu x}L_\e e^{\im\mu x}$, namely
\begin{equation} \label{mathcal L mu e}
\begin{aligned}
    L_{\mu,\e}:=&\begin{bmatrix}
(\partial_x+\im\mu)\circ(c_0+q_\varepsilon(x)) &  |D+\mu| \tanh((\tth + \ttf)|D+\mu|)\\
        -(1+a_\varepsilon(x))+\kappa\tau_{\mu,\varepsilon}-b\beta_{\mu,\e} & ~~(c_0+q_\varepsilon(x))(\partial_x+\im\mu)
    \end{bmatrix}\\
    =&\underbrace{\begin{bmatrix}
        0& \operatorname{Id}\\
        -\operatorname{Id}& 0
    \end{bmatrix}}_{=:\mathcal{J}}\underbrace{\begin{bmatrix}
 (1+a_\varepsilon(x))-\kappa\tau_{\mu,\e}+b\beta_{\mu,\e}& ~~-(c_0+q_\varepsilon(x))(\partial_x+\im\mu)\\
 (\partial_x+\im\mu)\circ(c_0+q_\varepsilon(x))& ~~|D+\mu| \tanh((\tth + \ttf)|D+\mu|)
    \end{bmatrix}}_{=:\mathfrak{K}_{\mu,\e}},
\end{aligned}    
\end{equation}
where $\beta_{\mu,\e}:=e^{-\im\mu x}\beta_\e e^{\im\mu x}$ and $\tau_{\mu,\e}:=e^{-\im\mu x}\tau_\e e^{\im\mu x}$ are given by the symmetric operator (since $\beta_\e, \tau_\e$ are symmetric)
\begin{equation} \label{Sigma}
\begin{aligned}
    \beta_{\mu,\e}&=(\pa_x+\im \mu)^4+\e \sum_{j=0}^4 b_{1,j}(x)\left(\pa_x+\im\mu\right)^{j}+\e^2 \sum_{j=0}^4 b_{2,j}(x)\left(\pa_x+\im\mu\right)^{j}+\cO(\e^3)\,,\\
    \tau_{\mu,\e}&=(\pa_x+\im \mu)^2+\e \sum_{j=0}^2 \tau_{1,j}(x)\left(\pa_x+\im\mu\right)^{j}+\e^2 \sum_{j=0}^2 \tau_{2,j}(x)\left(\pa_x+\im\mu\right)^{j}+\cO(\e^3).
\end{aligned}
\end{equation}

In fact, if we regard $L_{\mu,\e}$ as an operator with domain $Y:=H^4(\mathbb{T},\mathbb{C})\times H^1(\mathbb{T},\mathbb{C})$ in the space $X:=L^2(\mathbb{T},\mathbb{C})\times L^2(\mathbb{T},\mathbb{C})$, equipped with the complex scalar product
\begin{align} \label{complex product}
    (f,g):=\frac{1}{2\pi}\int_0^{2\pi} \left(f_1\overline{g_1}+f_2\overline{g_2} \right)\,\mathrm{d}x,~~\forall~f=\vet{f_1}{f_2},~~g=\vet{g_1}{g_2}\in L^2(\mathbb{T},\mathbb{C}^2),
\end{align}
one may verify that $\mathfrak{K}_{\mu,\e}$ is self-adjoint. By definition of \eqref{Blocht} we can check that (noticing that the operator $L_\e$ in \eqref{mathcal L e} has $2\pi$-periodic coefficients and if $A=Op(a)$ is a pseudo-differential operator with symbol $a(x,\xi)$, which is $2\pi$-periodic in $x$, then $A_\mu:=e^{-\im\mu x}A e^{\im\mu x}=Op(a(x,\xi+\mu))$)
\[
\mathcal{U}(K_\e f)(\mu, \cdot) = \mathfrak{K}_{\mu,\e} \, \mathcal{U}f(\mu, \cdot).
\]
In other words, if we define $\mathfrak{K}_{\e}$ on $L^2(Q,L^2(\T,\mathbb{C}^2))$ by
\[
(\mathfrak{K}_{\e}f) (\mu)  = \mathfrak{K}_{\mu,\e} (f(\mu)),
\]
we have $K_\e = \mathcal{U}^{-1} \mathfrak{K}_{\e} \mathcal{U}$. Now we can apply the Bloch--Floquet theory  \cite[(d) of Theorem XIII.85]{RS78}. Note that our family is continuous w.r.t. parameter $\mu$, we actually get
\[
\sigma_{L^2(\R,\mathbb{C}^2)}(K_\e) = \bigcup_{\mu\in[-\frac{1}{2},\frac{1}{2})} \sigma_{L^2(\mathbb{T},\mathbb{C}^2)}(\mathfrak{K}_{\mu,\e}),
\]
which in turn implies 
\begin{align*}
    \sigma_{L^2(\mathbb{R},\mathbb{C}^2)}(L_\e)=\bigcup_{\mu\in[-\frac{1}{2},\frac{1}{2})} \sigma_{L^2(\mathbb{T},\mathbb{C}^2)}(L_{\mu,\e}).
\end{align*}



We pause to make some remarks that: (i) If $A$ is a real operator and  $A_\mu:=e^{-\im\mu x}A e^{\im\mu x}$, then $\overline{A_\mu}=A_{-\mu}$. As a consequence the spectrum $\sigma(A_{-\mu})=\overline{\sigma(A_\mu)}$ and we can study $\sigma(A_\mu)$ just for $\mu\ge0$. (ii) $\sigma(A_\mu)$ is further a $1$-periodic set with respect to $\mu$, so one can restrict to $\mu\in[0,\frac{1}{2})$.

The complex operator $L_{\mu,\e}$ in \eqref{mathcal L mu e} is complex Hamiltonian and reversible. Recall that if $L : Y \to X$ is a complex linear operator, we say that it is 
\begin{itemize}
    \item \textbf{Complex Hamiltonian:} if there exists a self-adjoint operator, namely $\mathcal{K}=\mathcal{K}^*$, where $\mathcal{K}^*$ (with domain $Y$) is the adjoint with respect to the complex scalar product \eqref{complex product} such that $L = \mathcal{J} \mathcal{K}$.

    \item \textbf{Reversible:} if 
    \begin{align} \label{reversible}
    L \circ \overline{\rho} = -\overline{\rho} \circ L, \quad \text{where} \quad 
    \overline{\rho} \begin{bmatrix} \eta(x) \\ \psi(x) \end{bmatrix} := 
    \begin{bmatrix} \overline{\eta}(-x) \\ -\overline{\psi}(-x) \end{bmatrix} \ . 
    \end{align}
\end{itemize}

The property \eqref{reversible} for $L_{\mu,\e}$ follows because $L_\e$ is a real operator which is reversible with respect to the involution $\rho$ in \eqref{rho involution}. Equivalently, since $\mathcal{J}\circ \overline{\rho}=-\overline{\rho}\circ \mathcal{J}$, the self-adjoint operator $\mathfrak{K}_{\mu,\e}$ is reversibility-preserving, $\mathrm{i.e.}$
\begin{align}\label{B rho=rho B}
    \mathfrak{K}_{\mu,\e}\circ\overline{\rho}=\overline{\rho}\circ \mathfrak{K}_{\mu,\e}.
\end{align}
In addition $(\mu,\e)\rightarrow L_{\mu,\e}\in \mathcal{L}( Y,X)$ is analytic, since the functions $\e \mapsto a_\e, ~q_\e$ and $\beta_\e$ defined in \eqref{SN1}, \eqref{betaeps} are analytic and $ L_{\mu,\varepsilon}$ is analytic with respect to $\mu$, since, for any $\mu\in[-\frac{1}{2},\frac{1}{2})$,
\begin{align} \label{DtanhD}
    |D+\mu|\tanh\Big((\tth+\ttf)|D+\mu|\Big)=(D+\mu)\tanh\Big((\tth+\ttf)(D+\mu)\Big).
\end{align}
Recall also the identity \cite[Section 5.1]{NS}
\begin{align} \label{D+mu}
    |D+\mu|=|D|+\mu(\mathrm{sgn}(D)+\Pi_0),~~{\forall \mu\in\left[0,\frac{1}{2}\right)},
\end{align}
where $\mathrm{sgn}(D)$ is the Fourier multiplier operator, acting on $2\pi$-periodic functions, with symbol
\begin{equation} \label{sgn D}
\mathrm{sgn}(k):=1 \ \ \forall k >0 \ , \ \
\mathrm{sgn}(0):=0  , \ \ \ 
\mathrm{sgn}(k):= -1 \ \ \forall k < 0 \  , 
\end{equation}
and $\Pi_0$ is the projector operator on the zero mode, $\Pi_0 f(x):=\frac{1}{2\pi}\int_\mathbb{T} f(x) dx$.

\smallskip 

Our goal is to prove the existence of eigenvalues of $ L_{\mu,\e}$ in \eqref{mathcal L mu e} with nonzero real part. We remark that the Hamiltonian structure of $ L_{\mu,\e}$ implies that eigenvalues with nonzero real part may arise only from multiple eigenvalues of $ L_{\mu,0}$ (``Krein criterion''), because if $\lambda$ is an eigenvalue of $ L_{\mu,\e}$ then also $-\overline{\lambda}$ is, and the total algebraic multiplicity of the eigenvalues is conserved under small perturbation. We now describe the spectrum of $ L_{\mu,0}$.

\paragraph{The spectrum of $ L_{\mu,0}$.} The spectrum of the Fourier multiplier matrix operator 
\begin{equation} \label{mathcal L mu 0}
\begin{aligned}
 L_{\mu,0}:=&\begin{bmatrix}
c_0(\partial_x+\im\mu)&  |D+\mu| \tanh(\tth|D+\mu|)\\
        -1+\kappa (\pa_x+\im\mu)^2-b (\pa_x + \im \mu)^4 & ~~c_0(\partial_x+\im\mu)
    \end{bmatrix}
\end{aligned}    
\end{equation}
consists of the purely imaginary eigenvalues $\{\lambda^{\pm}_k(\mu) : \,~k\in\mathbb{Z}\}$, where
\begin{align} \label{eigenvalues of Lmu0}
    \lambda^{\pm}_k(\mu):=\im\left(c_0(\pm k+\mu)\mp \sqrt{\left(1+\kappa(k\pm\mu)^2+b(k\pm\mu)^4\right)\, |k\pm \mu|\tanh(\tth| k\pm \mu|)}\right) \ . 
\end{align}
For $(\tth,\kappa,b) \not\in \mathfrak{R}$ (cf. \eqref{def:fR}), at $\mu=0$ the real operator $ L_{0,0}$ possesses the eigenvalue $0$ with algebraic multiplicity four, 
\begin{align*}
    \lambda^+_0(0)=\lambda^-_0(0)=\lambda^+_1(0)=\lambda^-_1(0)=0,
\end{align*}
and geometric multiplicity $3$. A real basis of the kernel of $ L_{0,0}$ is 
\begin{align} \label{eigenfunc of mathcall L00}
    \phi^+_1:=\vet{(\ch/\ckb)^{\frac{1}{2}}\cos(x)}{(\ch/\ckb)^{-\frac{1}{2}}\sin(x)}, ~~\phi^-_1:=\vet{-(\ch/\ckb)^{\frac{1}{2}}\sin(x)}{(\ch/\ckb)^{-\frac{1}{2}}\cos(x)}, ~~\phi^-_0:=\vet{0}{1},
\end{align}
together with the generalized eigenvector
\begin{align} \label{eigenfunc f+0}
    \phi^+_0:=\vet{1}{0}, ~~ L_{0,0} \phi^+_0=-\phi^-_0.
\end{align}
Furthermore $0$ is an isolated eigenvalue for $ L_{0,0}$, namely the spectrum $\sigma( L_{0,0})$ decomposes in two separated parts,
\begin{align} \label{sigma decomposition}
    \sigma( L_{0,0})=\sigma'( L_{0,0})\cup\sigma''( L_{0,0}), ~~\mbox{where}~~\sigma'( L_{0,0}):=\{0\},
\end{align}
and $\sigma''( L_{0,0}):=\{\lambda^{\sigma}_k(0):~k\neq 0,1, ~\sigma=\pm\}$.

\begin{lemma}[Generalized kernel of $L_{0,\varepsilon}$] \label{237}
Let $(\tth,\kappa,b)\in
\big(\mathbb{R}_{>0}\times\mathbb{R}_{\geq 0}\times\mathbb{R}_{>0}\big)
\setminus \mathfrak R$.
For $|\varepsilon|$ sufficiently small, the eigenvalue $0$ of
$L_{0,\varepsilon}$ has algebraic multiplicity four. More precisely,
recall $Z$ and $\mathcal{P}_\e$ defined in \eqref{alinhac} and \eqref{LC} respectively, and define
\[
U_1 :=
\begin{bmatrix}
0\\
1
\end{bmatrix},
\qquad
U_2 :=
\varepsilon^{-1}\mathcal P_\varepsilon Z^{-1}
\left.
\begin{bmatrix}
\partial_x\eta_{\varepsilon,p}\\
\partial_x\psi_{\varepsilon,p}
\end{bmatrix}
\right|_{p=0},
\]
\[
U_3 :=
\mathcal P_\varepsilon Z^{-1}
\left.
\begin{bmatrix}
\partial_\varepsilon\eta_{\varepsilon,p}\\
\partial_\varepsilon\psi_{\varepsilon,p}
\end{bmatrix}
\right|_{p=0},
\qquad
U_4 :=
\mathcal P_\varepsilon Z^{-1}
\left.
\begin{bmatrix}
\partial_p\eta_{\varepsilon,p}\\
\partial_p\psi_{\varepsilon,p}
\end{bmatrix}
\right|_{p=0}.
\]
Then
\[
L_{0,\varepsilon}U_1=0,
\qquad
L_{0,\varepsilon}\widetilde U_2=0,
\]
and
\[
L_{0,\varepsilon}U_3
=
-\varepsilon
\left.\partial_\varepsilon c_{\varepsilon,p}\right|_{p=0}
\widetilde U_2,
\qquad
L_{0,\varepsilon}U_4
=
-U_1
-
\varepsilon
\left.\partial_p c_{\varepsilon,p}\right|_{p=0}
\widetilde U_2.
\]
Also, the four vectors $\{U_1, U_2,U_3,U_4\}$ form a basis of $\mathcal{V}_{0,\e}$ and $L_{0,\e}^{2}=0$ on $\mathcal{V}_{0,\e}$.
\end{lemma}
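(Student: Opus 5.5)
The plan is to obtain every identity in the statement by differentiating the traveling-wave equation along a two-parameter family of Stokes waves, and then to transport the resulting relations through the symplectic changes of variables $Z$ and $\mathcal P_\varepsilon$.

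\textbf{Step 1: the two-parameter family.} Since the statement uses $(\eta_{\varepsilon,p},\psi_{\varepsilon,p},c_{\varepsilon,p})$, I first extend Theorem~\ref{Thm: Stokes expansion} to the family with an added constant $p$ in $\psi$. System \eqref{in the reference frame} depends on $\psi$ only through $\psi_x$ and $G(\eta)\psi$, and $G(\eta)1=0$ because of the flat bottom. Hence I add the Bernoulli constant to the second equation of \eqref{travelingWWstokes}, and $\psi\mapsto\psi+p$-type shifts, or equivalently a small mean-flow parameter, enter analytically. The Crandall--Rabinowitz argument of Appendix~\ref{sec:AppG}, applied off $\fR$ with the extra parameter, gives an analytic family $(\eta_{\varepsilon,p},\psi_{\varepsilon,p},c_{\varepsilon,p})$ with $p=0$ reducing to the Stokes branch. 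I also use the translation family $x\mapsto x+\theta$.

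\textbf{Step 2: the four identities.} Write the traveling-wave equations as $\mathcal F(\eta,\psi,c)=0$. The linearized operator in \eqref{first linear eq} at $c=c_\varepsilon$ is exactly $\mathrm d_{(\eta,\psi)}\mathcal F$.
\begin{itemize}
\item Differentiating in $\theta$ gives $\mathrm d\mathcal F[\partial_x\eta_\varepsilon,\partial_x\psi_\varepsilon]=0$.
\item The constant $[0,1]^T$ is in the kernel since $G(\eta)1=0$; this survives the conjugations, because $Z^{-1}$ fixes $[0,1]^T$ and $\mathfrak P1=1$. This gives $L_{0,\varepsilon}U_1=0$.
\item Differentiating in $\varepsilon$ produces the extra term $\partial_\varepsilon c\,[\partial_x\eta,\partial_x\psi]^T$. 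Tracking signs, and using that $\partial_x$ of the profile is $\varepsilon\widetilde U_2$ after conjugation, gives the $U_3$ relation.
\item Differentiating in $p$ gives the $U_4$ relation. Here the $-U_1$ term comes from the way $p$ enters the Bernoulli/mean term.
\end{itemize}
Conjugating by $\mathcal P_\varepsilon Z^{-1}$ through \eqref{second linear eq} and \eqref{mathcal L e} transfers all of these to $L_{0,\varepsilon}$. The factor $\varepsilon^{-1}$ in $U_2$ is harmless, since $\partial_x\eta_\varepsilon=-\varepsilon\sin x+\cO(\varepsilon^2)$.

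\textbf{Step 3: basis and nilpotency.} Expanding at $\varepsilon=0$ via \eqref{exp:Sto} and \eqref{expfe}:
\begin{itemize}
\item $U_2\to$ a multiple of $\phi^-_1$;
\item $U_3\to$ a multiple of $\phi^+_1$, from $\cos x$ and $c_{\kappa,b}c_{\tth}^{-1}\sin x$;
\item $U_4\to\phi^+_0$, up to normalization;
\item $U_1=\phi^-_0$.
\end{itemize}
So the four vectors are linearly independent for small $\varepsilon$. By Kato perturbation theory, the spectral subspace $\mathcal V_{0,\varepsilon}$ for the isolated group near $0$ (cf. \eqref{sigma decomposition}) has dimension four. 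The vectors lie in it because each is annihilated by $L_{0,\varepsilon}^2$, and the image of each lies in $\mathrm{span}\{U_1,\widetilde U_2\}\subset\ker L_{0,\varepsilon}$. Thus they form a basis, $L_{0,\varepsilon}^2=0$ on $\mathcal V_{0,\varepsilon}$, and $0$ has algebraic multiplicity four.

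\textbf{Main obstacle.} I expect Step 1 to be the hardest part: setting up the parameter $p$ so that the family is analytic and $\partial_p$ yields exactly $-U_1$. Everything else is bookkeeping of the conjugations and signs. I would handle it by fixing $p$ as the Bernoulli constant in the $\psi$-equation and re-running the bifurcation argument with $p$ as an auxiliary analytic parameter. This is legitimate because the kernel stays one-dimensional off $\fR$, and because mean modes decouple at $\varepsilon=0$ (the $\psi$ mean is free and the $\eta$ mean equals $p$).
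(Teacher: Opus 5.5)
Your proposal is correct and follows the paper's route. The paper's argument is the Nguyen--Strauss one, applied to the two-parameter family of Appendix~\ref{sec:App2}: there $p$ is the Bernoulli constant, and the identity $G(p+\tilde\eta;\tth)=G(\tilde\eta;\tth+p)$ makes that family the depth-$(\tth+p)$ Stokes branch lifted by $p$, so analyticity in $p$ reduces to analyticity in the depth. One correction: drop the phrases in Step 1 about a constant shift $\psi\mapsto\psi+p$ or a mean-flow parameter, because a shift of $\psi$ gives $\partial_p(\eta,\psi)=U_1$ and no fourth vector, and a mean flow is not an equivalent choice. Only the Bernoulli-constant choice you settle on at the end yields $L_{0,\varepsilon}U_4=-U_1-\varepsilon\,\partial_p c\,\widetilde U_2$.
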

\begin{proof}
   The proof is analogous to that of \cite[Lemma 4.1]{NS}, but uses the expansion of
$(\eta_{\e,p}(x),\psi_{\e,p}(x),c_{\e,p})$ given in Appendix~\ref{sec:App2}.
\end{proof}

By Kato's perturbation theory for any $\mu,\e\neq 0$ sufficiently small, the perturbed spectrum $\sigma( L_{\mu,\e})$ admits a disjoint decomposition as 
\begin{align} \label{disjoint decomposition of spectrum}
    \sigma( L_{\mu,\e})=\sigma'( L_{\mu,\e})\cup \sigma''( L_{\mu,\e}),
\end{align}
where $\sigma'( L_{\mu,\e})$ consists of $4$ eigenvalues close to $0$. We denote by $\mathcal{V}_{\mu,\e}$ the spectral subspace associated with $\sigma'( L_{\mu,\e})$, which has dimension $4$ and it is invariant by $ L_{\mu,\e}$. Our goal is to prove that, for $\e$ small, for values of the Floquet exponent $\mu$ in an interval of order $\e$, the $4\times 4$ matrix which represents the operator $ L_{\mu,\e}:\mathcal{V}_{\mu,\e}\rightarrow \mathcal{V}_{\mu,\e}$ possesses a pair of eigenvalues close to zero with opposite nonzero real parts. 

Before stating our main result, let us introduce a notation that we shall use throughout the paper.

\begin{itemize}
\item
\textbf{Notation:} we denote by $\mathcal{O}(\mu^{m_1} \varepsilon^{n_1}, \dots, \mu^{m_p} \varepsilon^{n_p})$, $m_j, n_j \in \mathbb{N}$ (for us $\mathbb{N} := \{0,1,2, \dots \}$), analytic functions of $(\mu, \varepsilon)$ with values in a Banach space $X$ which satisfy, for some $C > 0$ uniform for $\tth$ in any compact set of $(0,+\infty)$, the bound
\[
\left\|
\mathcal O(
\mu^{m_1}\varepsilon^{n_1},
\dots,
\mu^{m_p}\varepsilon^{n_p}
)
\right\|_X \leq C \sum_{j=1}^{p} |\mu|^{m_j} |\varepsilon|^{n_j}
\]
for small values of $(\mu, \varepsilon)$. Similarly we denote $r_k(\mu^{m_1} \varepsilon^{n_1}, \dots, \mu^{m_p} \varepsilon^{n_p})$ scalar functions $\mathcal{O}(\mu^{m_1} \varepsilon^{n_1}, \dots, \mu^{m_p} \varepsilon^{n_p})$ which are also \textit{real} analytic.
\end{itemize}

Our main spectral result is the following:

\begin{theorem}[Complete Benjamin--Feir spectrum] \label{Complete BF thm}
Let $(\tth,\kappa,b) \in (\R_{>0}\times\R_{\geq 0} \times \R_{> 0}) \setminus (\fR \cup \mathfrak{D})$, where $\fR$ and $\mathfrak{D}$ are defined in \eqref{def:fR} and \eqref{def:fD} respectively. There exist $\e_{0}, \mu_{0} > 0$, such that, for any $0 < \mu < \mu_{0}$ and $0 \leq \e < \e_{0}$, the operator $L_{\mu,\e} : \mathcal{V}_{\mu,\e} \to \mathcal{V}_{\mu,\e}$ can be represented by a $4 \times 4$ matrix of the form
\begin{align} \label{U S diag}
\begin{pmatrix}
U & 0 \\
0 & S
\end{pmatrix},
\end{align}
where $U$ and $S$ are $2 \times 2$ matrices, with identical diagonal entries each, of the form
\begin{equation} \label{U S}
\begin{aligned}
U &= 
\begin{pmatrix}
\im  \frac{1}{2} \breve{\mathtt{c}}_{\tth,\kappa,b}\mu + \im r_{2}(\mu \e^{2}, \mu^{2}\e, \mu^{3})
& -\mathsf{e}_{22}\tfrac{\mu}{8}(1 + r_{5}(\e,\mu)) \\
- \mu \e^2 \mathsf{e}_{\mathrm{WB}} + r_{1}'(\mu \e^{3}, \mu^{2}\e^2) + \mathsf{e}_{22}\tfrac{\mu^{3}}{8}(1 + r_{1}''(\e,\mu)) 
& \im  \frac{1}{2} \breve{\mathtt{c}}_{\tth,\kappa,b}\mu + \im r_{2}(\mu \e^{2}, \mu^{2}\e, \mu^{3})
\end{pmatrix},\\
S &= 
\begin{pmatrix}
\im c_0 \mu + \im r_{9}(\mu \e^{2}, \mu^{2}\e) & \tanh(\tth\mu) + r_{10}(\mu \e) \\[1ex]
- \mu + r_{8}(\mu \e^{2}, \mu^{3} ) & \im c_0 \mu + \im r_{9}(\mu \e^{2}, \mu^{2}\e)
\end{pmatrix},
\end{aligned}
\end{equation}
where $\mathsf{e}_{\mathrm{WB}}, \mathsf{e}_{12}, \mathsf{e}_{22}$ are defined in \eqref{eWB} and \eqref{e11 f11}.
The eigenvalues of $U$ have the form
\begin{align} \label{lambda 1}
\lambda_{1}^{\pm}(\mu,\e) 
= \im \frac{1}{2} \breve{\mathtt{c}}_{\tth,\kappa,b}\mu + \im r_{2}(\mu \e^{2}, \mu^{2}\e, \mu^{3}) 
\pm \frac{1}{8} \mu \sqrt{1+r_{5}(\e,\mu)} \, \sqrt{\Delta_{\mathrm{BF}}(\tth,\kappa,b ;\mu,\e)},
\end{align}
where $\breve{\mathtt{c}}_{\tth,\kappa,b} := 2 c_0 - \mathsf{e}_{12}(\tth,\kappa,b)$ and $\Delta_{\mathrm{BF}}(\tth,\kappa,b;\mu,\e)$ is the Benjamin--Feir discriminant function
\begin{align}\label{BFDF}
    \Delta_{\mathrm{BF}}(\tth,\kappa,b;\mu,\e):=8\e^2 \mathsf{e}_{22}(\tth,\kappa,b)\mathsf{e}_{\mathrm{WB}}(\tth,\kappa,b)+r_1(\e^3,\mu\e^2)-\mathsf{e}^2_{22}(\tth,\kappa,b)\mu^2(1+r_1''(\e,\mu)).
\end{align}
The eigenvalues  in \eqref{lambda 1} have nonzero real part if and only if $\Delta_{\mathrm{BF}}(\tth,\kappa,b;\mu,\e)>0$.

The eigenvalues of the matrix $S$ are a pair of purely imaginary eigenvalues of the form
\begin{align} \label{lambda zero}
\lambda_{0}^{\pm}(\mu,\e) = \im c_0 \mu (1 + r_{9}(\e^{2}, \mu \e)) \mp \im \sqrt{\mu \tanh(\tth \mu)} (1 + r(\mu^2,\e)).
\end{align}
For $\e = 0$ the eigenvalues $\lambda_{1}^{\pm}(\mu,0), \lambda_{0}^{\pm}(\mu,0)$ coincide with those in \eqref{eigenvalues of Lmu0}.
\end{theorem}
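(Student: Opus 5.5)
We follow the Berti--Maspero--Ventura strategy \cite{BMV1,BMV3}, adapted to the hydroelastic operator $L_{\mu,\e}=\cJ\mathfrak K_{\mu,\e}$ in \eqref{mathcal L mu e}.

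\textbf{Step 1: symplectic Kato basis.} Let $P_{\mu,\e}$ be the Riesz projector onto $\mathcal V_{\mu,\e}$ along the contour around $\sigma'(L_{\mu,\e})$ given by \eqref{disjoint decomposition of spectrum}. Following Kato, we build the transformation operator $U_{\mu,\e}$ mapping $\mathcal V_{0,0}$ onto $\mathcal V_{\mu,\e}$. It is analytic in $(\mu,\e)$, symplectic, and reversibility preserving, because $L_{\mu,\e}$ is complex Hamiltonian and reversible (see \eqref{reversible}, \eqref{B rho=rho B}). Applying it to the basis $\{\phi_1^+,\phi_1^-,\phi_0^+,\phi_0^-\}$ of \eqref{eigenfunc of mathcall L00}--\eqref{eigenfunc f+0} gives a symplectic, reversible basis $\{f^\sigma_k(\mu,\e)\}$ of $\mathcal V_{\mu,\e}$. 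In this basis $L_{\mu,\e}|_{\mathcal V_{\mu,\e}}$ is represented by $\mathtt L_{\mu,\e}=\mathtt J_4\mathtt B_{\mu,\e}$, where $\mathtt B_{\mu,\e}=(\mathfrak K_{\mu,\e}f^\sigma_k,f^{\sigma'}_{k'})$ is self-adjoint. Reversibility forces the block structure $\mathtt B=\begin{pmatrix}E&F\\F^*&G\end{pmatrix}$, with $E,G$ of the form $\begin{pmatrix}\text{real}&\im\,\text{real}\\-\im\,\text{real}&\text{real}\end{pmatrix}$ and $F$ of the same parity pattern.

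\textbf{Step 2: expansions of the entries.} We expand $f^\sigma_k$ to second order in $\e$ and first order in $\mu$, using Lemmas \ref{lem:pa.exp}, \ref{tau}, \ref{beta} and the expansions \eqref{expfe} of $\mathfrak p,\ttf$. Lemma \ref{237} yields the exact identities at $\mu=0$, in particular $E_{11}=\e^2\mathsf e_{\mathrm{WB}}+r(\e^3)$ and $G_{11}=1$ at $\mu=0$. We then compute
\[
E=\begin{pmatrix}\e^2\mathsf e_{\mathrm{WB}}+r(\e^3,\mu\e^2)-\mathsf e_{22}\tfrac{\mu^2}{8}(1+r) & \im(\tfrac12\mathsf e_{12}\mu+r)\\ -\im(\cdot) & -\mathsf e_{22}\tfrac{\mu^2}{8}(1+r)\end{pmatrix},\qquad F=\mathcal O(\mu\e,\mu^3),
\]
with $G$ close to $\operatorname{diag}(1,\mu\tanh(\tth\mu))$ plus $\im c_0\mu$ off-diagonal terms. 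The main cost here is bookkeeping the bending contributions $b\beta_{\mu,\e}$, with their fourth-order coefficients $b_{j,i}$ and the $1/r_{\tth,\kappa,b}$ denominators. Two points need care: the $\mathcal O(\mu\e)$ entries of $F$ must be exact, and one must check that the $\mathcal O(\mu\e^2)$ corrections to $F_{11}$ vanish in the appropriate entries.

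\textbf{Step 3: block decoupling.} This is the step we expect to be the main obstacle. Since $G_{22}=\mathcal O(\mu^2)$, the matrix is singular at $\mu=0$. We first apply the rescaling $\operatorname{diag}(1,1,\mu^{1/2},\mu^{-1/2})$-type symplectic change on the $S$-block, after which all entries are analytic and divisible by $\mu$. Dividing $\mathtt L$ by $\mu$, we look for a symplectic, reversibility-preserving map $\exp(\Sigma)$ with Hamiltonian generator $\Sigma=\mathcal O(\e)$ solving the Sylvester equation
\[
U_0X-XS_0=-F_0,
\]
with leading blocks $U_0,S_0$. This equation is uniquely solvable iff $\sigma(U_0)\cap\sigma(S_0)=\emptyset$. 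At leading order the eigenvalues are $\im(c_0-\frac12\mathsf e_{12})$ for $U_0$ and $\im(c_0\mp\sqrt{\tth})$ for $S_0$, so solvability is exactly $\textup D_{\tth,\kappa,b}\ne0$, i.e.\ the exclusion of $\mathfrak D$. The first Lie transform reduces the coupling to $\mathcal O(\mu\e^3)$. The remaining coupling is then removed exactly by an implicit-function argument on the nonlinear block-diagonalization (Riccati) equation, which has invertible linearization by the same Sylvester non-degeneracy. Because the conjugations are symplectic and reversibility-preserving, each block keeps the Hamiltonian--reversible form $\begin{pmatrix}\im a&b\\ c&\im a\end{pmatrix}$ with $a,b,c$ real. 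Tracking the $\mathcal O(\mu\e^2)$ corrections shows that the $(2,1)$ entry of $U$ is $-\mu\e^2\mathsf e_{\mathrm{WB}}$ modulo the stated remainders, which yields \eqref{U S}.

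\textbf{Step 4: eigenvalues.} For a block $\begin{pmatrix}\im a&\beta\\\gamma&\im a\end{pmatrix}$ the eigenvalues are $\im a\pm\sqrt{\beta\gamma}$. For $U$, the product $\beta\gamma$ equals $\frac{\mu^2}{64}(1+r_5)\Delta_{\mathrm{BF}}$, which gives \eqref{lambda 1}; since $\beta\gamma$ is real, the real part is nonzero iff $\Delta_{\mathrm{BF}}>0$. For $S$, $\beta\gamma=-\mu\tanh(\tth\mu)(1+r)<0$, which gives the purely imaginary pair \eqref{lambda zero}. Finally, at $\e=0$ all couplings vanish and direct comparison with \eqref{eigenvalues of Lmu0} gives the last claim.
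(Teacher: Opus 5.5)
There is a genuine gap in Steps 2--3: the proposal misplaces where the Benjamin--Feir coefficient comes from.

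At $\mu=0$ the coupling block is not small in $\mu$. One has $\mathsf F_{11}(0,\e)=\mathsf f_{11}\e+r(\e^3)$ with $\mathsf f_{11}=-\tfrac12(\ch^{5/2}-\ch^{-3/2})\ckb^{3/2}\neq0$, so $F\neq\cO(\mu\e,\mu^3)$. (This nonvanishing is exactly what, together with $\mathsf L_{\e}(0)^2=0$ from Lemma~\ref{237}, forces $\mathsf E_{22}(0,\e)\equiv0$.)

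Likewise $\mathsf E_{11}(0,\e)=\mathsf e_{11}\e^2+r(\e^3)$, and it cannot be $\e^2\mathsf e_{\mathrm{WB}}+r(\e^3)$ as you claim. $\mathsf E_{11}(0,\e)$ involves only the $\mu=0$ Kato basis and is regular across $\mathfrak D$. By contrast, in \eqref{eWB} the numerator multiplying $\textup{D}_{\tth,\kappa,b}^{-1}$ equals $(\ch^{-1/2}\ckb^{1/2}+\tfrac12\mathsf e_{12}\mathsf f_{11})^2>0$ on $\mathfrak D$, so $\mathsf e_{\mathrm{WB}}$ blows up there.

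The factor $\textup{D}_{\tth,\kappa,b}^{-1}$ comes from inverting the Sylvester operator. In the paper, $\mathsf e_{\mathrm{WB}}=\mathsf e_{11}+\tilde{\mathsf e}_{11}$, where $\tilde{\mathsf e}_{11}\mu\e^2$ is the $(1,1)$ entry produced by the second-order Lie term $\tfrac12[\mathsf S^{(1)},\mathsf R^{(1)}]$. This term is quadratic in the two couplings $\mathsf f_{11}\e$ and $\im\mu\e\,\ch^{-1/2}\ckb^{1/2}$, weighted through $[\mathsf A^{(1)}]^{-1}$, and gives $\tilde{\mathsf e}_{11}=-\textup{D}_{\tth,\kappa,b}^{-1}\bigl(\ch^{-1}\ckb+\tth\mathsf f_{11}^2+\mathsf e_{12}\mathsf f_{11}\ch^{-1/2}\ckb^{1/2}\bigr)$. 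The first Lie transform necessarily adds such a nonzero $\mu\e^2$ contribution; even the $\mathsf F_{12}$-coupling alone gives $-\textup{D}_{\tth,\kappa,b}^{-1}\ch^{-1}\ckb$. So starting from $\e^2\mathsf e_{\mathrm{WB}}$, your bookkeeping cannot end at $-\mu\e^2\mathsf e_{\mathrm{WB}}$ in the $(2,1)$ entry of $U$, and the central coefficient of the theorem is not derived.

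Second, the rescaling $\operatorname{diag}(1,1,\mu^{1/2},\mu^{-1/2})$ on the $S$-block alone does not make every entry $\mu$ times an analytic function. The $E$-block keeps the entry $-\mathsf E_{11}=-\mathsf e_{11}\e^2+\dots$, which is not $\cO(\mu)$; even with your own expansion $E_{11}=\e^2\mathsf e_{\mathrm{WB}}+\dots$ it is not divisible by $\mu$. The coupling block becomes $\mathsf F\operatorname{diag}(\mu^{1/2},\mu^{-1/2})$, with entries such as $\mathsf f_{11}\e\mu^{1/2}$ and $\mathsf F_{12}\mu^{-1/2}\sim\mu^{1/2}\e$. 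After dividing by $\mu$ you get $\e^2/\mu$ and $\mu^{-1/2}$ singularities, so neither the Sylvester equation at $(\mu,\e)=(0,0)$ nor the implicit function argument can be set up.

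The fix is $\mathsf Q=\operatorname{diag}(\mu^{1/2},\mu^{-1/2},\mu^{1/2},\mu^{-1/2})$ on \emph{both} blocks. The $(1,1)$ entries of $\mathsf E,\mathsf F,\mathsf G$ are multiplied by $\mu$, the $(2,2)$ entries are divided by $\mu$, and the off-diagonal entries are unchanged. Every entry of $\mathsf L^{(1)}_{\mu,\e}$ is then $\mu$ times an analytic function, the normalized leading blocks are the ones you describe, and $\det\mathsf A^{(1)}=\mu^4\textup{D}_{\tth,\kappa,b}^2(1+r)$. With these two corrections, your remaining steps coincide with the paper's argument: the Sylvester-based Lie transform to $\cO(\mu\e^3)$, exact decoupling via the implicit function theorem, and the $2\times2$ eigenvalue formulas.
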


\begin{remark}
At $\varepsilon = 0$, the eigenvalues in \eqref{lambda 1} have the Taylor expansion
\begin{align*}
\lambda_{1}^{\pm}(\mu,0) 
= \im \frac{1}{2} \breve{\mathtt{c}}_{\tth,\kappa,b}\mu 
\pm \im \frac{1}{8}|\mathsf{e}_{22}(\tth,\kappa,b)| \mu^{2} + \mathcal{O}(\mu^{3}),
\end{align*}
which coincides with that of $\lambda_{1}^{\pm}(\mu)$ in \eqref{eigenvalues of Lmu0}, in view of the coefficients $\mathsf{e}_{12}(\tth,\kappa,b)$ and $\mathsf{e}_{22}(\tth,\kappa,b)$ defined in \eqref{e11 f11} respectively.
\end{remark}

\section{Perturbative Approach to the Separated Eigenvalues} \label{sec3 PA}

In this section, we analyze the splitting of the eigenvalues of $ L_{\mu, \varepsilon}$ close to 0 for small values of $\mu$ and $\varepsilon$, using Kato’s similarity transformation theory \cite[I-§4-6, II-§4]{Kato1966} and \cite{BMV1, BMV3, BMV_ed}. To this end, it is convenient to rewrite the operator $ L_{\mu, \varepsilon}$ in \eqref{mathcal L mu e} as
\begin{equation} \label{mathcal L= ichmu+mathscr L}
 L_{\mu, \varepsilon} = \im c_0 \mu + \mathscr{L}_{\mu, \varepsilon}, \quad \mu > 0,
\end{equation}
where, using also \eqref{D+mu}, $\mathscr{L}_{\mu, \varepsilon}$ is the Hamiltonian operator
\begin{equation} \label{mathscr L mu e}
\mathscr{L}_{\mu, \varepsilon} = \mathcal{J} \mathcal{K}_{\mu, \varepsilon},
\end{equation}
with $\mathcal{K}_{\mu, \varepsilon}$ the self-adjoint operator
\begin{equation} \label{mathcal B mu e}
\mathcal{K}_{\mu, \varepsilon} := \begin{bmatrix} 1 + a_{\varepsilon}(x)-\kappa\tau_{\mu,\e}+b\beta_{\mu,\e}& -(c_0 + q_{\varepsilon}(x)) \partial_x - \im \mu q_{\varepsilon}(x) \\ \partial_x \circ(c_0 + q_{\varepsilon}(x)) + \im \mu q_{\varepsilon}(x) & |D + \mu| \tanh ((\tth + \tf_\varepsilon)|D + \mu|) \end{bmatrix} \ ,
\quad \beta_{\mu,\e}, \tau_{\mu,\e} \mbox{ in } \eqref{Sigma} \ . 
\end{equation}
In addition  $\mathscr{L}_{\mu, \varepsilon}$   is also complex-reversible, namely it satisfies, by \eqref{reversible},
\begin{equation} \label{mathscr rho=-rho mathscr}
\mathscr{L}_{\mu, \varepsilon} \circ \bar{\rho} = - \bar{\rho} \circ \mathscr{L}_{\mu, \varepsilon},
\end{equation}
whereas $\mathcal{K}_{\mu, \varepsilon}$ is reversibility-preserving, i.e. fulfills \eqref{B rho=rho B}. Note also that $\mathcal{K}_{0, \varepsilon}$ is a real operator.

The scalar operator $\im c_0 \mu \equiv \im c_0 \mu \,\operatorname{Id}$ just translates the spectrum of $ L_{\mu, \varepsilon}$ along the imaginary axis of the quantity $\im c_0 \mu$, that is, in view of \eqref{mathcal L= ichmu+mathscr L},
\begin{equation}
\sigma( L_{\mu, \varepsilon}) = \im c_0 \mu + \sigma(\mathscr{L}_{\mu, \varepsilon}).
\end{equation}
Thus in the sequel we focus on studying the spectrum of $\mathscr{L}_{\mu, \varepsilon}$.

Note also that $\mathscr{L}_{0, \varepsilon} =  L_{0, \varepsilon}$ for any $\varepsilon \geq 0$. In particular $\mathscr{L}_{0,0}$ has zero as an isolated eigenvalue with algebraic multiplicity 4, geometric multiplicity 3 and generalized kernel spanned by the vectors $\{\phi_1^{+}, \phi_1^{-}, \phi_0^{+}, \phi_0^{-}\}$ in \eqref{eigenfunc of mathcall L00}, \eqref{eigenfunc f+0}; furthermore, its spectrum is separated as in \eqref{sigma decomposition}. For any $\varepsilon \neq 0$ small, $\mathscr{L}_{0, \varepsilon}$ has zero as an isolated eigenvalue with geometric multiplicity 2, and two generalized eigenvectors satisfying Lemma~\ref{237}.

We remark that the operator $\mathscr{L}_{\mu, \varepsilon}$ is analytic with respect to $\mu$. To be more precise, the most difficult part is to check the analyticity of the map 
\[
(\mu, \varepsilon) \mapsto |D + \mu| \tanh ((\tth + \tf_\varepsilon)|D + \mu|)
\]
in the space $\mathcal{L}(H^1(\T,\mathbb{C}), L^2(\T,\mathbb{C}))$.
In fact, since $\mu$ is real, we can write $|D + \mu| \tanh ((\tth + \tf_\varepsilon)|D + \mu|)$ as $(D + \mu) \tanh ((\tth + \tf_\varepsilon)(D + \mu))$. As a consequence, since the part $D + \mu$ is clearly an analytic map in $\mathcal{L}(H^1(\T,\mathbb{C}), L^2(\T,\mathbb{C}))$, it is sufficient to prove the map
\[
(\mu, \varepsilon) \mapsto \tanh ((\tth + \tf_\varepsilon)(D + \mu))
\]
is analytic in $\mathcal{L}(L^2(\T,\mathbb{C}))$. Since there is a natural embedding from $\ell^\infty(\Z) \to \mathcal{L}(L^2(\T,\mathbb{C}))$, we reduce to prove the following map 
\[
(\mu, \varepsilon) \mapsto (\tanh ((\tth + \tf_\varepsilon)(k + \mu)))_{k \in \Z}
\]
is analytic in $\ell^\infty(\Z)$. Recall that $\tf_\varepsilon$ is analytic in $\varepsilon$ in $B(\varepsilon_0)$. We can extend it (by power series) as a holomorphic function on $B(2\varepsilon_0')$ (by abuse of notation we still use the same notation $B$ to denote the balls in $\mathbb{C}$). Note that the function $\tanh(z)$ is uniformly bounded on the region where the real part of $z$ is lower bounded. So there exists a $\mu_0' > 0$ (shrinking $\varepsilon$ is necessary) such that when $|\mathrm{Im} (\mu)| < 2 \mu_0'$ and $\varepsilon \in B(2\varepsilon_0')$, we have 
\[
\sup_{k \in \Z}|\tanh ((\tth + \tf_\varepsilon)(k + \mu))| \le M.
\]
Now fix a point $(\mu_*,\varepsilon_*)$ such that $|\mathrm{Im} (\mu_*)| <  \mu_0'$ and $\varepsilon_* \in B(\varepsilon_0')$, by Cauchy's formula we obtain
\[
\tanh ((\tth + \tf_\varepsilon)(k + \mu)) = \sum_{i = 0}^\infty \sum_{j = 0}^\infty a_{ij}^k (\mu - \mu_*)^i (\varepsilon - \varepsilon_*)^j
\]
with the bound 
\[
|a_{ij}^k| \le \frac{M}{(\mu_0')^{i}(\varepsilon_0')^{j}}, \qquad \forall i,j \ge 0, k \in \Z.
\]
Since the estimates above are independent of $k$, we know the power series converges also in $\ell^\infty(\Z)$ for $|\mu - \mu_*|< \mu_0'$ and $|\varepsilon - \varepsilon_*| < \varepsilon_0'$.

The operator $\mathscr{L}_{\mu, \varepsilon}: Y \subset X \to X$ has domain $Y := H^4(\mathbb{T},\mathbb{C})\times H^1(\mathbb{T},\mathbb{C})$ and range $X := L^2(\mathbb{T},\mathbb{C})\times L^2(\mathbb{T},\mathbb{C})$.

\begin{lemma} \label{kato thm}
Let $\Gamma$ be a closed, counterclockwise-oriented curve around $0$ in the complex plane separating $\sigma' (\mathscr{L}_{0,0}) = \{0\}$ and the other part of the spectrum $\sigma'' (\mathscr{L}_{0,0})$ in \eqref{sigma decomposition}. There exist $\mu_0,\,\varepsilon_0,  > 0$ such that for any $(\mu, \varepsilon) \in B(\mu_0) \times B(\varepsilon_0)$ the following statements hold:

\begin{enumerate}
    \item \textit{The curve $\Gamma$ belongs to the resolvent set of the operator $\mathscr{L}_{\mu,\varepsilon} : Y \subset X \to X$ defined in \eqref{mathscr L mu e}.}
    \item \textit{The operators}
    \begin{equation} \label{Projection P mu e}
        P_{\mu,\varepsilon} := - \frac{1}{2\pi \im} \oint_\Gamma (\mathscr{L}_{\mu,\varepsilon} - \lambda)^{-1} \ \de\lambda : X \to Y
    \end{equation}
    \textit{are well-defined projectors commuting with $\mathscr{L}_{\mu,\varepsilon}$, i.e., $P_{\mu,\varepsilon}^2 = P_{\mu,\varepsilon}$ and $P_{\mu,\varepsilon} \mathscr{L}_{\mu,\varepsilon} = \mathscr{L}_{\mu,\varepsilon} P_{\mu,\varepsilon}$. The map $(\mu, \varepsilon) \mapsto P_{\mu,\varepsilon}$ is analytic from $B(\mu_0) \times B(\varepsilon_0)$ to $ \mathcal{L}(X,Y)$.}
    \item \textit{The domain $Y$ of the operator $\mathscr{L}_{\mu,\varepsilon}$ decomposes as the direct sum}
    \begin{equation} \label{Y=V+ker P}
        Y = \mathcal{V}_{\mu,\varepsilon} \oplus \ker(P_{\mu,\varepsilon}), \quad \mathcal{V}_{\mu,\varepsilon} := \operatorname{Rg}(P_{\mu,\varepsilon}) = \ker(\operatorname{Id} - P_{\mu,\varepsilon}),
    \end{equation}
    \textit{of closed invariant subspaces, namely $\mathscr{L}_{\mu,\varepsilon} : \mathcal{V}_{\mu,\varepsilon} \to \mathcal{V}_{\mu,\varepsilon}$, $\mathscr{L}_{\mu,\varepsilon} : \ker(P_{\mu,\varepsilon}) \to \ker(P_{\mu,\varepsilon})$. Moreover}
    \begin{equation} \label{spectrum separated by Gamma}
    \begin{aligned}
        \sigma(\mathscr{L}_{\mu,\varepsilon}) \cap \{z \in \mathbb{C}: z  \text{ is inside } \Gamma\} &= \sigma(\mathscr{L}_{\mu,\varepsilon} |_{\mathcal{V}_{\mu,\varepsilon}}) = \sigma'(\mathscr{L}_{\mu,\varepsilon}), \\
        \sigma(\mathscr{L}_{\mu,\varepsilon}) \cap \{z \in \mathbb{C}: z \text{ is outside } \Gamma\} &= \sigma(\mathscr{L}_{\mu,\varepsilon} |_{\ker(P_{\mu,\varepsilon})}) = \sigma''(\mathscr{L}_{\mu,\varepsilon}).
    \end{aligned}
    \end{equation}
    \item \textit{The projectors $P_{\mu,\varepsilon}$ are similar to each other; the transformation operators}
    \begin{equation} \label{U transformation operators}
        U_{\mu,\varepsilon} := (\operatorname{Id} - (P_{\mu,\varepsilon} - P_{0,0})^2)^{-\frac{1}{2}} \big[P_{\mu,\varepsilon} P_{0,0} + (\operatorname{Id} - P_{\mu,\varepsilon})(\operatorname{Id} - P_{0,0}) \big]
    \end{equation}
    \textit{are bounded and invertible in $Y$ and in $X$, with inverse}
    \begin{equation} \label{U inverse}
        U_{\mu,\varepsilon}^{-1} = \big[P_{0,0} P_{\mu,\varepsilon} + (\operatorname{Id} - P_{0,0})(\operatorname{Id} - P_{\mu,\varepsilon})\big](\operatorname{Id} - (P_{\mu,\varepsilon} - P_{0,0})^2)^{-\frac{1}{2}},
    \end{equation}
    \textit{and $U_{\mu,\varepsilon} P_{0,0} U_{\mu,\varepsilon}^{-1} = P_{\mu,\varepsilon}$ as well as $U_{\mu,\varepsilon}^{-1} P_{\mu,\varepsilon} U_{\mu,\varepsilon} = P_{0,0}$. The map $(\mu, \varepsilon) \mapsto U_{\mu,\varepsilon}$ is analytic from $B(\mu_0) \times B(\varepsilon_0)$ to $ \mathcal{L}(Y)$.}
    \item \textit{The subspaces $\mathcal{V}_{\mu,\varepsilon} = \operatorname{Rg}(P_{\mu,\varepsilon})$ are isomorphic to each other: $\mathcal{V}_{\mu,\varepsilon} = U_{\mu,\varepsilon} \mathcal{V}_{0,0}$. In particular $\dim \mathcal{V}_{\mu,\varepsilon} = \dim \mathcal{V}_{0,0} = 4$, for any $(\mu, \varepsilon) \in B(\mu_0) \times B(\varepsilon_0)$.}
\end{enumerate}
\end{lemma}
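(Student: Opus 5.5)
The proof follows Kato's perturbation scheme, as in \cite[Lemma 3.1]{BMV3}. The one structural input is that $(\mu,\varepsilon)\mapsto \mathscr{L}_{\mu,\varepsilon}\in\mathcal{L}(Y,X)$ is analytic on $B(\mu_0)\times B(\varepsilon_0)$; this was checked above via the $\ell^\infty(\Z)$ argument for the $\tanh$ multiplier, together with the analyticity of $q_\varepsilon$, $a_\varepsilon$, $\beta_{\mu,\varepsilon}$ and $\tau_{\mu,\varepsilon}$.

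\emph{Item 1.} Since $\sigma(\mathscr{L}_{0,0})=\{0\}\cup\sigma''(\mathscr{L}_{0,0})$ and $\sigma''$ stays at positive distance from $0$ (by \eqref{eigenvalues of Lmu0} and the non-resonance $(\tth,\kappa,b)\notin\fR$), the curve $\Gamma$ exists. For $\lambda\in\Gamma$ we factor
\[
\mathscr{L}_{\mu,\varepsilon}-\lambda=\bigl(\mathrm{Id}+(\mathscr{L}_{\mu,\varepsilon}-\mathscr{L}_{0,0})(\mathscr{L}_{0,0}-\lambda)^{-1}\bigr)(\mathscr{L}_{0,0}-\lambda).
\]
Three facts give invertibility. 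First, $(\mathscr{L}_{0,0}-\lambda)^{-1}$ is bounded from $X$ to $Y$ uniformly on the compact set $\Gamma$; this holds because it is a Fourier multiplier whose symbol decays like $\langle k\rangle^{-4}$ in the first component and $\langle k\rangle^{-1}$ in the second. Second, by analyticity, $\|\mathscr{L}_{\mu,\varepsilon}-\mathscr{L}_{0,0}\|_{\mathcal{L}(Y,X)}=\mathcal{O}(\mu,\varepsilon)$. Third, a Neumann series then inverts the bracket for $\mu_0,\varepsilon_0$ small. The resulting resolvent is analytic in $(\mu,\varepsilon)$, jointly continuous in $\lambda$, and uniformly bounded on $\Gamma$.

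\emph{Item 2.} Integrating the resolvent gives $P_{\mu,\varepsilon}\in\mathcal{L}(X,Y)$, and it is analytic because the resolvent is uniformly analytic in $(\mu,\varepsilon)$ on $\Gamma$. The projector property comes from the standard argument: deform $\Gamma$ to a nearby contour $\Gamma'$ and apply the resolvent identity. Commutation with $\mathscr{L}_{\mu,\varepsilon}$ on $Y$ holds because $\mathscr{L}_{\mu,\varepsilon}$ commutes with its own resolvent.

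\emph{Item 3.} This is Kato's spectral decomposition theorem \cite[III-§6.4]{Kato1966}. Invariance of $\operatorname{Rg}P$ and $\ker P$ follows from the commutation in item 2. The spectral identities follow because, for $\lambda$ inside $\Gamma$, the restriction of $\mathscr{L}_{\mu,\varepsilon}-\lambda$ to $\ker P_{\mu,\varepsilon}$ is inverted by $\frac{1}{2\pi\im}\oint_\Gamma(z-\lambda)^{-1}(\mathscr{L}_{\mu,\varepsilon}-z)^{-1}\,\de z$ composed with $\mathrm{Id}-P_{\mu,\varepsilon}$. The case of $\lambda$ outside $\Gamma$ on $\operatorname{Rg}P_{\mu,\varepsilon}$ is handled symmetrically.

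\emph{Items 4 and 5.} These follow from Kato's similarity construction \cite[I-§4.6]{Kato1966}. Analyticity gives $\|P_{\mu,\varepsilon}-P_{0,0}\|_{\mathcal{L}(Y)}<1$ after shrinking $\mu_0,\varepsilon_0$, so the square root $(\mathrm{Id}-(P_{\mu,\varepsilon}-P_{0,0})^2)^{-1/2}$ is defined by its binomial series, converges, and is analytic. The key algebraic fact is that $R:=(P_{\mu,\varepsilon}-P_{0,0})^2$ commutes with both $P_{\mu,\varepsilon}$ and $P_{0,0}$. From this one computes directly that $W:=P_{\mu,\varepsilon}P_{0,0}+(\mathrm{Id}-P_{\mu,\varepsilon})(\mathrm{Id}-P_{0,0})$ and $W':=P_{0,0}P_{\mu,\varepsilon}+(\mathrm{Id}-P_{0,0})(\mathrm{Id}-P_{\mu,\varepsilon})$ satisfy $W'W=WW'=\mathrm{Id}-R$. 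This yields \eqref{U inverse} and the intertwining $U_{\mu,\varepsilon}P_{0,0}=P_{\mu,\varepsilon}U_{\mu,\varepsilon}$. The same estimates hold in $\mathcal{L}(X)$, since $P_{\mu,\varepsilon}\in\mathcal{L}(X,Y)\subset\mathcal{L}(X)$. Intertwining gives $\mathcal{V}_{\mu,\varepsilon}=U_{\mu,\varepsilon}\mathcal{V}_{0,0}$, and hence the dimension is $4$ by the generalized kernel description of $\mathscr{L}_{0,0}$ in \eqref{eigenfunc of mathcall L00}--\eqref{eigenfunc f+0}.

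\emph{Main obstacle.} The only non-routine point is the uniform resolvent bound of item 1 in $\mathcal{L}(X,Y)$ for the operator with unequal orders (fourth order in $\eta$, first order in $\psi$). I would handle it by the explicit $2\times2$ Fourier symbol inversion: the determinant grows like $b k^4\cdot|k|\tanh(\tth|k|)$ and is bounded away from zero on $\Gamma$, and each inverse entry has the right decay. The perturbation $\mathscr{L}_{\mu,\varepsilon}-\mathscr{L}_{0,0}$ includes variable-coefficient fourth-order terms $\varepsilon b_{1,4}(x)(\partial_x+\im\mu)^4$, but these are small in $\mathcal{L}(Y,X)$, so the Neumann argument closes.
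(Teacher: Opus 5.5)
Your proposal is correct and follows the argument the paper intends: the paper omits the proof and refers to \cite[Lemma 3.1]{BMV1}, which is this same Kato construction (Neumann-series resolvent near $\Gamma$, Riesz projector, and the similarity $U_{\mu,\varepsilon}$ built from $(\mathrm{Id}-(P_{\mu,\varepsilon}-P_{0,0})^2)^{-1/2}$). The one point specific to this setting is the uniform $\mathcal{L}(X,Y)$ bound on $(\mathscr{L}_{0,0}-\lambda)^{-1}$ for $\lambda\in\Gamma$ with $Y=H^4\times H^1$, and you handle it correctly by explicitly inverting the $2\times2$ Fourier symbol, whose determinant grows like $b|k|^5$.
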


The proof of Lemma \ref{kato thm} is similar to the one of \cite[Lemma 3.1]{BMV1} and we skip it.  Recalling \eqref{mathscr L mu e}-\eqref{mathscr rho=-rho mathscr}, the Hamiltonian and reversible nature of the operator $\mathscr{L}_{\mu,\e}$ imply additional algebraic properties for spectral projectors $P_{\mu,\e}$ and the transformation operators $U_{\mu,\e}$ as follows.  

\begin{lemma} \label{properties of U and P}
For any $(\mu, \varepsilon) \in B(\mu_0) \times B(\varepsilon_0)$, the following holds true:

\begin{itemize}
    \item[(i)] The projectors $P_{\mu,\varepsilon}$ defined in \eqref{Projection P mu e} are skew-Hamiltonian, namely $\mathcal{J} P_{\mu,\varepsilon} = P_{\mu,\varepsilon}^* \mathcal{J}$, and reversibility preserving, i.e. $\bar{\rho} P_{\mu,\varepsilon} = P_{\mu,\varepsilon} \bar{\rho}$.
    
    \item[(ii)] The transformation operators $U_{\mu,\varepsilon}$ in \eqref{U transformation operators} are symplectic, namely $U_{\mu,\varepsilon}^* \mathcal{J} U_{\mu,\varepsilon} = \mathcal{J}$, and reversibility preserving.
    
    \item[(iii)] $P_{0,\varepsilon}$ and $U_{0,\varepsilon}$ are real operators, i.e. $\bar{P}_{0,\varepsilon} = P_{0,\varepsilon}$ and $\bar{U}_{0,\varepsilon} = U_{0,\varepsilon}$.
\end{itemize}
    
\end{lemma}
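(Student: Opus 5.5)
The plan is to derive all three properties directly from the Riesz integral \eqref{Projection P mu e} and the structural identities $\mathscr L_{\mu,\e}=\mathcal J\mathcal K_{\mu,\e}$, with $\mathcal K_{\mu,\e}=\mathcal K_{\mu,\e}^*$, and \eqref{mathscr rho=-rho mathscr}. The argument follows \cite[Lemma 3.2]{BMV1}. The bending and capillary terms play no special role, since they enter only through the self-adjointness of $\beta_{\mu,\e}$ and $\tau_{\mu,\e}$, which was checked above.

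\emph{Skew-Hamiltonian property in (i).} Since $\mathcal J^*=-\mathcal J=\mathcal J^{-1}$, we have $\mathscr L_{\mu,\e}^*=\mathcal K_{\mu,\e}\mathcal J^*=-\mathcal K_{\mu,\e}\mathcal J$. Hence $\mathcal J\mathscr L_{\mu,\e}=-\mathcal K_{\mu,\e}=\mathscr L_{\mu,\e}^*\mathcal J$, and therefore
\[
\mathcal J(\mathscr L_{\mu,\e}-\lambda)^{-1}=(\mathscr L^*_{\mu,\e}-\lambda)^{-1}\mathcal J
\]
for $\lambda\in\Gamma$. Integrating over $\Gamma$ gives $\mathcal J P_{\mu,\e}=\widetilde P\,\mathcal J$, where $\widetilde P:=-\frac1{2\pi\im}\oint_\Gamma(\mathscr L^*_{\mu,\e}-\lambda)^{-1}\de\lambda$. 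It remains to show $\widetilde P=P_{\mu,\e}^*$. Taking adjoints of the Riesz integral gives
\[
P_{\mu,\e}^*=\frac1{2\pi\im}\oint_{\Gamma}(\mathscr L^*_{\mu,\e}-\bar\lambda)^{-1}\,\overline{\de\lambda}.
\]
This is the Riesz projector of $\mathscr L^*_{\mu,\e}$ along the conjugate curve $\overline\Gamma$, traversed in its own counterclockwise orientation once the sign change from conjugation is accounted for. To compare the two, I will choose $\Gamma$ symmetric under complex conjugation, for example a circle centered at $0$. Then $\overline\Gamma=\Gamma$ and $\widetilde P=P^*_{\mu,\e}$ exactly. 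By Lemma \ref{kato thm}, any $\Gamma$ enclosing only $\sigma'$ gives the same projector, so this choice costs nothing.

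\emph{Reversibility in (i).} Here $\bar\rho$ is antilinear, so
\[
\bar\rho(\mathscr L_{\mu,\e}-\lambda)^{-1}=(-\mathscr L_{\mu,\e}-\bar\lambda)^{-1}\bar\rho=-(\mathscr L_{\mu,\e}+\bar\lambda)^{-1}\bar\rho .
\]
Pulling $\bar\rho$ through the integral conjugates both the scalar $\frac1{2\pi\im}$ and $\de\lambda$. Substituting $\lambda'=-\bar\lambda$ maps $\Gamma$ to $-\overline\Gamma$. If $\Gamma$ is taken symmetric under $\lambda\mapsto-\bar\lambda$ (again a circle centered at $0$ works), this map preserves the curve and its orientation. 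The accumulated signs cancel, giving $\bar\rho P_{\mu,\e}=P_{\mu,\e}\bar\rho$. The bookkeeping of orientation and signs in these two substitutions is the only delicate point, and I expect it to be the main obstacle, mild as it is.

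\emph{Property (ii).} First, skew-Hamiltonian projectors satisfy $P^*\mathcal J P=\mathcal JP^2=\mathcal J P$, and similarly for $\mathrm{Id}-P$. Set $A:=P_{\mu,\e}P_{0,0}+(\mathrm{Id}-P_{\mu,\e})(\mathrm{Id}-P_{0,0})$. Then
\[
A^*\mathcal J A=P_{0,0}^*P_{\mu,\e}^*\mathcal J P_{\mu,\e}P_{0,0}+(\mathrm{Id}-P_{0,0})^*(\mathrm{Id}-P_{\mu,\e})^*\mathcal J(\mathrm{Id}-P_{\mu,\e})(\mathrm{Id}-P_{0,0}).
\]
The cross terms vanish because $P^*_{\mu,\e}\mathcal J(\mathrm{Id}-P_{\mu,\e})=\mathcal J P_{\mu,\e}(\mathrm{Id}-P_{\mu,\e})=0$. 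Moving $\mathcal J$ to the left, the expression becomes $\mathcal J\big(P_{0,0}P_{\mu,\e}P_{0,0}+(\mathrm{Id}-P_{0,0})(\mathrm{Id}-P_{\mu,\e})(\mathrm{Id}-P_{0,0})\big)$. This equals $\mathcal J\big(\mathrm{Id}-(P_{\mu,\e}-P_{0,0})^2\big)$, by the standard Kato identity that $\mathrm{Id}-(P-Q)^2$ commutes with $P$ and $Q$.

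Next, set $R:=\mathrm{Id}-(P_{\mu,\e}-P_{0,0})^2$. It satisfies $\mathcal J R=R^*\mathcal J$ and commutes with both projectors. Its inverse square root is given by a convergent binomial series in $(P_{\mu,\e}-P_{0,0})^2$, since this quantity is small. So $R^{-1/2}$ inherits $\mathcal J R^{-1/2}=(R^{-1/2})^*\mathcal J$ and commutes with $A$. Combining,
\[
U^*_{\mu,\e}\mathcal JU_{\mu,\e}=A^*(R^{-1/2})^*\mathcal J R^{-1/2}A=A^*\mathcal J R^{-1}A=\mathcal J R^{-1}R=\mathcal J .
\]
Reversibility preservation of $U_{\mu,\e}$ follows because $\bar\rho$ commutes with $P_{\mu,\e}$ and $P_{0,0}$. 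It therefore commutes with each term of the series, whose coefficients are real.

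\emph{Property (iii).} For $\mu=0$, $\mathscr L_{0,\e}$ is a real operator, so $\overline{(\mathscr L_{0,\e}-\lambda)^{-1}}=(\mathscr L_{0,\e}-\bar\lambda)^{-1}$. The same conjugation-symmetric choice of $\Gamma$ as in (i) then yields $\overline{P_{0,\e}}=P_{0,\e}$. Consequently $U_{0,\e}$ is real, because it is built from $P_{0,\e}$ and $P_{0,0}$ by real operations and a series with real coefficients.
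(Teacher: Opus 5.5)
Your route is the one the paper relies on; the paper itself only cites \cite[Lemma 3.2]{BMV1}. That proof uses Riesz integrals over a contour symmetric under the relevant reflections for (i), skew-Hamiltonian projectors plus the identity $P_{0,0}P_{\mu,\e}P_{0,0}+(\mathrm{Id}-P_{0,0})(\mathrm{Id}-P_{\mu,\e})(\mathrm{Id}-P_{0,0})=\mathrm{Id}-(P_{\mu,\e}-P_{0,0})^2$ for (ii), and reality of $\mathscr L_{0,\e}$ for (iii). Your parts (ii) and (iii) are fine.

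\textbf{The key identity in (i) has the wrong sign.} Since $\mathscr L_{\mu,\e}^*\mathcal J=-\mathcal K_{\mu,\e}\mathcal J^2=\mathcal K_{\mu,\e}$, the correct relation is $\mathcal J\mathscr L_{\mu,\e}=-\mathscr L^*_{\mu,\e}\mathcal J$, not $\mathcal J\mathscr L_{\mu,\e}=\mathscr L^*_{\mu,\e}\mathcal J$. Your version would make $\mathscr L_{\mu,\e}$ similar to its adjoint, so its spectrum would be invariant under $\lambda\mapsto\bar\lambda$. That is false for $\mu\neq0$: at $\e=0$ the four eigenvalues inside $\Gamma$ are purely imaginary with sum $-\im\,\mathsf e_{12}\mu+\cO(\mu^2)\neq0$. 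The Hamiltonian symmetry is $\lambda\mapsto-\bar\lambda$.

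The correct resolvent identity is $\mathcal J(\mathscr L_{\mu,\e}-\lambda)^{-1}=-(\mathscr L^*_{\mu,\e}+\lambda)^{-1}\mathcal J$. Integrating gives $\mathcal JP_{\mu,\e}=\frac{1}{2\pi\im}\oint_\Gamma(\mathscr L^*_{\mu,\e}+\lambda)^{-1}\de\lambda\,\mathcal J$. You then need one more substitution, $\lambda\mapsto-\lambda$. This is a rotation by $\pi$, which maps a circle centered at $0$ onto itself and preserves its orientation, and it brings you to $\widetilde P\,\mathcal J$. From there your identification $\widetilde P=P^*_{\mu,\e}$ via $\overline\Gamma=\Gamma$ is correct.

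\textbf{The reversibility step has a matching orientation error.} The map $\lambda\mapsto-\bar\lambda$ is a reflection across the imaginary axis. It maps a circle centered at $0$ onto itself but reverses its orientation; it does not preserve it. That reversal is the sign you need. There are four sign changes:
\begin{itemize}
\item from $(-\mathscr L_{\mu,\e}-\bar\lambda)^{-1}=-(\mathscr L_{\mu,\e}+\bar\lambda)^{-1}$;
\item from $\overline{-1/(2\pi\im)}=1/(2\pi\im)$;
\item from $\de\lambda'=-\overline{\de\lambda}$;
\item from the orientation reversal.
\end{itemize}
These cancel and give $\bar\rho P_{\mu,\e}=P_{\mu,\e}\bar\rho$. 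If the orientation were preserved, as you state, you would get $\bar\rho P_{\mu,\e}=-P_{\mu,\e}\bar\rho$ instead.

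Both corrections are handled by your choice of $\Gamma$ as a circle centered at $0$, since it is invariant under $\lambda\mapsto\bar\lambda$, $\lambda\mapsto-\lambda$ and $\lambda\mapsto-\bar\lambda$. With them, the argument is complete.
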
 

See \cite[Lemma 3.2]{BMV1} for details. By the previous lemma, the linear involution $\bar{\rho}$ commutes with the spectral projectors $P_{\mu,\varepsilon}$ and then $\bar{\rho}$ leaves invariant the subspace $\mathcal{V}_{\mu,\varepsilon} = \mathrm{Rg}(P_{\mu,\varepsilon})$.

\paragraph{Symplectic and reversible basis of $\mathcal{V}_{\mu,\varepsilon}$.} It is convenient to represent the Hamiltonian and reversible operator $\mathscr{L}_{\mu,\varepsilon}: \mathcal{V}_{\mu,\varepsilon} \to \mathcal{V}_{\mu,\varepsilon}$ in a basis which is symplectic and reversible, according to the following definition:

\begin{definition}[Symplectic and reversible basis]  \label{Symplectic and reversible basis}
    A basis $\{ \Phi_1^+, \Phi_1^-, \Phi_0^+, \Phi_0^- \}$ of $\mathcal{V}_{\mu,\varepsilon}$ is \textit{symplectic} if, for any $k, k' = 0,1$,
\begin{equation} \label{basis is symplectic}
    \begin{aligned}
    (\mathcal{J} \Phi_k^\mp, \Phi_k^\pm) = \pm 1, ~~(\mathcal{J} \Phi_k^\sigma, \Phi_k^\sigma) = 0, \quad \forall \,\sigma = \pm; \\
    \text{if } k \neq k', \text{ then } (\mathcal{J} \Phi_k^\sigma, \Phi_{k'}^{\sigma'}) = 0, \quad \forall\, \sigma, \sigma' = \pm.
\end{aligned}
\end{equation}   

This is \textit{reversible} if
\begin{equation} \label{basis is reversible}
    \begin{aligned}
    \bar{\rho} \Phi_1^+ &= \Phi_1^+,  ~\bar{\rho} \Phi_1^- = -\Phi_1^-,~\bar{\rho} \Phi_0^+ = \Phi_0^+,  ~\bar{\rho} \Phi_0^- = -\Phi_0^-, \\
    &\text{i.e. } \bar{\rho} \Phi_k^\sigma = \sigma \Phi_k^\sigma, \quad \forall\, \sigma = \pm, k = 0,1.
\end{aligned}
\end{equation}
\end{definition} 

We use the following notation along the paper: we denote by $\mathit{even}(x)$ a real $2\pi$-periodic function which is even in $x$, and by $\mathit{odd}(x)$ a real $2\pi$-periodic function which is odd in $x$.

We pause to remark that any vector $\Phi\in\mathcal{V}_{\mu,\e}$ can be expressed as
\begin{align} \label{expasion of f by using symplectic basis}
    \Phi=-(\mathcal{J}\Phi,\Phi^-_1)\Phi^+_1+(\mathcal{J}\Phi,\Phi^+_1)\Phi^-_1-(\mathcal{J}\Phi,\Phi^-_0)\Phi^+_0+(\mathcal{J}\Phi,\Phi^+_0)\Phi^-_0\in\mathcal{V}_{\mu,\e}.
\end{align}
Also, following from the reversibility of the problem, specifically from the involution \( \overline{\rho} \) defined in equation \eqref{reversible}, the elements of a reversible basis \( \{\Phi^+_1,\Phi^-_1,\Phi^+_0,\Phi^-_0\} \) satisfy specific parity properties: 
\begin{align} \label{Parity structure}
\Phi_k^+(x) = 
\begin{bmatrix}
\mathit{even}(x) + \im\,\mathit{odd}(x) \\
\mathit{odd}(x) + \im\,\mathit{even}(x)
\end{bmatrix}, \quad
\Phi_k^-(x) = 
\begin{bmatrix}
\mathit{odd}(x) + \im\,\mathit{even}(x) \\
\mathit{even}(x) + \im\,\mathit{odd}(x)
\end{bmatrix}.
\end{align}

We now compute the matrix representation of $\mathscr{L}_{\mu,\e}$ in a symplectic and reversible basis of $\mathcal{V}_{\mu,\e}$.

\begin{lemma} \label{matrix representation mathsf L}
The $4 \times 4$ matrix that represents the Hamiltonian and reversible operator $\mathscr{L}_{\mu,\varepsilon} = \mathcal{J} \mathcal{K}_{\mu,\varepsilon} : \mathcal{V}_{\mu,\varepsilon} \to \mathcal{V}_{\mu,\varepsilon}$ with respect to a symplectic and reversible basis $\{ \Phi_1^{+}, \Phi_1^{-}, \Phi_0^{+}, \Phi_0^{-} \}$ of $\mathcal{V}_{\mu,\varepsilon}$ is
\begin{align} \label{J4Bmuepsilon}
  \mathsf{L}_{\mu,\varepsilon}:=  \mathsf{J}_4 \mathsf{K}_{\mu,\varepsilon}, \quad \mathsf{J}_4 := 
\left(
\begin{array}{c|c}
\mathsf{J}_2 & 0 \\
\hline
0 & \mathsf{J}_2
\end{array}
\right), \quad
    \mathsf{J}_2 := 
    \begin{pmatrix}
        0 & 1 \\
        -1 & 0
    \end{pmatrix}, \quad \text{where} \quad \mathsf{K}_{\mu,\varepsilon} = \mathsf{K}_{\mu,\varepsilon}^{*}.
\end{align}
The self-adjoint matrix
\begin{align} \label{Bmuepsilon matrix representation}
    \mathsf{K}_{\mu,\varepsilon} = \left( 
    \begin{array}{cccc}
        (\mathcal{K}_{\mu,\varepsilon} \Phi_1^{+}, \Phi_1^{+}) & (\mathcal{K}_{\mu,\varepsilon} \Phi_1^{-}, \Phi_1^{+}) & (\mathcal{K}_{\mu,\varepsilon} \Phi_0^{+}, \Phi_1^{+}) & (\mathcal{K}_{\mu,\varepsilon} \Phi_0^{-}, \Phi_1^{+}) \\
        (\mathcal{K}_{\mu,\varepsilon} \Phi_1^{+}, \Phi_1^{-}) & (\mathcal{K}_{\mu,\varepsilon} \Phi_1^{-}, \Phi_1^{-}) & (\mathcal{K}_{\mu,\varepsilon} \Phi_0^{+}, \Phi_1^{-}) & (\mathcal{K}_{\mu,\varepsilon} \Phi_0^{-}, \Phi_1^{-}) \\
        (\mathcal{K}_{\mu,\varepsilon} \Phi_1^{+}, \Phi_0^{+}) & (\mathcal{K}_{\mu,\varepsilon} \Phi_1^{-}, \Phi_0^{+}) & (\mathcal{K}_{\mu,\varepsilon} \Phi_0^{+}, \Phi_0^{+}) & (\mathcal{K}_{\mu,\varepsilon} \Phi_0^{-}, \Phi_0^{+}) \\
        (\mathcal{K}_{\mu,\varepsilon} \Phi_1^{+}, \Phi_0^{-}) & (\mathcal{K}_{\mu,\varepsilon} \Phi_1^{-}, \Phi_0^{-}) & (\mathcal{K}_{\mu,\varepsilon} \Phi_0^{+}, \Phi_0^{-}) & (\mathcal{K}_{\mu,\varepsilon} \Phi_0^{-}, \Phi_0^{-})
    \end{array}
    \right).
\end{align}
The entries of the matrix $\mathsf{K}_{\mu,\varepsilon}$ are alternatively real or purely imaginary: for any $\sigma = \pm, k = 0, 1$,
\begin{align} \label{B are alternatively real or imaginary}
    (\mathcal{K}_{\mu,\varepsilon} \Phi_k^{\sigma}, \Phi_{k'}^{\sigma}) \text{ is real}, \quad
    (\mathcal{K}_{\mu,\varepsilon} \Phi_k^{\sigma}, \Phi_{k'}^{-\sigma}) \text{ is purely imaginary}.
\end{align}  
\end{lemma}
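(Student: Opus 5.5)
We prove Lemma~\ref{matrix representation mathsf L} by expanding $\mathscr{L}_{\mu,\varepsilon}\Phi$ in the symplectic basis and then reading off the reality of the entries from the involution $\bar\rho$.

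\textbf{Matrix representation.} Take a basis vector $\Phi=\Phi_{k'}^{\sigma'}$ and apply \eqref{expasion of f by using symplectic basis} to $\mathscr{L}_{\mu,\varepsilon}\Phi_{k'}^{\sigma'}\in\mathcal V_{\mu,\varepsilon}$; this is legitimate because $\mathcal V_{\mu,\varepsilon}$ is invariant by Lemma~\ref{kato thm}. Since $\mathcal J^2=-\operatorname{Id}$, we get
\[
\mathcal J\mathscr L_{\mu,\varepsilon}\Phi_{k'}^{\sigma'}=-\mathcal K_{\mu,\varepsilon}\Phi_{k'}^{\sigma'}.
\]
So the coefficient of $\Phi_k^+$ in $\mathscr L_{\mu,\varepsilon}\Phi_{k'}^{\sigma'}$ is
\[
-(\mathcal J\mathscr L\Phi_{k'}^{\sigma'},\Phi_k^-)=(\mathcal K_{\mu,\varepsilon}\Phi_{k'}^{\sigma'},\Phi_k^-),
\]
and the coefficient of $\Phi_k^-$ is $-(\mathcal K_{\mu,\varepsilon}\Phi_{k'}^{\sigma'},\Phi_k^+)$. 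Ordering the basis as $(\Phi_1^+,\Phi_1^-,\Phi_0^+,\Phi_0^-)$, these formulas say that the representing matrix equals $\mathsf J_4\mathsf K_{\mu,\varepsilon}$, with $\mathsf K_{\mu,\varepsilon}$ given by \eqref{Bmuepsilon matrix representation}. One checks this row by row: $\mathsf J_2$ sends the row $(\cdot,\Phi_k^-)$ to the first slot with sign $+$, and the row $(\cdot,\Phi_k^+)$ to the second slot with sign $-$.

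\textbf{Self-adjointness.} The matrix $\mathsf K_{\mu,\varepsilon}$ is self-adjoint because $\mathcal K_{\mu,\varepsilon}$ is self-adjoint with respect to \eqref{complex product}. This gives
\[
(\mathcal K\Phi_k^\sigma,\Phi_{k'}^{\sigma'})=\overline{(\mathcal K\Phi_{k'}^{\sigma'},\Phi_k^\sigma)}.
\]
The basis vectors lie in $Y$, the domain of $\mathcal K_{\mu,\varepsilon}$, since $P_{\mu,\varepsilon}$ maps $X$ into $Y$.

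\textbf{Real or imaginary entries.} We use that $\bar\rho$ is antilinear, involutive and satisfies
\[
(\bar\rho f,\bar\rho g)=\overline{(f,g)}.
\]
This follows from the change of variables $x\mapsto -x$ in \eqref{complex product} together with the conjugation; the sign flip in the second component does not matter because the product of the two second components is taken. Since $\mathcal K_{\mu,\varepsilon}$ commutes with $\bar\rho$ by \eqref{B rho=rho B}, and $\bar\rho\Phi_k^\sigma=\sigma\Phi_k^\sigma$ by \eqref{basis is reversible}, we compute
\[
\overline{(\mathcal K\Phi_k^\sigma,\Phi_{k'}^{\sigma'})}
=(\bar\rho\mathcal K\Phi_k^\sigma,\bar\rho\Phi_{k'}^{\sigma'})
=(\mathcal K\bar\rho\Phi_k^\sigma,\bar\rho\Phi_{k'}^{\sigma'})
=\sigma\sigma'(\mathcal K\Phi_k^\sigma,\Phi_{k'}^{\sigma'}).
\]
If $\sigma=\sigma'$ the entry equals its own conjugate and is real. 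If $\sigma'=-\sigma$ the entry equals minus its conjugate and is purely imaginary. This is exactly \eqref{B are alternatively real or imaginary}.

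\textbf{Main obstacle.} The argument contains no real obstacle. The only point needing care is the sign bookkeeping in the expansion formula \eqref{expasion of f by using symplectic basis}, so that the result comes out as $\mathsf J_4\mathsf K$ rather than $-\mathsf J_4\mathsf K$ or its transpose. We confirm the sign by testing it on a single basis vector using the symplectic relations \eqref{basis is symplectic}.
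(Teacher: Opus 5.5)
Your proposal is correct and follows the paper's proof essentially step by step. You expand $\mathscr{L}_{\mu,\varepsilon}\Phi_{k'}^{\sigma'}$ via \eqref{expasion of f by using symplectic basis} using $\mathcal{J}\mathscr{L}_{\mu,\varepsilon}=-\mathcal{K}_{\mu,\varepsilon}$, get self-adjointness of $\mathsf{K}_{\mu,\varepsilon}$ from that of $\mathcal{K}_{\mu,\varepsilon}$, and derive $(\mathcal{K}_{\mu,\varepsilon}\Phi_k^\sigma,\Phi_{k'}^{\sigma'})=\sigma\sigma'\,\overline{(\mathcal{K}_{\mu,\varepsilon}\Phi_k^\sigma,\Phi_{k'}^{\sigma'})}$ from $(\bar\rho f,\bar\rho g)=\overline{(f,g)}$ together with $\mathcal{K}_{\mu,\varepsilon}\bar\rho=\bar\rho\,\mathcal{K}_{\mu,\varepsilon}$; your sign bookkeeping and the remark that the basis lies in the domain $Y$ are both accurate.
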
 
\begin{proof}
    Recalling \eqref{mathscr L mu e} and \eqref{expasion of f by using symplectic basis}, for $\sigma=\pm$, $k=0,1$, we obtain
    \begin{equation*} 
        \begin{aligned}
            \mathscr{L}_{\mu,\varepsilon}\Phi^{\sigma}_k&=-(\mathcal{J}\mathscr{L}_{\mu,\varepsilon}\Phi^{\sigma}_k,\Phi^-_1)\Phi^+_1+(\mathcal{J}\mathscr{L}_{\mu,\varepsilon}\Phi^{\sigma}_k,\Phi^+_1)\Phi^-_1-(\mathcal{J}\mathscr{L}_{\mu,\varepsilon}\Phi^{\sigma}_k,\Phi^-_0)\Phi^+_0+(\mathcal{J}\mathscr{L}_{\mu,\varepsilon}\Phi^{\sigma}_k,\Phi^+_0)\Phi^-_0\\
            &=(\mathcal{K}_{\mu,\varepsilon}\Phi^{\sigma}_k,\Phi^-_1)\Phi^+_1-(\mathcal{K}_{\mu,\varepsilon}\Phi^{\sigma}_k,\Phi^+_1)\Phi^-_1+(\mathcal{K}_{\mu,\varepsilon}\Phi^{\sigma}_k,\Phi^-_0)\Phi^+_0-(\mathcal{K}_{\mu,\varepsilon}\Phi^{\sigma}_k,\Phi^+_0)\Phi^-_0.
        \end{aligned}
    \end{equation*}
This verifies that the matrix representation of $\mathscr{L}_{\mu,\varepsilon}$ with respect to $\mathsf{F}$ is $\mathsf{J}_4\mathsf{K}_{\mu,\varepsilon}$. Also, the matrix $\mathsf{K}_{\mu,\e}$ is self-adjoint because $\mathcal{K}_{\mu,\e}$ is self-adjoint. Next, recall from \eqref{complex product} and \eqref{reversible} that the inner product satisfies
\begin{align} \label{(f,g)=(rho f, rho g)}
    (\Phi,\tilde{\Phi})=\overline{(\overline{\rho}\Phi,\overline{\rho}\tilde{\Phi})}.
\end{align}
Then, since $\mathcal{K}_{\mu,\e}$ is both self-adjoint and reversibility-preserving \eqref{B rho=rho B} and \eqref{basis is reversible}, we compute:
\begin{align*}
    (\mathcal{K}_{\mu,\varepsilon} \Phi_k^{\sigma}, \Phi_{k'}^{\sigma'})=\overline{(\overline{\rho}\mathcal{K}_{\mu,\e} \Phi^{\sigma}_k,\overline{\rho} \Phi^{\sigma'}_{k'})}=\overline{(\mathcal{K}_{\mu,\e}\overline{\rho} \Phi^{\sigma}_k,\overline{\rho} \Phi^{\sigma'}_{k'})}=\sigma\sigma'\overline{(\mathcal{K}_{\mu,\e}\Phi^{\sigma}_k,\Phi^{\sigma'}_{k'})},
\end{align*}
which proves \eqref{B are alternatively real or imaginary}.
\end{proof}

We conclude this section recalling some notation. A $2n \times 2n$, $n = 1, 2$, matrix of the form $\mathsf{L} = \mathsf{J}_{2n} \mathsf{K}$ is \textit{Hamiltonian} if $\mathsf{K}$ is a self-adjoint matrix, i.e. $\mathsf{K} = \mathsf{K}^*$. It is \textit{reversible} if $\mathsf{K}$ is reversibility-preserving, i.e.
\[
\rho_{2n} \circ \mathsf{K} = \mathsf{K} \circ \rho_{2n},
\]
where
\[
\rho_4 := \begin{pmatrix} \rho_2 & 0 \\ 0 & \rho_2 \end{pmatrix}, \quad
\rho_2 := \begin{pmatrix} \mathfrak{c} & 0 \\ 0 & -\mathfrak{c} \end{pmatrix}, 
\]
and $\mathfrak{c} : z \mapsto \bar{z}$ is the conjugation of the complex plane.
Equivalently, $\rho_{2n} \circ \mathsf{L} = - \mathsf{L} \circ \rho_{2n}.$

The transformations preserving the Hamiltonian structure are called \textit{symplectic}, and satisfy
\begin{align} \label{YstarJ4Y=J4}
Q^* \mathsf{J}_4 Q = \mathsf{J}_4.
\end{align}
If $Q$ is symplectic then $Q^*$ and $Q^{-1}$ are symplectic as well. A Hamiltonian matrix $\mathsf{L} = \mathsf{J}_4 \mathsf{K}$, with $\mathsf{K} = \mathsf{K}^*$, is conjugated through a symplectic matrix $Q$ in a new Hamiltonian matrix
\begin{align} \label{L1=J4B}
    \mathsf{L}_1=Q^{-1}\mathsf{L}Q=Q^{-1}\mathsf{J}_4(Q^-)^*Q^*\mathsf{K}Q=\mathsf{J}_4 \mathsf{K}_1\,\,\textit{where}\,\,\mathsf{K}_1:=Q^*\mathsf{K}Q=\mathsf{K}^*_1.
\end{align}
A $4\times 4$ matrix $\mathsf{K} = (\mathsf{K}_{ij})_{i,j=1,\ldots,4}$ is reversibility-preserving if and only if its entries are alternatively real and purely imaginary, namely $\mathsf{K}_{ij}$ is real when $i + j$ is even and purely imaginary otherwise, as in \eqref{B are alternatively real or imaginary}. A $4\times 4$ complex matrix $\mathsf{L} = (\mathsf{L}_{ij})_{i,j=1,\ldots,4}$ is reversible if and only if $\mathsf{L}_{ij}$ is purely imaginary when $i + j$ is even and real otherwise.

We finally mention that the flow of a Hamiltonian reversibility-preserving matrix is symplectic and reversibility-preserving (see \cite[Lemma 3.8]{BMV1}).

\section{Matrix Representation of $\mathscr{L}_{\mu,\e}$ on $\mathcal{V}_{\mu,\e}$}\label{sec4 MP} Using the transformation operator $U_{\mu,\e}$ in \eqref{U transformation operators}, we construct the basis of $\mathcal{V}_{\mu,\e}$
\begin{equation} \label{F basis set and f}
    \begin{aligned}
        \Phi_{\mu,\e}&:=\{\phi^+_1(\mu,\e),\phi^-_1(\mu,\e),\phi^+_0(\mu,\e),\phi^-_0(\mu,\e)\}, \qquad 
        \phi^\sigma_k(\mu,\e):=U_{\mu,\e} \phi^{\sigma}_k,~~\sigma=\pm,~k=0,1,
    \end{aligned}
\end{equation}
where 
\begin{align} \label{eigenfunc of mathcall L00 2}
    \phi^+_1=\vet{(\ch/\ckb)^{\frac{1}{2}}\cos(x)}{(\ch/\ckb)^{-\frac{1}{2}}\sin(x)}, ~~\phi^-_1=\vet{-(\ch/\ckb)^{\frac{1}{2}}\sin(x)}{(\ch/\ckb)^{-\frac{1}{2}}\cos(x)}, ~~\phi^+_0=\vet{1}{0},~~\phi^-_0=\vet{0}{1},
\end{align}
form a basis of $\mathcal{V}_{0,0}=\mathrm{Rg}(P_{0,0})$, cf. \eqref{eigenfunc of mathcall L00}-\eqref{eigenfunc f+0}. Note that the real valued vectors $\{\phi^\pm_1,\phi^\pm_0\}$ form a symplectic and reversible basis for $\mathcal{V}_{0,0}$, according to Definition \ref{Symplectic and reversible basis}. Then, by Lemma \ref{kato thm} and Lemma \ref{properties of U and P} we deduce that:
\begin{lemma} \label{F is symplectic and reversible}
    The basis $\Phi_{\mu,\e}$ of $\mathcal{V}_{\mu,\e}$ defined in \eqref{F basis set and f}, is symplectic and reversible, i.e. satisfies \eqref{basis is symplectic} and \eqref{basis is reversible}. Each map $(\mu,\e) \mapsto \phi^\sigma_k(\mu,\e)$ is analytic as a map $B(\mu_0)\times B(\mu_0)\rightarrow H^4(\mathbb{T})\times H^1(\mathbb{T})$.
\end{lemma}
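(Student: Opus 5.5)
We first check the claims for the unperturbed basis $\{\phi^\pm_1,\phi^\pm_0\}$ in \eqref{eigenfunc of mathcall L00 2}, and then transport them to $\Phi_{\mu,\e}$ using Lemma \ref{properties of U and P}.

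\emph{Unperturbed basis.} Reversibility is immediate from \eqref{reversible}. The first component of $\phi^+_1$ is even and its second is odd, so $\bar\rho\phi^+_1=\phi^+_1$. The first component of $\phi^-_1$ is odd and its second is even, so $\bar\rho\phi^-_1=-\phi^-_1$. Likewise $\bar\rho\phi^+_0=\phi^+_0$ and $\bar\rho\phi^-_0=-\phi^-_0$. For the symplectic relations \eqref{basis is symplectic}, compute with \eqref{complex product} and $\mathcal J\vet{f_1}{f_2}=\vet{f_2}{-f_1}$:
\[
(\mathcal J\phi^-_1,\phi^+_1)=\frac{1}{2\pi}\int_0^{2\pi}\bigl(\cos^2 x+\sin^2 x\bigr)\,\de x=1,
\qquad
(\mathcal J\phi^-_0,\phi^+_0)=1 .
\]
Here the weights $(\ch/\ckb)^{\pm\frac12}$ cancel. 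The conditions $(\mathcal J\phi^\sigma_k,\phi^\sigma_k)=0$ hold by antisymmetry of $\mathcal J$ on real vectors. The cross terms between $k=0$ and $k=1$ vanish by orthogonality of the Fourier modes $e^{\pm \im x}$ and the constants. The relation $(\mathcal J\Phi^+_k,\Phi^-_k)=-1$ then follows from $\mathcal J^*=-\mathcal J$ and the reality of these vectors.

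\emph{Transport by $U_{\mu,\e}$.} By Lemma \ref{properties of U and P}(ii), $U_{\mu,\e}^*\mathcal J U_{\mu,\e}=\mathcal J$. Hence, for any pair of basis vectors,
\[
(\mathcal J U_{\mu,\e}\phi,U_{\mu,\e}\phi')=(U_{\mu,\e}^*\mathcal J U_{\mu,\e}\phi,\phi')=(\mathcal J\phi,\phi'),
\]
so all the relations \eqref{basis is symplectic} are inherited. Reversibility preservation, $\bar\rho U_{\mu,\e}=U_{\mu,\e}\bar\rho$, gives
\[
\bar\rho\,\phi^\sigma_k(\mu,\e)=U_{\mu,\e}\bar\rho\phi^\sigma_k=\sigma\,\phi^\sigma_k(\mu,\e).
\]

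\emph{Basis property and analyticity.} By Lemma \ref{kato thm}(5), $\mathcal V_{\mu,\e}=U_{\mu,\e}\mathcal V_{0,0}$, and $U_{\mu,\e}$ is invertible. Therefore the images $\phi^\sigma_k(\mu,\e)$ form a basis of $\mathcal V_{\mu,\e}$. Analyticity follows from Lemma \ref{kato thm}(4): the map $(\mu,\e)\mapsto U_{\mu,\e}\in\mathcal L(Y)$ is analytic, and the fixed vectors $\phi^\sigma_k$ lie in $Y=H^4\times H^1$. Evaluation at a fixed vector is bounded linear, so $(\mu,\e)\mapsto\phi^\sigma_k(\mu,\e)$ is analytic into $H^4(\mathbb T)\times H^1(\mathbb T)$.

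The only potentially delicate point is Lemma \ref{properties of U and P}(ii) itself, namely the symplecticity of $U_{\mu,\e}$. It rests on $P_{\mu,\e}$ being skew-Hamiltonian, $\mathcal JP=P^*\mathcal J$. This in turn follows from $\mathscr L^*_{\mu,\e}=-\mathcal K\mathcal J$ by conjugating the resolvent integral and reflecting the contour $\Gamma$, which is chosen symmetric under $\lambda\mapsto-\bar\lambda$. Since Lemma \ref{properties of U and P} is already available, the argument reduces to the bookkeeping above.
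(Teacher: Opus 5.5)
Your proposal is correct and is essentially the paper's proof: the symplectic and reversibility relations are transported from $\{\phi^\pm_1,\phi^\pm_0\}$ by $U_{\mu,\e}$ using Lemma~\ref{properties of U and P}(ii), and analyticity follows from that of $(\mu,\e)\mapsto U_{\mu,\e}\in\mathcal{L}(Y)$ in Lemma~\ref{kato thm}. Your explicit checks for the unperturbed basis and of the basis property fill in what the paper asserts just before the lemma and in Lemma~\ref{kato thm}(5); in your closing remark, no symmetry of $\Gamma$ is actually needed, since a Riesz projection depends only on the enclosed part of the spectrum.
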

\begin{proof}
By Lemma \ref{properties of U and P}-(ii), the operators $U_{\mu,\varepsilon}$ are symplectic and reversibility preserving, i.e. 
$U_{\mu,\varepsilon}^* \mathcal{J} U_{\mu,\varepsilon} = \mathcal{J}$ and $U_{\mu,\varepsilon}\circ\overline{\rho}=\overline{\rho}\circ U_{\mu,\varepsilon}$. Recall \eqref{F basis set and f}, i.e. $\phi_k^\sigma(\mu,\varepsilon):=U_{\mu,\varepsilon} \phi_k^\sigma, \sigma=\pm, k=0,1.$
Then for any $k,k',\sigma,\sigma'$,
\begin{align*}
(\mathcal{J} \phi_k^\sigma(\mu,\varepsilon), \phi_{k'}^{\sigma'}(\mu,\varepsilon))
= (\mathcal{J} \phi_k^\sigma, \phi_{k'}^{\sigma'}),
\end{align*}
so the symplectic relations \eqref{basis is symplectic} are preserved. Moreover,
\begin{align*}
\overline{\rho} \phi_k^\sigma(\mu,\varepsilon)
= U_{\mu,\varepsilon}\circ \overline{\rho} \phi_k^\sigma
= \sigma \phi_k^\sigma(\mu,\varepsilon),
\end{align*}
which shows that the reversibility conditions \eqref{basis is reversible} hold as well. 
Finally, the analyticity of $\phi_k^\sigma(\mu,\varepsilon)$ follows from the analyticity of 
$U_{\mu,\varepsilon}$ (Lemma \ref{kato thm}). 
\end{proof}

We then expand the vectors $\phi^\sigma_k(\mu,\e)$ in $(\mu,\e)$. We denote by $\mathit{even}_0(x)$ a real, even, $2\pi$-periodic function with zero space average. In the sequel $\cO(\mu^m\e^n)\vet{\mathit{even}(x)}{\mathit{odd}(x)}$ denotes an analytic map in $(\mu,\e)$ with values in $Y=H^4(\mathbb{T},\mathbb{C})\times H^1(\mathbb{T},\mathbb{C})$, whose first component is $\mathrm{even}(x)$ and the second one $\mathit{odd}(x)$; we have a similar meaning for $\cO(\mu^m\e^n)\vet{\mathit{odd}(x)}{\mathit{even}(x)}$, etc $\ldots$.

Next, we provide the expansion of the basis $\phi_k^{\pm}(\mu,\varepsilon) = U_{\mu,\varepsilon} \phi_k^{\pm}$, $k = 0,1$, in \eqref{F basis set and f}, where $\phi_k^{\pm}$ defined in \eqref{eigenfunc of mathcall L00 2} belong to the subspace $\mathcal{V}_{0,0} := \operatorname{Rg}(P_{0,0})$. We first Taylor-expand the transformation operators $U_{\mu,\varepsilon}$ defined in \eqref{U transformation operators}. We denote $\partial_{\varepsilon}$ with a prime and $\partial_{\mu}$ with a dot.
The next lemma follows as \cite[Lemma A.1]{BMV1}.
\begin{lemma} \label{U mu epsilon P00}
    The first jets of $U_{\mu,\varepsilon} P_{0,0}$ are
\begin{align} \label{A1}
    &U_{0,0} P_{0,0} = P_{0,0}, \quad\quad U'_{0,0} P_{0,0} = P'_{0,0} P_{0,0}, \quad\quad \dot{U}_{0,0} P_{0,0} = \dot{P}_{0,0} P_{0,0},\\ \label{A2}
    &\dot{U}'_{0,0} P_{0,0} = \left( \dot{P}'_{0,0} - \frac{1}{2} P_{0,0} \dot{P}'_{0,0} \right) P_{0,0}, 
\end{align}
where
\begin{align} \label{A3}
    P'_{0,0} &= \frac{1}{2\pi \im} \oint_{\Gamma} (\mathscr{L}_{0,0} - \lambda)^{-1} \mathscr{L}'_{0,0} (\mathscr{L}_{0,0} - \lambda)^{-1} \de \lambda,  \\ \label{A4}
    \dot{P}_{0,0} &= \frac{1}{2\pi \im} \oint_{\Gamma} (\mathscr{L}_{0,0} - \lambda)^{-1} \dot{\mathscr{L}}_{0,0} (\mathscr{L}_{0,0} - \lambda)^{-1} \de \lambda, 
\end{align}
and
\begin{align} \label{A5a}
    \dot{P}'_{0,0} &= -\frac{1}{2\pi \im} \oint_{\Gamma} (\mathscr{L}_{0,0} - \lambda)^{-1} \dot{\mathscr{L}}_{0,0} (\mathscr{L}_{0,0} - \lambda)^{-1} \mathscr{L}'_{0,0} (\mathscr{L}_{0,0} - \lambda)^{-1} \de \lambda \\ \label{A5b}
    &\quad -\frac{1}{2\pi \im} \oint_{\Gamma} (\mathscr{L}_{0,0} - \lambda)^{-1} \mathscr{L}'_{0,0} (\mathscr{L}_{0,0} - \lambda)^{-1} \dot{\mathscr{L}}_{0,0} (\mathscr{L}_{0,0} - \lambda)^{-1} \de \lambda \\ \label{A5c}
    &\quad + \frac{1}{2\pi \im} \oint_{\Gamma} (\mathscr{L}_{0,0} - \lambda)^{-1} \dot{\mathscr{L}}'_{0,0} (\mathscr{L}_{0,0} - \lambda)^{-1} \de \lambda.
\end{align}
The operators $\mathscr{L}'_{0,0}$ and $\dot{\mathscr{L}}_{0,0}$ are
\begin{align} \label{A6}
    \mathscr{L}'_{0,0}=\begin{bmatrix}
        \partial_x\circ q_1(x)  &0\\
        -a_1(x)+\kappa\tau_1-b\beta_1 &q_1(x)\circ \partial_x
    \end{bmatrix},~~\dot{\mathscr{L}}_{0,0}=\begin{bmatrix}
        0 &\mathrm{sgn}(D)m(D)\\
        2\im\kappa\pa_x-4\im b \partial^3_{x} &0
    \end{bmatrix},
\end{align}
where $\mathrm{sgn}(D)$ is defined in \eqref{D+mu}, \eqref{sgn D} and $m(D)$ is the real, even operator
\begin{align} \label{mD}
    m(D):=\tanh(\tth|D|)+\tth|D|(1-\tanh^2(\tth|D|))
\end{align}
and $a_1(x)$, $q_1(x)$ are given in Lemma \ref{lem:pa.exp}, $\tau_1$ is given in Lemma \ref{tau}, and $\beta_1$ is given in Lemma \ref{beta}.

The operator $\dot{\mathscr{L}}'_{0,0}$ is
\begin{align} \label{A8}
    \dot{\mathscr{L}}'_{0,0}=\begin{bmatrix}
        \im q_1(x) & 0\\
    \im\kappa\left(\tau_{1,1}(x)+2\tau_{1,2}(x)\pa_x\right)-\im b \left(b_{1,1}(x) +2b_{1,2}(x)\pa_x+3b_{1,3}(x)\pa^2_x+4b_{1,4}(x)\pa^4_x\right)       & \im q_1(x)
    \end{bmatrix}
\end{align}
with $\tau_{1,1}(x)$, $\tau_{1,2}(x)$ in \eqref{tau11}, \eqref{tau12} and $b_{1,1}(x)$, $b_{1,2}(x)$, $b_{1,3}(x)$, $b_{1,4}(x)$ in \eqref{b111}, \eqref{b112}, \eqref{b113}, \eqref{b114}. 
\end{lemma}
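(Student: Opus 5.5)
The first jets in \eqref{A1}–\eqref{A2} come from differentiating the explicit formula \eqref{U transformation operators} for $U_{\mu,\e}$ and then restricting to $\mathrm{Rg}(P_{0,0})$. The operator expressions for $\mathscr{L}'_{0,0}$, $\dot{\mathscr{L}}_{0,0}$, $\dot{\mathscr{L}}'_{0,0}$ come from differentiating \eqref{mathscr L mu e}–\eqref{mathcal B mu e}. I would follow \cite[Lemma A.1]{BMV1}.

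\textbf{Jets of $U_{\mu,\e}P_{0,0}$.} Write $\Delta:=P_{\mu,\e}-P_{0,0}=\mathcal O(\mu,\e)$. Then
\[
U_{\mu,\e}P_{0,0}=(\mathrm{Id}-\Delta^2)^{-1/2}P_{\mu,\e}P_{0,0},
\]
because $(\mathrm{Id}-P_{0,0})P_{0,0}=0$. Since $\Delta^2$ is quadratic, $(\mathrm{Id}-\Delta^2)^{-1/2}=\mathrm{Id}+\tfrac12\Delta^2+\mathcal O(\Delta^4)$. Expanding $P_{\mu,\e}$ to second order, the zeroth and first-order terms give directly
\[
U_{0,0}P_{0,0}=P_{0,0},\qquad U'_{0,0}P_{0,0}=P'_{0,0}P_{0,0},\qquad \dot U_{0,0}P_{0,0}=\dot P_{0,0}P_{0,0}.
\]
For the mixed derivative, the $\partial_\mu\partial_\e$ coefficient is $\dot P'_{0,0}P_{0,0}+\tfrac12(\dot P_{0,0}P'_{0,0}+P'_{0,0}\dot P_{0,0})P_{0,0}$. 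I would rewrite the bracket using the projector identities obtained by differentiating $P^2=P$:
\[
P'=P'P+PP',\qquad \dot P=\dot P P+P\dot P,\qquad \dot P'=\dot P'P+\dot P P'+P'\dot P+P\dot P'.
\]
These imply $P_{0,0}P'_{0,0}P_{0,0}=0$ and $P_{0,0}\dot P_{0,0}P_{0,0}=0$. Multiplying the third identity by $P_{0,0}$ on both sides then gives
\[
P_{0,0}\dot P'_{0,0}P_{0,0}=-P_{0,0}(\dot P_{0,0}P'_{0,0}+P'_{0,0}\dot P_{0,0})P_{0,0}.
\]
One also checks that $(\dot P_{0,0}P'_{0,0}+P'_{0,0}\dot P_{0,0})P_{0,0}$ lies in $\mathrm{Rg}(P_{0,0})$, using $P'P_{0,0}=(\mathrm{Id}-P_{0,0})P'P_{0,0}$ and its $\dot{}$ analogue. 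Substituting yields \eqref{A2}. This bookkeeping of projector identities is the only delicate algebra, and I expect it to be the main obstacle.

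\textbf{Derivatives of $P_{\mu,\e}$.} Differentiate the Riesz formula \eqref{Projection P mu e} using $\partial(\mathscr L-\lambda)^{-1}=-(\mathscr L-\lambda)^{-1}(\partial\mathscr L)(\mathscr L-\lambda)^{-1}$, which is legitimate because Lemma \ref{kato thm} gives analyticity in $\mathcal L(X,Y)$. One derivative gives \eqref{A3}–\eqref{A4}, the overall minus sign cancelling. Differentiating once more in the other variable produces the two triple-resolvent terms \eqref{A5a}–\eqref{A5b} and the mixed-derivative term \eqref{A5c}.

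\textbf{The operators.} For $\mathscr L'_{0,0}=\mathcal J\mathcal K'_{0,0}$, expand \eqref{mathcal B mu e} at $\mu=0$ to first order in $\e$ using Lemmas \ref{lem:pa.exp}, \ref{tau} and \ref{beta}. The term $\ttf=\mathcal O(\e^2)$ does not contribute at this order.

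For $\dot{\mathscr L}_{0,0}$, differentiate in $\mu$ at $\e=0$:
\begin{itemize}
\item By \eqref{D+mu}, $\partial_\mu\big[|D+\mu|\tanh(\tth|D+\mu|)\big]_{\mu=0}=(\mathrm{sgn}(D)+\Pi_0)\,m(D)$. Since $m(0)=0$, this reduces to $\mathrm{sgn}(D)m(D)$.
\item $\partial_\mu(\partial_x+\im\mu)^2=2\im\partial_x$ and $\partial_\mu(\partial_x+\im\mu)^4=4\im\partial_x^3$. Applying $\mathcal J$ gives the entries $2\im\kappa\partial_x-4\im b\partial_x^3$.
\end{itemize}

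For $\dot{\mathscr L}'_{0,0}$, take $\partial_\mu\partial_\e$ of \eqref{Sigma} together with the $\im\mu q_\e$ terms. This gives $\partial_\mu\big[b_{1,j}(\partial_x+\im\mu)^j\big]=\im j\,b_{1,j}\partial_x^{j-1}$, and similarly for the $\tau_{1,j}$, which produces \eqref{A8}. Reading off \eqref{A8} against this derivative, the last entry should be $4b_{1,4}\partial_x^3$ rather than $\partial_x^4$, and I would check this.
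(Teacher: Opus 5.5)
Your proposal is correct and follows the paper's proof essentially step by step: the binomial expansion of $(\mathrm{Id}-(P_{\mu,\e}-P_{0,0})^2)^{-1/2}$ applied to $P_{\mu,\e}P_{0,0}$, the projector identity obtained by differentiating $P_{\mu,\e}^2=P_{\mu,\e}$, differentiation of the Riesz integral, and direct differentiation of $\mathcal{K}_{\mu,\e}$ using $m(0)=0$. The projector step is simpler than you expect: multiplying the mixed identity $\dot{P}'=\dot{P}'P+\dot{P}P'+P'\dot{P}+P\dot{P}'$ only on the right by $P_{0,0}$ gives $(\dot{P}_{0,0}P'_{0,0}+P'_{0,0}\dot{P}_{0,0})P_{0,0}=-P_{0,0}\dot{P}'_{0,0}P_{0,0}$ directly, so the range check is unnecessary; and your correction of the last entry of \eqref{A8} to $4b_{1,4}(x)\partial_x^3$ is right, since that is the form actually used later in \eqref{tau B11 B12}.
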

\begin{proof}
   For $\|P_{\mu,\varepsilon}-P_{0,0}\|_{ \mathcal{L}(Y)}<1$, we have binomial series
   \begin{equation} \label{binomial}
       \begin{aligned}
       (\mathrm{Id}-(P_{\mu,\varepsilon}-P_{0,0})^2)^{-\frac{1}{2}}&=\sum_{k=0}^{\infty} \binom{-\frac{1}{2}}{k}(-1)^k (P_{\mu,\varepsilon}-P_{0,0})^{2k}\\
       &=\mathrm{Id}+\frac{1}{2}(P_{\mu,\varepsilon}-P_{0,0})^2+\frac{3}{8} (P_{\mu,\varepsilon}-P_{0,0})^4+\cO((P_{\mu,\varepsilon}-P_{0,0})^6),
   \end{aligned}
   \end{equation}  
where 
\[
\mathcal{O}(P_{\mu, \varepsilon} - P_{0,0})^6 = \mathcal{O}(\varepsilon^6, \varepsilon^5 \mu, \varepsilon^4 \mu^2, \varepsilon^3 \mu^3, \varepsilon^2\mu^4, \varepsilon \mu^5, \mu^6) \in  \mathcal{L}(Y).
\]
By \eqref{U transformation operators} and \eqref{binomial} one has the Taylor expansion in $ \mathcal{L}(Y)$
\begin{align*}
    U_{\mu, \varepsilon} P_{0,0} &= P_{\mu, \varepsilon} P_{0,0} + \frac{1}{2} (P_{\mu, \varepsilon} - P_{0,0})^2 P_{\mu, \varepsilon} P_{0,0} + \mathcal{O}(P_{\mu, \varepsilon} - P_{0,0})^4.
\end{align*}
Consequently, one derives \eqref{A1}, \eqref{A2}, using also the identity 
\[
\dot{P}_{0,0} P'_{0,0} P_{0,0} + P'_{0,0} \dot{P}_{0,0} P_{0,0} = - P_{0,0} \dot{P}'_{0,0} P_{0,0},
\]
which follow differentiating $P_{\mu, \varepsilon}^2 = P_{\mu, \varepsilon}$. Differentiating \eqref{Projection P mu e} one gets \eqref{A3}, \eqref{A4}, \eqref{A5a}, \eqref{A5b}, and \eqref{A5c}. Formulas \eqref{A6}, \eqref{mD}, and \eqref{A8} follow by \eqref{mathcal B mu e} using also that the Fourier multiplier 
$\Pi_0 \big( \tanh(\tth|D|) + \tth|D| (1 - \tanh^2(\tth|D|)) \big) = 0$.
\end{proof}
By the Lemma \ref{U mu epsilon P00}, we have the Taylor expansion
\begin{equation} \label{the expandsion of f sigma mu}
    \begin{aligned}
    \phi_k^\sigma (\mu, \varepsilon) &= \phi_k^\sigma + \varepsilon P'_{0,0} \phi_k^\sigma + \mu \dot{P}_{0,0} \phi_k^\sigma  + \mu \varepsilon \big( \dot{P}'_{0,0} - \frac{1}{2} P_{0,0} \dot{P}'_{0,0} \big) \phi_k^\sigma + \mathcal{O}(\mu^2, \varepsilon^2).
\end{aligned}
\end{equation}

\begin{lemma}  [Expansion of the basis $\Phi_{\mu,\e}$] \label{expansion of the basis F} For small values of $(\mu,\e)$ the basis $\Phi_{\mu,\e}$ in \eqref{F basis set and f} has the expansion

\begin{equation} \label{43 f+1}
\begin{aligned} 
\phi_1^+(\mu,\varepsilon) &= 
\vet{(\ch/\ckb)^{\frac{1}{2}}\cos(x)}{(\ch/\ckb)^{-\frac{1}{2}}\sin(x)}
+ \im \frac{\mu}{4} \gamma_{\tth,\kappa,b}
\vet{(\ch/\ckb)^{\frac{1}{2}}\sin(x)}{(\ch/\ckb)^{-\frac{1}{2}}\cos(x)}
+ \varepsilon
\begin{bmatrix}
\alpha_{\tth,\kappa,b} \cos(2x) \\
\beta_{\tth,\kappa,b} \sin(2x)
\end{bmatrix} \\
& \quad + \mathcal{O}(\mu^2)
\begin{bmatrix}
\mathit{even}_0(x) + \im \mathit{odd}(x) \\
\mathit{odd}(x) + \im \mathit{even}_0(x)
\end{bmatrix}
+ \mathcal{O}(\varepsilon^2)
\begin{bmatrix}
\mathit{even}_0(x) \\
\mathit{odd}(x)
\end{bmatrix} + \im \mu \varepsilon
\begin{bmatrix}
\mathit{odd}(x) \\
\mathit{even}(x)
\end{bmatrix}
+ \mathcal{O}(\mu^2 \varepsilon, \mu \varepsilon^2),
\end{aligned}
\end{equation}

\begin{equation} \label{44 f-1}
\begin{aligned}
\phi_1^-(\mu,\varepsilon) &= 
\vet{-(\ch/\ckb)^{\frac{1}{2}}\sin(x)}{(\ch/\ckb)^{-\frac{1}{2}}\cos(x)}
+ \im \frac{\mu}{4}\gamma_{\tth,\kappa,b}
\vet{(\ch/\ckb)^{\frac{1}{2}}\cos(x)}{-(\ch/\ckb)^{-\frac{1}{2}}\sin(x)}
+ \varepsilon
\begin{bmatrix}
- \alpha_{\tth,\kappa,b} \sin(2x) \\
\beta_{\tth,\kappa,b} \cos(2x)
\end{bmatrix} \\
& \quad + \mathcal{O}(\mu^2)
\begin{bmatrix}
\mathit{odd}(x) + \im \mathit{even}_0(x) \\
\mathit{even}_0(x) + \im \mathit{odd}(x)
\end{bmatrix}
+ \mathcal{O}(\varepsilon^2)
\begin{bmatrix}
\mathit{odd}(x) \\
\mathit{even}(x)
\end{bmatrix}   + \im \mu \varepsilon
\begin{bmatrix}
\mathit{even}(x) \\
\mathit{odd}(x)
\end{bmatrix}
+ \mathcal{O}(\mu^2 \varepsilon, \mu \varepsilon^2),
\end{aligned}
\end{equation}

\begin{equation} \label{45 f+0}
\begin{aligned}
\phi_0^+(\mu,\varepsilon) &=
\begin{bmatrix}
1 \\
0
\end{bmatrix}
+ \varepsilon \delta_{\tth,\kappa,b}
\vet{(\ch/\ckb)^{\frac{1}{2}}\cos(x)}{-(\ch/\ckb)^{-\frac{1}{2}}\sin(x)}
+ \mathcal{O}(\varepsilon^2)
\begin{bmatrix}
\mathit{even}_0(x) \\
\mathit{odd}(x)
\end{bmatrix}  + \im \mu \varepsilon
\begin{bmatrix}
\mathit{odd}(x) \\
\mathit{even}(x)
\end{bmatrix}
+ \mathcal{O}(\mu^2 \varepsilon, \mu \varepsilon^2),
\end{aligned}
\end{equation}

\begin{equation} \label{46 f-0}
\begin{aligned}
\phi_0^-(\mu,\varepsilon) &=
\begin{bmatrix}
0 \\
1
\end{bmatrix}
+ \im \mu \varepsilon
\begin{bmatrix}
\mathit{even}_0(x) \\
\mathit{odd}(x)
\end{bmatrix}
+ \mathcal{O}(\mu^2 \varepsilon, \mu \varepsilon^2),
\end{aligned}
\end{equation}
where the remainders $\mathcal{O}()$ are vectors in $Y=H^4(\mathbb{T},\mathbb{C})\times H^1(\mathbb{T},\mathbb{C})$ and
\begin{equation} \label{47 coefficients}
\begin{aligned}
\alpha_{\tth,\kappa,b} &:= \frac{b\ch^4-3\kappa -27 b+\ch^4\kappa +\ch^4+3}{2\ch^\frac{3}{2}\ckb^\frac{1}{2}\,\left(b\ch^4-3\kappa -15 b+\ch^4\kappa +\ch^4\right)}, \quad
\beta_{\tth,\kappa,b} := -\frac{\left(\ch^4+1\right)\,\left(\ch^4-3\right)\ckb^\frac{5}{2}}{4\ch^\frac{5}{2}\,\left(b\ch^4-3\kappa -15 b+\ch^4\kappa +\ch^4\right)}, \\
\gamma_{\tth,\kappa,b} &:= 1+\tth(\ch^{-2}-\ch^2)-(4b+2\kappa)\ckb^{-2}, \quad
\delta_{\tth,\kappa,b} := \frac{\left(\ch^4+3\right)\,\ckb^\frac{1}{2}}{4\ch^\frac{5}{2}}.
\end{aligned}
\end{equation}
For $\mu = 0$, the basis $\{ \phi_j^\pm(0,\varepsilon),\, \varepsilon = 0 \}$ is real and
\begin{equation} \label{48 f-0=01}
\begin{aligned}
\phi_1^+(0,\varepsilon) = 
\begin{bmatrix}
\mathit{even}_0(x) \\
\mathit{odd}(x)
\end{bmatrix}, \quad
\phi_1^-(0,\varepsilon) = 
\begin{bmatrix}
\mathit{odd}(x) \\
\mathit{even}(x)
\end{bmatrix},  \quad \phi_0^+(0,\varepsilon) =\vet{1}{0}+
\begin{bmatrix}
\mathit{even}_0(x) \\
\mathit{odd}(x)
\end{bmatrix}, \quad
\phi_0^-(0,\varepsilon) = 
\begin{bmatrix}
0 \\
1
\end{bmatrix}.
\end{aligned}
\end{equation}
\end{lemma}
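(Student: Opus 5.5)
We will compute each term of the Taylor expansion \eqref{the expandsion of f sigma mu} by applying Lemma \ref{U mu epsilon P00} to the explicit basis \eqref{eigenfunc of mathcall L00 2}. The parity structure of the remainders will follow from reversibility, and the realness at $\mu=0$ from Lemma \ref{properties of U and P}-(iii). The basic tool is the resolvent of the Fourier multiplier $\mathscr{L}_{0,0}$ evaluated on trigonometric polynomials. On each Fourier mode $e^{\im jx}$, $\mathscr{L}_{0,0}$ acts as a $2\times 2$ matrix with eigenvalues $\lambda^\pm_j(0)$. For $j=0,\pm1$ it is nilpotent on the generalized kernel. For $|j|\ge2$ its eigenvalues are nonzero precisely because $(\tth,\kappa,b)\notin\fR$.

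\textbf{Contour integrals.} For $f\in\mathcal V_{0,0}$ we have $(\mathscr{L}_{0,0}-\lambda)^{-1}f=-\lambda^{-1}f-\lambda^{-2}\mathscr{L}_{0,0}f$, since $\mathscr{L}_{0,0}^2=0$ on $\mathcal V_{0,0}$. On the complement we write $(\mathscr{L}_{0,0}-\lambda)^{-1}(\mathrm{Id}-P_{0,0})$, which is holomorphic inside $\Gamma$. The residue theorem then reduces \eqref{A3}--\eqref{A4} to
\[
P'_{0,0}f=P_{0,0}\mathscr{L}'_{0,0}\mathscr{L}_{0,0}^{\natural}f\ \text{-type terms}\;-\;(\mathscr{L}_{0,0})^{-1}_{\perp}(\mathrm{Id}-P_{0,0})\mathscr{L}'_{0,0}f+\cdots,
\]
which we will use in the standard form of \cite[Lemma A.1]{BMV1}:
\[
P'_{0,0}f=-(\mathrm{Id}-P_{0,0})\,\mathscr{L}_{0,0}^{-1}\,(\mathrm{Id}-P_{0,0})\mathscr{L}'_{0,0}f+(\text{terms in }\mathcal V_{0,0}\text{ involving }\mathscr{L}_{0,0}f).
\]
The same structure holds for $\dot P_{0,0}$.

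\textbf{First-order terms.} Using \eqref{A6}, $\mathscr{L}'_{0,0}\phi_1^\pm$ only involves the first harmonics $\cos x,\sin x$ multiplied by harmonics $0,2$. The coefficients come from $q_1,a_1$ in Lemma \ref{lem:pa.exp}, from $\tau_1$ in Lemma \ref{tau} and from $\beta_1$ in Lemma \ref{beta}.

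The $2x$-component is inverted by the $2\times2$ matrix symbol of $\mathscr{L}_{0,0}$ at $j=2$. Its determinant is proportional to $(1+4\kappa+16b)2\tanh(2\tth)-4c_0^2$, which after simplification is a multiple of $r_{\tth,\kappa,b}$. This produces $\alpha_{\tth,\kappa,b},\beta_{\tth,\kappa,b}$ with the denominator $b\ch^4-3\kappa-15b+\ch^4\kappa+\ch^4=r_{\tth,\kappa,b}$.

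The zero-mode components land in $\mathcal V_{0,0}$. There one must check that the Jordan-structure terms cancel, so that no constant correction appears for $\phi_1^\pm$. This is the source of the $\mathit{even}_0$ entries.

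For $\phi_0^+$, $\mathscr{L}'_{0,0}\phi_0^+=[0,-a_1]^T$ is a pure first harmonic lying in $\mathcal V_{0,0}$. The term $\lambda^{-2}$ combined with $\mathscr{L}_{0,0}\phi_0^+=-\phi_0^-$ gives a kernel component $\delta_{\tth,\kappa,b}\,(\cos,-\sin)$, and we will verify $\delta_{\tth,\kappa,b}=(\ch^4+3)\ckb^{1/2}/(4\ch^{5/2})$ from $a_1^{[1]}$ and $q_1^{[1]}$. For $\phi_0^-$, $\mathscr{L}'_{0,0}\phi_0^-=0$, so there is no $\e$-term.

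For the $\mu$-derivative, $\dot{\mathscr{L}}_{0,0}$ is a Fourier multiplier. Hence $\dot P_{0,0}\phi_1^\pm$ stays in the modes $\pm1$, and $\dot P_{0,0}\phi_0^\pm=0$ because $\mathrm{sgn}(D)$ and $\pa_x$ annihilate constants. The multiplier values $m(1)=\tanh\tth+\tth(1-\tanh^2\tth)$ and $2\kappa+4b$ from \eqref{A6} combine, after normalization by $c_0$, into $\gamma_{\tth,\kappa,b}=1+\tth(\ch^{-2}-\ch^2)-(4b+2\kappa)\ckb^{-2}$.

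\textbf{Parities and the expected obstacle.} The mixed $\mu\e$ term and the $\cO(\mu^2),\cO(\e^2)$ remainders are not computed explicitly; only their parity is needed. Parity follows because $U_{\mu,\e}$ is reversibility-preserving, so $\overline\rho\phi_k^\sigma(\mu,\e)=\sigma\phi_k^\sigma(\mu,\e)$. Realness at $\mu=0$ makes $\mu$-odd coefficients imaginary and $\mu$-even ones real, giving \eqref{Parity structure} order by order.

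The zero-average claims ($\mathit{even}_0$) and the exact form \eqref{48 f-0=01}, in particular $\phi_0^-(0,\e)=[0,1]^T$, require an extra argument. Since $U_1=[0,1]^T$ lies in $\ker L_{0,\e}$ by Lemma \ref{237}, we have $P_{0,\e}U_1=U_1$. Together with $P_{0,0}U_1=U_1$, this gives $U_{0,\e}\phi_0^-=\phi_0^-$ exactly. For the averages we will use that $\Pi_0$ of the first component is controlled by pairing with $\mathcal J\phi_0^-$, combined with symplecticity.

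The main obstacle is the bookkeeping of the $2x$-mode inversion with the new fourth-order bending coefficients $b_{1,j}$ and the exact cancellations inside $\mathcal V_{0,0}$.
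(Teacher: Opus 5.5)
Your outline has a genuine error at the step that produces $\delta_{\tth,\kappa,b}$, namely the first-order term of $\phi_0^+$.

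\textbf{The $\delta$ term.} Three things go wrong here.
\begin{itemize}
\item The input vector is wrong: $\mathscr{L}'_{0,0}\phi_0^+\neq[0,-a_1]^T$. Applying $\mathscr{L}'_{0,0}$ to $[1,0]^T$ gives $[\pa_x q_1,\,-a_1+\kappa\tau_{1,0}-b\,b_{1,0}]^T=\vet{2\ch^{-1}\ckb\sin(x)}{(\ch^2+\ch^{-2})\ckb^2\cos(x)}$. So $q_1$ and the zeroth-order capillary and bending coefficients both enter.
\item This vector is not in $\mathcal V_{0,0}$. A vector $[A\sin x,B\cos x]^T$ lies in $\mathcal V_{0,0}$ only if it is a multiple of $\phi_1^-$. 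Here the $\phi_{-1}^-$-coefficient is $\tfrac12\ckb^{3/2}\ch^{1/2}(\ch^2+3\ch^{-2})\neq0$. Your mode-by-mode picture (``nilpotent for $j=0,\pm1$'') overlooks the invariant block $\mathcal U=\mathrm{span}\{\phi_{-1}^+,\phi_{-1}^-\}$ inside the first harmonics. On it, $\mathscr{L}_{0,0}\phi_{-1}^+=-2c_0\phi_{-1}^-$ and $\mathscr{L}_{0,0}\phi_{-1}^-=2c_0\phi_{-1}^+$, with eigenvalues $\pm2\im c_0$.
\item The mechanism you invoke produces nothing. The $\lambda^{-2}\phi_0^-$ part of $(\mathscr{L}_{0,0}-\lambda)^{-1}\phi_0^+$ is annihilated, since $\mathscr{L}'_{0,0}\phi_0^-=0$, as you note yourself. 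A $\mathcal V_{0,0}$-valued $\mathscr{L}'_{0,0}\phi_0^+$ would only give integrands $\lambda^{-2},\lambda^{-3}$, which have zero residue. Followed literally, your outline gives $P'_{0,0}\phi_0^+=0$.
\end{itemize}
In fact $P_{0,0}P'_{0,0}P_{0,0}=0$, so $P'_{0,0}\phi_0^+$ cannot have any $\mathcal V_{0,0}$-component. Also, $\phi_{-1}^+$, which you call a ``kernel component'', is not in the kernel.

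The correct computation is your own general formula; its extra terms vanish here. Applied to the $\mathcal U$-component it gives
$P'_{0,0}\phi_0^+=-\mathscr{L}_{0,0}^{-1}\bigl(\tfrac12\ckb^{3/2}\ch^{1/2}(\ch^2+3\ch^{-2})\phi_{-1}^-\bigr)=\tfrac{1}{4c_0}\ckb^{3/2}\ch^{1/2}(\ch^2+3\ch^{-2})\,\phi_{-1}^+=\delta_{\tth,\kappa,b}\,\phi_{-1}^+$.
The same $\mathcal U$-block inversion, dividing by $2c_0$, is what produces $\gamma_{\tth,\kappa,b}$ in $\dot P_{0,0}\phi_1^\pm$.

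\textbf{The pure-$\mu$ part.} You also skip the argument here. The statement asserts two things:
\begin{itemize}
\item $\phi_0^\pm(\mu,\e)$ contain no pure $\mu^k$ terms at all.
\item The $\mathcal O(\mu^2)$ terms of $\phi_1^\pm$ have zero average in both components.
\end{itemize}
Showing $\dot P_{0,0}\phi_0^\pm=0$ only settles first order, and pairing with $\mathcal J\phi_0^-$ only controls first components.

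The missing idea is that $\mathrm{span}\{\phi_0^+,\phi_0^-\}$ is invariant under $\mathscr{L}_{\mu,0}$. Indeed $\mathscr{L}_{\mu,0}\phi_0^+=-(1+\kappa\mu^2+b\mu^4)\phi_0^-$ and $\mathscr{L}_{\mu,0}\phi_0^-=\mu\tanh(\tth\mu)\phi_0^+$, with eigenvalues inside $\Gamma$. Hence $P_{\mu,0}\phi_0^\pm=\phi_0^\pm$ and $\phi_0^\pm(\mu,0)=U_{\mu,0}\phi_0^\pm=\phi_0^\pm$. Symplecticity against both $\mathcal J\phi_0^+=[0,-1]^T$ and $\mathcal J\phi_0^-=[1,0]^T$ then gives the zero averages of both components of $\phi_1^\pm(\mu,0)$.

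\textbf{Imaginary $\mu$-odd coefficients.} Realness at $\mu=0$ alone does not make the $\mu$-odd coefficients purely imaginary, which you need for the $\im\mu\e$ terms. You need either $\overline{\mathscr{L}_{\mu,\e}}=\mathscr{L}_{-\mu,\e}$, or the paper's observation: $\dot{\mathscr{L}}_{0,0}$ and $\dot{\mathscr{L}}'_{0,0}$ are purely imaginary while $\mathscr{L}'_{0,0}$ is real, so $\dot P'_{0,0}$ is purely imaginary.
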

\begin{proof}
    The proof can be found in Appendix \ref{secA1}.
\end{proof}

We now state the main result of this section.

\begin{lemma} \label{B decomposition}
    The matrix that represents the Hamiltonian and reversible operator $\mathscr{L}_{\mu,\varepsilon} : \mathcal{V}_{\mu,\varepsilon} \to \mathcal{V}_{\mu,\varepsilon}$ in the symplectic and reversible basis $\Phi_{\mu,\e}$ of $\mathcal{V}_{\mu,\varepsilon}$ defined in \eqref{F basis set and f}, is a Hamiltonian matrix $\mathsf{L}_{\mu,\varepsilon} = \mathsf{J}_4 \mathsf{K}_{\mu,\varepsilon}$, where $\mathsf{K}_{\mu,\varepsilon}$ is a self-adjoint and reversibility preserving (i.e., satisfying \eqref{B are alternatively real or imaginary}) $4 \times 4$ matrix of the form
\begin{align} \label{B mu epsilon}
    \mathsf{K}_{\mu,\varepsilon} = 
\begin{pmatrix}
\mathsf E & \mathsf F \\
\mathsf{F}^* & \mathsf G
\end{pmatrix}, \quad \mathsf E = \mathsf{E}^*, \quad \mathsf G = \mathsf{G}^*,
\end{align}
where $\mathsf E, \mathsf F, \mathsf G$ are the $2 \times 2$ matrices
\begin{equation} \label{E}
\begin{aligned} 
\mathsf E &:= 
\begin{pmatrix}
\mathsf{e}_{11} \varepsilon^2 (1 + \hat{r}_1(\varepsilon,\mu\varepsilon)) - \mathsf{e}_{22} \frac{\mu^2}{8} (1 + \check{r}_1(\varepsilon,\mu)) &
\im\left(\frac{1}{2} \mathsf{e}_{12} \mu + r_2(\mu \varepsilon^2, \mu^2 \varepsilon, \mu^3)\right) \\
- \im\left(\frac{1}{2} \mathsf{e}_{12} \mu + r_2(\mu \varepsilon^2, \mu^2 \varepsilon, \mu^3)\right) &
- \mathsf{e}_{22} \frac{\mu^2}{8}(1 + r_5(\varepsilon, \mu))
\end{pmatrix}, 
\end{aligned}
\end{equation}

\begin{equation} \label{F}
\begin{aligned} 
\mathsf F &:= 
\begin{pmatrix}
\mathsf{f}_{11} \varepsilon + r_3(\varepsilon^3, \mu \varepsilon^2, \mu^2 \varepsilon) &
\im \,\mu \varepsilon \ch^{-\frac{1}{2}}\ckb^{\frac{1}{2}} + \im\, r_4(\mu \varepsilon^2, \mu^2 \varepsilon) \\
\im\, r_6(\mu \varepsilon) &
r_7(\mu^2 \varepsilon)
\end{pmatrix}, 
\end{aligned}
\end{equation}

\begin{equation} \label{G}
\begin{aligned} 
\mathsf G &:= 
\begin{pmatrix}
1 +\kappa\mu^2+ r_8(\varepsilon^2, \mu^2 \varepsilon) &
- \im r_9(\mu \varepsilon^2, \mu^2 \varepsilon) \\
\im r_9(\mu \varepsilon^2, \mu^2 \varepsilon) &
\mu \tanh(\tth \mu) + r_{10}(\mu^2 \varepsilon)
\end{pmatrix}, 
\end{aligned}
\end{equation}
where 
\begin{equation} \label{e11 f11}
    \begin{aligned} 
\mathsf{e}_{11}&:=\mathsf{e}_{11}(\tth,\kappa,b) \\
&:= \Big(-b^2\ch^8+20 b^2\ch^4+69 b^2+5 b\ch^8 \kappa +8 b\ch^8-89 b\ch^4 \kappa -140 b\ch^4+90 b\kappa +78 b\\
&\,+6\ch^8 \kappa ^2+15\ch^8 \kappa +9\ch^8-25\ch^4 \kappa ^2-44\ch^4 \kappa -10\ch^4+21 \kappa ^2+30\kappa +9\Big)\Big(8\ch^3\ckb r_{\tth,\kappa,b}\Big)^{-1},\\
\mathsf{e}_{12} &:=\mathsf{e}_{12}(\tth,\kappa,b):=\frac{\left(4 b+2\kappa \right)\ch}{\ckb}+\frac{\left(\ch^2-\tth \left(\ch^4-1\right)\right)\ckb}{\ch}\,,\\
\mathsf{e}_{22} &:=\mathsf{e}_{22}(\tth,\kappa,b)\\
&:= \Big( 2b\ch^4+2b\tth^2-2\ch^2\tth+2\ch^6\tth+2\ch^4\kappa +2\tth^2\kappa +\ch^4+\tth^2+17b^2\ch^4+b^2\tth^2+2\ch^4\tth^2\\
&\,-3\ch^8\tth^2+5\ch^4\kappa^2+\tth^2\kappa^2-2b^2\ch^2\tth+4b\ch^4\tth^2+2b^2\ch^6\tth-6b\ch^8\tth^2-2\ch^2\tth\kappa^2+4\ch^4\tth^2\kappa\\ 
&\,+2\ch^6\tth\kappa^2-6\ch^8\tth^2\kappa +2b^2\ch^4\tth^2-3b^2\ch^8\tth^2-32b\ch^4\ckb^2+2\ch^4\tth^2\kappa^2-3\ch^8\tth^2\kappa^2-8\ch^4\kappa\ckb^2\\
&\,-4b\ch^2\tth+4b\ch^6\tth+18 b\ch^4\kappa +2b\tth^2\kappa -4\ch^2\tth\kappa +4\ch^6\tth\kappa -8b\ch^2\tth\ckb^2+8b\ch^6\tth\ckb^2-4\ch^2\tth\kappa\ckb^2\\
&\,+4\ch^6\tth\kappa\ckb^2-4b\ch^2\tth\kappa +4b\ch^6\tth\kappa +4b\ch^4\tth^2\kappa -6b\ch^8\tth^2\kappa\Big)\Big(\ch^3\ckb^3\Big)^{-1}\,,\\
\mathsf{f}_{11}&:=\mathsf{f}_{11} (\tth,\kappa,b):=-\frac{1}{2}\left(\ch^{\frac{5}{2}}-\ch^{-\frac{3}{2}}\right)\ckb^{\frac{3}{2}}. 
\end{aligned}
\end{equation}
\end{lemma}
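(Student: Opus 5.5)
The proof follows the scheme of \cite[Lemma 4.3]{BMV3} and \cite{HM2025}. The structural part is immediate. By Lemma \ref{matrix representation mathsf L} and Lemma \ref{F is symplectic and reversible}, the matrix representing $\mathscr{L}_{\mu,\varepsilon}$ on $\mathcal V_{\mu,\varepsilon}$ in the basis $\Phi_{\mu,\varepsilon}$ has the form $\mathsf J_4\mathsf K_{\mu,\varepsilon}$. Here $\mathsf K_{\mu,\varepsilon}$ is self-adjoint, and its entries alternate between real and purely imaginary as in \eqref{B are alternatively real or imaginary}. Since $(\mu,\varepsilon)\mapsto \phi_k^\sigma(\mu,\varepsilon)$ and $\mathcal K_{\mu,\varepsilon}$ are analytic, each entry $(\mathcal K_{\mu,\varepsilon}\phi_k^\sigma(\mu,\varepsilon),\phi_{k'}^{\sigma'}(\mu,\varepsilon))$ is analytic in $(\mu,\varepsilon)$. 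The task therefore reduces to Taylor-expanding every entry to the orders displayed in \eqref{E}--\eqref{G}, and to identifying which Taylor coefficients vanish identically.

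\textbf{Step 1: structural vanishing.} At $\mu=0$ everything is real, and Lemma \ref{237} describes the generalized kernel of $\mathscr{L}_{0,\varepsilon}$. By \eqref{48 f-0=01}, $\phi_0^-(0,\varepsilon)=[0,1]^T$ and $\mathcal K_{0,\varepsilon}[0,1]^T=[-(c_0+q_\varepsilon)\partial_x 1,\ |D|\tanh(\cdot)1]^T=0$. Hence the whole last column of $\mathsf K_{0,\varepsilon}$ vanishes, so the $(1,4),(2,4),(3,4),(4,4)$ entries are $\mathcal O(\mu)$.

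Three more facts at $\mu=0$ come from $\mathsf J_4\mathsf K_{0,\varepsilon}$ representing $\mathscr{L}_{0,\varepsilon}$ with $\mathscr{L}_{0,\varepsilon}^2=0$ on $\mathcal V_{0,\varepsilon}$, combined with the parity identities that force the imaginary entries to vanish at $\mu=0$:
\begin{itemize}
\item the $(2,2)$ entry vanishes;
\item the $(1,2)$ entry vanishes;
\item the $(2,3)$ entry vanishes.
\end{itemize}
Next, the symmetry $\mu\mapsto-\mu$ combined with complex conjugation, i.e.\ $\overline{\mathscr{L}_{\mu,\varepsilon}}=\mathscr{L}_{-\mu,\varepsilon}$, makes real entries even in $\mu$ and imaginary entries odd in $\mu$ (up to the basis symmetry). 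This explains the structures $\mu^2$, $\mu\varepsilon$, $\mu^3$ in the remainders. Finally, at $\varepsilon=0$ the entries can be computed exactly from the Fourier multiplier $\mathcal K_{\mu,0}$ on $\phi_k^\sigma(\mu,0)$. This gives:
\begin{itemize}
\item $\mathsf G(\mu,0)=\mathrm{diag}(1+\kappa\mu^2,\ \mu\tanh(\tth\mu))$;
\item $\mathsf F(\mu,0)=0$;
\item $\mathsf E(\mu,0)$, whose entries are, in closed form, the expansion of the dispersion relation \eqref{eigenvalues of Lmu0} at $k=1$. This produces $\mathsf e_{12}$ (twice the group velocity) and $\mathsf e_{22}$.
\end{itemize}
As a consistency check, the eigenvalues $\pm$ of $\mathsf J_2\mathsf E(\mu,0)$ must match $\lambda_1^\pm(\mu)-\im c_0\mu$.

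\textbf{Step 2: the $\varepsilon$-dependent coefficients.} This is the main obstacle. The coefficients $\mathsf e_{11}$, $\mathsf f_{11}$ and the $\mu\varepsilon$ entry in $\mathsf F$ must be computed explicitly.
\begin{itemize}
\item \textbf{$\mathsf f_{11}$:} it equals $(\mathcal K_1\phi_0^+,\phi_1^+)+(\mathcal K_0\phi_0^+(\varepsilon\text{-part}),\phi_1^+)$ at first order, using $\mathcal K'_{0,0}$ from \eqref{A6} together with $q_1,a_1,\tau_1,\beta_1$. Since $\mathcal K_0\phi_1^\pm=0$, it is in fact just $(\mathcal K_1[1,0]^T,\phi_1^+)$, which involves $a_1$ and the zero-order coefficients $\tau_{1,0},b_{1,0}$.
\item \textbf{$\mu\varepsilon$ entry $(1,4)$:} it follows similarly from $\dot{\mathcal K}'$ and the $\mu$-corrections in \eqref{43 f+1}.
\item \textbf{$\mathsf e_{11}$:} this is the second-order term
\[
(\mathcal K_2\phi_1^+,\phi_1^+)+2\,\mathrm{Re}\,(\mathcal K_1\phi_1^+,\phi_1^{+\prime})+(\mathcal K_0\phi_1^{+\prime},\phi_1^{+\prime})+(\mathcal K_0\phi_1^+,\phi_1^{+\prime\prime}).
\]
Because $\mathcal K_0\phi_1^+=0$, the last term drops and no second-order basis vector is needed; only the $\varepsilon\cos 2x$ part of \eqref{43 f+1} (the coefficients $\alpha,\beta$) and the second-order coefficients of Lemmas \ref{lem:pa.exp}, \ref{tau}, \ref{beta} enter.
\end{itemize}
The bending coefficients $b_{2,j}$, containing $r_{\tth,\kappa,b}$ in their denominators, make this long, so I would organize it by Fourier mode (only modes $0,\pm1,\pm2$ contribute), do it symbolically, and verify against the $b=0$ gravity--capillary values of \cite{HM2025}.

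\textbf{Step 3: remainders.} The remaining bounds, such as $r_6(\mu\varepsilon)$, $r_7(\mu^2\varepsilon)$, $r_8(\varepsilon^2,\mu^2\varepsilon)$, and $r_{10}(\mu^2\varepsilon)$, follow by combining the vanishing at $\mu=0$ and at $\varepsilon=0$ from Step 1 with the parity of the terms in \eqref{43 f+1}--\eqref{46 f-0}. For example, for the $(2,3)$ entry an $\im\mu$ contribution at $\varepsilon=0$ vanishes because $\mathsf F(\mu,0)=0$, which leaves $\mu\varepsilon$. For the $(1,1)$ entry, the absence of a pure $\varepsilon$ term follows from $\mathcal K_{0,\varepsilon}$ applied to the kernel direction $U_2$; this yields the factor $\varepsilon^2(1+\hat r_1(\varepsilon,\mu\varepsilon))$.
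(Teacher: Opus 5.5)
\textbf{Main gap: the $\e=0$ values of $\mathsf E$.} In Step 1 you claim that the entries of $\mathsf E(\mu,0)$ are ``the expansion of the dispersion relation \eqref{eigenvalues of Lmu0} at $k=1$'', and you offer the eigenvalue match as a check. That step cannot establish the lemma. Write $\mathsf E_{12}=\im e$. Then the eigenvalues of $\mathsf J_2\mathsf E$ are $-\im e\pm\sqrt{-\mathsf E_{11}\mathsf E_{22}}$.

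So the spectrum $\{\lambda_1^\pm(\mu)-\im c_0\mu\}$ determines only two things: $\mathsf E_{12}(\mu,0)$, hence $\mathsf e_{12}$, and the product $\mathsf E_{11}\mathsf E_{22}(\mu,0)$. It does not say how that product is split between the two diagonal entries, and it does not fix their sign, since $(\mathsf E_{11},\mathsf E_{22})$ and $(-\mathsf E_{11},-\mathsf E_{22})$ give the same spectrum. This is why your check, and the Remark after Theorem \ref{Complete BF thm}, only see $|\mathsf e_{22}|$.

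The lemma, however, asserts that both diagonal entries equal $-\mathsf e_{22}\mu^2/8$ with the signed $\mathsf e_{22}$ of \eqref{e11 f11}. That sign, relative to $\mathsf e_{11}$, is exactly what decides the sign of $\operatorname{Ind}=8\mathsf e_{22}\mathsf e_{\mathrm{WB}}$ later on.

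The paper gets these coefficients by direct computation. It sums four contributions, each identical on both diagonal entries:
\begin{itemize}
\item $\zeta_{\tth,\kappa,b}\mu^2$ from $\mathsf K^\e$, via the matrix $\widetilde Z$, which is built from $\dot\phi_1^\pm(0,0)$ and $\gamma_{\tth,\kappa,b}$;
\item $-\frac{\mu^2}{4}\flat_{\tth,\kappa,b}$ from $\mathsf K^\flat$;
\item the explicit capillary term of $\mathsf K^\tau$;
\item the explicit bending term of $\mathsf K^\beta$.
\end{itemize}
These add up to $-\mathsf e_{22}\mu^2/8$. Computing $(\mathcal K_{\mu,0}\phi_1^\sigma(\mu,0),\phi_1^\sigma(\mu,0))$ honestly is possible, but it needs the Kato vectors to second order in $\mu$, not just the eigenvalues.

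If you want to keep your route, you need two extra arguments.
\begin{itemize}
\item Translation invariance at $\e=0$: $U_{\mu,0}$ commutes with translations and $\phi_1^+(x+\pi/2)=\phi_1^-(x)$, so $\mathsf E_{11}(\mu,0)=\mathsf E_{22}(\mu,0)$.
\item A separate determination of the sign. In practice this means computing one diagonal entry to order $\mu^2$.
\end{itemize}

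\textbf{Remainders and smaller points.} Your $\mu$-parity observation is correct and cleaner than the paper's term-by-term bookkeeping: since $\overline{\mathcal K_{\mu,\e}}=\mathcal K_{-\mu,\e}$ and $\overline{\phi_k^\sigma(\mu,\e)}=\phi_k^\sigma(-\mu,\e)$, real entries are even in $\mu$ and imaginary entries are odd. However, together with vanishing at $\mu=0$ and the $\e=0$ values, it does not exclude several terms that the stated remainders forbid:
\begin{itemize}
\item an $\e$-linear term in $\mathsf G_{11}(0,\e)$;
\item an $\e^2$ term in $\mathsf F_{11}(0,\e)$;
\item $\mu\e$ terms in $\mathsf E_{12}$ and in $\mathsf G_{12}$.
\end{itemize}
These vanish by Fourier-harmonic counting, which is what the paper's computations use (for instance $(\mathcal K_1\phi_0^+,\phi_0^+)=0$). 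Equivalently, use the symmetry $\eta_{-\e}=\eta_\e(\cdot+\pi)$, $\psi_{-\e}=\psi_\e(\cdot+\pi)$, $c_{-\e}=c_\e$. It conjugates $\mathscr L_{\mu,-\e}$ to $\mathscr L_{\mu,\e}$ by translation by $\pi$, and makes $\mathsf E,\mathsf G$ even and $\mathsf F$ odd in $\e$. You should add one of these arguments.

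The same harmonic counting, $(\mathcal K_1\phi_1^+,\phi_1^+)=0$, is what removes the $\e$-linear term of $\mathsf E_{11}(0,\e)$. That term is not controlled by $U_2$ lying in the kernel, which only concerns the $\phi_1^-$ direction. Also, your conclusion $\mathsf E_{22}(0,\e)\equiv0$ from $\mathscr L_{0,\e}^2=0$ requires $\mathsf f_{11}\neq0$, to rule out the alternative $\mathsf E_{11}\equiv\mathsf F_{11}\equiv0$.

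Finally, two computational points:
\begin{itemize}
\item $\mathcal K_2$ contains the depth correction $-\tf^{[2]}_2\partial_x^2(1-\tanh^2(\tth|D|))$ coming from \eqref{expfe}. It contributes a nonzero amount to $(\mathcal K_2\phi_1^+,\phi_1^+)$, so it must enter $\mathsf e_{11}$; your list of ingredients omits it.
\item The exact value is $\mathsf G_{11}(\mu,0)=1+\kappa\mu^2+b\mu^4$ (Lemma \ref{Fmu0 eq 0 Gmu0}), not $1+\kappa\mu^2$. The $b\mu^4$ term, which is missing from the displayed $(3,3)$ entry, has to be carried.
\end{itemize}
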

\begin{proof}
    The proof can be found in Appendix \ref{secD}.
\end{proof}

\section{Block-Decoupling}\label{sec:BD}
\paragraph{Singular symplectic rescaling.} Our goal here is to block-decouple the $4\times 4$ Hamiltonian matrix $\mathsf{L}_{\mu,\varepsilon} = \mathsf{J}_4 \mathsf{K}_{\mu,\varepsilon}$ in Lemma~\ref{B decomposition}. To this end, let us first introduce a symplectic and reversibility-preserving matrix:
\begin{equation*}
\begin{aligned} \mathsf{Q}:=
\left(
\begin{array}{cc|cc}
\mu^{\frac{1}{2}} & 0 & 0 & 0\\
0 & \mu^{-\frac{1}{2}} & 0 & 0\\
\hline
0 & 0 & \mu^{\frac{1}{2}} & 0\\
0 & 0 & 0 & \mu^{-\frac{1}{2}}
\end{array}
\right), \qquad\text{with}\qquad \mu>0.
\end{aligned}
\end{equation*}
Let us define
\begin{align} \label{L1mue YLY}
    \mathsf{L}_{\mu,\varepsilon}^{(1)} := \mathsf{Q}^{-1} \mathsf{L}_{\mu,\varepsilon} \mathsf{Q}.
\end{align}
A straightforward calculation reveals that 
\begin{align*} 
    \mathsf{L}_{\mu,\varepsilon}^{(1)} = \mathsf{J}_4 \mathsf{K}_{\mu, \varepsilon}^{(1)} 
= \begin{pmatrix}
\mathsf{J}_2 \mathsf{E}^{(1)} & \mathsf{J}_2 \mathsf{F}^{(1)} \\
\mathsf{J}_2 [\mathsf{F}^{(1)}]^* & \mathsf{J}_2 \mathsf{G}^{(1)}
\end{pmatrix}\,,
\end{align*}
where $\mathsf{K}_{\mu, \varepsilon}^{(1)}$ is a self-adjoint and reversibility-preserving $4 \times 4$ matrix
\begin{align*}
    \mathsf{K}_{\mu, \varepsilon}^{(1)} = 
\begin{pmatrix}
\mathsf{E}^{(1)} & \mathsf{F}^{(1)} \\
[\mathsf{F}^{(1)}]^* & \mathsf{G}^{(1)}
\end{pmatrix}, 
\quad 
\mathsf{E}^{(1)} = [\mathsf{E}^{(1)}]^*, \quad \mathsf{G}^{(1)} = [\mathsf{G}^{(1)}]^*,
\end{align*}
where the $2 \times 2$ reversibility-preserving matrices $\mathsf{E}^{(1)}$, $\mathsf{F}^{(1)}$, and $\mathsf{G}^{(1)}$ extend analytically at $\mu = 0$ with the following expansion
\begin{align} \label{E1}
\mathsf{E}^{(1)} &= 
\begin{pmatrix}
\mathsf{e}_{11} \mu \e^2 (1 + \hat{r}_1(\e, \mu \varepsilon)) - \mathsf{e}_{22} \frac{\mu^3}{8} (1 + \check{r}_1(\e, \mu)) & \im\, \left( \frac{1}{2} \mathsf{e}_{12} \mu + r_2(\mu \varepsilon^2, \mu^2 \varepsilon, \mu^3) \right) \\
-\im\, \left( \frac{1}{2} \mathsf{e}_{12} \mu + r_2(\mu \varepsilon^2, \mu^2 \varepsilon, \mu^3) \right) & -\mathsf{e}_{22} \frac{\mu}{8} (1 + r_5(\varepsilon, \mu))
\end{pmatrix},
\\ \label{F1}
\mathsf{F}^{(1)} &= 
\begin{pmatrix}
\mathsf{f}_{11} \mu \e + r_3(\mu \varepsilon^3, \mu^2 \varepsilon^2, \mu^3 \varepsilon) & \im\, \mu \varepsilon \ch^{-\frac{1}{2}}\ckb^{\frac{1}{2}} + \im\, r_4(\mu \varepsilon^2, \mu^2 \varepsilon) \\
\im\, r_6(\mu \varepsilon) & r_7(\mu \varepsilon)
\end{pmatrix},
\\ \label{G1}
\mathsf{G}^{(1)} &= 
\begin{pmatrix}
\mu +\kappa\mu^3+ r_8(\mu \varepsilon^2, \mu^3 \varepsilon) & -\im\, r_9(\mu \varepsilon^2, \mu^2 \varepsilon) \\
\im\, r_9(\mu \varepsilon^2, \mu^2 \varepsilon) & \tanh(\tth \mu) + r_{10}(\mu \varepsilon)
\end{pmatrix},
\end{align}
where $\mathsf{e}_{11}, \mathsf{f}_{11}, \mathsf{e}_{12}, \mathsf{e}_{22}, $ are defined in \eqref{e11 f11}.

We pause to remark that the Hamiltonian and reversible matrix $\mathsf{L}_{\mu, \varepsilon}^{(1)}$, a priori defined only for $\mu \neq 0$, extends analytically to the zero matrix at $\mu = 0$. For $\mu \neq 0$ the spectrum of $\mathsf{L}_{\mu, \varepsilon}^{(1)}$ coincides with the spectrum of $\mathsf{L}_{\mu, \varepsilon}$.

\paragraph{Block-decoupling.} We perform a 
symplectic and reversibility-preserving Lie transform, determined by a 
Sylvester equation, to reduce the off-diagonal coupling from 
$\cO(\mu\varepsilon)$ to $\cO(\mu\varepsilon^3)$. Recalling \eqref{E1}, \eqref{F1}, and \eqref{G1}, let us introduce the $4\times 4$ matrix 
\begin{align} \label{Ax=f}
\mathsf{A}^{(1)}:=\begin{pmatrix}
a & b & c & 0 \\
d & a & 0 & -c \\
e & 0 & a & -b \\
0 & -e & -d & a
\end{pmatrix}:=\begin{pmatrix}
\mathsf{G}^{(1)}_{12}-\mathsf{E^{(1)}_{12}} & \mathsf{G}^{(1)}_{11} & \mathsf{E}^{(1)}_{22} & 0 \\
\mathsf{G}^{(1)}_{22} & \mathsf{G}^{(1)}_{12}-\mathsf{E^{(1)}_{12}} & 0 & -\mathsf{E}^{(1)}_{22} \\
\mathsf{E}^{(1)}_{11} & 0 & \mathsf{G}^{(1)}_{12}-\mathsf{E^{(1)}_{12}} & -\mathsf{G}^{(1)}_{11} \\
0 & -\mathsf{E}^{(1)}_{11} & -\mathsf{G}^{(1)}_{22} & \mathsf{G}^{(1)}_{12}-\mathsf{E^{(1)}_{12}}
\end{pmatrix}.
\end{align}
Assume that $(\tth,\kappa,b) \notin \mathfrak{D}$, where 
$\mathfrak{D}$ is defined in \eqref{def:fD}. A direct computation yields 
\begin{equation} \label{det A}
\begin{aligned}
\det \mathsf{A}^{(1)} 
=  \mu^4 \textup{D}_{\tth,\kappa,b}^2 (1 + r(\varepsilon, \mu^2))\neq 0 ,\quad\text{with}\quad \mu\neq 0 \quad\text{and}\quad |\mu|, |\e|\ll 1.
\end{aligned}    
\end{equation}
The inverse of $\mathsf{A}^{(1)}$ is computed in \cite[formula (5.23)]{BMV3} as 
\begin{align} \label{A inverse}
[\mathsf{A}^{(1)}]^{-1} = \frac{1}{\det\mathsf{A}^{(1)} }
\begin{pmatrix}
a (a^2 - bd - ce) & b(-a^2 + bd - ce) & -c(a^2 + bd - ce) & -2abc \\[6pt]
d(-a^2 + bd - ce) & a(a^2 - bd - ce) & 2acd & -c(-a^2 - bd + ce) \\[6pt]
-e(a^2 + bd - ce) & 2abe & a(a^2 - bd - ce) & b(a^2 - bd + ce) \\[6pt]
-2ade & -e(-a^2 - bd + ce) & d(a^2 - bd + ce) & a(a^2 - bd - ce)
\end{pmatrix}
\end{align}
and by an elementary (but tedious) computation, we have
\begin{align} \label{mathsf A -1}
[\mathsf{A}^{(1)}]^{-1} = (1 + r(\varepsilon, \mu)) 
\frac{1}{\mu \textup{D}_{\tth,\kappa,b}^2}\left(
\begin{array}{cccc}
\frac{1}{2} \mathsf{e}_{12} \textup{D}_{\tth,\kappa,b} & \textup{D}_{\tth,\kappa,b} & \frac{1}{32} \mathsf{e}_{22}(\mathsf{e}_{12}^2 + 4\tth) & -\frac{1}{8} \mathsf{e}_{12} \mathsf{e}_{22} \\
\tth \textup{D}_{\tth,\kappa,b} & \frac{1}{2} \mathsf{e}_{12} \textup{D}_{\tth,\kappa,b} & \frac{1}{8} \mathsf{e}_{12} \mathsf{e}_{22} \tth & -\frac{1}{32} \mathsf{e}_{22} (\mathsf{e}_{12}^2 + 4\tth) \\
r(\varepsilon^2, \mu^2) & r(\varepsilon^2, \mu^2) & \frac{1}{2} \mathsf{e}_{12} \textup{D}_{\tth,\kappa,b} & -\textup{D}_{\tth,\kappa,b} \\
r(\varepsilon^2, \mu^2) & r(\varepsilon^2, \mu^2) & -\tth \textup{D}_{\tth,\kappa,b} & \frac{1}{2} \mathsf{e}_{12} \textup{D}_{\tth,\kappa,b} \\
\end{array}
\right).
\end{align}
Let us define 
\begin{align} \label{mathsf V}
    \mathsf{V}:=\begin{pmatrix}
v_{11} & \im \,v_{12} \\
\im \,v_{21} & v_{22}
\end{pmatrix}, \qquad\text{with}\qquad 
\begin{pmatrix}
v_{11} \\ v_{12} \\ v_{21} \\ v_{22}
\end{pmatrix}
:=
[\mathsf{A}^{(1)}]^{-1}
\begin{pmatrix}
- \mathsf{F}_{21}^{(1)} \\  \mathsf{F}_{22}^{(1)} \\ - \mathsf{F}_{11}^{(1)} \\  \mathsf{F}_{12}^{(1)} 
\end{pmatrix}=\begin{pmatrix}
\cO(\e) \\  \cO(\e) \\ \cO(\e) \\  \cO(\e) 
\end{pmatrix}\in\mathbb{R}^4.
\end{align}
We are ready to introduce the new Hamiltonian and reversibility-preserving matrix
\begin{align} \label{L2mueps}
\mathsf{L}^{(2)}_{\mu, \varepsilon} := \exp\left(\mathsf{S}^{(1)}\right) \mathsf{L}^{(1)}_{\mu, \varepsilon} \exp\left(-\mathsf{S}^{(1)}\right) .
\end{align}
Here the symplectic and reversibility-preserving $4 \times 4$ matrix
\begin{align} \label{S1}
    \exp\left(\mathsf{S}^{(1)}\right), \quad \text{where } \quad \mathsf{S}^{(1)} := \mathsf{J}_4 \begin{pmatrix}
0 & \mathsf{M} \\
\mathsf{M}^* & 0
\end{pmatrix}, \quad \mathsf{M} := \mathsf{J}_2 \mathsf{V}. 
\end{align}
By direct computation (see Appendix~\ref{secF}), we obtain a self-adjoint and reversibility-preserving $4\times 4$ matrix 
\begin{align}
 \mathsf{K}^{(2)}_{\mu, \varepsilon} = [ \mathsf{K}^{(2)}_{\mu, \varepsilon}]^*=  \begin{pmatrix}
\mathsf{J}_2 \mathsf{E}^{(2)} & \mathsf{J}_2 \mathsf{F}^{(2)} \\
\mathsf{J}_2 [\mathsf{F}^{(2)}]^* & \mathsf{J}_2 \mathsf{G}^{(2)}
\end{pmatrix}=-\mathsf{J}_4\mathsf{L}^{(2)}_{\mu, \varepsilon}.
\end{align}
Here the reversibility-preserving $2 \times 2$ self-adjoint matrix $\mathsf{E}^{(2)}=[\mathsf{E}^{(2)}]^* $ has the form
\begin{align} \label{E2 in lemma}
\mathsf{E}^{(2)} = \begin{pmatrix}
\mu \varepsilon^2 \mathsf{e}_{\mathrm{WB}} + r_1'(\mu \varepsilon^3, \mu^2 \varepsilon^2) - \mathsf{e}_{22} \tfrac{\mu^3}{8}(1 + r_1''(\varepsilon, \mu)) & \im\,\left( \frac{1}{2} \mathsf{e}_{12} \mu + r_2(\mu \varepsilon^2, \mu^2 \varepsilon, \mu^3) \right) \\
-\im\,\left( \frac{1}{2}  \mathsf{e}_{12} \mu + r_2(\mu \varepsilon^2, \mu^2 \varepsilon, \mu^3) \right) & -\mathsf{e}_{22} \frac{\mu}{8}(1 + r_5(\varepsilon, \mu))
\end{pmatrix}, 
\end{align}
where
\begin{align} \label{eWB}
\mathsf{e}_{\mathrm{WB}}:= \mathsf{e}_{\mathrm{WB}}(\tth,\kappa,b) := \mathsf{e}_{11} - \textup{D}_{\tth,\kappa,b}^{-1}\left(\ch^{-1}\ckb  + \tth \mathsf{f}^2_{11} + \mathsf{e}_{12} \mathsf{f}_{11} \ch^{-\frac{1}{2}}\ckb^{\frac{1}{2}}\right), 
\end{align}
with constants $\mathsf{e}_{11},  \mathsf{f}_{11}, \mathsf{e}_{12}$ defined in \eqref{e11 f11}. The reversibility-preserving $2 \times 2$ self-adjoint matrix $\mathsf{G}^{(2)}=[\mathsf{G}^{(2)}]^* $ has the form
\begin{align} \label{G2}
\mathsf{G}^{(2)} = \begin{pmatrix}
\mu+\kappa \mu^3 + r_8(\mu \varepsilon^2, \mu^3\e ) & -\im r_9(\mu \varepsilon^2, \mu^2 \varepsilon) \\
\im r_9(\mu \varepsilon^2, \mu^2 \varepsilon) & \tanh(\tth \mu) + r_{10}(\mu \varepsilon)
\end{pmatrix}, 
\end{align}
and
\begin{align} \label{F2}
\mathsf{F}^{(2)} = \begin{pmatrix}
r_3(\mu \varepsilon^3) & \im r_4(\mu \varepsilon^3) \\
\im r_6(\mu \varepsilon^3) & r_7(\mu \varepsilon^3)
\end{pmatrix}. 
\end{align}

\paragraph{Complete block-decoupling and proof of the main result.}
It remains to eliminate the residual off-diagonal coupling in 
$\mathsf{L}^{(2)}_{\mu,\e}$. We first decompose
\begin{equation} \label{L2mueps2}
\mathsf{L}^{(2)}_{\mu,\e}
=
\mathsf{D}^{(2)}+\mathsf{R}^{(2)}, 
\qquad
\mathsf{D}^{(2)}
:=
\begin{pmatrix}
\mathsf{J}_2\mathsf{E}^{(2)} & 0\\
0 & \mathsf{J}_2\mathsf{G}^{(2)}
\end{pmatrix},
\qquad
\mathsf{R}^{(2)}
:=
\begin{pmatrix}
0 & \mathsf{J}_2\mathsf{F}^{(2)}\\
\mathsf{J}_2[\mathsf{F}^{(2)}]^* & 0
\end{pmatrix}.
\end{equation}
By \eqref{E2 in lemma}, \eqref{G2}, and \eqref{F2}, there exist analytic 
Hamiltonian and reversibility-preserving matrices 
$\overline{\mathsf{D}}^{(2)}$ and $\overline{\mathsf{R}}^{(2)}$ such that
$$
\mathsf{D}^{(2)}
=
\mu\overline{\mathsf{D}}^{(2)},
\qquad
\mathsf{R}^{(2)}
=
\mu\overline{\mathsf{R}}^{(2)},
\qquad
\overline{\mathsf{R}}^{(2)}
=
\cO(\e^3).
$$

Let $\Pi_D$ and $\Pi_R$ denote the projections onto the 
block-diagonal and block-off-diagonal matrices, respectively. For 
$\vec{x}=(x_{11},x_{12},x_{21},x_{22})^\top\in\mathbb{R}^4$, define
$$
\mathsf{W}(\vec{x})
:=
\begin{pmatrix}
x_{11} & \im x_{12}\\
\im x_{21} & x_{22}
\end{pmatrix},
\qquad
\mathsf{M}(\vec{x})
:=
\mathsf{J}_2\mathsf{W}(\vec{x}), \qquad
\mathsf{S}^{(2)}(\vec{x})
:=
\mathsf{J}_4
\begin{pmatrix}
0 & \mathsf{M}(\vec{x})\\
\mathsf{M}(\vec{x})^* & 0
\end{pmatrix}.
$$
Then $\mathsf{S}^{(2)}(\vec{x})$ is Hamiltonian and reversibility-preserving, 
and hence $\exp(\mathsf{S}^{(2)}(\vec{x}))$ is symplectic and 
reversibility-preserving. We seek $\vec{x}=\vec{x}(\mu,\e)$ such that
$$
\Pi_R
\left(
\exp(\mathsf{S}^{(2)}(\vec{x}))
\left(
\mathsf{D}^{(2)}+\mathsf{R}^{(2)}
\right)
\exp(-\mathsf{S}^{(2)}(\vec{x}))
\right)
=
0.
$$
Since both $\mathsf{D}^{(2)}$ and $\mathsf{R}^{(2)}$ contain the common 
factor $\mu$, this equation is equivalent, for $\mu\neq0$, to
\begin{equation} \label{normalized off diagonal equation}
\Pi_R
\left(
\exp(\mathsf{S}^{(2)}(\vec{x}))
\left(
\overline{\mathsf{D}}^{(2)}
+
\overline{\mathsf{R}}^{(2)}
\right)
\exp(-\mathsf{S}^{(2)}(\vec{x}))
\right)
=
0,
\end{equation}
and the latter equation extends analytically to $\mu=0$.

Identifying the space of block-off-diagonal Hamiltonian and 
reversibility-preserving matrices with $\mathbb{R}^4$, define
$$
f(\mu,\e,\vec{x})
:=
\Pi_R
\left(
\exp(\mathsf{S}^{(2)}(\vec{x}))
\left(
\overline{\mathsf{D}}^{(2)}
+
\overline{\mathsf{R}}^{(2)}
\right)
\exp(-\mathsf{S}^{(2)}(\vec{x}))
\right).
$$
Since $\overline{\mathsf{R}}^{(2)}=\cO(\e^3)$, one has
$$
f(\mu,\e,0)
=
\overline{\mathsf{R}}^{(2)}
=
\cO(\e^3),
\qquad
f(0,0,0)=0.
$$
Moreover,
$$
\partial_{\vec{x}}f(0,0,0)[\vec{h}]
=
\Pi_R
\left[
\mathsf{S}^{(2)}(\vec{h}),
\overline{\mathsf{D}}^{(2)}(0,0)
\right].
$$
This is precisely the normalized Sylvester operator associated with the 
block-diagonal matrix $\overline{\mathsf{D}}^{(2)}(0,0)$. Since the 
corrections appearing in $\mathsf{E}^{(2)}$ and $\mathsf{G}^{(2)}$ vanish 
at $(\mu,\e)=(0,0)$, its coordinate matrix has the same leading determinant 
as the normalized matrix in \eqref{det A}; namely,
$$
\det\left(\partial_{\vec{x}}f(0,0,0)\right)
=
\textup{D}_{\tth,\kappa,b}^{\,2}.
$$
Since $(\tth,\kappa,b)\notin\mathfrak{D}$, one has
$\textup{D}_{\tth,\kappa,b}\neq0$. Therefore,
$\partial_{\vec{x}}f(0,0,0)$ is invertible.

By the analytic implicit function theorem, there exists a unique analytic 
map
$$
\vec{x}
=
\vec{x}(\mu,\e),
\qquad
\vec{x}(0,0)=0,
$$
defined for $|\mu|,|\e|\ll1$, such that
$$
f(\mu,\e,\vec{x}(\mu,\e))=0.
$$
Furthermore, since $f(\mu,\e,0)=\cO(\e^3)$ and 
$\partial_{\vec{x}}f(0,0,0)$ is invertible, one has
$$
\vec{x}(\mu,\e)=\cO(\e^3).
$$
Hence, setting
$$
\mathsf{S}^{(2)}
:=
\mathsf{S}^{(2)}(\vec{x}(\mu,\e)),
$$
we obtain a Hamiltonian and reversibility-preserving matrix, analytic in 
$(\mu,\e)$, satisfying
$$
\mathsf{S}^{(2)}=\cO(\e^3)\qquad\text{and}\qquad
\Pi_R
\left(
\exp(\mathsf{S}^{(2)})
\mathsf{L}^{(2)}_{\mu,\e}
\exp(-\mathsf{S}^{(2)})
\right)
=
0.
$$
Thus,
\begin{equation} \label{complete block diagonalization}
\exp(\mathsf{S}^{(2)})
\mathsf{L}^{(2)}_{\mu,\e}
\exp(-\mathsf{S}^{(2)})
=
\mathsf{D}^{(2)}+\mathsf{P},
\end{equation}
where
$$
\mathsf{P}
:=
\Pi_D
\left(
\exp(\mathsf{S}^{(2)})
\mathsf{L}^{(2)}_{\mu,\e}
\exp(-\mathsf{S}^{(2)})
\right)
-
\mathsf{D}^{(2)}=\cO(\mu\e^6)
$$
is block-diagonal, Hamiltonian, reversibility-preserving, and analytic in 
$(\mu,\e)$.

Consequently, there exist self-adjoint and 
reversibility-preserving $2\times2$ matrices $\mathsf{E}^{(3)}$ and 
$\mathsf{G}^{(3)}$ such that
$$
\mathsf{D}^{(2)}+\mathsf{P}
=
\begin{pmatrix}
\mathsf{J}_2\mathsf{E}^{(3)} & 0\\
0 & \mathsf{J}_2\mathsf{G}^{(3)}
\end{pmatrix},
$$
with
$$
\mathsf{E}^{(3)}
=
\mathsf{E}^{(2)}
+
\cO(\mu\e^6),
\qquad
\mathsf{G}^{(3)}
=
\mathsf{G}^{(2)}
+
\cO(\mu\e^6).
$$
In particular, $\mathsf{E}^{(3)}$ and $\mathsf{G}^{(3)}$ have the same 
expansions as $\mathsf{E}^{(2)}$ and $\mathsf{G}^{(2)}$ at the orders 
displayed in \eqref{E2 in lemma} and \eqref{G2}.

\begin{proof} [{Proof of Theorem \ref{Complete BF thm}.}]
Recalling \eqref{mathcal L= ichmu+mathscr L} the operator $ L_{\mu, \varepsilon} : \mathcal{V}_{\mu, \varepsilon} \to \mathcal{V}_{\mu, \varepsilon}$ is represented by the $4 \times 4$ Hamiltonian and reversible matrix
\[
\im c_0 \mu + \exp(\mathsf{S}^{(2)}) \mathsf{L}^{(2)}_{\mu, \varepsilon} \exp(-\mathsf{S}^{(2)}) 
= \im c_0 \mu + 
\begin{pmatrix}
\mathsf{J}_2 \mathsf{E}^{(3)} & 0 \\
0 & \mathsf{J}_2 \mathsf{G}^{(3)}
\end{pmatrix}
=: 
\begin{pmatrix}
U & 0 \\
0 & S
\end{pmatrix},
\]
where the matrices $\mathsf{E}^{(3)}$ and $\mathsf{G}^{(3)}$ expand as in \eqref{E2 in lemma}, \eqref{G2}. Consequently, the matrices $U$ and $S$ expand as in \eqref{U S}. Theorem \ref{Complete BF thm} is proved. 
\end{proof}

\appendix

\section{Analyticity of the Dirichlet--Neumann operator for finite depth}\label{sec:AppG}

In this appendix we establish a tame analyticity result for the
finite-depth Dirichlet--Neumann operator in a scale of exponentially
weighted Sobolev spaces. The argument is formulated on $\mathbb T^d$ and therefore applies beyond the one-dimensional periodic
setting used in the main body of the paper. This result provides the
analytic foundation for the construction of the hydroelastic Stokes branch
and for the subsequent Bloch perturbation analysis. Our proof follows the
general strategy of~\cite{BMV2}, but the finite-depth geometry and the
regularity requirements of the present problem require several
modifications. We therefore give a self-contained argument.

Fix $\tth>0$ and $d\in\mathbb N$. Recall that on the domain $\Omega_{\tth,\eta} := \{(x,y) \in \T^d \times \R : -\tth < y < \eta(x)\}$, given a periodic Dirichlet datum $\psi(x)$, we consider the unique harmonic function $\Psi(x,y)$ solving the system
\begin{align}\label{defining}
\begin{cases}
    \Delta_{x,y} \Psi = 0 \quad & \mbox{in} \ \Omega_{\tth,\eta} \\
    \Psi(x,y) = \psi(x) \quad & \mbox{at} \ y = \eta(x) \\
    \partial_y \Psi(x,y)  =0  \quad & \mbox{at} \ y = -\tth.
\end{cases}
\end{align}

Then the Dirichlet--Neumann operator $G(\eta)$ is defined by the linear operator
\[
[G(\eta)\psi](x) := (\partial_y \Psi)(x, \eta(x)) - \nabla_x\eta(x) \cdot (\nabla_x \Psi) (x,\eta(x))  = \sqrt{1 + |\nabla_x \eta (x)|^2} \partial_n \Psi,
\]
where $\partial_n$ denotes the outward normal derivative. It follows from Green's formula that $G$ is symmetric. In fact, if $\tilde{\Psi}$ is the solution of the following system 
\begin{align}\label{defining22}
\begin{cases}
    \Delta_{x,y} \tilde{\Psi} = 0 \quad & \mbox{in} \ \Omega_{\tth,\eta} \\
    \tilde{\Psi}(x,y) = \tilde{\psi}(x) \quad & \mbox{at} \ y = \eta(x) \\
    \partial_y \tilde{\Psi}(x,y)  =0  \quad & \mbox{at} \ y = -\tth.
\end{cases}
\end{align}
Then with suitable regularity on $\psi$ and $\tilde{\psi}$ we obtain
\[
\int_{\T^d} [G(\eta)\psi](x) \tilde{\psi}(x) \de x = 
\int_{\Omega_{\tth,\eta}} \nabla_{x,y} \Psi(x,y) \cdot \nabla_{x,y}  \tilde{\Psi}(x,y) \de x \de y
= \int_{\T^d} \psi(x) [G(\eta)\tilde{\psi}](x) \de x.
\]
The terminology ``Dirichlet--Neumann''  reflects the fact that the operator $G(\eta)$ maps the Dirichlet datum $\psi(x)$ of the harmonic function $\Psi(x,y)$ into its weighted outward normal derivative at the boundary. Here is the main result of this appendix. 

\begin{theorem}\label{thmG}
    Let $\alpha \ge 0$ and $m, m_0$ such that $m + \frac{1}{2},m_0 \in \N$, and $m - \frac{3}{2} \ge m_0 > \frac{d + 1}{2}$. Then there exists $\varepsilon_0 > 0$ such that the Dirichlet--Neumann operator map 
    \[
    \eta \mapsto G(\eta), \qquad H^{\alpha,m} \cap B^{\alpha, m_0 + \frac{3}{2}}(\varepsilon_0) \to  \mathcal{L}(H^{\alpha,m}, H^{\alpha,m - 1})
    \]
is analytic and the following tame estimate holds
\[
\|G(\eta) \psi\|_{H^{\alpha,m - 1}} \le C (\|\psi\|_{H^{\alpha,m}} + \|\eta\|_{H^{\alpha,m}} \|\psi\|_{H^{\alpha,m_0 + \frac{3}{2}}}  )
\]
for some $C > 0$. 
\end{theorem}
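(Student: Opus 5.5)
We flatten the domain and reduce the problem to an elliptic boundary value problem on a fixed strip, whose coefficients depend polynomially on $\eta$, and then read off analyticity from a Neumann-series inversion. Concretely, we use the change of variables
\[
(x,y)\mapsto \Bigl(x,\; y+\tfrac{y+\tth}{\tth}\,\eta(x)\Bigr),\qquad (x,y)\in \T^d\times(-\tth,0),
\]
which maps the fixed strip $S:=\T^d\times(-\tth,0)$ onto $\Omega_{\tth,\eta}$. We then set $u(x,y):=\Psi(x,y+\frac{y+\tth}{\tth}\eta(x))$. Under this change of variables, \eqref{defining} becomes
\[
\nabla_{x,y}\cdot\bigl(A(\eta)\nabla_{x,y}u\bigr)=0\ \text{in }S,\qquad u|_{y=0}=\psi,\qquad e_{d+1}\cdot A(\eta)\nabla u|_{y=-\tth}=0 .
\]
The coefficient matrix $A(\eta)$ is a rational function of $(\eta,\nabla_x\eta)$ whose only denominator is $1+\eta/\tth$. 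Moreover, $G(\eta)\psi=e_{d+1}\cdot A(\eta)\nabla u|_{y=0}$. Writing $A(\eta)=\mathrm{Id}+A_1(\eta)$, the map $\eta\mapsto A_1(\eta)$ is analytic from a small ball $B^{\alpha,m_0+\frac12}(\varepsilon_0)$ into the corresponding multiplier space, and it obeys tame bounds. This uses the algebra and tame product estimates in $H^{\alpha,s}$ for $s>\frac d2$, which we prove first. These estimates are straightforward, because $e^{\alpha|k|}\le e^{\alpha|k-j|}e^{\alpha|j|}$ makes the exponential weight submultiplicative.

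Next we decompose $u=u_0+w$. Here $u_0=\frac{\cosh((y+\tth)|D|)}{\cosh(\tth|D|)}\psi$ is the flat harmonic extension. The correction $w$ vanishes at $y=0$ and solves $\Delta w=-\nabla\cdot(A_1\nabla(u_0+w))$ in $S$, with the conormal Neumann condition at the bottom. We work in the anisotropic spaces $X^{\alpha,s}$ consisting of functions in $L^2_y H^{\alpha,s}_x\cap H^1_yH^{\alpha,s-1}_x$ (and one further $y$-derivative where needed). The flat solution operator $\Delta^{-1}_{D,N}$ on $S$ is a Fourier multiplier in $x$, so it gains one $x$-derivative uniformly in $\alpha$; this gain is just the explicit ODE solution in $y$ for each frequency $k$. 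Consequently the fixed-point equation takes the form $w=T(\eta)(u_0+w)$, where $T(\eta):=-\Delta^{-1}_{D,N}\nabla\cdot(A_1(\eta)\nabla\,\cdot)$ is linear in $A_1$ and has operator norm $\lesssim\|\eta\|_{\alpha,m_0+\frac32}$ on the low-norm space. Hence $w=(\mathrm{Id}-T(\eta))^{-1}T(\eta)u_0$ is given by a convergent Neumann series for $\varepsilon_0$ small. Each term of the series is a bounded multilinear expression in $\eta$, and uniform convergence of the series yields analyticity. The Dirichlet--Neumann operator is then recovered as
\[
G(\eta)\psi=|D|\tanh(\tth|D|)\psi+\bigl(\pa_yw+e_{d+1}\cdot A_1\nabla u\bigr)|_{y=0}.
\]
Taking this trace costs half a derivative in $x$. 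The interior estimate in $X^{\alpha,m-\frac12}$ compensates for this, and it produces the loss of exactly one derivative, from $H^{\alpha,m}$ to $H^{\alpha,m-1}$.

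For the tame estimate we prove the high-norm bound on the fixed point by a standard induction on the regularity index. We apply $\langle D\rangle^{s}e^{\alpha|D|}$ to the equation and estimate the products with the tame rule $\|fg\|_{s}\lesssim\|f\|_s\|g\|_{m_0}+\|f\|_{m_0}\|g\|_s$. The high norm of $w$ is absorbed on the left-hand side using the smallness of $\|\eta\|_{\alpha,m_0+\frac32}$. This gives
\[
\|w\|_{X^{\alpha,m}}\lesssim\|\psi\|_{\alpha,m}+\|\eta\|_{\alpha,m}\|\psi\|_{\alpha,m_0+\frac32},
\]
and taking the trace gives the claimed inequality for $G(\eta)\psi$. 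The restriction to half-integer $m$ and integer $m_0$, together with the gap $m-\frac32\ge m_0$, is what allows the induction to proceed in integer steps; the $y$-traces cost half a derivative, so every index must be tracked to exactly that precision.

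The main obstacle will be the bookkeeping that matches the Sobolev indices in the tame step. Each term $\nabla\cdot(A_1\nabla u)$ puts one derivative on $\eta$ through $\nabla_x\eta$ in $A_1$, and that derivative has to be recovered by the elliptic gain of $\Delta^{-1}_{D,N}$. At the same time, the low norm of $\eta$ must stay at $m_0+\frac32$ rather than creeping upward. To handle this, I would first try to work with the $y$-derivative equation. Solving for $\pa_y^2w$ from the equation, and using only $x$-regularity for $A_1$ (which is independent of $y$ apart from a polynomial factor), should avoid any need for the regularity of $\eta$ in $y$. If that fails, the fallback is the paralinearization of $G(\eta)$ from the Lannes framework \cite{LD book}, adapted by inserting the weight $e^{\alpha|D|}$.
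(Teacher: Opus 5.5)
There is a genuine gap, and it is the one you flag as the ``main obstacle''. With the trivial flattening $y\mapsto y+\frac{y+\tth}{\tth}\eta(x)$ the Sobolev indices do not close.

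Your coefficient $A_1(\eta)$ contains $\nabla_x\eta$, extended polynomially in $y$. So for $\eta\in H^{\alpha,m}$ it lies in $L^\infty_yH^{\alpha,m-1}_x$ and no better, because this extension smooths nothing. To get $\pa_yw|_{y=0}\in H^{\alpha,m-1}$ from a trace, you need $\nabla w\in L^2_yH^{\alpha,m-\frac12}_x$, together with one $y$-derivative at level $m-\frac32$. This is the level of $\nabla u_0$ when $\psi\in H^{\alpha,m}$. It forces you to estimate $A_1\nabla u$ in $L^2_yH^{\alpha,m-\frac12}_x$. The tame product bound for that contains the term $\|A_1\|_{L^\infty_yH^{\alpha,m-\frac12}_x}\|\nabla u\|_{\mathrm{low}}\sim\|\eta\|_{H^{\alpha,m+\frac12}}\|\psi\|_{\mathrm{low}}$, which is half a derivative more than the theorem allows.

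If you instead close at the level permitted by $\|\eta\|_{H^{\alpha,m}}$, you get your own final bound $\|w\|_{X^{\alpha,m}}$. That controls $\nabla w$ only in $L^2_yH^{\alpha,m-1}_x$, so $\pa_yw|_{y=0}$ lands only in $H^{\alpha,m-\frac32}$. Your sentence about an estimate in $X^{\alpha,m-\frac12}$ ``compensating'' is therefore inconsistent with what your scheme actually proves.

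Neither of your remedies addresses this. Solving for $\pa_y^2w$ from the equation trades $y$-derivatives for $x$-derivatives, but $A_1$ is already polynomial in $y$. The deficit is half an $x$-derivative of $\eta$, and that trade cannot recover it. The paralinearization fallback is a different and much heavier proof, and you do not carry it out.

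The missing idea is a half-derivative gain in the coefficients. The paper obtains it by flattening with the regularizing extension $\rho(x,y)=y+\eta_s(x,y)$, where $\eta_s:=\frac{\sinh((y+\tth)|D|)}{\sinh(\tth|D|)}\eta$.

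1. By the harmonic-propagator lemma, $\eta_s\in\cH^{\alpha,m+\frac12}$ whenever $\eta\in H^{\alpha,m}$. This is the isotropic strip space, which is an algebra for index $>\frac{d+1}{2}$.
2. Hence the coefficients $\alpha(\eta),\beta(\eta),\gamma(\eta)$, built from first derivatives of $\rho$, lie in $\cH^{\alpha,m-\frac12}$. This is exactly the regularity of $\nabla\varphi$.
3. This yields $\|F(\eta)\theta\|_{\cH^{\alpha,m-\frac32}}\lesssim\|\eta\|_{H^{\alpha,m_0+\frac32}}\|\theta\|_{\cH^{\alpha,m+\frac12}}+\|\eta\|_{H^{\alpha,m}}\|\theta\|_{\cH^{\alpha,m_0+2}}$.
4. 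The flat solution operator $L$ gains two derivatives, so the Neumann series for $(\mathrm{Id}-L F(\eta))^{-1}$ converges in $\cH^{\alpha,m+\frac12}$ with the tame bound.
5. Finally, the trace of $\pa_y\varphi\in\cH^{\alpha,m-\frac12}$ lands in $H^{\alpha,m-1}$.

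Once you change the diffeomorphism this way, the rest of your outline matches the paper's argument. That includes splitting off the flat extension of $\psi$, inverting by a Neumann series, and taking the trace.
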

Here the function space
\begin{align}\label{defHss}
H^{\alpha,m} = H^{\alpha, m}(\T^d) := \left\{u(x) = \sum_{k \in \Z^d} u_k e^{\im k \cdot x} : \|u\|_{H^{\alpha,m}}^2 := \sum_{k \in \Z^d} e^{2\alpha|k|_1} \langle k \rangle^{2 m} |u_k|^2 < +\infty \right\},    
\end{align}
where for any $k = (k_1, \ldots, k_d) \in \Z^d$, we set
\[
|k|_1 := \sum_{j = 1}^d |k_j|, \qquad \langle k \rangle := \sqrt{1 + |k|^2}, \qquad |k| := \sqrt{\sum_{j = 1}^d |k_j|^2}.
\]
Furthermore, we use the notation $B^{\alpha,m}(r)$ to denote the ball centered at $0$ with radius $r$ in the function space $H^{\alpha, m}$ and the notation $ \mathcal{L}(H_1,H_2)$ are reserved to denote the space of bounded linear maps between two Hilbert spaces $H_1$ and $H_2$.

Roughly speaking, the proof of Theorem \ref{thmG} is composed of the following three steps: (i) since the information of $\eta$ is hidden in the region, we cannot say much about it. The first thing to do is to use a change of variables to flatten the boundary. If we do so, the information of $\eta$ will be represented by the operator; (ii) we solve the equation \eqref{defining} but after the change of variables and show that the solution depends analytically on both $\eta$ and $\psi$. Furthermore, we need to find a suitable function space on which the solution lives. Here we will use the perturbative argument; (iii) we go back to the operator $G(\eta)$ by taking trace and track all the estimates of the norms.

\paragraph{Function spaces.} In this subsection we collect basic properties of the function spaces that we will use in the proof. Recall that the space $H^{\alpha, m}$ is defined in \eqref{defHss}. When $m > \frac{d}{2}$, it is well-known that $H^{\alpha, m}$ is a Banach algebra. Moreover, the following tame estimate holds (cf. \cite[Lemma B.2]{BMV2}). 

\begin{lemma}[Tame]\label{lemT}
Let $\alpha \ge 0$ and $m \ge m_0 > \frac{d}{2}$. There exist positive constants $C_{m}$ (increasing in $m$) such that for any $f,g \in H^{\alpha, m}$, one has
\begin{align}\label{tamee}
    \|fg\|_{H^{\alpha, m}} \le C_m (\|f\|_{H^{\alpha, m}} \|g\|_{H^{\alpha, m_0}} + \|f\|_{H^{\alpha, m_0}} \|g\|_{H^{\alpha, m}}).
\end{align}
In particular, for any $j \ge 1$,
\begin{align}\label{tamee2}
    \|f^j\|_{H^{\alpha, m}} \le (2 C_m \|f\|_{H^{\alpha, m_0}})^{j - 1} \|f\|_{H^{\alpha, m}}.
\end{align}
\end{lemma}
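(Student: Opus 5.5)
The statement to prove is the tame product estimate of Lemma~\ref{lemT}. I will work directly on Fourier coefficients. For $f=\sum f_k e^{\im k\cdot x}$ and $g=\sum g_k e^{\im k\cdot x}$ one has
\[
(fg)_k=\sum_{j\in\Z^d} f_{k-j}\,g_j .
\]
The exponential weight is handled by the triangle inequality $|k|_1\le |k-j|_1+|j|_1$, which gives $e^{\alpha|k|_1}\le e^{\alpha|k-j|_1}e^{\alpha|j|_1}$. Setting $F_k:=e^{\alpha|k|_1}|f_k|$ and $G_k:=e^{\alpha|k|_1}|g_k|$, we get
\[
\|fg\|_{H^{\alpha,m}}\le \|\widetilde F\,\widetilde G\|_{H^{0,m}},
\]
where $\widetilde F,\widetilde G$ are the functions with Fourier coefficients $F_k,G_k$. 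These have nonnegative coefficients and satisfy $\|\widetilde F\|_{H^{0,s}}=\|f\|_{H^{\alpha,s}}$ for every $s$. The problem is therefore reduced to the case $\alpha=0$ with nonnegative coefficients, and the constant will not depend on $\alpha$.

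For $\alpha=0$, I will use the elementary inequality
\[
\langle k\rangle^{m}\le 2^{m}\bigl(\langle k-j\rangle^{m}+\langle j\rangle^{m}\bigr),
\]
valid for $m\ge0$, since $\langle k\rangle\le \langle k-j\rangle+\langle j\rangle\le 2\max(\langle k-j\rangle,\langle j\rangle)$. This splits $\langle k\rangle^m(fg)_k$ into two convolutions: $(\langle\cdot\rangle^m F)*G$ and $F*(\langle\cdot\rangle^m G)$. For the first I apply Young's inequality $\ell^2*\ell^1\to\ell^2$:
\[
\bigl\|(\langle\cdot\rangle^m F)*G\bigr\|_{\ell^2}\le \|F\|_{H^{0,m}}\,\|G\|_{\ell^1}.
\]
Then Cauchy--Schwarz gives
\[
\|G\|_{\ell^1}\le \Bigl(\sum_{k}\langle k\rangle^{-2m_0}\Bigr)^{1/2}\|G\|_{H^{0,m_0}},
\]
and the sum is finite exactly because $m_0>d/2$. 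The second convolution is treated symmetrically. This yields \eqref{tamee} with
\[
C_m:=2^{m}\Bigl(\sum_k\langle k\rangle^{-2m_0}\Bigr)^{1/2}.
\]
This constant is increasing in $m$, and it may be enlarged if needed so that also $C_m\ge1$.

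The power estimate \eqref{tamee2} follows by induction on $j$; the case $j=1$ is trivial. For the inductive step I apply \eqref{tamee} to $f\cdot f^{j-1}$:
\[
\|f^j\|_{H^{\alpha,m}}\le C_m\bigl(\|f\|_{H^{\alpha,m}}\|f^{j-1}\|_{H^{\alpha,m_0}}+\|f\|_{H^{\alpha,m_0}}\|f^{j-1}\|_{H^{\alpha,m}}\bigr).
\]
The induction hypothesis at level $m_0$, together with $C_{m_0}\le C_m$, gives
\[
\|f^{j-1}\|_{H^{\alpha,m_0}}\le (2C_m\|f\|_{H^{\alpha,m_0}})^{j-2}\|f\|_{H^{\alpha,m_0}}.
\]
At level $m$ it gives the analogous bound with the last factor $\|f\|_{H^{\alpha,m}}$. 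Substituting both, each of the two terms is bounded by
\[
C_m(2C_m)^{j-2}\|f\|_{H^{\alpha,m_0}}^{j-1}\|f\|_{H^{\alpha,m}},
\]
and their sum is exactly $(2C_m\|f\|_{H^{\alpha,m_0}})^{j-1}\|f\|_{H^{\alpha,m}}$.

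There is no genuine obstacle here. The only points needing care are (i) that the exponential weight factorizes under convolution, which is what makes the nonnegative-coefficient reduction legitimate, and (ii) using the same $m_0$-norm and a monotone-in-$m$ constant so that the induction closes with exactly the factor $2C_m$.
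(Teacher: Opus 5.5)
Your proof is correct and complete. The weight factorization $e^{\alpha|k|_1}\le e^{\alpha|k-j|_1}e^{\alpha|j|_1}$ reduces everything to nonnegative coefficients at $\alpha=0$. The splitting $\langle k\rangle^m\le 2^m(\langle k-j\rangle^m+\langle j\rangle^m)$, together with Young's $\ell^2*\ell^1$ inequality and Cauchy--Schwarz (which uses $m_0>d/2$), then gives \eqref{tamee} with a constant that is increasing in $m$ and independent of $\alpha$. The induction for \eqref{tamee2}, run simultaneously at the levels $m_0$ and $m$ and using $C_{m_0}\le C_m$, closes with exactly the factor $2C_m$.

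The paper gives no proof of its own and only cites \cite[Lemma B.2]{BMV2}; your argument is the standard proof behind that citation. The enlargement to $C_m\ge 1$ that you mention is not needed.
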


For the solution after flattening, for given $\alpha \ge 0$ and $m \in \N$, we introduce the following space:
\begin{align}
    \label{defcHss}
    \cH^{\alpha,m} := \left\{ u(x,y) = \sum_{k \in \Z^d} u_k(y) e^{\im k \cdot x} : \T^d \times (-\tth, 0 ) \to \mathbb{C}: \|u\|_{\cH^{\alpha,m}} < +\infty\right\},
\end{align}
where
\begin{align}
    \label{defnorm}
  \|u\|_{\cH^{\alpha,m}}^2 := \sum_{j = 0}^m \|\partial_y^j u \|_{L^2((-\tth,0), H^{\alpha, m - j})}^2
  = \sum_{j = 0}^m \sum_{k \in \Z^d} e^{2 \alpha |k|_1} \langle k \rangle^{2(m - j)}\int_{-\tth}^0 |\partial_y^j u_k(y) |^2 \de y.
\end{align}

\begin{remark}
Since $\alpha \ge 0$ and $m \in \N$ and we observe that $\cH^{0,0} = L^2(\T^d \times (-\tth,0))$, we know that $\cH^{\alpha,m} \subset L^2(\T^d \times (-\tth,0))$. In the definition there are two weak derivatives: one appears as the weak derivative of $u$ and the other is the weak derivative of $u_k$. In fact, we have the following relation:
\[
(\partial^j_y u)_k = \partial^j_y (u_k), \qquad \forall \, k \in \Z^d, j \in \N
\]
so there is no ambiguity if we write $\partial^j_y u_k$.
\end{remark}

\begin{remark}
In \cite{BMV2} the authors use $\cH^{\alpha,m,a}$ (with a exponential decay $e^{-a|y|}$ at infinity) instead of our $\cH^{\alpha,m}$. Since we are in the finite depth case, there is no need to consider the decay at infinity, which simplifies the space a lot. Furthermore, the constant function is also included in our spaces so there is no need to use the space $\mathbb{C} \oplus \cH^{\alpha,m,a}$ and the corresponding projection any more.
\end{remark}

Here are some properties of the space $ \cH^{\alpha,m}$.

\begin{lemma}
    \label{lemder}
    Let $\alpha \ge 0$ and $m \in \N$. The linear maps
    \[
    \partial_{x_j}, \quad \forall \, j = 1, \ldots, d, \qquad \mbox{and} \qquad \partial_y
    \]
    are bounded from $\cH^{\alpha,m}$ to $\cH^{\alpha,m - 1}$.
\end{lemma}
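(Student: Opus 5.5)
To prove Lemma~\ref{lemder}, I would argue directly on the Fourier-in-$x$ representation $u(x,y)=\sum_{k\in\Z^d}u_k(y)e^{\im k\cdot x}$ and the norm \eqref{defnorm}. Both operators act diagonally, or as a shift, on this structure: $(\partial_{x_j}u)_k=\im k_j u_k$, and, by the remark preceding the lemma, $(\partial_y u)_k=\partial_y u_k$ in the weak sense. So the whole task is an explicit comparison of weighted sums. First I check that these identities hold for $u\in\cH^{\alpha,m}$. For $\partial_{x_j}$ this follows by testing against $e^{-\im k\cdot x}\varphi(y)$ and integrating by parts in $x$ on the torus. For $\partial_y$ it is the quoted identity $(\partial^j_y u)_k=\partial^j_y u_k$.

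\emph{The operator $\partial_{x_j}$.} Fix $u\in\cH^{\alpha,m}$ with $m\ge1$. For $0\le i\le m-1$,
\[
\|\partial_y^i\partial_{x_j}u\|^2_{L^2((-\tth,0),H^{\alpha,m-1-i})}=\sum_{k}e^{2\alpha|k|_1}\langle k\rangle^{2(m-1-i)}|k_j|^2\int_{-\tth}^0|\partial_y^iu_k|^2\,\de y .
\]
Since $|k_j|\le\langle k\rangle$, each such term is bounded by the corresponding term $\|\partial_y^iu\|^2_{L^2((-\tth,0),H^{\alpha,m-i})}$. The only index appearing in \eqref{defnorm} for $m$ but not for $m-1$ is $i=m$, and that term is simply dropped. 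Summing over $i$ gives $\|\partial_{x_j}u\|_{\cH^{\alpha,m-1}}\le\|u\|_{\cH^{\alpha,m}}$.

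\emph{The operator $\partial_y$.} For $0\le i\le m-1$,
\[
\|\partial_y^i(\partial_yu)\|_{L^2((-\tth,0),H^{\alpha,m-1-i})}=\|\partial_y^{i+1}u\|_{L^2((-\tth,0),H^{\alpha,m-(i+1)})},
\]
which is exactly the $(i+1)$-th term of $\|u\|_{\cH^{\alpha,m}}$. Summing gives $\|\partial_yu\|_{\cH^{\alpha,m-1}}\le\|u\|_{\cH^{\alpha,m}}$. Linearity is clear, and it remains to check that $\partial_yu$ (respectively $\partial_{x_j}u$) indeed lies in $\cH^{\alpha,m-1}$. This follows from the finiteness of the bounds above together with the identification $(\partial_y^{i}\partial_y u)_k=\partial_y^{i+1}u_k$.

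The only genuinely delicate point is the justification that weak derivatives commute with taking Fourier coefficients in $x$, especially for $\partial_y$ acting on $L^2$ functions in the mixed variables. This is exactly what the preceding remark records. If needed, I would prove it by pairing with test functions of the form $e^{-\im k\cdot x}\varphi(y)$, $\varphi\in C_c^\infty(-\tth,0)$, and applying Fubini. Everything else is a term-by-term comparison with constant $1$.
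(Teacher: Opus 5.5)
Your argument is correct. The paper states this lemma without proof, treating it as immediate from the definition \eqref{defnorm}, and your term-by-term comparison of the weighted sums is exactly that verification: $|k_j|\le\langle k\rangle$ handles $\partial_{x_j}$, the index shift $i\mapsto i+1$ handles $\partial_y$, and both bounds hold with constant $1$. The statement only makes sense for $m\ge 1$, since $\cH^{\alpha,-1}$ is not defined, which you implicitly assume.
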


\begin{lemma}[Trace]
    \label{lemtra}
    Let $\alpha \ge 0$ and $m \in \R$. Then there exists a constant $C > 0$ such that
    \[
    \sup_{y \in (-\tth,0)}\|u(\cdot,y)\|_{H^{\alpha,m}} \le C \left( \|u\|_{L^2((-\tth,0), H^{\alpha,m + \frac{1}{2}})} + \|\partial_y u\|_{L^2((-\tth,0), H^{\alpha,m - \frac{1}{2}})}\right).
    \]
    In particular, the trace operators
    \begin{align}\label{deftra}
    \Gamma(u) := u(\cdot, 0) := u|_{y = 0}, \qquad \widetilde{\Gamma}(u) := u(\cdot, -\tth) := u|_{y = -\tth}
    \end{align}
    are, for any $m \in \N$, a bounded map between $\cH^{\alpha,m + 1} \to H^{\alpha, m + \frac{1}{2}}$, satisfying
    \begin{align}
        \|\Gamma(u)\|_{H^{\alpha, m + \frac{1}{2}}} + \|\widetilde{\Gamma}(u)\|_{H^{\alpha, m + \frac{1}{2}}} \le C' \|u\|_{\cH^{\alpha,m + 1}} 
    \end{align}
    for some $C' > 0$.
\end{lemma}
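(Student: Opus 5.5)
The plan is to work mode by mode in the Fourier variable. Write $u(x,y)=\sum_{k} u_k(y)e^{\im k\cdot x}$ and observe that the $x$-weights $e^{\alpha|k|_1}\langle k\rangle^{\cdot}$ factor out of every norm involved. Hence it suffices to prove a one-dimensional estimate for each fixed $k$ and then sum with the weights. Both inequalities reduce to a single scalar inequality of the form
\[
\sup_{y\in(-\tth,0)}|f(y)|^2 \le C\Big(\lambda\int_{-\tth}^0|f|^2\,\de y+\lambda^{-1}\int_{-\tth}^0|f'|^2\,\de y\Big),
\qquad \lambda:=\langle k\rangle\ge1,
\]
with $C$ independent of $\lambda$.

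\textbf{Scalar inequality.} For $f\in H^1(-\tth,0)$ and $y,z\in(-\tth,0)$ we have $|f(y)|^2=|f(z)|^2+\int_z^y 2\,\mathrm{Re}(f'\bar f)$. Averaging in $z$ over the whole interval gives
\[
|f(y)|^2\le \tth^{-1}\|f\|_{L^2}^2+2\|f\|_{L^2}\|f'\|_{L^2}.
\]
The weighted Young inequality $2ab\le \lambda a^2+\lambda^{-1}b^2$ then yields the bound with $C=\max(1,\tth^{-1})+1$, where we used $\lambda\ge1$ so that $\tth^{-1}\le\tth^{-1}\lambda$. The finite depth is what makes this step elementary: there is no decay at infinity to handle, unlike in \cite{BMV2}. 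The only point requiring care is that the constant is independent of $k$, which the Young splitting guarantees.

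\textbf{First inequality.} Multiply the scalar estimate for $f=u_k$ and $\lambda=\langle k\rangle$ by $e^{2\alpha|k|_1}\langle k\rangle^{2m}$. The right-hand side becomes
\[
e^{2\alpha|k|_1}\Big(\langle k\rangle^{2m+1}\|u_k\|^2_{L^2}+\langle k\rangle^{2m-1}\|u_k'\|^2_{L^2}\Big).
\]
Next, bound $\sup_y\sum_k\le\sum_k\sup_y$ and sum over $k$. This gives exactly $C\big(\|u\|^2_{L^2H^{\alpha,m+1/2}}+\|\partial_yu\|^2_{L^2H^{\alpha,m-1/2}}\big)$, and taking square roots yields the stated inequality. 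This holds for every real $m$, since only the positivity of $\langle k\rangle$ is used. A density argument, through Fourier truncations and smooth approximation in $y$, justifies the pointwise evaluation for general $u$.

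\textbf{Trace statement.} Set $m+1$ with $m\in\N$ in the definition \eqref{defnorm}. The $j=0$ and $j=1$ terms of $\|u\|^2_{\cH^{\alpha,m+1}}$ are
\[
\|u\|^2_{L^2H^{\alpha,m+1}}\ge\|u\|^2_{L^2H^{\alpha,m+1/2}}
\qquad\text{and}\qquad
\|\partial_yu\|^2_{L^2H^{\alpha,m}}\ge\|\partial_y u\|^2_{L^2H^{\alpha,m-1/2}},
\]
where the inequalities hold because $\langle k\rangle\ge1$. The first part of the lemma therefore bounds $\sup_y\|u(\cdot,y)\|_{H^{\alpha,m+1/2}}$ by $C\|u\|_{\cH^{\alpha,m+1}}$.

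It remains to identify the boundary values. Each $u_k\in H^1(-\tth,0)$ has a continuous representative up to the endpoints, so $\Gamma u$ and $\widetilde\Gamma u$ are well defined mode by mode. Their norms are controlled by the same supremum bound, extended to the closed interval by continuity. Adding the two traces gives the constant $C'=2C$.
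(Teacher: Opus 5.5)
Your argument is correct and essentially matches the paper's. Both proofs work mode by mode, write $|u_k(y)|^2$ via the fundamental theorem of calculus, split $2|u_k||u_k'|$ by the weighted Young inequality at scale $\langle k\rangle$, and then sum with the weights $e^{2\alpha|k|_1}\langle k\rangle^{2m}$. The only difference is the base point. The paper multiplies by a cutoff $\chi$ so that $u$ vanishes near $y=-\tth$, and uses a second cutoff for points near the bottom. You instead average over $z\in(-\tth,0)$ and absorb the extra $\tth^{-1}\|u_k\|_{L^2}^2$ using $\langle k\rangle\ge 1$. Your route is slightly cleaner, since it covers the whole closed interval, and hence both traces, in one step.

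There is one bookkeeping slip in the trace part. The weakened norms you display, $\|u\|_{L^2H^{\alpha,m+1/2}}$ and $\|\partial_y u\|_{L^2H^{\alpha,m-1/2}}$, are the right-hand side of the first inequality at index $m$. Read literally, they only control $\sup_y\|u(\cdot,y)\|_{H^{\alpha,m}}$, which is half a derivative short of your stated conclusion. To reach $H^{\alpha,m+\frac12}$, apply the first part at index $m+\frac12$, which is allowed since it holds for every real index. Its right-hand side is exactly $\|u\|_{L^2H^{\alpha,m+1}}+\|\partial_y u\|_{L^2H^{\alpha,m}}$, i.e.\ the $j=0,1$ terms of $\|u\|^2_{\cH^{\alpha,m+1}}$, so no weakening step is needed.
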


\begin{proof}
It suffices to prove the first assertion since the rest part follows from it. In the following we focus on the points on $(-\frac{3}{4} \tth,0 )$ and for the points belonging to $(-\tth, - \frac{1}{4}\tth) $ the proof is similar with a different cutoff function. Fix a cutoff function $\chi \in C^\infty_c(\R)$ such that $\chi$ is $1$ on $(-\frac{3}{4} \tth,0 )$ and $0$ on $(-\infty, -\frac{7}{8} \tth)$. Now for any $u \in C^\infty(\T^d \times (-\tth, 0))$. It is not hard to prove that there exists a constant $C_* > 0$ such that 
\[
 \|u \chi\|_{L^2((-\tth,0), H^{\alpha,m + \frac{1}{2}})} + \|\partial_y (u \chi)\|_{L^2((-\tth,0), H^{\alpha,m - \frac{1}{2}})} \le C_* \left( \|u\|_{L^2((-\tth,0), H^{\alpha,m + \frac{1}{2}})} + \|\partial_y u\|_{L^2((-\tth,0), H^{\alpha,m - \frac{1}{2}})}\right).
\]
Therefore, without loss of generality, we can assume that the function $u$ is $0$ for $y$ (so as $u_k$) near $-\tth$, which allows us to write
\[
|u_k(y_0)|^2 = \int_{-\tth}^{y_0} \partial_y |u_k(y)|^2 \de y \le 2 \int_{-\tth}^{0}  |\partial_y u_k(y)| |u_k(y)| \de y\,,
\]
which implies
\begin{align*}
&\|u(\cdot,y_0)\|_{H^{\alpha,m}}^2 \le  \sum_{k \in \Z^d} e^{2\alpha |k|_1} 2 \langle  k \rangle^{2m} \int_{-\tth}^{0}  |\partial_y u_k(y)| |u_k(y)| \de y \\
\le \, &  \sum_{k \in \Z^d} e^{2\alpha |k|_1} \langle k \rangle^{2m + 1}  \int_{-\tth}^{0} |u_k(y)|^2 \de y + \sum_{k \in \Z^d} e^{2\alpha |k|_1} \langle k \rangle^{2m - 1}  \int_{-\tth}^{0} |\partial_y u_k(y)|^2 \de y  \\
=  \, &  \|u\|_{L^2((-\tth,0), H^{\alpha,m + \frac{1}{2}})}^2 + \|\partial_y u\|_{L^2((-\tth,0), H^{\alpha,m - \frac{1}{2}})}^2,
\end{align*}
where we have used the inequality $2 \langle  k \rangle^{2m} |\partial_y u_k(y)| |u_k(y)| \le \langle k \rangle^{2m + 1} |u_k(y)|^2 + \langle k \rangle^{2m - 1} |\partial_y u_k(y)|^2$. Finally, the general estimate holds by approximation by smooth functions.
\end{proof}

Similar to the space $H^{\alpha,m}$, when $m > \frac{d + 1}{2}$, the space $\cH^{\alpha,m}$ is also a Banach algebra and similar tame estimates hold.

\begin{proposition}[Tame]\label{propT}
Let $\alpha \ge 0$ and $m \ge m_0 > \frac{d + 1}{2}$, $m,m_0 \in \N$. There exist positive constants $C_{m}$ (increasing in $m$) such that for any $u,v \in \cH^{\alpha, m}$, one has
\begin{align}\label{tamee3}
    \|uv\|_{\cH^{\alpha, m}} \le C_m (\|u\|_{\cH^{\alpha, m}} \|v\|_{\cH^{\alpha, m_0}} + \|u\|_{\cH^{\alpha, m_0}} \|v\|_{\cH^{\alpha, m}}).
\end{align}
In particular, for any $j \ge 1$,
\begin{align}\label{tamee4}
    \|u^j\|_{\cH^{\alpha, m}} \le (2 C_m \|u\|_{\cH^{\alpha, m_0}})^{j - 1} \|u\|_{\cH^{\alpha, m}}.
\end{align}
\end{proposition}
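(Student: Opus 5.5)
We reduce the tame estimate \eqref{tamee3} for the mixed spaces $\cH^{\alpha,m}$ to the one-variable estimate of Lemma \ref{lemT}, applied fiberwise in $y$, together with the one-dimensional Sobolev embedding in $y$. The starting point is the Leibniz rule in $y$. For $0\le j\le m$,
\[
\partial_y^j(uv)=\sum_{i=0}^j\binom{j}{i}\,\partial_y^i u\,\partial_y^{j-i}v ,
\]
so it suffices to bound each product $\|\partial_y^i u\,\partial_y^{l}v\|_{L^2((-\tth,0),H^{\alpha,m-j})}$ with $i+l=j$.

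For fixed $y$ we apply Lemma \ref{lemT} in $x$ with regularity index $s:=m-j$. This needs some care, because $s$ may fall below the threshold $d/2$ required there. We therefore use the variant form of the tame estimate:
\[
\|fg\|_{H^{\alpha,s}}\lesssim \|f\|_{H^{\alpha,s}}\|g\|_{H^{\alpha,s_0}}+\|f\|_{H^{\alpha,s_0}}\|g\|_{H^{\alpha,s}},\qquad s_0>\tfrac d2,\ s\ge0 .
\]
This variant follows from the same Fourier convolution argument. The only change is that $e^{\alpha|k|_1}\le e^{\alpha|k-k'|_1}e^{\alpha|k'|_1}$ takes care of the exponential weight, and $\langle k\rangle^{s}\lesssim\langle k-k'\rangle^{s}+\langle k'\rangle^{s}$ handles the polynomial weight.

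In each term we then place the factor carrying fewer $y$-derivatives in $L^\infty_y$ and the other factor in $L^2_y$. Concretely, suppose $i\le l$. We use the trace estimate of Lemma \ref{lemtra}, which bounds
\[
\sup_y\|\partial_y^i u(\cdot,y)\|_{H^{\alpha,r}}\lesssim \|u\|_{\cH^{\alpha,r+i+1}}\quad(\text{indeed by }\|\partial_y^iu\|_{\cH^{\alpha,r+1}}\text{ and Lemma \ref{lemder}}).
\]
The $L^2_y$ factor is controlled directly by the definition \eqref{defnorm}.

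The key step, and the main obstacle, is the bookkeeping of indices. Each product must end up bounded by $\|u\|_{\cH^{\alpha,m}}\|v\|_{\cH^{\alpha,m_0}}$ or by the symmetric combination, with $m_0>\frac{d+1}{2}$. The extra $\frac12$ in this threshold beyond $\frac d2$ is exactly what the trace loss in $y$ costs.

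There are two regimes. When the derivative split is unbalanced, put the low-order factor in $L^\infty_y H^{\alpha,s_0}$ and the high-order factor in $L^2_yH^{\alpha,m-j}$. When the split is balanced, neither factor alone reaches order $m$. In that case we interpolate, using $\|w\|_{\cH^{\alpha,k}}\le\|w\|_{\cH^{\alpha,m_0}}^{\theta}\|w\|_{\cH^{\alpha,m}}^{1-\theta}$, which is available from the Fourier description and Hölder in $k$ and in $y$. After Young's inequality $a^\theta b^{1-\theta}c^{1-\theta}d^{\theta}\le ab+cd$, this yields the two-term right-hand side. To keep the argument manageable, I would first do the case $\alpha=0$ and reduce to it by the substitution $\hat u_k\mapsto e^{\alpha|k|_1}\hat u_k$ acting on nonnegative Fourier majorants. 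This is legitimate because all the estimates used above hold for the majorant functions $\sum|u_k(y)|e^{\im k\cdot x}$.

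Finally, \eqref{tamee4} follows from \eqref{tamee3} by induction on $j$. Apply \eqref{tamee3} with $m$ and also with $m=m_0$, which gives $\|u^j\|_{\cH^{\alpha,m_0}}\le(2C_{m_0}\|u\|_{\cH^{\alpha,m_0}})^{j-1}\|u\|_{\cH^{\alpha,m_0}}$. The monotonicity $C_{m_0}\le C_m$ then closes the induction, exactly as in Lemma \ref{lemT}.
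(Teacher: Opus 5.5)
\textbf{Gap: the balanced Leibniz terms.} Your scheme fails on the balanced Leibniz terms when $m$ is close to $m_0$. In particular it fails at the endpoint $m=m_0$, the algebra property, which your own induction for \eqref{tamee4} uses at level $m_0$.

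Take $\partial_y^iu\,\partial_y^lv$ with $1\le i\le l$ and $s=m-i-l<m_0$. Your fiberwise product puts $H^{\alpha,s_0}$, $s_0>\frac d2$, on one factor, and you then place that factor in $L^\infty_y$. Even with the sharp $\frac12$-loss of Lemma~\ref{lemtra} (take $s_0=m_0-\frac12$), this costs $\|u\|_{\cH^{\alpha,m_0+i}}$. That norm exceeds $\cH^{\alpha,m}$ once $i>m-m_0$, which happens for $i=\lfloor m/2\rfloor$ whenever $\lfloor m/2\rfloor>m-m_0$. Interpolation inside the scale $\cH^{\alpha,k}$, $m_0\le k\le m$, never yields an index above $m$, so your ``balanced case: interpolate'' step cannot repair this.

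Swapping the roles of $L^\infty_y$ and $L^2_y$ does not help in general either. For $d=2$, $m=m_0=2$, the term $\partial_yu\,\partial_yv\in L^2$ would need $\partial_yu$ in $L^\infty_yH^{s_0}_x$ or in $L^2_yH^{s_0}_x$ with $s_0>1$. But $\cH^{2}$ only gives $\partial_yu\in L^2_yH^1_x\cap H^1_yL^2_x$. The estimate is still true there, because $\partial_yu,\partial_yv\in\cH^{1}\subset L^4$ in three dimensions. To see it you must split integrability and regularity between \emph{both} factors, e.g.\ $L^4_y\times L^4_y$ with mixed fractional embeddings. An $L^\infty_y\times L^2_y$ placement with a full $H^{s_0}_x$, $s_0>\frac d2$, on one factor cannot capture this.

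\textbf{Secondary points and the paper's route.}
\begin{itemize}
\item For $s<s_0$ your two-term fiberwise bound contains $\|\partial_y^iu\|_{H^{\alpha,s}}\|\partial_y^lv\|_{H^{\alpha,s_0}}$. For $i=0$, $l=m$ this asks for $\partial_y^mv\in L^2_yH^{\alpha,s_0}$, which you do not control. You need the one-sided bound $\|fg\|_{H^{\alpha,s}}\lesssim\|f\|_{H^{\alpha,s}}\|g\|_{H^{\alpha,s_0}}$ for $0\le s\le s_0$.
\item Interpolation on the strip is not just H\"older in $k$ and $y$. Components $\|\partial_y^jw\|_{L^2_yH^{\alpha,k-j}}$ with $j>m_0$ require interpolating between different orders of $\partial_y$ on a bounded interval, or an extension.
\item Your displayed trace bound loses a full derivative. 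That would force $m_0>\frac d2+1$, not $m_0>\frac{d+1}{2}$.
\item The majorant reduction to $\alpha=0$ fails in $y$, since $\partial_y^j|u_k|$ is not controlled by $|\partial_y^ju_k|$. It is also unnecessary, because the weight $e^{\alpha|k|_1}$ is already handled fiberwise.
\end{itemize}
The paper avoids all of this. It extends $u,v$ to $\T^d\times\R$ with the reflection operator $\mathcal E_m$ of Lemma~\ref{lemext}, which is bounded on $\cH^{\alpha,m_0}$ as well as on $\cH^{\alpha,m}$. It then applies the whole-line tame estimate of Lemma~\ref{lemtamlemB} and restricts back to the strip. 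That estimate is a genuinely $(d+1)$-dimensional convolution argument with weight $\langle k,\xi\rangle^{m}$, where the threshold $m_0>\frac{d+1}{2}$ enters naturally.
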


To give a proof of Proposition \ref{propT} we follow the strategy of \cite{BMV2}: we first extend the 
functions $u,v$ to $\T^d \times \R$, keeping the Sobolev norm up to a constant uniformly. On $\T^d \times \R$ we can use the Fourier transform to characterize such space which leads to the estimate above for the Sobolev spaces on $\T^d \times \R$. Since the extension will not enlarge the norm too much, we can go back to the original space. 

To realize the proof, we introduce the following space: given $\alpha \ge 0$ and $m \in \N$\
\begin{align}
    \label{defcHss2}
    \cH^{\alpha,m,\R} := \left\{ u(x,y) = \sum_{k \in \Z^d} u_k(y) e^{\im k \cdot x} : \T^d \times \R \to \mathbb{C}: \|u\|_{\cH^{\alpha,m,\R}} < +\infty\right\},
\end{align}
where
\begin{align}
    \label{defnorm2}
  \|u\|_{\cH^{\alpha,m,\R}}^2 := 
  \sum_{j = 0}^m \sum_{k \in \Z^d} e^{2 \alpha |k|_1} \langle k \rangle^{2(m - j)}\int_\R |\partial_y^j u_k(y) |^2 \de y.
\end{align}

\begin{remark}
Using Fourier transform, we can write down an equivalent norm as follows
\[
\sqrt{\sum_{k \in \Z^d} \int_\R e^{2 \alpha |k|_1} \langle k,\xi \rangle^{2m} |\widehat{u}_k(\xi)|^2 \de \xi},
\]
where
\[
\langle k,\xi \rangle = \sqrt{1 + |k|^2 + |\xi|^2}
\]
and $\widehat{u}_k$ can be understood as both the Fourier transform of $u_k$ or the $k$-th Fourier coefficient (w.r.t. $x$-variable) of the partial Fourier transform w.r.t. $y$ variable.
\end{remark}

Then we need the following extension lemma. 

\begin{lemma}[Extension operator]
    \label{lemext}
Let $\alpha \ge 0$ and $m \in \N$. There exists a linear bounded extension operator $\mathcal{E}_m : \cH^{\alpha,m} \to \cH^{\alpha,m,\R}$ such that $\mathcal{E}_m u = u$ a.e. on $(-\tth,0)$. Moreover, there exists a constant $C > 0$ such that 
\begin{align} \label{extest}
    \|u\|_{\cH^{\alpha,m}} \le \|\mathcal{E}_m u\|_{\cH^{\alpha,m,\R}} \le C \|u\|_{\cH^{\alpha,m}}.
\end{align}
\end{lemma}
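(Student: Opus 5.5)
The extension operator will be built one Fourier mode at a time. The weight $e^{2\alpha|k|_1}\langle k\rangle^{2(m-j)}$ depends only on the horizontal frequency $k$, so it suffices to extend each coefficient $u_k\in H^m(-\tth,0)$. I will use one linear extension $E:H^m(-\tth,0)\to H^m(\R)$ that does not depend on $k$. The plan is to check that
\[
\sum_{j=0}^m \langle k\rangle^{2(m-j)}\|\partial_y^j E f\|_{L^2(\R)}^2\le C\sum_{j=0}^m \langle k\rangle^{2(m-j)}\|\partial_y^j f\|_{L^2(-\tth,0)}^2,
\]
with $C$ uniform in $k$. Multiplying by $e^{2\alpha|k|_1}$ and summing over $k$ then gives \eqref{extest}. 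The setting is $(\mathcal E_m u)_k:=E u_k$.

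For $E$ I will take the Hestenes higher-order reflection. Near $y=0$, for $y>0$, set
\[
Ef(y):=\sum_{i=1}^{m+1}c_i f(-\lambda_i y)
\]
with distinct $\lambda_i\in(0,1]$. The coefficients $c_i$ solve the Vandermonde system $\sum_i c_i(-\lambda_i)^\ell=1$ for $\ell=0,\dots,m$. This makes all derivatives up to order $m$ match at $0$, so $Ef\in H^m$ across the junction. The same reflection is used about $y=-\tth$. A fixed smooth partition of unity subordinate to neighbourhoods of the two endpoints, together with multiplication by a cutoff supported in $(-2\tth,\tth)$, makes the extension compactly supported. The pieces are mixed using cutoffs chosen so that the reflected points stay inside $(-\tth,0)$, for instance by applying the reflection only on $y\in(0,\tth/2)$.

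Uniformity in $k$ comes from the fact that the construction gives $\|\partial_y^j Ef\|_{L^2(\R)}\le C_j\sum_{i\le j}\|\partial_y^i f\|_{L^2(-\tth,0)}$ for each $j\le m$. The lower-order terms come from differentiating the cutoffs (Leibniz), and the constants are independent of $k$. Since $\langle k\rangle\ge1$, any term $\|\partial_y^i f\|$ with $i\le j$ carries weight $\langle k\rangle^{2(m-j)}\le\langle k\rangle^{2(m-i)}$. So the weighted sum for $Ef$ is controlled by the weighted sum for $f$. This term-by-term absorption is the point to check carefully, and it is the only real obstacle. It fails if one uses an extension whose bounds mix $\partial_y^j$ with higher-order terms, which is why a local reflection is used rather than something like a Fourier-type extension.

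The lower bound in \eqref{extest} is trivial, because $\mathcal E_m u=u$ on $(-\tth,0)$ and restricting the integrals to $(-\tth,0)$ can only decrease the norm. Linearity and a.e. agreement are immediate from the construction. Convergence of the series defining $\mathcal E_m u$ in $\cH^{\alpha,m,\R}$ follows from the mode-wise bound by orthogonality in $k$.
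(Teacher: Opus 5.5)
Your proposal is correct and follows the paper's route: a Hestenes-type higher-order reflection at both $y=0$ and $y=-\tth$, with coefficients from a Vandermonde system, followed by a cutoff. The paper uses dilation factors $j+1\ge 1$ on strips of width $\tth/(m+1)$ instead of your factors $\lambda_i\in(0,1]$, which makes no difference. Your observation that $\partial_y^j Ef$ is controlled only by the $\partial_y^i f$ with $i\le j$, so the $k$-dependent weights are absorbed term by term, is what the paper leaves to ``a direct calculation.''
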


\begin{proof}
Fix $\alpha \ge 0$ and $m \in \N$. In this proof, we define another function space 
\begin{align}
    \label{defcHss3}
    \cH^{\alpha,m,*} := \left\{ u(x,y) = \sum_{k \in \Z^d} u_k(y) e^{\im k \cdot x} : \T^d \times \left(-\tth - \frac{\tth}{m + 1}, \frac{\tth}{m + 1} \right) \to \mathbb{C}: \|u\|_{\cH^{\alpha,m,*}} < +\infty\right\},
\end{align}
where
\begin{align}
    \label{defnorm3}
  \|u\|_{\cH^{\alpha,m,*}}^2 := 
  \sum_{j = 0}^m \sum_{k \in \Z^d} e^{2 \alpha |k|_1} \langle k \rangle^{2(m - j)}\int_{-\tth - \frac{\tth}{m + 1}}^{\frac{\tth}{m + 1}} |\partial_y^j u_k(y) |^2 \de y.
\end{align}
Our method for extension is still the standard reflection, but we need to take care of two boundaries at the same time. To be more precise, for any function $u \in C_c^\infty(\R, H^{\alpha,m})$, we define
\begin{align}
    \label{defext}
    (\widetilde{\mathcal{E}}_m u)(y) = \begin{cases}
        \sum_{j = 0}^m \alpha_j^{(m)} u (- (j + 1) y), \qquad & y \in \left(0, \frac{\tth}{m + 1}\right), \\
        u(y), \qquad & y \in [-\tth,0], \\
        \sum_{j = 0}^m \beta_j^{(m)} u ((j + 1) (y + \tth)), \qquad & y \in \left(- \tth -\frac{\tth}{m + 1},-\tth\right),
    \end{cases}
\end{align}
where the coefficients $\alpha_j^{(m)},  \beta_j^{(m)}, 0 \le j \le m$ are determined by the following relations
\[
\partial_y^j (\widetilde{\mathcal{E}}_m u) (0) = \partial_y^j u (0), \qquad \partial_y^j (\widetilde{\mathcal{E}}_m u)(-\tth) = \partial_y^j u (-\tth), \qquad \forall \, j = 0,\ldots, m.
\]
This is possible because for example for the choice of $\alpha_j^{(m)}, 0 \le j \le m$ we need to solve the following linear system
\[
\begin{bmatrix}
    1 & 1 & \cdots & 1 \\
    -1 & -2 & \cdots & -m - 1 \\
    \vdots & \vdots & \ddots & \vdots \\
    (-1)^m & (-2)^m & \cdots & (- m - 1)^m
\end{bmatrix}
\begin{bmatrix}
   \alpha_0^{(m)} \\
   \alpha_2^{(m)} \\
   \vdots \\
   \alpha_m^{(m)}
\end{bmatrix}
= 
\begin{bmatrix}
    1 \\
    1\\
    \vdots \\
    1
\end{bmatrix},
\]
which is always solvable since the coefficient matrix is a Vandermonde matrix. Furthermore, by a direct calculation, one can find that there is a constant $C_{*} > 0$ such that 
\[
\|\widetilde{\mathcal{E}}_m u\|_{\cH^{\alpha,m,*}} \le C_* \|u\|_{\cH^{\alpha,m}}.
\]
Now we fix a cutoff function $\chi \in C_c^\infty(\R)$ such that $\chi$ is $1$ on $[-\tth,0]$ and $0$ outside $\left[ -\tth - \frac{\tth}{2(m + 1)}, \frac{\tth}{2(m + 1)} \right]$ and define 
\[
\mathcal{E}_m u := \chi \cdot \widetilde{\mathcal{E}}_m u.
\]
It is not hard to show that $\mathcal{E}_m u \in \cH^{\alpha,m,\R}$ and \eqref{extest} holds.
\end{proof}

Recall the following tame estimate of $\cH^{\alpha,m,\R}$ established in \cite[Lemma B.5]{BMV2}.

\begin{lemma}
    \label{lemtamlemB}
    Let $\alpha \ge 0$ and $m,m_0 \in \N$ with $m \ge m_0 > \frac{d + 1}{2}$. Then there exists a constant $K > 0$ such that 
    \[
    \|uv\|_{\cH^{\alpha, m,\R}} \le K (\|u\|_{\cH^{\alpha, m,\R}} \|v\|_{\cH^{\alpha, m_0,\R}} + \|u\|_{\cH^{\alpha, m_0,\R}} \|v\|_{\cH^{\alpha, m,\R}}).
    \]
\end{lemma}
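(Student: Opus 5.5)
The estimate concerns $\mathcal H^{\alpha,m,\mathbb R}$ on $\mathbb T^d\times\mathbb R$. I would prove it directly on the Fourier side, using the equivalent norm from the preceding remark,
\[
\|u\|_{\mathcal H^{\alpha,m,\mathbb R}}\simeq \Big\|e^{\alpha|k|_1}\langle k,\xi\rangle^{m}\widehat u_k(\xi)\Big\|_{\ell^2_k L^2_\xi}.
\]
The first step is to verify this equivalence. Expanding $\langle k,\xi\rangle^{2m}=(1+|k|^2+\xi^2)^m$ shows it is comparable to $\sum_{j=0}^m\langle k\rangle^{2(m-j)}\xi^{2j}$. Plancherel in $y$ then converts $\xi^{j}\widehat u_k$ into $\partial_y^j u_k$.

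The product is a convolution on the Fourier side:
\[
\widehat{(uv)}_k(\xi)=\frac{1}{2\pi}\sum_{k'}\int\widehat u_{k-k'}(\xi-\xi')\widehat v_{k'}(\xi')\,\mathrm d\xi'.
\]
The exponential weight factorizes, because the triangle inequality gives $e^{\alpha|k|_1}\le e^{\alpha|k-k'|_1}e^{\alpha|k'|_1}$. It therefore suffices to bound the unweighted convolution of $U:=e^{\alpha|k|_1}|\widehat u_k(\xi)|$ and $V:=e^{\alpha|k|_1}|\widehat v_k(\xi)|$, both nonnegative. This reduces the problem to the standard $\alpha=0$ tame estimate in the $(d+1)$-dimensional mixed lattice/continuum frequency space $\mathbb Z^d\times\mathbb R$.

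For the polynomial weight I would use $\langle k,\xi\rangle^m\le 2^{m}\big(\langle k-k',\xi-\xi'\rangle^m+\langle k',\xi'\rangle^m\big)$. This splits the convolution into two terms. In the first, the weight $\langle\cdot\rangle^m$ falls on $U$; the bound is then Young's inequality ($L^2 * L^1\subset L^2$ on $\mathbb Z^d\times\mathbb R$), which gives
\[
\|\langle\cdot\rangle^mU\|_{L^2}\,\|V\|_{L^1}.
\]
The second term is symmetric. The remaining point is the Cauchy--Schwarz bound
\[
\|V\|_{L^1(\mathbb Z^d\times\mathbb R)}\le \Big(\sum_k\int\langle k,\xi\rangle^{-2m_0}\mathrm d\xi\Big)^{1/2}\|v\|_{\mathcal H^{\alpha,m_0,\mathbb R}}.
\]
The prefactor is finite exactly when $2m_0>d+1$. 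I expect this step to need the most care: one must check that the mixed sum-integral $\sum_k\int(1+|k|^2+\xi^2)^{-m_0}\mathrm d\xi$ converges under $m_0>\frac{d+1}{2}$. Integrating in $\xi$ first yields $\sum_k\langle k\rangle^{1-2m_0}$, which is finite iff $2m_0-1>d$, matching the hypothesis. Combining the pieces gives the claim with $K$ depending on $m,m_0,d$.

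This lemma is quoted from \cite[Lemma B.5]{BMV2}, so in the paper it suffices to cite it. Proposition \ref{propT} then follows by applying the lemma to $\mathcal E_m u,\mathcal E_m v$ and using \eqref{extest}. That step requires care with the fact that $\mathcal E_m$ depends on $m$. Using $\mathcal E_m$ for both the $m$ and $m_0$ norms is fine, since $\mathcal E_m$ is also bounded in $\mathcal H^{\alpha,m_0}$, as the reflection coefficients match derivatives up to order $m\ge m_0$.
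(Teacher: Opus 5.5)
Your argument is correct. The steps are: the Fourier-side equivalent norm, submultiplicativity of $e^{\alpha|k|_1}$, the splitting $\langle k,\xi\rangle^m\le 2^m\bigl(\langle k-k',\xi-\xi'\rangle^m+\langle k',\xi'\rangle^m\bigr)$, Young's inequality on $\mathbb Z^d\times\R$, and Cauchy--Schwarz with $\sum_k\int\langle k,\xi\rangle^{-2m_0}\,\mathrm{d}\xi<\infty$ exactly when $m_0>\frac{d+1}{2}$. Together they give the estimate, and the hypothesis $m\ge m_0$ is not even used.

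The paper gives no proof of this lemma; it only cites \cite[Lemma B.5]{BMV2}, right after recording the same equivalent Fourier norm you use. So you are supplying the standard convolution proof that the citation stands in for.

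Your closing remark is also correct. Proposition~\ref{propT} needs $\mathcal E_m$ to be bounded from $\cH^{\alpha,m_0}$ to $\cH^{\alpha,m_0,\R}$ as well, whereas Lemma~\ref{lemext} states the bound only at level $m$; you make explicit a step the paper uses tacitly.
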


\begin{proof}[Proof of Proposition \ref{propT}]
    Since $\mathcal{E}_m u$ and $\mathcal{E}_m v$ are extensions of $u$ and $v$, we have
\begin{align*}
    &\|uv\|_{\cH^{\alpha, m}} \le \|\mathcal{E}_m u \, \mathcal{E}_m v\|_{\cH^{\alpha, m,\R}} \le 
    K (\|\mathcal{E}_m u\|_{\cH^{\alpha, m,\R}} \|\mathcal{E}_m v\|_{\cH^{\alpha, m_0,\R}} + \|\mathcal{E}_m u\|_{\cH^{\alpha, m_0,\R}} \|\mathcal{E}_m v\|_{\cH^{\alpha, m,\R}}) \\
    \le \, & K'  (\|u\|_{\cH^{\alpha, m}} \|v\|_{\cH^{\alpha, m_0}} + \|u\|_{\cH^{\alpha, m_0}} \|v\|_{\cH^{\alpha, m}}),
\end{align*}
where in the second ``$\le$'' we have used Lemma \ref{lemtamlemB} and in the last ``$\le$'' we have used Lemma \ref{lemext}. Finally we chose $K'$ in a way that it is increasing in $m$ and we end the proof.
\end{proof}

\paragraph{Flattening diffeomorphism.} As mentioned above, we introduce the following change of variables
\[
x = x', \qquad y = \rho(x',y'), \quad \mbox{where} \quad \rho(x',y') := y' + \frac{\sinh((y' + \tth )|D|)}{\sinh(\tth |D|)} \eta(x').
\]
Here $\frac{\sinh((y' + \tth )|D|)}{\sinh(\tth |D|)}$ denotes the Fourier multiplier
\begin{align}\label{defsinh}
\frac{\sinh((y' + \tth )|D|)}{\sinh(\tth |D|)} g(x) := \sum_{k \in \Z^d} g_k \frac{\sinh((y' + \tth )|k|)}{\sinh(\tth |k|)} e^{\im k \cdot x}, \qquad \mbox{if} \quad g(x) = \sum_{k \in \Z^d} g_k e^{\im k \cdot x}.
\end{align}
Similarly, we can also define the Fourier multiplier
\begin{align}\label{defcosh}
\frac{\cosh((y' + \tth )|D|)}{\cosh(\tth |D|)} g(x) := \sum_{k \in \Z^d} g_k \frac{\cosh((y' + \tth )|k|)}{\cosh(\tth |k|)} e^{\im k \cdot x}, \qquad \mbox{if} \quad g(x) = \sum_{k \in \Z^d} g_k e^{\im k \cdot x}.
\end{align}

Let us take it for granted that $\rho$ is $C^2$ and for every $x' \in \T^d$, we have $\partial_{y'} \rho (x',y') > 0$. In fact, in the following we will choose a suitable function space with small norm of $\eta$ to realize it (we will not mention it later but one can check this fact). From our construction, we know that for fixed $x' \in \T^d$, $\rho^{-1}$ maps the point $(x', \eta(x'))$ in the upper boundary $\{(x,y) \in \T^d \times \R : y = \eta(x)\}$ to $(x',0)$ and fixes the point $(x',-\tth)$ in the lower boundary. In conclusion, this change of variables is a $C^2$-diffeomorphism and its inverse maps $\Omega_{\tth,\eta}$ to $\Omega_{\tth,0}$. Furthermore, the derivatives $\partial_y$ and $\nabla_x$ become
\[
\Lambda_1 = \frac{1}{\partial_{y'} \rho}\partial_{y'}, \qquad \Lambda_2  = \nabla_{x'} - \frac{\nabla_{x'} \rho}{\partial_{y'} \rho} \partial_{y'}.
\]
Set $\varphi(x',y') = \Psi(x', \rho(x',y'))$. After the change of variables, \eqref{defining} becomes (to ease the notation in the following we still use $x,y$ instead of $x',y'$) the problem
\begin{align}\label{defining2}
\begin{cases}
    (\Lambda_1^2 + \Lambda_2^2) \varphi = 0 \quad & \mbox{in} \ \Omega_{\tth,0} \\
    \varphi(x,0) = \psi(x) \quad & \mbox{at} \ y = 0 \\
    \partial_y \varphi(x,y)  =0  \quad & \mbox{at} \ y = -\tth.
\end{cases}
\end{align}

For our proof we can further rewrite it as the following perturbed elliptic problem
\begin{align}\label{defining3}
\begin{cases}
    \Delta_{x,y} \varphi = F(\eta)\varphi \quad & \mbox{in} \ \Omega_{\tth,0} \\
    \varphi(x,0) = \psi(x) \quad & \mbox{at} \ y = 0 \\
    \partial_y \varphi(x,y)  =0  \quad & \mbox{at} \ y = -\tth,
\end{cases}
\end{align}
where
\begin{align}
    \label{defFe}
    F(\eta)\varphi := (\alpha(\eta) \partial_y^2 + \beta(\eta)\Delta_x + \gamma(\eta) \cdot \nabla_x \partial_y + \delta(\eta) \partial_y) \varphi
\end{align}
with
\begin{equation}\label{defcoeff}
    \begin{aligned}
        \alpha(\eta) &:= 1 - \frac{1 + |\nabla_x \rho|^2}{\partial_y \rho}, \\
        \beta(\eta) &:= 1 - \partial_y \rho, \\
        \gamma(\eta) &:= 2 \nabla_x \rho, \\
        \delta(\eta) &:= \frac{1}{\partial_y \rho} \left( - 2 \nabla_x \rho \cdot \nabla_x \partial_y \rho + \partial_y \rho \Delta_x \rho +  \frac{1 + |\nabla_x \rho|^2}{\partial_y \rho} \partial_y^2 \rho\right).
    \end{aligned}
\end{equation}

And the Dirichlet--Neumann operator becomes
\begin{align}
    \label{defDNO}
    G(\eta)\psi = -\nabla_x \eta \cdot \nabla_x\varphi(\cdot, 0) + \frac{1 + |\nabla_x\eta|^2(\cdot)}{1 + (|D| \coth(h|D|)\eta)(\cdot)} (\partial_y \varphi)(\cdot, 0).
\end{align}

Finally we give the following mapping property.

\begin{lemma}[Harmonic propagator]
    \label{lemHp}
    Let $\alpha \ge 0$ and $m + \frac{1}{2} \in \N$. Given $g \in H^{\alpha
    ,m}$, write
    \[
    g_c(x,y) := \frac{\cosh((y + \tth )|D|)}{\cosh(\tth |D|)} g(x), \qquad g_s(x,y) := \frac{\sinh((y + \tth )|D|)}{\sinh(\tth |D|)} g(x).
    \]
    Then the following estimates hold:
    \[
    \|g_s\|_{\cH^{\alpha, m + \frac{1}{2}}} + \|g_c\|_{\cH^{\alpha, m + \frac{1}{2}}} \le C \|g\|_{H^{\alpha,m}}, \qquad \forall \, g \in H^{\alpha,m},
    \]
    for some $C > 0$.
\end{lemma}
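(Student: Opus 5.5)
The estimate reduces to a Fourier-by-Fourier computation, because both $g_c$ and $g_s$ act diagonally in the $x$-Fourier basis. Writing $g=\sum_k g_k e^{\im k\cdot x}$, we have $(g_s)_k(y)=g_k\,\frac{\sinh((y+\tth)|k|)}{\sinh(\tth|k|)}$ for $k\neq 0$ and $(g_s)_0=g_0\,\frac{y+\tth}{\tth}$, interpreting the multiplier at $k=0$ by continuity. Likewise $(g_c)_k(y)=g_k\,\frac{\cosh((y+\tth)|k|)}{\cosh(\tth|k|)}$, with $(g_c)_0=g_0$. Set $M:=m+\frac12\in\N$. By \eqref{defnorm} it then suffices to show that for every $0\le j\le M$,
\[
\langle k\rangle^{2(M-j)}\int_{-\tth}^0 |\partial_y^j (g_{s})_k(y)|^2\,\de y \le C\,\langle k\rangle^{2m}|g_k|^2 ,
\]
with $C$ independent of $k$, and the same for $g_c$. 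The weight $e^{2\alpha|k|_1}$ is a common factor on both sides, so summing over $k$ gives the lemma.

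To prove this, first note that $\partial_y^j$ of $\sinh((y+\tth)|k|)$ equals $|k|^j$ times $\sinh$ or $\cosh$ of the same argument, and similarly for $\cosh$. For $|k|\ge1$ we have
\[
\frac{\cosh((y+\tth)|k|)}{\sinh(\tth|k|)}\le C e^{y|k|}, \qquad y\in(-\tth,0),
\]
because $\sinh(\tth|k|)\ge c_\tth e^{\tth|k|}$ when $|k|\ge1$. The same exponential bound holds with $\sinh$ in the numerator and with $\cosh$ in the denominator. Therefore
\[
\int_{-\tth}^0|\partial_y^j(g_s)_k|^2\,\de y\le C|k|^{2j}|g_k|^2\int_{-\tth}^0 e^{2y|k|}\,\de y\le C|k|^{2j-1}|g_k|^2 .
\]
Multiplying by $\langle k\rangle^{2(M-j)}$ gives at most $C\langle k\rangle^{2M-1}|g_k|^2=C\langle k\rangle^{2m}|g_k|^2$, since $M=m+\frac12$. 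This half-derivative gain from integrating in $y$ is exactly the mechanism behind the statement.

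The case $k=0$ is immediate: the profiles are polynomials in $y$ of degree at most one on a bounded interval, so all the relevant integrals are bounded by $C|g_0|^2$.

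The only point needing care is uniformity in $k$ for the factor $1/\sinh(\tth|k|)$. This is exactly why the bound $\sinh(\tth|k|)\ge c_\tth e^{\tth|k|}$ is used only for $|k|\ge1$, while $k=0$ is handled separately. Summing the Fourier-mode bounds over $k$ then concludes the proof.
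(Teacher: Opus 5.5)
Your proof is correct and follows the same mode-by-mode argument as the paper: the $y$-derivatives produce a factor $|k|^j$, and integrating the squared hyperbolic ratio over $(-\tth,0)$ gains a factor $|k|^{-1}$, which is exactly the half-derivative. The differences are cosmetic. You use the pointwise bound $Ce^{y|k|}$ where the paper evaluates $\int_{-\tth}^0\cosh^2((y+\tth)|k|)\,\de y$ exactly, and you treat the mode $k=0$ (where $1/\sinh(\tth|k|)$ degenerates) explicitly, whereas the paper covers $g_s$ only by the remark that it is ``similar''.
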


\begin{proof}
We only provide the proof for $g_c$ and the proof for $g_s$ is similar. Since $|\sinh(z)| \le \cosh(z)$, we have
\begin{align*}
   & \|g_c\|^2_{\cH^{\alpha,m + \frac{1}{2}}} = \sum_{j = 0}^{m + \frac{1}{2}} \sum_{k \in \Z^d} e^{2 \alpha |k|_1} \langle k \rangle^{2(m - j) + 1} \int_{-\tth}^0 |\partial_y^j \cosh((y + \tth)|k|)|^2 \frac{|g_k|^2}{|\cosh(\tth |k|)|^2} \de y \\
   \le \, & \sum_{j = 0}^{m + \frac{1}{2}} \sum_{k \in \Z^d} e^{2 \alpha |k|_1} \langle k \rangle^{2m  + 1} \int_{-\tth}^0 | \cosh((y + \tth)|k|)|^2 \frac{|g_k|^2}{|\cosh(\tth |k|)|^2} \de y \\
   \le \, & \sum_{j = 0}^{m + \frac{1}{2}} \sum_{k \in \Z^d} e^{2 \alpha |k|_1} \langle k \rangle^{2m  + 1} \frac{|g_k|^2}{|\cosh(\tth |k|)|^2} \frac{1}{2|k|} (\cosh(\tth |k|) \sinh(\tth |k|) + \tth |k|) \\
   \le \, & C' \sum_{j = 0}^{m + \frac{1}{2}} \sum_{k \in \Z^d} e^{2 \alpha |k|_1} \langle k \rangle^{2m  } |g_k|^2  \le C'' \|g\|^2_{H^{\alpha, m}}.
\end{align*}
\end{proof}

\paragraph{Perturbative argument.} We are now ready to solve the equation \eqref{defining3}. In fact, by a perturbative argument we decompose \eqref{defining3} into the following two problems. To be more precise, we write the solution $\varphi$ in \eqref{defining3} as
\[
\varphi(x,y) =  u(x,y) + v(x,y),
\]
where the function $v$ is the solution of 
\begin{align}\label{defining4}
\begin{cases}
    \Delta_{x,y} v = 0 \quad & \mbox{in} \ \Omega_{\tth,0}, \\
    v(x,0) = \psi(x) \quad & \mbox{at} \ y = 0, \\
    \partial_y v(x,y)  =0  \quad & \mbox{at} \ y = -\tth,
\end{cases}
\end{align}
and the function $u$ is the solution of 
\begin{align}\label{defining5}
\begin{cases}
    \Delta_{x,y} u = F(\eta)(v + u) \quad & \mbox{in} \ \Omega_{\tth,0}, \\
    u(x,0) = 0 \quad & \mbox{at} \ y = 0, \\
    \partial_y u(x,y)  =0  \quad & \mbox{at} \ y = -\tth.
\end{cases}
\end{align}

For \eqref{defining4} it is not hard to show that the solution (similar to the proof of Lemma \ref{lemER} below and we omit the details) is given by 
\begin{align}
    \label{repv}
    v(x,y) = \psi_c(x,y) := \frac{\cosh((y + \tth )|D|)}{\cosh(\tth |D|)} \psi(x).
\end{align}

With the information of $v$ at hand, we are in a position to solve the function $u$ in \eqref{defining5}. To this end, we first consider the following linear elliptic problem
\begin{align}\label{defining6}
\begin{cases}
    \Delta_{x,y} u = g \quad & \mbox{in} \ \Omega_{\tth,0}, \\
    u(x,0) = 0 \quad & \mbox{at} \ y = 0, \\
    \partial_y u(x,y)  =0  \quad & \mbox{at} \ y = -\tth.
\end{cases}
\end{align}

One key property of the solution of \eqref{defining6} is the following.

\begin{lemma}[Elliptic Regularity]
\label{lemER}
Let $\alpha \ge 0$ and $m \in \N$. For any $g \in \cH^{\alpha,m}$, the elliptic problem \eqref{defining6} has a unique solution $L g \in \cH^{\alpha,m + 2}$. Furthermore, there exists a constant $C > 0$ such that 
\[
\|L g\|_{\cH^{\alpha,m + 2}} \le C \|g\|_{\cH^{\alpha,m }}, \qquad \forall \, g \in \cH^{\alpha,m }.
\]
\end{lemma}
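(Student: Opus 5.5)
The plan is to diagonalize the problem in the horizontal Fourier variable, solve the resulting family of two-point boundary value ODEs explicitly by a Green's function, and then read off the weighted $\cH^{\alpha,m+2}$ bound mode by mode. Writing $g=\sum_k g_k(y)e^{\im k\cdot x}$ and $u=\sum_k u_k(y)e^{\im k\cdot x}$, problem \eqref{defining6} decouples into
\[
u_k''-|k|^2u_k=g_k \ \text{on } (-\tth,0),\qquad u_k(0)=0,\qquad u_k'(-\tth)=0 .
\]
For $k\neq0$ the homogeneous solutions $\cosh((y+\tth)|k|)$ (which satisfies the bottom condition) and $\sinh(y|k|)$ (which satisfies the top condition) have Wronskian $|k|\cosh(\tth|k|)\neq0$. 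The problem is therefore uniquely solvable, with
\[
u_k(y)=\int_{-\tth}^0 G_k(y,s)g_k(s)\,\de s,
\qquad
G_k(y,s)=\frac{\cosh((\min(y,s)+\tth)|k|)\,\sinh(\max(y,s)|k|)}{|k|\cosh(\tth|k|)} .
\]
For $k=0$ we take $u_0(y)=-\int_y^0\int_{-\tth}^{t}g_0(s)\,\de s\,\de t$. Uniqueness follows because the homogeneous problem has only the trivial solution. One sees this either from the Wronskian, or from the energy identity $\int_{-\tth}^0(|u_k'|^2+|k|^2|u_k|^2)\,\de y=0$, which one obtains by multiplying by $\overline{u_k}$ and integrating by parts with the two boundary conditions.

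\textbf{Key a priori estimate.} The heart of the proof is the uniform mode-wise bound
\[
\|u_k''\|_{L^2}+\langle k\rangle\|u_k'\|_{L^2}+\langle k\rangle^2\|u_k\|_{L^2}\le C\|g_k\|_{L^2},
\]
with $C$ independent of $k$. I would prove it by the energy method rather than by kernel bounds. Testing the equation against $\overline{u_k}$ and integrating by parts (the boundary terms vanish because $u_k(0)=0$ and $u_k'(-\tth)=0$) gives
\[
\|u_k'\|^2+|k|^2\|u_k\|^2\le \|g_k\|\,\|u_k\| .
\]
For $|k|\ge1$ this yields $|k|^2\|u_k\|\le\|g_k\|$ and $|k|\|u_k'\|\le\|g_k\|$. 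For $k=0$, the Poincaré inequality (available since $u_0(0)=0$) gives $\|u_0\|\le C\|u_0'\|$, and hence $\|u_0\|,\|u_0'\|\le C\|g_0\|$. Finally, the equation itself gives $\|u_k''\|\le\|g_k\|+|k|^2\|u_k\|\le 2\|g_k\|$.

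\textbf{Higher derivatives.} Next I would bootstrap in $y$. Since $u_k^{(j+2)}=|k|^2u_k^{(j)}+g_k^{(j)}$, induction gives
\[
\langle k\rangle^{m+2-j}\|u_k^{(j)}\|\le C\sum_{i\le \max(j-2,0)}\langle k\rangle^{m-i}\|g_k^{(i)}\|\qquad\text{for } 0\le j\le m+2 .
\]
For $j\le2$ this is the key estimate multiplied by $\langle k\rangle^m$. For $j\ge2$ the two terms are $\langle k\rangle^{m+4-j}\|u_k^{(j-2)}\|$, handled by the induction hypothesis, and $\langle k\rangle^{m+2-j}\|g_k^{(j-2)}\|$, which is a term of the $\cH^{\alpha,m}$ norm. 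Squaring, multiplying by $e^{2\alpha|k|_1}$ and summing over $k$ gives $\|Lg\|_{\cH^{\alpha,m+2}}\le C\|g\|_{\cH^{\alpha,m}}$. This also shows that the series defining $u$ converges in $\cH^{\alpha,m+2}$, and that $u$ solves \eqref{defining6} in the weak and trace sense; the trace sense is justified by Lemma \ref{lemtra}.

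\textbf{Main obstacle.} The step I expect to need care is the uniformity of the constant in $k$, in particular near $k=0$ and in the large-$k$ regime where $\cosh(\tth|k|)$ grows. The energy argument avoids having to estimate $G_k$ directly, and the only point that needs checking is that the boundary terms vanish exactly. Apart from that, everything is routine.
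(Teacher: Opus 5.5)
Your proof is correct, and its overall architecture is the paper's. Both proofs diagonalize in $x$ and write down the explicit two-point solution; your $G_k$ is exactly the kernel in \eqref{repuk}. Both then prove mode-wise $L^2$ bounds and bootstrap through $u_k^{(j+2)}=|k|^2u_k^{(j)}+g_k^{(j)}$, which is the unrolled recursion \eqref{deruk}.

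The genuine difference is how you obtain the base estimate \eqref{claimapp}. The paper derives it from the representation formula. It bounds the kernels pointwise by $e^{-|t-y||k|}$, checking the signs of $(t+\tth)|k|$ and $y|k|$ in each triangle, and then applies Cauchy--Schwarz in Schur-test fashion. It needs a separate kernel computation for $u_k'$ via \eqref{repukp} and a separate treatment of $k=0$.

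You instead test the equation against $\overline{u_k}$. The mixed conditions $u_k(0)=0$ and $u_k'(-\tth)=0$ kill the boundary term exactly. The single identity $\|u_k'\|^2+|k|^2\|u_k\|^2=-\int g_k\overline{u_k}$ then gives both $|k|^2\|u_k\|\le\|g_k\|$ and $|k|\|u_k'\|\le\|g_k\|$, with constant $1$ uniformly in $k\neq0$, and Poincaré handles $k=0$.

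Your route buys several things:
\begin{itemize}
\item it is shorter and gives explicit constants;
\item it never has to confront the growth of $\cosh(\tth|k|)$;
\item it would survive perturbations of the operator for which no closed-form kernel is available.
\end{itemize}
The paper's route uses the explicit solution it already needs for existence, at the cost of case-by-case kernel bounds.

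The energy identity requires $u_k\in H^2(-\tth,0)$, which you obtain from the Green's function construction. Your order of ``construct first, then estimate'' is therefore the right one.
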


\begin{proof}
We write
\[
u(x,y)  = \sum_{k \in \Z^d} u_k(y) e^{\im k \cdot x}, \qquad g(x,y)  = \sum_{k \in \Z^d} g_k(y) e^{\im k \cdot x}.
\]
Inserting this into \eqref{defining6}, we obtain that for any $k \in \Z^d$, the following equation holds:
\begin{align}\label{defining7}
\begin{cases}
    u_k''(y) - |k|^2 u_k(y) = g_k(y),  \\
    u_k(0) = 0,  \\
    u_k'(-\tth)  =0.
\end{cases}
\end{align}

Regarding \eqref{defining7} we first solve the case $g_k = 0$ and use the variation of constants method, the solution can be uniquely written as 
\begin{align}
    \label{repuk}
    u_k(y) = \int_{-\tth}^y \frac{\cosh((t + \tth)|k|)}{\cosh(\tth |k|)} \frac{\sinh(y |k|)}{|k|} g_k(t) \de t + \int_y^0 \frac{\cosh((y + \tth)|k|)}{\cosh(\tth |k|)} \frac{\sinh(t |k|)}{|k|} g_k(t) \de t.
\end{align}
As a result, we can take derivative and obtain
\begin{align}
    \label{repukp}
    u_k'(y) = \int_{-\tth}^y \frac{\cosh((t + \tth)|k|)}{\cosh(\tth |k|)} \cosh(y |k|) g_k(t) \de t + \int_y^0 \frac{\sinh((y + \tth)|k|)}{\cosh(\tth |k|)} \sinh(t |k|) g_k(t) \de t.
\end{align}

In the following to ease the notation, we use the notation $A \lesssim B$ if there exists a constant $C > 0$ (independent of $k$, $t$, $x$ and $y$ but may depends on $\tth$, $\alpha$ and $m,m_0$) such that $A \le C B$. The value of the implicit constant may change from one line to the next. We also write the first term in \eqref{repuk} as $I_k^1(y)$ and the second term \eqref{repuk} as $I_k^2(y)$. 

We claim that 
\begin{align}
    \label{claimapp}
    \int_{-\tth}^0 |u_k(y)|^2 \de y \lesssim \frac{1}{\langle k \rangle^4} \int_{-\tth}^0 |g_k(y)|^2 \de y, \qquad \int_{-\tth}^0 |u_k'(y)|^2 \de y \lesssim \frac{1}{\langle k \rangle^2} \int_{-\tth}^0 |g_k(y)|^2 \de y.
\end{align}

We only focus on the first inequality of \eqref{claimapp} and the proof for the other is almost the same. In fact, it suffices to show that 
\begin{align}
    \label{claimapp2}
    \int_{-\tth}^0 |I_k^1(y)|^2 \de y \lesssim \frac{1}{\langle k \rangle^4} \int_{-\tth}^0 |g_k(y)|^2 \de y, \qquad  \int_{-\tth}^0 |I_k^2(y)|^2 \de y \lesssim \frac{1}{\langle k \rangle^4} \int_{-\tth}^0 |g_k(y)|^2 \de y.
\end{align}

We split the proof into two cases.

\paragraph{Case 1: $k = 0$.} In such case it follows from the Cauchy--Schwarz inequality that 
\[
|I_0^1(y)|^2 + |I_0^2(y)|^2 \lesssim  \int_{-\tth}^0 |g_0(y)|^2 \de y
\]
so the estimate \eqref{claimapp2} is direct.

\paragraph{Case 2: $k \ne 0$.} We first deal with $I_k^1$. Notice that if $-\tth \le t \le y \le 0$, we have 
$(t + \tth)|k| \ge 0$ and $y |k| \le 0$, which implies
\[
\frac{\cosh((t + \tth)|k|)}{\cosh(\tth |k|)} \sinh(y |k|) \lesssim e^{(t - y) |k|}
\]
and thus by the Cauchy--Schwarz inequality we obtain
\begin{align*}
& \int_{-\tth}^0 |I_k^1(y)|^2 \de y \lesssim \frac{1}{|k|^2} \int_{-\tth}^0 \left( \int_{-\tth}^y e^{(t - y) |k|} |g(t)| \de t\right)^2 \de y   \\
 \le \, & \frac{1}{|k|^3}  \int_{-\tth}^0  \int_{-\tth}^y e^{(t - y) |k|}  |g(t)|^2 \de t \de y =  \frac{1}{|k|^3}   \int_{-\tth}^0 \int_t^0 e^{(t - y) |k|} \de y  |g(t)|^2 \de t 
 \le  \frac{1}{|k|^4}\int_{-\tth}^0 |g(t)|^2 \de t,
\end{align*}
which is our desired estimate. The treatment of $I_k^2$ is similar. In fact, if $-\tth \le y \le t \le 0$, we have 
$(y + \tth)|k| \ge 0$ and $t |k| \le 0$, which implies
\[
\frac{\cosh((y + \tth)|k|)}{\cosh(\tth |k|)} \sinh(t |k|) \lesssim e^{(y - t) |k|}
\]
and thus again by the Cauchy--Schwarz inequality we obtain
\begin{align*}
& \int_{-\tth}^0 |I_k^2(y)|^2 \de y \lesssim \frac{1}{|k|^2} \int_{-\tth}^0 \left( \int_y^0 e^{(y - t) |k|} |g(t)| \de t\right)^2 \de y   \\
 \le \, & \frac{1}{|k|^3}  \int_{-\tth}^0  \int_y^0 e^{(y - t) |k|}  |g(t)|^2 \de t \de y =  \frac{1}{|k|^3}   \int_{-\tth}^0 \int_{-\tth}^t e^{(y - t) |k|} \de y  |g(t)|^2 \de t 
 \le  \frac{1}{|k|^4}\int_{-\tth}^0 |g(t)|^2 \de t.
\end{align*}
So we have proved the claim \eqref{claimapp}. Now we turn to the proof of Lemma \ref{lemER}. Note that from the equation \eqref{defining7} we can obtain 
\begin{align}
    \label{deruk}
    u_k^{(\ell)}(y) = \sum_{j = 1}^{\lfloor \frac{\ell}{2} \rfloor} |k|^{2j - 2} g^{(\ell - 2j)}_k(y) + |k|^{2 \lfloor \frac{\ell}{2} \rfloor} u_k^{(\ell - 2 \lfloor \frac{\ell}{2} \rfloor)}(y).
\end{align}
Here $\lfloor \cdot \rfloor$ is the floor function. Using \eqref{deruk}, we can prove that for $\ell \ge 2$,
\begin{align}
    \label{deruk2}
    \langle k \rangle^{2(m + 2 - \ell)} \int_{-\tth}^0 |u_k^{(\ell)}(y)|^2 \de y \lesssim \sum_{j  =0}^\ell  \langle k \rangle^{2(m - j)}  \int_{-\tth}^0 |g_k^{(j)}(y)|^2 \de y.
\end{align}
Then multiplying $e^{2\alpha |k|_1}$ on both side and summing over $k \in \Z^d$ and $\ell \in \{2,\ldots, m + 2\}$ (together with \eqref{claimapp}) yield our desired estimate. To prove \eqref{deruk2} we only give the proof for the case $\ell = 2N$ with $N \ge 1$. Then \eqref{deruk} becomes
\begin{align}
    \label{deruk3}
    u_k^{(2N)}(y) = \sum_{j = 1}^{N} |k|^{2j - 2} g^{(2N - 2j)}_k(y) + |k|^{2 N} u_k(y),
\end{align}
which implies
\begin{align*}
 \langle k \rangle^{2(m + 2 - 2N)}    \int_{-\tth}^0 |u_k^{(2N)}(y)|^2 \de y \lesssim 
  \sum_{j = 1}^{N} \langle k \rangle^{2(m - 2(N - j))} \int_{-\tth}^0 | g^{(2N - 2j)}_k(y)|^2 \de y + \langle k \rangle^{2(m + 2)} \int_{-\tth}^0 |u_k (y)|^2 \de y.
\end{align*}
Then an application of \eqref{claimapp} gives the estimate \eqref{deruk2}.
\end{proof}

\paragraph{The proof of Theorem \ref{thmG}.} We begin with the last preparation that the map $\eta \mapsto F(\eta)$ is analytic. 

\begin{lemma}
    \label{lemanaF}
    Let $\alpha \ge 0$ and $m + \frac{1}{2}, m_0 \in \N$ with $m - \frac{3}{2} \ge m_0 > \frac{d + 1}{2}$. Then there exist $\varepsilon_0 > 0$ and $C > 0$ such that the nonlinear map
\begin{align*}
F : H^{\alpha,m} \cap B^{\alpha,m_0 + \frac{3}{2}} (\varepsilon_0) &\to  \mathcal{L}(\cH^{\alpha, m + \frac{1}{2}},\cH^{\alpha, m- \frac{3}{2}})\\
\eta &\mapsto F(\eta)
\end{align*}
is analytic and the following estimate holds
\begin{align}\label{estF}
    \|F(\eta)\theta\|_{\cH^{\alpha, m- \frac{3}{2}}} \le C (\|\eta\|_{H^{\alpha,m_0 + \frac{3}{2}}} \|\theta\|_{\cH^{\alpha, m + \frac{1}{2}}}  + \|\eta\|_{H^{\alpha,m}} \|\theta\|_{\cH^{\alpha, m_0 + 2}}).
\end{align}
\end{lemma}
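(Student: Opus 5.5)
The plan is to treat $F(\eta)$ as a finite sum of products of coefficient functions with derivatives of $\theta$, show that each coefficient $\alpha(\eta),\beta(\eta),\gamma(\eta),\delta(\eta)$ in \eqref{defcoeff} is an analytic function of $\eta$ with values in $\cH^{\alpha,\cdot}$, and then close with the tame product estimate of Proposition~\ref{propT} and the derivative bounds of Lemma~\ref{lemder}.

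\textbf{Step 1: $\rho$ as a harmonic propagator.} Write $\rho(x,y)=y+\eta_s(x,y)$, where $\eta_s=\frac{\sinh((y+\tth)|D|)}{\sinh(\tth|D|)}\eta$ is linear in $\eta$. By Lemma~\ref{lemHp}, $\|\eta_s\|_{\cH^{\alpha,m+\frac12}}\lesssim\|\eta\|_{H^{\alpha,m}}$, and the same bound holds at level $m_0+\frac32$ in place of $m$. Lemma~\ref{lemder} then gives the following:
\begin{itemize}
\item $\partial_y\rho-1=\partial_y\eta_s$ and $\nabla_x\rho=\nabla_x\eta_s$ lie in $\cH^{\alpha,m-\frac12}$;
\item their second derivatives lie in $\cH^{\alpha,m-\frac32}$;
\item all of them are linear, hence analytic, in $\eta$.
\end{itemize}

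\textbf{Step 2: analyticity of the coefficients.} For $\eta$ in the small ball $B^{\alpha,m_0+\frac32}(\varepsilon_0)$, the quantity $w:=\partial_y\eta_s$ is small in $\cH^{\alpha,m_0+1}$. Since $m_0+1>\frac{d+1}{2}$, this space is a Banach algebra, and it embeds in $L^\infty$, so $\partial_y\rho>0$. Expand
\[
\frac{1}{\partial_y\rho}=\sum_{j\ge0}(-w)^j .
\]
By \eqref{tamee4}, $\|w^j\|_{\cH^{\alpha,m-\frac12}}\le(2C\|w\|_{\cH^{\alpha,m_0+1}})^{j-1}\|w\|_{\cH^{\alpha,m-\frac12}}$. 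The series therefore converges in $\cH^{\alpha,m-\frac12}$ once $2C\varepsilon_0<1$, and it is analytic in $\eta$. Its norm is tame: it is bounded by $1+C\|\eta\|_{H^{\alpha,m}}$ at the high level and by $1+C\varepsilon_0$ at the low level. Products and sums then give the following:
\begin{itemize}
\item $\alpha(\eta)$, $\beta(\eta)$ and $\gamma(\eta)$ are analytic into $\cH^{\alpha,m-\frac12}$;
\item $\delta(\eta)$, which involves second derivatives of $\rho$, is analytic into $\cH^{\alpha,m-\frac32}$.
\end{itemize}
Each coefficient vanishes at $\eta=0$ and satisfies
\[
\|c(\eta)\|_{\cH^{\alpha,k}}\lesssim\|\eta\|_{H^{\alpha,k+\frac12}}\quad\text{or}\quad\|c(\eta)\|_{\cH^{\alpha,k}}\lesssim\|\eta\|_{H^{\alpha,k+\frac32}},
\]
for $k$ at the low and the high level respectively. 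The low-level bounds are uniform on the ball.

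\textbf{Step 3: estimate \eqref{estF}.} For $\theta\in\cH^{\alpha,m+\frac12}$, the second derivatives $\partial_y^2\theta$, $\Delta_x\theta$ and $\nabla_x\partial_y\theta$ lie in $\cH^{\alpha,m-\frac32}$, and $\partial_y\theta$ lies in $\cH^{\alpha,m-\frac12}$. Apply \eqref{tamee3} at level $m-\frac32$, with base level $m_0$; this is allowed since $m-\frac32\ge m_0$. Then
\[
\|\alpha(\eta)\partial_y^2\theta\|_{\cH^{\alpha,m-\frac32}}\lesssim\|\alpha(\eta)\|_{\cH^{\alpha,m-\frac32}}\|\partial_y^2\theta\|_{\cH^{\alpha,m_0}}+\|\alpha(\eta)\|_{\cH^{\alpha,m_0}}\|\partial_y^2\theta\|_{\cH^{\alpha,m-\frac32}}.
\]
In this bound:
\begin{itemize}
\item $\|\partial_y^2\theta\|_{\cH^{\alpha,m_0}}\le\|\theta\|_{\cH^{\alpha,m_0+2}}$;
\item $\|\alpha(\eta)\|_{\cH^{\alpha,m-\frac32}}\lesssim\|\eta\|_{H^{\alpha,m}}$;
\item $\|\alpha(\eta)\|_{\cH^{\alpha,m_0}}\lesssim\|\eta\|_{H^{\alpha,m_0+\frac12}}$.
\end{itemize}
This is exactly the structure of \eqref{estF}, and the $\beta$ and $\gamma$ terms are identical.

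\textbf{Main obstacle: the $\delta$ term.} Here I need $\|\delta(\eta)\|_{\cH^{\alpha,m_0}}\lesssim\|\eta\|_{H^{\alpha,m_0+\frac32}}$, which fits the hypothesis of the ball exactly. I also need $\|\delta(\eta)\|_{\cH^{\alpha,m-\frac32}}\lesssim\|\eta\|_{H^{\alpha,m}}$ paired with $\|\partial_y\theta\|_{\cH^{\alpha,m_0}}$, which is fine.

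Two points need care:
\begin{itemize}
\item the half-integer indices, handled by splitting integer versus half-integer levels via the Fourier-side equivalent norm;
\item keeping the Neumann series tame.
\end{itemize}
Both are handled by always applying \eqref{tamee4} at base level $m_0$ or $m_0+1$.

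Linearity in $\theta$ and analyticity of the coefficients then give analyticity of $\eta\mapsto F(\eta)$ into $\mathcal L(\cH^{\alpha,m+\frac12},\cH^{\alpha,m-\frac32})$.
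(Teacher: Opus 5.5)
Your proposal is correct and follows essentially the same route as the paper. You write $F(\eta)\theta$ as a sum of coefficient-times-derivative bilinear terms, close with Proposition~\ref{propT} and Lemma~\ref{lemder}, and obtain analyticity of $\alpha(\eta),\beta(\eta),\gamma(\eta),\delta(\eta)$ from Lemma~\ref{lemHp} plus a tame Neumann series for $1/\partial_y\rho$. If anything, your explicit low-level bounds $\|c(\eta)\|_{\cH^{\alpha,m_0}}\lesssim\|\eta\|_{H^{\alpha,m_0+\frac32}}$ fill in a step the paper leaves implicit in \eqref{targete}, which records only the high-norm bound; and your worry about half-integer indices is unnecessary, since every $\cH$-level you use ($m\pm\frac12$, $m-\frac32$, $m_0$, $m_0+1$, $m_0+2$) is an integer.
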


\begin{proof}
Recall the definition of $F(\eta)$ in \eqref{defFe} and we can thus rewrite it as 
\begin{align*}
    F(\eta)\theta = \mathcal{F}_1(\alpha(\eta),\theta) + \mathcal{F}_2(\beta(\eta),\theta) + \sum_{j = 1}^d \mathcal{F}_{3j} (\gamma_j(\eta),\theta) + \mathcal{F}_4(\delta(\eta),\theta),
\end{align*}
with the bilinear maps
\begin{align} \label{defbilin}
    \mathcal{F}_1(g,\theta) := g \partial_y^2 \theta, \qquad 
    \mathcal{F}_2(g,\theta) := g \Delta_x \theta, \qquad 
    \mathcal{F}_{3j}(g,\theta) := g \partial_{x_j} \partial_y\theta, \quad \forall \, j = 1, \ldots, d, \qquad 
    \mathcal{F}_4(g,\theta) := g \partial_y \theta.
\end{align}
Then it follows from Proposition \ref{propT} and Lemma \ref{lemder} that for every $\mathcal{F} \in \{\mathcal{F}_1, \mathcal{F}_2, \mathcal{F}_{31}, \ldots, \mathcal{F}_{3d}, \mathcal{F}_4\}$ the following tame estimates hold:
\[
\|\mathcal{F}(g,\theta)\|_{\cH^{\alpha, m- \frac{3}{2}}} \lesssim (\|g\|_{\cH^{\alpha,m_0 }} \|\theta\|_{\cH^{\alpha, m + \frac{1}{2}}}  + \|g\|_{\cH^{\alpha,m - \frac{3}{2}}} \|\theta\|_{\cH^{\alpha, m_0 + 2}}).
\]
Since for every $\mathcal{F} \in \{\mathcal{F}_1, \mathcal{F}_2, \mathcal{F}_{31}, \ldots, \mathcal{F}_{3d}, \mathcal{F}_4\}$, $g \mapsto \mathcal{F}(g,\cdot)$ is linear, thus analytic. It suffices to prove that the maps
\begin{align*}
    H^{\alpha,m} \cap B^{\alpha, m_0 + \frac{1}{2}}(\varepsilon_0) &\to \cH^{\alpha, m- \frac{1}{2}}, \qquad \eta \mapsto \alpha(\eta),\beta(\eta),\gamma_j(\eta), \quad \forall \, j = 1, \ldots, d, \\
    H^{\alpha,m} \cap B^{\alpha, m_0 + \frac{3}{2}}(\varepsilon_0) &\to \cH^{\alpha, m- \frac{3}{2}}, \qquad \eta \mapsto \delta(\eta)\,,
\end{align*}
are analytic and the following estimate holds:
\begin{align}\label{targete}
   \|\alpha(\eta)\|_{\cH^{\alpha, m- \frac{1}{2}}} +  \|\beta(\eta)\|_{\cH^{\alpha, m- \frac{1}{2}}} + \sum_{j = 1}^d \|\gamma_j(\eta)\|_{\cH^{\alpha, m- \frac{1}{2}}} + \|\delta(\eta)\|_{\cH^{\alpha, m- \frac{3}{2}}} \lesssim \|\eta\|_{H^{\alpha, m}}. 
\end{align}

Recall that the those maps are defined in \eqref{defcoeff}. For the proof of $\alpha(\eta)$ we rewrite it as 
\[
\alpha(\eta) = 1 - \frac{1}{\partial_y \rho} + \left( 1 - \frac{1}{\partial_y \rho} \right) |\nabla_x \rho|^2 - |\nabla_x \rho|^2.
\]

To prove the analyticity of $\alpha(\eta)$ we notice that it follows from Lemma \ref{lemder} and Lemma \ref{lemHp} that 
\[
\|\partial_y \rho  - 1\|_{\cH^{\alpha , m  - \frac{1}{2}}} \lesssim \|\eta_s\|_{\cH^{\alpha , m  + \frac{1}{2}}} \lesssim  \|\eta\|_{H^{\alpha,m}}.
\]
Here by definition $\eta_s(x,y) := \frac{\sinh((y + \tth )|D|)}{\sinh(\tth |D|)} \eta(x)$. Then it follows from Proposition \ref{propT} that if we choose $\varepsilon_0$ small enough, the series
\[
\sum_{j = 1}^{+\infty} (\partial_y \rho  - 1)^j
\]
converges and to the function $1 - \frac{1}{\partial_y \rho}$. By linearity the map
\[
\eta \mapsto \partial_y \rho  - 1
\]
is analytic and so as $ 1 - \frac{1}{\partial_y \rho}$ now.  Furthermore, we have the estimate
\[
\left\| 1 - \frac{1}{\partial_y \rho} \right\|_{\cH^{\alpha,m - \frac{1}{2}}} \lesssim \|\eta\|_{H^{\alpha,m}}.
\]

Since now $\alpha(\eta)$ is a polynomial of $1 - \frac{1}{\partial_y \rho}$ and $\partial_{x_j} \rho, \ j = 1, \ldots, d$, it follows that $\alpha(\eta)$ is analytic in $\eta$ and the \eqref{targete} follows from the previous estimates, Lemma \ref{lemder}, Lemma \ref{lemHp} and Proposition \ref{propT}. Similar arguments also work for $\beta(\eta)$ and $\gamma_j(\eta), \ j = 1, \ldots, d$. Finally for 
$\delta(\eta)$, it suffices to know from \eqref{defcoeff} that it is a polynomial of
\[
1 - \frac{1}{\partial_y \rho}, \partial_{x_j} \rho, \partial_y \rho, \partial_{x_j}^2 \rho, \partial_{x_j} \partial_y \rho, \partial_y^2\rho
\]
and the maximum order of derivatives is $2$. So finally we can obtain the desired esimate \eqref{targete} and end the proof.
\end{proof}

Now we show that the solution of \eqref{defining3} depends analytically in $\eta$.

\begin{proposition}\label{propA}
    Let $\alpha \ge 0$ and $m + \frac{1}{2}, m_0 \in \N$ with $m - \frac{3}{2} \ge m_0 > \frac{d + 1}{2}$. Then there exist $\varepsilon_0 > 0$ and $C > 0$ such that for any $\eta \in H^{\alpha,m} \cap B^{\alpha,m_0 + \frac{3}{2}} (\varepsilon_0)$ and $\psi \in H^{\alpha,m}$, we have a unique solution $\Psi(\eta)\psi \in \cH^{\alpha , m + \frac{1}{2}}$ of \eqref{defining3} satisfying the following estimate
\begin{align}\label{estPsi}
    \|\Psi(\eta)\psi\|_{\cH^{\alpha, m+ \frac{1}{2}}} \le C ( \|\psi\|_{H^{\alpha, m}}  + \|\eta\|_{H^{\alpha,m}} \|\psi\|_{H^{\alpha, m_0 + \frac{3}{2}}}).
\end{align}
Furthermore, $\psi \mapsto \Psi(\eta)\psi$ is linear and the map $\Psi: H^{\alpha,m} \cap B^{\alpha,m_0 + \frac{3}{2}} (\varepsilon_0) \to  \mathcal{L}(H^{\alpha,m}, \cH^{\alpha, m+ \frac{1}{2}})$ is analytic.
\end{proposition}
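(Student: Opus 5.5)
We split $\Psi(\eta)\psi = v + u$ as in \eqref{defining4}--\eqref{defining5}, with $v=\psi_c$ given by \eqref{repv}, and solve for $u$ by a fixed point built from the solution operator $L$ of Lemma \ref{lemER}. By Lemma \ref{lemHp}, $\|v\|_{\cH^{\alpha,m+\frac12}}\le C\|\psi\|_{H^{\alpha,m}}$, and $\psi\mapsto v$ is linear, bounded and independent of $\eta$. Problem \eqref{defining5} becomes
\[
u = L F(\eta)(u+v), \qquad\text{equivalently}\qquad (\mathrm{Id}-LF(\eta))u = LF(\eta)v .
\]

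Combining Lemma \ref{lemanaF} with Lemma \ref{lemER} (applied with $m-\frac32$ in place of $m$, and $m-\frac32\in\N$ since $m+\frac12\in\N$), the operator $T(\eta):=LF(\eta)$ is analytic from $H^{\alpha,m}\cap B^{\alpha,m_0+\frac32}(\varepsilon_0)$ into $\mathcal L(\cH^{\alpha,m+\frac12})$. It satisfies the tame bound
\[
\|T(\eta)\theta\|_{\cH^{\alpha,m+\frac12}}\le C\big(\|\eta\|_{H^{\alpha,m_0+\frac32}}\|\theta\|_{\cH^{\alpha,m+\frac12}}+\|\eta\|_{H^{\alpha,m}}\|\theta\|_{\cH^{\alpha,m_0+2}}\big).
\]

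The key point is to close the estimate at two levels. First apply the lemmas at the low regularity $m$ replaced by $m_0+\frac32$. This is admissible because $m_0+\frac32\ge m_0+\frac32$, so the hypotheses of Lemma \ref{lemanaF} hold. At that level
\[
\|T(\eta)\|_{\mathcal L(\cH^{\alpha,m_0+2})}\le C\|\eta\|_{H^{\alpha,m_0+\frac32}}\le C\varepsilon_0 <\tfrac12
\]
for $\varepsilon_0$ small. Hence $\mathrm{Id}-T(\eta)$ is invertible on $\cH^{\alpha,m_0+2}$ by a Neumann series, and $\|u\|_{\cH^{\alpha,m_0+2}}\lesssim\|\psi\|_{H^{\alpha,m_0+\frac32}}$.

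At the high level, the tame bound gives
\[
\|u\|_{\cH^{\alpha,m+\frac12}}\le C\varepsilon_0\|u+v\|_{\cH^{\alpha,m+\frac12}}+C\|\eta\|_{H^{\alpha,m}}\|u+v\|_{\cH^{\alpha,m_0+2}} .
\]
The first term on the right is absorbed into the left-hand side. The low-norm bound controls $\|u+v\|_{\cH^{\alpha,m_0+2}}$ by $\|\psi\|_{H^{\alpha,m_0+\frac32}}$. Together these yield \eqref{estPsi}.

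To get existence in the high space rather than just an a priori estimate, we run the Neumann series $u=\sum_{n\ge1}T(\eta)^n v$ directly in $\cH^{\alpha,m+\frac12}$. Iterating the tame bound gives
\[
\|T^n v\|_{m+\frac12}\le (C\varepsilon_0)^{n}\|v\|_{m+\frac12}+n(C\varepsilon_0)^{n-1}C\|\eta\|_{H^{\alpha,m}}\|v\|_{m_0+2},
\]
which is summable once $C\varepsilon_0<1$. This is the main technical step: the constant must not depend on $\|\eta\|_{H^{\alpha,m}}$ multiplicatively in the geometric ratio, and the tame structure is exactly what prevents that.

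Uniqueness follows from invertibility of $\mathrm{Id}-T(\eta)$ on the low space, together with uniqueness in Lemma \ref{lemER}. Linearity in $\psi$ is clear from the formula. For analyticity, $\eta\mapsto T(\eta)$ is analytic, and the series $\sum_n T(\eta)^n$ converges uniformly on the ball in operator norm on $\mathcal L(\cH^{\alpha,m+\frac12})$, since $\|T(\eta)\|\le C(\varepsilon_0+\|\eta\|_{H^{\alpha,m}})$. Restricting to a small ball in $H^{\alpha,m}$ if needed, or arguing locally around each point, a uniform limit of analytic maps is analytic. Composing with the bounded linear map $\psi\mapsto\psi_c$ shows that $\Psi:\eta\mapsto(\mathrm{Id}-T(\eta))^{-1}\circ(\psi\mapsto\psi_c)$ is analytic into $\mathcal L(H^{\alpha,m},\cH^{\alpha,m+\frac12})$.
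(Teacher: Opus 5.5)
Your proposal is correct and follows essentially the same route as the paper. Like you, the paper sets $P(\eta)=L\circ F(\eta)$, contracts on $\cH^{\alpha,m_0+2}$, iterates the tame bound to get $\|P(\eta)^j\theta\|_{\cH^{\alpha,m+\frac12}}\le C^j\|\eta\|_{H^{\alpha,m_0+\frac32}}^{j-1}\big(\|\eta\|_{H^{\alpha,m_0+\frac32}}\|\theta\|_{\cH^{\alpha,m+\frac12}}+j\|\eta\|_{H^{\alpha,m}}\|\theta\|_{\cH^{\alpha,m_0+2}}\big)$, sums the Neumann series, and writes $\Psi(\eta)\psi=(\mathrm{Id}-P(\eta))^{-1}P(\eta)\psi_c+\psi_c$.

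One small correction to your analyticity step: near a given $\eta$, convergence of $\sum_n T(\eta)^n$ comes from this iterated tame bound, not from $\|T(\eta)\|\le C(\varepsilon_0+\|\eta\|_{H^{\alpha,m}})$; analyticity is then cleanest by composing the analytic map $\eta\mapsto\mathrm{Id}-T(\eta)$ with operator inversion, which is analytic on invertible operators.
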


\begin{proof}
Recall the operator $L$ appearing in Lemma \ref{lemER} and write $P(\eta) = L \circ F(\eta)$. It follows from Lemma \ref{lemanaF} and Lemma \ref{lemER} that there exist $\varepsilon_0 > 0$ and $C > 0$ such that the nonlinear map
\begin{align*}
P : H^{\alpha,m} \cap B^{\alpha,m_0 + \frac{3}{2}} (\varepsilon_0) &\to  \mathcal{L}(\cH^{\alpha, m + \frac{1}{2}},\cH^{\alpha, m +  \frac{1}{2}})\\
\eta &\mapsto P(\eta)
\end{align*}
is analytic and the following estimate holds
\begin{align}\label{estP}
    \|P(\eta)\theta\|_{\cH^{\alpha, m + \frac{1}{2}}} \le C (\|\eta\|_{H^{\alpha,m_0 + \frac{3}{2}}} \|\theta\|_{\cH^{\alpha, m + \frac{1}{2}}}  + \|\eta\|_{H^{\alpha,m}} \|\theta\|_{\cH^{\alpha, m_0 + 2}}).
\end{align}
In particular, for the special case $m =  m_0 + \frac{3}{2}$ we obtain 
\begin{align}\label{estP2}
    \|P(\eta)\theta\|_{\cH^{\alpha, m_0 + 2 }} \le C' \|\eta\|_{H^{\alpha,m_0 + \frac{3}{2}}}\|\theta\|_{\cH^{\alpha, m_0 + 2}}.
\end{align}
Without loss of generality we can assume $C' \le C$. Let us prove that for small $\varepsilon'_0 \le \varepsilon_0$, when $\eta \in B^{\alpha ,m_0 + \frac{3}{2} }(\varepsilon_0)$, the series 
\[
\sum_{j = 0}^{+\infty} P(\eta)^j
\]
converges in $ \mathcal{L}(\cH^{\alpha, m + \frac{1}{2}},\cH^{\alpha, m +  \frac{1}{2}})$. Firstly, it follows from \eqref{estP2} that 
\begin{align}
    \label{estP3}
    \|P(\eta)^j \theta\|_{\cH^{\alpha, m_0 + 2}} \le (C' \|\eta\|_{H^{\alpha,m_0 + \frac{3}{2}}} )^{j - 1} \|\theta\|_{\cH^{\alpha, m_0 + 2}}.
\end{align}
By induction, we can show that 
\begin{align}
    \label{estP4}
    \|P(\eta)^j \theta\|_{\cH^{\alpha, m + \frac{1}{2}}} \le C^j \|\eta\|_{H^{\alpha,m_0 + \frac{3}{2}}}^{j - 1}(\|\eta\|_{H^{\alpha,m_0 + \frac{3}{2}}} \|\theta\|_{\cH^{\alpha,m + \frac{1}{2}}} + j  \|\eta\|_{H^{\alpha,m}} \|\theta\|_{\cH^{\alpha, m_0 + 2}}).
\end{align}
In fact, for the case $j = 1$, it is exactly \eqref{estP} and from the case $j$ to $j + 1$ it suffices to use \eqref{estP} again and \eqref{estP3}. So as a result, for small $\varepsilon'_0 \le \varepsilon_0$, when $\eta \in B^{\alpha ,m_0 + \frac{3}{2} }(\varepsilon_0)$, we have $(\operatorname{Id} - P(\eta))^{-1}$ is well-defined, analytically in $\eta$, and 
\[
\|(\operatorname{Id} - P(\eta))^{-1}\|_{ \mathcal{L}(\cH^{\alpha, m + \frac{1}{2}},\cH^{\alpha, m +  \frac{1}{2}})} \lesssim 1.
\]
Now if $u$ is a solution of \eqref{defining5} we must have
\[
u = P(\eta)(v + u),
\]
which implies $u = (\operatorname{Id} - P(\eta))^{-1} P(\eta)[v]$. Thus the solution of \eqref{defining3} can be represented by
\[
\Psi(\eta)\psi = (\operatorname{Id} - P(\eta))^{-1} P(\eta)\psi_c + \psi_c,
\]
which is analytic in $\eta$ and the estimate \eqref{estPsi} follows from Lemma \ref{lemHp} and \eqref{estP}.
\end{proof}

\begin{proof}[Proof of Theorem \ref{thmG}]
We rewrite $G(\eta)[\psi]$ as
\[
G(\eta)\psi = - \nabla_x \eta \cdot \nabla_x\psi + \Gamma(\partial_y \Psi(\eta)\psi) +  \frac{|\nabla_x\eta|^2 - |D|\coth(h|D|)\eta}{1 + |D|\coth(h|D|)\eta}\Gamma(\partial_y \Psi(\eta)\psi) = \sum_{j = 1}^3 \mathbf{G}_j(\eta)\psi.
\]
 The first term $\mathbf{G}_1(\eta)\psi$ is bilinear so it is analytic and the estimate follows from Lemma \ref{lemder} and    Lemma \ref{lemT}. For the second term, we know from Proposition \ref{propA} that $\Psi(\eta)\psi$ is analytic. Since both $\Gamma$ and $\partial_y$ are linear, $\mathbf{G}_2(\eta)\psi$ is also analytic. The estimate follows from \eqref{estPsi}, Lemma \ref{lemder} and Lemma \ref{lemtra}. For the remaining term $\mathbf{G}_3(\eta)\psi$ we note that 
 $\mathbf{G}_3(\eta)[\psi] = f(\eta) \mathbf{G}_2(\eta)\psi$ with $f(\eta) := \frac{|\nabla_x\eta|^2 - |D|\coth(h|D|)\eta}{1 + |D|\coth(h|D|)\eta}$. As before, by using Lemma \ref{lemT}, for $\varepsilon_0$ small and $\eta \in B^{\alpha,m_0 + \frac{3}{2}}(\varepsilon_0)$, the following series
 \[
 \sum_{j = 0}^{+\infty} (-|D|\coth(h|D|)\eta)^j
 \]
 converges in $H^{\alpha, m- 1}$, which means $f(\eta)$ is analytic. Thus by Lemma \ref{lemT} again we have the estimate
 \[
 \|f(\eta)\|_{H^{\alpha, m- 1}} \lesssim \|\eta\|_{H^{\alpha,m}}.
 \]
 Finally the estimate of $\mathbf{G}_3(\eta)\psi$ follows from that of $\mathbf{G}_3(\eta)\psi$, $f(\eta)$ and Lemma \ref{lemT} again.
\end{proof}

\begin{theorem}[Existence of Stokes waves]
Given $\alpha\geq 0$, $m + \frac12 \in \N$, $m > \frac{11}{2}$ and $(\tth,\kappa,b) \in (\R_{>0}\times\R_{\geq 0} \times\R_{> 0}) \setminus \fR$, with  $\fR$  in \eqref{def:fR}. There exist
$\e_*=\e_*(\alpha,m,\tth,\kappa,b)>0$ and a unique family of solutions to \eqref{travelingWWstokes}
\[
(\eta_\e(x),\psi_\e(x),c_\e)\in H^{\alpha,m}_{\mathtt{ev}} (\T)\times H^{\alpha,m}_{\mathtt{odd}}(\T)\times \R,
\qquad |\e|<\e_*
\]
such that
\[
c_0:=\sqrt{1+\kappa+b}\sqrt{\tanh(\tth)},
\]
where  $ H^{\alpha,m}_{\mathtt{ev}}(\T) $ (resp. $ H^{\alpha,m}_{\mathtt{odd}}(\T) $) denotes the space of even (resp. odd) functions in $ H^{\alpha,m}(\T) $. 
\end{theorem}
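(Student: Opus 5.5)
The plan is to apply the analytic Crandall--Rabinowitz theorem to the map
\[
\mathcal F(\eta,\psi,c):=\Bigl(c\,\eta_x+G(\eta)\psi,\;
c\,\psi_x-\eta-\tfrac{\psi_x^2}{2}+\tfrac{(G(\eta)\psi+\eta_x\psi_x)^2}{2(1+\eta_x^2)}+\kappa\sigma(\eta)-bF(\eta)\Bigr),
\]
where $F(\eta)$ is as in \eqref{esp l}. We regard it as a map from $H^{\alpha,m}_{\mathtt{ev}}\times H^{\alpha,m}_{\mathtt{odd}}\times\R$, near $(0,0,c_0)$, into $H^{\alpha,m-4}_{\mathtt{odd}}\times H^{\alpha,m-4}_{\mathtt{ev}}$. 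Because bending is fourth order, the natural target is $m-4$ in the second component, while $c\eta_x+G(\eta)\psi$ lies in $H^{\alpha,m-1}\subset H^{\alpha,m-4}$. One could also rebalance the spaces, with $\eta$ in $H^{\alpha,m}$ and $\psi$ in $H^{\alpha,m-3/2}$, to obtain a Fredholm operator of index zero. The parity bookkeeping is as follows. For $\eta$ even, $G(\eta)$ maps odd functions to odd functions, and $\sigma(\eta)$, $F(\eta)$ are even; hence $\mathcal F$ respects these spaces. This is just the reversibility \eqref{rho involution}.

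\textbf{Analyticity.} By Theorem \ref{thmG} with $d=1$ and $m_0=1$, the map $\eta\mapsto G(\eta)$ is analytic into $\mathcal L(H^{\alpha,m},H^{\alpha,m-1})$ on a small ball. This needs $m-\tfrac32\ge m_0>1$; the hypothesis $m>\frac{11}{2}$ leaves room for the four derivatives lost in $F(\eta)$. The remaining terms are built from products, from $\eta_x^2$, and from compositions with $(1+\eta_x^2)^{-1/2}$ and $(1+\eta_x^2)^{-1}$. These are analytic by the algebra property and tame estimate of Lemma \ref{lemT}, using the convergent power series in $\eta_x^2$ for $\|\eta\|_{H^{\alpha,m}}$ small. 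Note that $\sigma$ costs two derivatives and $\partial_s^2\sigma$ costs four. Consequently $\mathcal F$ is real-analytic, and $\mathcal F(0,0,c)=0$ for all $c$.

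\textbf{Linearization and kernel.} At the trivial solution, $\mathrm d_{(\eta,\psi)}\mathcal F(0,0,c)$ equals $(c\partial_x\eta+|D|\tanh(\tth|D|)\psi,\;c\partial_x\psi-\eta+\kappa\eta_{xx}-b\eta_{xxxx})$, which is diagonal in Fourier modes. For the mode pair $(\cos nx,\sin nx)$, $n\ge1$, it reduces to a $2\times2$ matrix with determinant $c^2n^2-n\tanh(n\tth)(1+\kappa n^2+bn^4)$. This determinant vanishes at $c=c_0$ for $n=1$. It is nonzero for every $n\ge2$ exactly when $(\tth,\kappa,b)\notin\fR$, by the equivalent form of the non-resonance condition stated after Theorem \ref{Thm: Stokes expansion}. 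The zero mode is excluded because constants are absent from the odd space of $\psi$, and on even $\eta$ the constant component gives $-\eta_0\neq0$. Hence the kernel is one-dimensional, spanned by $(\cos x,\, c_0^{-1}\tanh(\tth)\sin x)$, which matches the $\psi$ coefficient $c_\tth^{-1}c_{\kappa,b}$ in \eqref{exp:Sto}.

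\textbf{Main obstacle: the Fredholm property.} I expect the main work to be showing that the range is closed with codimension one, which requires uniform invertibility for large $n$. Since $bn^4\tanh(n\tth)\,n$ dominates $c^2n^2$, the inverse matrices satisfy $|\eta_n|\lesssim n^{-4}|f_{2,n}|+n^{-3}|f_{1,n}|$ and analogous bounds for $\psi_n$. The weights $e^{\alpha|n|}$ pass through unchanged, so this yields boundedness of the inverse on the complement of the kernel mode. I would fix the spaces at this step so that the linearization is an isomorphism modulo one dimension; if needed, I would take $\psi\in H^{\alpha,m-3/2}$ and the target $H^{\alpha,m-5/2}\times H^{\alpha,m-4}$, adjusting the statement's space for $\psi$ by the regularity gained from the first equation.

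\textbf{Transversality and conclusion.} The transversality condition is $\partial_c\mathrm d\mathcal F(0,0,c_0)[\text{kernel}]=(-\sin x,\, c_0^{-1}\tanh\tth\cos x)$. It does not lie in the range, because $\partial_c\det|_{n=1}=2c_0\neq0$. The analytic Crandall--Rabinowitz theorem then gives a unique analytic branch $\varepsilon\mapsto(\eta_\varepsilon,\psi_\varepsilon,c_\varepsilon)$ with $\eta_\varepsilon=\varepsilon\cos x+o(\varepsilon)$, normalized by the complement condition $\langle\eta_\varepsilon,\cos x\rangle=\varepsilon\pi$, and with $c_{\varepsilon=0}=c_0$. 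The even dependence of $c_\varepsilon$ on $\varepsilon$ follows from the translation $x\mapsto x+\pi$.
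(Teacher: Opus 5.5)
Your overall route is the paper's: Crandall--Rabinowitz for $\mathcal F$, analyticity from Theorem~\ref{thmG}, a one-dimensional kernel from the Fourier-mode determinant and the non-resonance condition, and transversality at $n=1$. The gap is in the function-space setup you start from: the Fredholm step you yourself flag does fail there. On the mode pair $(\cos nx,\sin nx)$, inverting the linearization gives
\[
\eta_n=\frac{c n\, f_{1,n}-n\tanh(n\tth)\, f_{2,n}}{d_n},\qquad
\psi_n=\frac{(1+\kappa n^2+bn^4)\, f_{1,n}-c n\, f_{2,n}}{d_n},\qquad
d_n=(1+\kappa n^2+bn^4)\,n\tanh(n\tth)-c^2n^2\sim bn^5 .
\]
So $\eta_n=O(n^{-4})(|f_{1,n}|+|f_{2,n}|)$, but $\psi_n\sim n^{-1}f_{1,n}$. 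Take the target $H^{\alpha,m-4}_{\mathtt{odd}}\times H^{\alpha,m-4}_{\mathtt{ev}}$ with $\psi\in H^{\alpha,m}$. Up to the mode-one constraint, the range then consists of pairs with $f_1\in H^{\alpha,m-1}$, which is dense but not closed. The linearization is therefore not Fredholm, and Crandall--Rabinowitz does not apply. Your phrase ``analogous bounds for $\psi_n$'' is exactly where this loss of three derivatives is hidden.

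The remedy, which is what the paper does, is to keep $\psi$ in $H^{\alpha,m}$ and sharpen the target to $H^{\alpha,m-1}_{\mathtt{odd}}\times H^{\alpha,m-4}_{\mathtt{ev}}$. The first component $c\eta_x+G(\eta)\psi$ genuinely lies there by Theorem~\ref{thmG}. The formulas above then show the inverse is bounded off the kernel mode, so the operator is Fredholm of index zero and you get the stated regularity $\psi_\e\in H^{\alpha,m}_{\mathtt{odd}}$ directly. Your fallback ($\psi\in H^{\alpha,m-3/2}$, target $H^{\alpha,m-5/2}\times H^{\alpha,m-4}$) is also Fredholm and analytic, but it only produces $\psi_\e\in H^{\alpha,m-3/2}$. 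To recover the statement you would have to rerun it with $m+\frac32$ in place of $m$, or prove an elliptic regularity result for $G(\eta)$ in the weighted spaces. The phrase ``regularity gained from the first equation'' does not supply either.

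There are two smaller slips. First, the kernel condition $-c_0a+\tanh(\tth)\,a'=0$ gives $a'=c_0/\tanh(\tth)=\ch^{-1}\ckb$, not $c_0^{-1}\tanh(\tth)$, which is its reciprocal. So your claimed agreement with \eqref{exp:Sto} is false as written, and the transversality vector should be $(-\sin x,\ch^{-1}\ckb\cos x)$; the determinant-derivative argument itself is unaffected. Second, $m_0=1$ is not admissible in Theorem~\ref{thmG}, which requires $m_0>\frac{d+1}{2}=1$. Take $m_0=2$, which the hypothesis $m>\frac{11}{2}$ easily accommodates.
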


\begin{proof}
This result follows from an application of the classical Crandall--Rabinowitz bifurcation theorem, see for example \cite[Theorem 3.1]{BMV2}. For the sake of completeness, we include a sketched proof here. In fact, by an application of Theorem \ref{thmG}, there exists an $\epsilon_0 >0 $ such that the following map is analytic
\begin{align*}
    F:  &(H^{\alpha,m}_{\mathtt{ev}} (\T) \cap B^{\alpha, m}(\epsilon_0))\times H^{\alpha,m}_{\mathtt{odd}}(\T)\times \R \to H^{\alpha,m - 1}_{\mathtt{odd}}(\T) \times H^{\alpha,m - 4}_{\mathtt{ev}} (\T), \\
    F(\eta,\psi,c) &:= \begin{bmatrix}
        c \, \eta_x+G(\eta)\psi \\
        c \, \psi_x -  \eta - \dfrac{\psi_x^2}{2} + 
\dfrac{1}{2(1+\eta_x^2)} \big( G(\eta) \psi + \eta_x \psi_x \big)^2  +\kappa\sigma(\eta)-b\left(\partial^2_s \sigma(\eta)+\frac{1}{2}\sigma(\eta)^3\right).
    \end{bmatrix}
\end{align*}

It is not hard to show that $F(0,0,c) = 0$ for all $c \in \R$. A direct calculation gives that 
\begin{align*}
    \de_{\eta,\psi}F(0,0,c): & H^{\alpha,m}_{\mathtt{ev}} (\T)\times H^{\alpha,m}_{\mathtt{odd}}(\T) \to H^{\alpha,{m - 1}}_{\mathtt{odd}}(\T) \times H^{\alpha,{m - 4}}_{\mathtt{ev}} (\T), \\
    \de_{\eta,\psi}F(0,0,c) \begin{bmatrix}
        \hat{\eta} \\
        \hat{\psi}
    \end{bmatrix} & := \begin{bmatrix}
        c \partial_x & |D| \tanh(h|D|) \\
        -1 + \kappa \partial_x^2 - b \partial_x^4 & c \partial_x
    \end{bmatrix} \begin{bmatrix}
        \hat{\eta} \\
        \hat{\psi}
    \end{bmatrix}.
\end{align*}

Our assumptions guarantee that with the choice $c = c_0$ given above, we have
\[
\mathrm{ker} (\de_{\eta,\psi}F(0,0,c_0)) = \mathrm{span} \left\{ \begin{bmatrix}
    a \cos(x) \\
    a' \sin(x)
\end{bmatrix} \right\}, \qquad \mbox{such that} \qquad 
\begin{bmatrix}
    - c_0 & \tanh(\tth) \\
    -(1 + \kappa + b) & c_0
\end{bmatrix}
\begin{bmatrix}
    a  \\
    a' 
\end{bmatrix}
= 0
\]
and 
\begin{align*}
\mathrm{ran} (\de_{\eta,\psi}F(0,0,c_0)) = \mathrm{span} \left\{ \begin{bmatrix}
   0 \\
    1
\end{bmatrix} \right\} &\oplus  \mathrm{span} \left\{ \begin{bmatrix}
    c_0 \sin(x) \\
    (1 + \kappa + b) \cos(x)
\end{bmatrix} \right\} \\
&\oplus 
     \overline{\mathrm{span} \left\{ \begin{bmatrix}
    \sin(jx) \\
    0
\end{bmatrix},\begin{bmatrix}
    0 \\
    \cos(jx)
\end{bmatrix}, j \ge 2 \right\}}^{H^{\alpha,{m - 1}}_{\mathtt{odd}}(\T) \times H^{\alpha,{m - 4}}_{\mathtt{ev}} (\T)}.
\end{align*}
So it is clear that the kernel of $\de_{\eta,\psi}F(0,0,c_0)$ is $1$-dimensional, the range of $\de_{\eta,\psi}F(0,0,c_0)$ is closed and the cokernel of $\de_{\eta,\psi}F(0,0,c_0)$ is also $1$-dimensional. Finally we show that $\partial_c \de_{\eta,\psi}F(0,0,c_0) [u^*]$ does not belong to the range of $\de_{\eta,\psi}F(0,0,c_0)$, where $u^*$ is a nonzero element in the $\mathrm{ker} (\de_{\eta,\psi}F(0,0,c_0))$. We assume on the contrary that $\partial_c \de_{\eta,\psi}F(0,0,c_0) [u^*] \in \mathrm{ran} (\de_{\eta,\psi}F(0,0,c_0))$, namely
\[
\begin{bmatrix}
         \partial_x & 0 \\
        0 &  \partial_x
    \end{bmatrix} \begin{bmatrix}
    a \cos(x) \\
    a' \sin(x)
\end{bmatrix} \in \mathrm{ran} (\de_{\eta,\psi}F(0,0,c_0))
\]
for some $(a,a') \ne (0,0)$ satisfying $-(1 + \kappa + b) a + c_0 a'  = 0$. We can see from the expression of  $\mathrm{ran} (\de_{\eta,\psi}F(0,0,c_0))$ that 
\[
\begin{bmatrix}
    -a \sin(x) \\
    a' \cos(x)
\end{bmatrix} \parallel 
\begin{bmatrix}
    c_0 \sin(x) \\
    (1 + \kappa + b) \cos(x)
\end{bmatrix},
\]
which implies $(1 + \kappa + b) a + c_0 a' = 0$. Combining this with $-(1 + \kappa + b) a + c_0 a'  = 0$, we get $(a,a') = (0,0)$, which leads to a contradiction. Then we can conclude that  $F$ satisfies all the conditions of Crandall--Rabinowitz bifurcation theorem.
\end{proof}

\section{Hydroelastic Stokes Waves}\label{sec:App2}

In this Appendix we provide the expansions in \eqref{exp:Sto}. In fact, we prove slightly more. We actually compute the expansion with sufficiently small pressure $p$.
\\[1mm]
\noindent
{\bf Proof of \eqref{exp:Sto}.}
Writing
 \begin{equation}\label{etapsic}
\begin{aligned}
 & \eta_{\e,p}(x) =\eta_{0,p}+\tilde{\eta}_{\e,p}(x)= \eta_{0,p}+\e \eta_{1,p}(x) + \e^2 \eta_{2,p}(x) + \cO(\e^3) \, , 
  \end{aligned}
\end{equation}
\begin{equation} \label{B2 expansion}
\psi_{\e,p}(x) = \e \psi_{1,p}(x) + \e^2 \psi_{2,p}(x) + \cO(\e^3)\,, \quad c_{\e,p} = c_{0,p} + \e c_{1,p} + \e^2 c_{2,p}+ \cO(\e^3) \,,   
\end{equation}
and
\begin{align} \label{exp:Sto 2}
    \quad c_{0,p} := \ckb c_{\tth,p}, \quad \ckb:=\sqrt{1+\kappa+b},\quad c_{\tth,p}:=\sqrt{\tanh(\tth+p)}\,,
\end{align}
where  $\eta_{i,p}$ is $\mathit{even}(x)$ and $\psi_{i,p}$ is $\mathit{odd}(x)$ for $i=1,2$, 
we solve order by order in $ \e $ for the system as follows:
\begin{equation}
\label{Sts0}
\begin{cases}
-c \, \psi_x +  \eta   + \dfrac{\psi_x^2}{2} - 
\dfrac{\eta_x^2}{2(1+\eta_x^2)} \left( c  -  \psi_x \right)^2-\kappa\sigma(\eta)+b\left(\partial^2_s \sigma(\eta)+\frac{1}{2}\sigma(\eta)^3\right)  = p\,, \\
c \, \eta_x+G(\eta;\tth)\psi = 0 	\, ,
\end{cases}
\end{equation}
having substituted $G(\eta;\tth)\psi $ with $-c \, \eta_x $  in the first equation.
First, we observe that $\eta_{0,p}=p$. Next, let $\tilde{\tth}:=\tth+p$. We expand the Dirichlet--Neumann operator 
$ G(\eta;\tth)= G(\tilde{\eta};\tilde{\tth})= G_0+ G_1(\tilde{\eta}) + G_2(\tilde{\eta}) + \cO(\tilde{\eta}^3)  $
where, according to \cite[formula (2.14)]{CS}, 
\begin{equation} \label{expDiriNeu}
\begin{aligned}
 G_0 & := D\tanh(\tilde{\tth} D) = |D| \tanh(\tilde{\tth} |D|) \,, \\
 G_1(\tilde{\eta}) & := D \big( \tilde{\eta} -  \tanh(\tilde{\tth} D)\tilde{\eta} \tanh(\tilde{\tth} D) \big)D = -\pa_x \tilde{\eta} \pa_x - |D| \tanh(\tilde{\tth}|D|)\tilde{\eta}|D| \tanh(\tilde{\tth}|D|), \\
 G_2(\tilde{\eta}) & := -\frac12 D \Big( 
 D{\tilde{\eta}}^2 \tanh(\tilde{\tth} D) +\tanh(\tilde{\tth} D){\tilde{\eta}}^2 D - 2\tanh(\tilde{\tth} D)\tilde{\eta} D\tanh(\tilde{\tth}  D)\tilde{\eta}\tanh(\tilde{\tth} D) \Big)D \, .
\end{aligned}
\end{equation}
Using the fact that 
\begin{align}
    G(\eta;\tth)=G(\tilde{\eta};\tilde{\tth}), \quad \sigma(\eta)=\sigma(\tilde{\eta}),
\end{align}
one may rewrite \eqref{Sts0} as
\begin{equation}
\label{Sts}
\begin{cases}
-c \, \psi_x +  \tilde{\eta}   + \dfrac{\psi_x^2}{2} - 
\dfrac{\tilde{\eta}_x^2}{2(1+\tilde{\eta}_x^2)} \left( c  -  \psi_x \right)^2-\kappa\sigma(\tilde{\eta})+b\left(\partial^2_s \sigma(\tilde{\eta})+\frac{1}{2}\sigma(\tilde{\eta})^3\right)  = 0\,, \\
c \, \tilde{\eta}_x+G(\tilde{\eta};\tilde{\tth})\psi = 0 	\, .
\end{cases}
\end{equation}
\noindent
{\bf First order in $ \e $.}  Substituting the expansions in \eqref{etapsic} into \eqref{Sts}, we get the linear system 
\begin{equation}
 \left\{\begin{matrix} -c_{0,p} \,\pa_x(\psi_{1,p}) + \eta_{1,p}-\kappa\pa_x^2(\eta_{1,p}) +b \pa_x^4(\eta_{1,p})= 0\,, \\
 c_{0,p} \,\pa_x(\eta_{1,p}) + G_0\psi_{1,p} =0 \, ,  \end{matrix}\right.
\end{equation}
or, equivalently, 
\begin{equation} \label{cB0}
\begin{aligned}
   \vet{\eta_{1,p}}{\psi_{1,p}} \in \mathrm{ker }\,\cB_0  \text{ with } \cB_0 := \begin{bmatrix} 1-\kappa\pa^2_{x}+b\pa^4_{x} & -c_{0,p}\,\pa_x \\ c_{0,p}\,\pa_x & G_0  \end{bmatrix}\,,
\end{aligned}
\end{equation}
where $\eta_{1,p}$ is $\mathit{even}(x)$ and $\psi_{1,p}$ is $\mathit{odd}(x)$.
\begin{lemma}\label{lem:B0}
If $(\tth+p,\kappa,b) \in \R^3_+ \setminus \fR$,  the kernel of 
 the linear operator $\cB_0$ in \eqref{cB0} is one dimensional and given by
\begin{equation}\label{chk}
\mathrm{ker }\,\cB_0= \mathrm{span}\,\left\{\vet{\cos(x)}{\frac{\ckb}{c_{\tth,p}}\sin(x)} \right\}.
\end{equation}
\end{lemma}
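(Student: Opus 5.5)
Since $\cB_0$ is a Fourier multiplier matrix, I will diagonalize it on the parity-adapted Fourier basis. Because $\eta_{1,p}$ is even and $\psi_{1,p}$ is odd, I write
\[
\eta_{1,p}=\sum_{n\ge0}a_n\cos(nx),\qquad \psi_{1,p}=\sum_{n\ge1}b_n\sin(nx).
\]
Then $\cB_0$ acts on each frequency $n$ separately. For $n=0$ only the first row survives; it gives $a_0=0$, because $1-\kappa\pa_x^2+b\pa_x^4$ acts as $1$ on constants. For each $n\ge1$ the pair $(a_n,b_n)$ must lie in the kernel of the $2\times2$ matrix
\[
M_n:=\begin{pmatrix}1+\kappa n^2+bn^4 & -c_{0,p}n\\ -c_{0,p}n & n\tanh(n\tilde{\tth})\end{pmatrix},
\]
where $\tilde{\tth}=\tth+p$. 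The entries come from $\pa_x\sin(nx)=n\cos(nx)$, $\pa_x\cos(nx)=-n\sin(nx)$, and $G_0\sin(nx)=n\tanh(n\tilde\tth)\sin(nx)$. I will double-check these signs against \eqref{cB0}; the precise sign of the off-diagonal entries does not affect the determinant.

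Next I compute $\det M_n=(1+\kappa n^2+bn^4)\,n\tanh(n\tilde\tth)-c_{0,p}^2n^2$. Using $c_{0,p}^2=(1+\kappa+b)\tanh(\tilde\tth)$, this vanishes if and only if
\[
\frac{(1+\kappa n^2+bn^4)\tanh(n\tilde\tth)}{n}=(1+\kappa+b)\tanh(\tilde\tth).
\]
For $n=1$ this identity holds trivially, so $\det M_1=0$. Since $M_1\neq0$, its kernel is one-dimensional. Solving the second row, $-c_{0,p}a_1+\tanh(\tilde\tth)b_1=0$, gives $b_1=a_1c_{0,p}/\tanh(\tilde\tth)=a_1\ckb/c_{\tth,p}$. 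This is the vector in \eqref{chk}. I will also verify it against the first row, which follows from $\det M_1=0$.

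For $n\ge2$ I will show that $\det M_n\ne0$ is exactly the condition $(\tilde\tth,\kappa,b)\notin\fR_n$. This is the step that needs care. I multiply by $n/\tanh(n\tilde\tth)$ and use $\rho_n(\tilde\tth)=n\tanh(\tilde\tth)/\tanh(n\tilde\tth)$. The resonance identity then becomes $n^2(1+\kappa n^2+bn^4)/n^2\cdot\ldots$; more simply, $1+\kappa n^2+bn^4=\rho_n(1+\kappa+b)$, which rearranges to
\[
\kappa(n^2-\rho_n)+b(n^4-\rho_n)=\rho_n-1.
\]
Since $n^2-\rho_n>0$, dividing by it yields precisely the defining equation of $\fR_n$ in \eqref{def:fRn}. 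So for $n\ge2$ the hypothesis makes $M_n$ invertible, forcing $a_n=b_n=0$.

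It remains to pass from frequencies to the function space. The orthogonality of the Fourier modes in $H^{\alpha,m}$ means that an element of the kernel is a vector whose coefficients all vanish except $(a_1,b_1)$. This proves \eqref{chk}.
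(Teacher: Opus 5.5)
Your proof is correct and follows the same route as the paper: you split $\cB_0$ into the $2\times 2$ blocks on $\{[\cos(kx);0],[0;\sin(kx)]\}$, get $\det M_1=0$ with a one-dimensional kernel giving \eqref{chk}, exclude $k\ge 2$ through the equivalence $\det M_k=0\iff(\tth+p,\kappa,b)\in\fR_k$, and exclude $k=0$ by parity. The paper also remarks that $\det M_k\to+\infty$, which matters for closed range but is not needed for the kernel claim. The only blemishes are presentational: the garbled intermediate line just before the $\rho_n$ identity, and reusing $b$ both for the Fourier coefficients and for the bending rigidity.
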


\begin{proof} 
The action of $\cB_0$ on each subspace spanned by $\footnotesize{\,\Big\{\vet{\cos(kx)}{0}, \vet{0}{\sin(kx)}\Big\}} $, $k\in \N$, is represented by the $2\times 2$ matrix $\footnotesize{ \begin{bmatrix} 1+\kappa k^2+b k^4 & -c_{0,p}\, k \\ -c_{0,p}\, k & k\tanh((\tth+p) k)  \end{bmatrix}}$. Its determinant is given by
 $$ (1+\kappa k^2+b k^4)k \tanh((\tth+p) k) - c_{0,p}^2 k^2\stackrel{\eqref{exp:Sto 2}}{=} k^2 \left(\frac{(1+\kappa k^2+b k^4)\tanh((\tth+p) k)}{k} -(1+\kappa+b)\tanh(\tth+p) \right)$$
 and, provided $(\tth+p,\kappa,b) \in \R^3_+ \setminus \fR$, 
 vanishes if and only if $k=1$.
Indeed, for $k$ large enough, the determinant goes asymptotically to $+\infty$ so it is uniformly bounded away from zero,  whereas if for some $k \in \N$ it vanishes, it implies that $(\tth+p,\kappa,b) \in \fR_k \subset \fR$, absurd.
  For $k=1$ we obtain the kernel of $\cB_0$ given in \eqref{chk}. For $k=0$ it has no kernel since $\psi_{1,p}(x)$ is odd.
\end{proof}
We set
$
\eta_{1,p}(x) := \cos(x)$, $\psi_{1,p}(x) := c_{\tth,p}^{-1}\ckb \sin(x) 
$
in agreement with 
\eqref{exp:Sto} when $p=0$. Also, we denote $c_{\tth}:=c_{\tth,0}$ and $c_0:=c_{0,0}$ for notational simplicity. 
\\[1mm]
{\bf Second order in $ \e $.} 
By \eqref{Sts}, and since 
$ (c_{0,p} (\eta_{1,p})_x)^2 = (G_0\psi_{1,p})^2  $, we get  the linear system
\begin{equation}\label{syslin2}
 \cB_0 \vet{\eta_{2,p}}{\psi_{2,p}} = \vet{c_{1,p}(\psi_{1,p})_x-\frac12 (\psi_{1,p})_x^2 + \frac12 (G_0\psi_{1,p})^2 }{-c_{1,p}(\eta_{1,p})_x - G_1(\eta_{1,p})\psi_{1,p}} \, , 
\end{equation}
where  $\cB_0$ is the self-adjoint operator in \eqref{cB0}. 
System \eqref{syslin2} admits a solution if and only if its right-hand term is orthogonal to the kernel  of $\cB_0$ in \eqref{chk}, namely
\begin{equation}\label{orth1}
 \left(\vet{c_{1,p}(\psi_{1,p})_x-\frac12 (\psi_{1,p})_x^2 + \frac12 (G_0\psi_{1,p})^2 }{-c_{1,p}(\eta_{1,p})_x - G_1(\eta_{1,p})\psi_{1,p}}\;,\;\vet{\cos(x)}{\frac{\ckb}{c_{\tth,p}}\sin(x)}\right)=0 \, . 
\end{equation}
In view of the first order expansion \eqref{exp:Sto}, \eqref{expDiriNeu} and the identity
 $
  \tanh(2(\tth+p))  = 2c_{\tth,p}^2(1+c_{\tth,p}^4)^{-1}
 $,
  we obtain $
 [G_0\psi_{1,p}](x)= c_{\tth,p} \ckb\sin(x)$, $\big[G_1(\eta_{1,p})\psi_{1,p}\big](x) 
 =\frac{(1-c_{\tth,p}^4)\ckb}{c_{\tth,p}(1+c_{\tth,p}^4)}\sin(2x)$
so that \eqref{orth1} implies
 $c_{1,p}=0$, in agreement with \eqref{exp:Sto}. Equation
 \eqref{syslin2} reduces to 
\begin{align}\label{sisto2}
\begin{bmatrix} 1-\kappa\pa^2_{x}+b\pa^4_{x} & -c_{0,p}\,\pa_x \\ c_{0,p}\,\pa_x & G_0  \end{bmatrix}
\vet{\eta_{2,p}}{\psi_{2,p}} 
  = \vet{\frac{\ckb^2}{4}\left(c_{\tth,p}^2-c_{\tth,p}^{-2}\right)-\frac{\ckb^2}{4}\left(c_{\tth,p}^2+c_{\tth,p}^{-2}\right)\cos(2x) }{ -\frac{(1-c_{\tth,p}^4)\ckb}{c_{\tth,p}(1+c_{\tth,p}^4)}\sin(2x)}.
\end{align}
Setting $ \eta_{2,p} = \eta_{2,p}^{[0]} +  \eta_{2,p}^{[2]} \cos(2x) $ and  $ \psi_{2,p} =
 \psi_{2,p}^{[2]}  \sin (2x) $, system 
\eqref{sisto2} amounts to 
\begin{align*}
\left\{ \begin{matrix} \eta_{2,p}^{[0]} +\big( (1+4\kappa+16 b)\eta_{2,p}^{[2]} -2c_{0,p}  \psi_{2,p}^{[2]} \big) \cos(2x)  =  \frac{\ckb^2}{4}\left(c_{\tth,p}^2-c_{\tth,p}^{-2}\right)-\frac{\ckb^2}{4}\left(c_{\tth,p}^2+c_{\tth,p}^{-2}\right)\cos(2x)\,,   \\ (-2c_{0,p} \eta_{2,p}^{[2]}   + 2  \psi_{2,p}^{[2]} \tanh(2(\tth+p)))\sin(2x)  =   -\frac{(1-c_{\tth,p}^4)\ckb}{c_{\tth,p}(1+c_{\tth,p}^4)}\sin(2x) \, , \end{matrix}\right.
\end{align*}
which leads to the expansions as follow:
\begin{align}\label{expcoef}
 &\eta_{2,p}^{[0]} := \frac{\ckb^2}{4}\left(c_{\tth,p}^2-c_{\tth,p}^{-2}\right), \qquad 
\eta_{2,p}^{[2]} := -\frac{\left(c_{\tth,p}^4-3\right)\,\ckb^2}{4\,c_{\tth,p}^2\,(b\,c_{\tth,p}^4-3\,\kappa -15\,b+c_{\tth,p}^4\,\kappa +c_{\tth,p}^4)}\,, \\ \label{expcoef1}
&\psi_{2,p}^{[2]} :=\frac{\ckb\left(33\,b+9\,\kappa -30\,b\,c_{\tth,p}^4+b\,c_{\tth,p}^8-6\,c_{\tth,p}^4\,\kappa +c_{\tth,p}^8\,\kappa +c_{\tth,p}^8+3\right)}{8\,c_{\tth,p}^3\,(b\,c_{\tth,p}^4-3\,\kappa -15\,b+c_{\tth,p}^4\,\kappa +c_{\tth,p}^4)}\,.
\end{align}
In particular, $\eta^{[0]}_{2}:=\eta^{[0]}_{2,0}$, $\eta^{[2]}_{2}:=\eta^{[2]}_{2,0}$, and $\psi^{[2]}_{2}:=\psi^{[2]}_{2,0}$.

\noindent
{\bf Third order in $ \e $.} 
It remains to determine 
$ c_{2,p} $ in \eqref{B2 expansion} (cf. \eqref{expc2}).
 We get the linear system 
\begin{equation}\label{syslin3}
 \cB_0 \vet{\eta_{3,p}}{\psi_{3,p}} = \vet{c_{2,p}(\psi_{1,p})_x 
 - (\psi_{1,p})_x (\psi_{2,p})_x - (\eta_{1,p})_x^2 (\psi_{1,p})_x c_{0,p} + 
 (\eta_{1,p})_x (\eta_{2,p})_x c_{0,p}^2-\kappa\mathfrak{C}(\eta_{1,p})+b\mathfrak{D}(\eta_{1,p})
 }{-c_{2,p}(\eta_{1,p})_x - G_1(\eta_{1,p})\psi_{2,p}- G_1(\eta_{2,p})\psi_{1,p} - G_2(\eta_{1,p})\psi_{1,p}} \, , 
\end{equation}
where 
\begin{align}
    \mathfrak{C}(\eta):=\frac{3}{2}(\eta)^2_x(\eta)_{xx},\quad\mathfrak{D}(\eta):=\frac{5}{2}\left((\eta)^3_{xx}+(\eta)^2_x(\eta)_{xxxx}\right)+10(\eta)_x(\eta)_{xx}(\eta)_{xxx}.
\end{align}
System \eqref{syslin3} has 
a solution if and only if the right hand side is orthogonal to the kernel of
$ \cB_0 $ given in \eqref{chk}. This condition determines uniquely $ c_{2,p} $.
Denoting  $\Pi_1$ the $L^2$-orthogonal projector on span$\, \{\cos(x),\sin(x)\} $, we obtain 
\begin{align*} 
& c_{2,p} (\psi_{1,p})_x = c_{2,p} c_{\tth,p}^{-1}\ckb\cos(x)\, , \quad c_{2,p} (\eta_{1,p})_x = -c_{2,p} \sin(x) \, , \quad \Pi_1[ (\psi_{1,p})_x (\psi_{2,p})_x] = 
\psi_{2,p}^{[2]} c_{\tth,p}^{-1} \ckb \cos(x)\, ,\\ 
& \Pi_1 [c_{0,p} (\eta_{1,p})_x^2 (\psi_{1,p})_x ] = \frac14 \ckb^2\cos(x)\, , \quad \Pi_1[c_{0,p}^2 (\eta_{1,p})_x (\eta_{2,p})_x] = \eta_{2,p}^{[2]} c_{\tth,p}^2\ckb^2\cos(x) \, ,\\
& \Pi_1[\mathfrak{C}(\eta_{1,p})]=\Pi_1\left[-\frac{3}{2}\cos(x)\sin^2(x)\right]=-\frac{3}{8}\cos(x) ,\quad\Pi_1[\mathfrak{D}(\eta_{1,p})]=\Pi_1\left[\frac{25}{2}\cos(x)-15\cos^3(x)\right]=\frac{5}{4}\cos(x)\, ,
\end{align*}
and, in view of \eqref{expDiriNeu}, and \eqref{exp:Sto}, \eqref{expcoef}, 
\begin{align*}
 G_1(\eta_{1,p})\psi_{2,p}  &= \psi_{2,p}^{[2]}\frac{1-c_{\tth,p}^4}{1+c_{\tth,p}^4}\sin(x)+\psi_{2,p}^{[2]}\frac{3(1-c_{\tth,p}^8)}{3 c_{\tth,p}^4+1}\sin(3x)\, , \\
  G_2(\eta_{1,p})\psi_{1,p}
 &=  \frac{\ckb(3 c_{\tth,p}^5-c_{\tth,p})}{4(1+c_{\tth,p}^4)} \sin(x)-\frac{3\ckb\left(c_{\tth,p}^9-4c_{\tth,p}^5+3c_{\tth,p}\right)}{4\,\left(3c_{\tth,p}^8+4c_{\tth,p}^4+1\right)}\sin(3x) \, , \\
G_1(\eta_{2,p})\psi_{1,p}
&= c_{\tth,p}^{-1}\ckb 
 \Big( \eta_{2,p}^{[0]}(1-c_{\tth,p}^4) +   \frac12 \eta_{2,p}^{[2]} (1+c_{\tth,p}^4) \Big)\sin(x)+\frac{3\,\eta_{2,p}^{[2]}\ckb(1-c_{\tth,p}^8)}{c_{\tth,p}(6c_{\tth,p}^4+2)}\sin(3x) \,, 
\end{align*}
and, hence,
\begin{align*}
 \Pi_1[ G_1(\eta_{1,p})\psi_{2,p}]  &=\psi_{2,p}^{[2]}\frac{1-c_{\tth,p}^4}{1+c_{\tth,p}^4} \sin(x) \, , \quad \Pi_1[G_2(\eta_{1,p})\psi_{1,p}] 
 =\frac{\ckb(3c_{\tth,p}^5-c_{\tth,p})}{4(1+c_{\tth,p}^4)} \sin(x)\, , \\
 \Pi_1[G_1(\eta_{2,p})\psi_{1,p}] 
&= c_{\tth,p}^{-1}\ckb 
 \Big( \eta_{2,p}^{[0]}(1-c_{\tth,p}^4) +   \frac12 \eta_{2,p}^{[2]} (1+c_{\tth,p}^4) \Big)\sin(x) \, . 
\end{align*}
Imposing the orthogonality condition, we arrive at
\begin{equation}\label{expc2}
    \begin{aligned}
c_{2,p} :=& \Bigg(3c_{\tth,p}^4\left(28 b^2-19 b+3\right)+\left(48 b-39 bc_{\tth,p}^4-7 bc_{\tth,p}^8+6c_{\tth,p}^4+3c_{\tth,p}^8-12\right)\ckb^2\\&+\left(24 b-48 bc_{\tth,p}^4+24 bc_{\tth,p}^8-13c_{\tth,p}^4+15\right)\ckb^4+\left(-2\,{\left(c_{\tth,p}^4-1\right)}^2\,\left(c_{\tth,p}^4-3\right)\right)\ckb^6\Bigg)\Bigg(16c_{\tth,p}^3\ckb\,r_{\tth,p,\kappa,b}\Bigg)^{-1}\,,
\end{aligned}
\end{equation}
with
\begin{align*}
r_{\tth,p,\kappa,b}:=b\,c_{\tth,p}^4-3\,\kappa -15\,b+c_{\tth,p}^4\,\kappa +c_{\tth,p}^4\,.
\end{align*}
In particular, $c_2:=c_{2,0}$ and $r_{\tth,\kappa,b}:=r_{\tth,0,\kappa,b}$.

\section{Expansions of $\frak{p}(x)$, $\mathsf{f}_\e$, Lemma \ref{lem:pa.exp}, Lemma \ref{tau}, and Lemma \ref{beta}} \label{sec:App2 2}

\begin{proof}[Proof of \eqref{expfe}] 
We expand  the function $\mathfrak{p}(x)  = \e\mathfrak{p}_1(x) + \e^2 \mathfrak{p}_2(x) + \cO(\e^3)  $ defined by the fixed point equation \eqref{def:ttf}.   We first 
note that  the constant $\mathtt{f}_\e=\cO(\e^2)$  because
 $\eta_1(x) = \cos(x)$ has zero average. 
Then 
$ \mathfrak{p}(x)
 = \frac{\mathfrak H}{\tanh(\tth |D|)} \big[\e\eta_1 +\e^2\big(\eta_2 + 
 (\eta_1)_x \mathfrak{p}_1 \big)+\cO(\e^3)\big] $, 
and, using that $\mathfrak H \cos(kx) = \sin(kx)$, for any  $ k \in \N$, we get 
\begin{align}
 \mathfrak{p}_1(x) 
 & = \frac{\mathfrak H}{\tanh(\tth |D|)}\cos(x)  = \ch^{-2}\sin(x) \, , \label{pfra1} \\
 \mathfrak{p}_2(x) &= \frac{\mathfrak H}{\tanh(\tth |D|)}((\eta_1)_x
 \mathfrak{p}_1 +\eta_2 ) 
 = \left(\frac{\ch^{-2}+2\eta_2^{[2]}}{2\tanh(2\tth)}\right)\sin(2x) \, . \label{pfra2}
\end{align}
More explicitly, we obtain
\begin{align}
    \mathfrak{p}_2(x)=\frac{\left(\ch^4+1\right)\,\left(b\,\ch^4-3\kappa -27 b+\ch^4 \kappa +\ch^4+3\right)}{8\ch^4\,r_{\tth,\kappa,b}}\sin(2x).
\end{align}
Finally
\begin{align*}
 \tf_\e & = \frac{\e^2}{2\pi} \int_\T \big( \eta_2 + (\eta_1)_x\mathfrak{p}_1 \big) \de x + \cO(\e^3) 
 = \e^2\left(\eta_2^{[0]} -\frac1{2} \ch^{-2} \right)+\cO(\e^3 ) \stackrel{\eqref{expcoef}}{=} \e^2 \tf^{[2]}_2+\cO(\e^3 ) \, .
\end{align*}
The expansion \eqref{expfe} is proved.
\end{proof}

\begin{proof}[Proof of Lemma \ref{lem:pa.exp}, Lemma \ref{tau}, and Lemma \ref{beta}] 
In view of \eqref{exp:Sto}-\eqref{expcoef}, the expansions of the functions $B$, $V$  in \eqref{espV} and \eqref{espB} are
\begin{align}
 B&= : \e B_1(x) + \e^2 B_2(x) + \cO(\e^3) 
\notag \\
&= \e c_0\, \sin(x) + \e^2 \frac{\ckb\,\left(18 b+6\kappa -2 b\ch^4-2\ch^4 \kappa -2\ch^4+3\right)}{2\ch r_{\tth,\kappa,b}}\sin(2x) + \cO(\e^3) \label{espB1}  
  \end{align}
  and
  \begin{align}
 V&= : \e V_1(x) + \e^2 V_2(x)  + \cO(\e^3) \notag\\
 &= \e \ch^{-1} \ckb\cos(x) + \e^2 \left[\frac{c_0}{2}  +\left(\frac{\ckb \left(33b+9\kappa-b\ch^8-\ch^8\kappa-\ch^8+3\right)}{4 \ch^3 \rhkb}\right)\cos(2x)\right]+\cO(\e^3)
  \, .\label{espV1} 
\end{align}
In  view of  \eqref{def:pa}, denoting derivatives w.r.t $x$ with  a prime and suppressing dependence on $x$ when trivial, we have
\begin{align}
c_0+q_\e(x)  &= (c_0+\e^2 c_2 - V(x) - V'(x)\mathfrak{p}(x)+\cO(\e^3)) (1-\mathfrak{p}'(x)+(\mathfrak{p}'(x))^2+\cO(\e^3))\notag \\
&= c_0 + \e \underbrace{(- V_1 -c_0 \mathfrak{p}_1')}_{=: q_1}+\e^2 \underbrace{\big( c_2 + V_1\mathfrak{p}_1' - V_2 - V_1'\mathfrak{p}_1 -c_0 \mathfrak{p}_2' +c_0 (\mathfrak{p}_1')^2 \big)}_{=:q_2} + \, \cO(\e^3) \, .\label{pino12imp}
\end{align} 
Similarly by \eqref{def:pa}
\begin{align}
 1+a_\e(x) : =& \frac{1}{1+\mathfrak{p}_x(x)} - (c_0+q_\e(x))B_x(x+\mathfrak{p}(x)) \notag \\
   = &  1+\e\underbrace{\big(-\mathfrak{p}_1'-c_0 B_1'\big)}_{=:a_1} +\e^2\underbrace{\big((\mathfrak{p}_1')^2-\mathfrak{p}_2'- c_0 B_2'-c_0 B_1''\mathfrak{p}_1(x)+ B_1' V_1 + c_0  B_1'\mathfrak{p}_1' \big)}_{=:a_2}+\cO(\e^3)\, . \label{aino12imp}
 \end{align}
By \eqref{espV1}, \eqref{pfra1}, \eqref{exp:Sto}, \eqref{pfra2}, \eqref{espB1} we 
deduce that the functions $q_1 $, $q_2 $, $a_1 $, $a_2 $ in \eqref{pino12imp} and \eqref{aino12imp} have an expansion as in \eqref{pino1fd}-\eqref{aino2fd}.

Finally the operator $\beta_\e$ in \eqref{def:Sigma g} expands as
$\beta_\e = \pa^4_{x} + \e \beta_1 + \e^2 \beta_2 + \cO(\e^3)$
where
\begin{align}\label{Sigma 1 expansion}
    \beta_1:=& -5\mathfrak{p}_1'\pa^4_x-10\mathfrak{p}_1''\pa^3_x-10\mathfrak{p}_1'''\pa^2_x-5\mathfrak{p}_1''''\pa_x-\mathfrak{p}_1''''' 
\end{align}
and
\begin{equation} \label{Sigma 2}
\begin{aligned}
\beta_2:=&\left(F^{[4]}_2(x)-5\mathfrak{p}_2'+15(\mathfrak{p}_1')^2\right)\pa^4_x+\left(F^{[3]}_2(x)+60\mathfrak{p}_1'\mathfrak{p}_1''-10\mathfrak{p}_2''\right)\pa^3_x\\ 
    &+\left(F^{[2]}_2(x)+45(\mathfrak{p}_1'')^2+60\mathfrak{p}_1'\mathfrak{p}_1'''-10\mathfrak{p}_2'''\right)\pa^2_x+\left(F^{[1]}_2(x)+30\mathfrak{p}_1'\mathfrak{p}_1''''+60\mathfrak{p}_1''\mathfrak{p}_1'''-5\mathfrak{p}_2''''\right)\pa_x\\
    &+6\mathfrak{p}_1'\mathfrak{p}_1'''''+15\mathfrak{p}_1''\mathfrak{p}_1''''+10(\mathfrak{p}_1''')^2-\mathfrak{p}_2'''''.
\end{aligned}
\end{equation}
Inserting the expansions of $F^{[1]}_2,F^{[2]}_2,F^{[3]}_2,F^{[4]}_2$ in \eqref{F12 F22 F32 F42} and those of $\mathfrak{p}_1$, $\mathfrak{p}_2$ in \eqref{pfra1}, \eqref{pfra2} proves the claimed expansions 
in \eqref{betaj} with coefficients in \eqref{b110}--\eqref{b224}.

Similarly, the operator $\tau_\e$ in \eqref{tau operator} expands as 
$\tau_\e = \pa^2_{x} + \e \tau_1 + \e^2 \tau_2 + \cO(\e^3)$
where
\begin{align}\label{tau 1 expansion}
    \tau_1:=& -3\mathfrak{p}_1'\pa^2_x-3\mathfrak{p}_1''\pa_x-\mathfrak{p}_1''' 
\end{align}
and
\begin{equation} \label{tau 2 expansion}
\begin{aligned}
\tau_2:=&\left(\Sigma^{[2]}_2(x)-3\mathfrak{p}_2'+6(\mathfrak{p}_1')^2\right)\pa^2_x+\left(\Sigma^{[1]}_2(x)-3\mathfrak{p}_2''+12\mathfrak{p}_1'\mathfrak{p}_1''\right)\pa_x+3(\mathfrak{p}_1'')^2+4\mathfrak{p}_1'\mathfrak{p}_1'''-\mathfrak{p}_2'''.
\end{aligned}
\end{equation}
Inserting the expansions of $\Sigma^{[1]}_2,\Sigma^{[2]}_2$ in \eqref{Sigma coefficients} and those of $\mathfrak{p}_1$, $\mathfrak{p}_2$ in \eqref{pfra1}, \eqref{pfra2} proves the claimed expansions 
in \eqref{tauj} with coefficients in \eqref{tau10}--\eqref{tau22}.
\end{proof}

\section{Kato Transformation}\label{secA1}

In this appendix, we prove Lemma \ref{expansion of the basis F}. In order to compute the vectors $P'_{0,0} \phi_k^\sigma$ and $\dot{P}_{0,0} \phi_k^\sigma$ using \eqref{A3} and \eqref{A4}, it is useful to know the action of $(\mathscr{L}_{0,0} - \lambda)^{-1}$ on the vectors
\begin{equation} \label{A10}
    \begin{aligned}
        \phi^+_k&:=\vet{(\ch/\ckb)^{\frac{1}{2}}\cos(kx)}{(\ch/\ckb)^{-\frac{1}{2}}\sin(kx)},~~\phi^-_k:=\vet{-(\ch/\ckb)^{\frac{1}{2}}\sin(kx)}{(\ch/\ckb)^{-\frac{1}{2}}\cos(kx)},\\
        \phi^+_{-k}&:=\vet{(\ch/\ckb)^{\frac{1}{2}}\cos(kx)}{-(\ch/\ckb)^{-\frac{1}{2}}\sin(kx)},~~\phi^-_{-k}:=\vet{(\ch/\ckb)^{\frac{1}{2}}\sin(kx)}{(\ch/\ckb)^{-\frac{1}{2}}\cos(kx)},~~k\in\mathbb{N}.
    \end{aligned}
\end{equation}

\begin{lemma}
    The space $Y=H^4(\mathbb{T},\mathbb{C})\times H^1(\mathbb{T},\mathbb{C})$ decomposes as $Y = \mathcal{V}_{0,0} \oplus \mathcal{U} \oplus \mathcal{W}_{Y}$, with
\begin{equation*}
    \mathcal{W}_{Y} = \overline{\bigoplus_{k=2}^{\infty} \mathcal{W}_k}^{Y}
\end{equation*}
where the subspaces $\mathcal{V}_{0,0}, \mathcal{U}$, and $\mathcal{W}_k$, defined below, are invariant under $\mathscr{L}_{0,0}$ and the following properties hold:

\begin{itemize}
\item[(i)] $\mathcal{V}_{0,0} = \text{span}\{\phi_1^+, \phi_1^-, \phi_0^+, \phi_0^-\}$ is the generalized kernel of $\mathscr{L}_{0,0}$. For any $\lambda \neq 0$ the operator $\mathscr{L}_{0,0} - \lambda : \mathcal{V}_{0,0} \to \mathcal{V}_{0,0}$ is invertible and
    \begin{equation} \label{A11-0}
        (\mathscr{L}_{0,0} - \lambda)^{-1} \phi_1^+ = -\frac{1}{\lambda} \phi_1^+, \quad (\mathscr{L}_{0,0} - \lambda)^{-1} \phi_1^- = -\frac{1}{\lambda} \phi_1^-,
    \end{equation}
    \begin{equation} \label{A11}
        (\mathscr{L}_{0,0} - \lambda)^{-1} \phi_0^- = -\frac{1}{\lambda} \phi_0^-,
    \end{equation}
    \begin{equation} \label{A12}
        (\mathscr{L}_{0,0} - \lambda)^{-1} \phi_0^+ = -\frac{1}{\lambda} \phi_0^+ + \frac{1}{\lambda^2} \phi_0^- .
    \end{equation}

    \item[(ii)] $\mathcal{U} := \text{span}\{ \phi_{-1}^+, \phi_{-1}^- \}$. For any $\lambda \neq \pm \im \,2c_0$ the operator $\mathscr{L}_{0,0} - \lambda : \mathcal{U} \to \mathcal{U}$ is invertible and
    \begin{equation} \label{A13}
    \begin{aligned}
        (\mathscr{L}_{0,0} - \lambda)^{-1} \phi_{-1}^+ &= \frac{1}{\lambda^2 + 4c_0^2} \left(-\lambda \phi_{-1}^+ + 2c_0 \phi_{-1}^-\right),\\
        (\mathscr{L}_{0,0} - \lambda)^{-1} \phi_{-1}^- &= \frac{1}{\lambda^2 + 4c_0^2} \left(-2c_0 \phi_{-1}^+ - \lambda \phi_{-1}^-\right).
    \end{aligned}
    \end{equation}

    \item[(iii)] Each subspace $\mathcal{W}_k := \text{span}\{ \phi_k^+, \phi_k^-, \phi_{-k}^+, \phi_{-k}^- \}$ is invariant under $\mathscr{L}_{0,0}$. Let
    \begin{equation*}
        \mathcal{W}_{L^2} = \overline{\bigoplus_{k=2}^{\infty} \mathcal{W}_k}^{ L^2}.
    \end{equation*}
    For any $|\lambda| < \delta(\tth)$ small enough, the operator $\mathscr{L}_{0,0} - \lambda : \mathcal{W}_{Y} \to \mathcal{W}_{L^2}$ is invertible and for any $\phi \in \mathcal{W}_{L^2}$,
    \begin{equation} \label{A14}
        (\mathscr{L}_{0,0} - \lambda)^{-1} \phi = \left( c_0^2 \partial^2_{x} + (1-\kappa\pa^2_x+b \pa^4_{x})G_0 \right)^{-1} \begin{bmatrix} c_0 \partial_x & -G_0 \\ 1-\kappa\pa^2_x+b \pa^4_{x} & c_0 \partial_x \end{bmatrix} \phi
        + \lambda \varphi_\phi(\lambda, x),
    \end{equation}
    for some analytic function $\lambda \mapsto \varphi_f(\lambda, \cdot) \in Y=H^4(\mathbb{T},\mathbb{C})\times H^1(\mathbb{T},\mathbb{C})$.
\end{itemize}
\end{lemma}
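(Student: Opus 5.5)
The plan is to exploit that $\mathscr{L}_{0,0}$ is a Fourier multiplier matrix,
\[
\mathscr{L}_{0,0}=\begin{bmatrix} c_0\partial_x & G_0\\ -(1-\kappa\partial_x^2+b\partial_x^4) & c_0\partial_x\end{bmatrix},\qquad G_0=|D|\tanh(\tth|D|),
\]
so it commutes with every projection onto a pair of frequencies $\pm k$. Consequently $Y$ splits as the direct sum of the $\mathscr{L}_{0,0}$-invariant blocks spanned by $\{\cos(kx),\sin(kx)\}$ in each component. The vectors $\phi^\pm_{\pm k}$ in \eqref{A10} form a basis of the $k$-th block for $k\ge1$, and $\{\phi_0^+,\phi_0^-\}$ spans the $k=0$ block. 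Regrouping gives $Y=\mathcal V_{0,0}\oplus\mathcal U\oplus\mathcal W_Y$, with $\mathcal V_{0,0}$ the $k=0$ block together with $\phi_1^\pm$, $\mathcal U=\mathrm{span}\{\phi_{-1}^\pm\}$, and $\mathcal W_k$ the $k$-th block for $k\ge2$. Closure in $Y$ is harmless because the decomposition is orthogonal in Fourier.

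For (i) and (ii), I will compute $\mathscr{L}_{0,0}$ directly on the basis vectors, using $c_0^2=(1+\kappa+b)\tanh\tth$ and the normalizations $(\ch/\ckb)^{\pm1/2}$.
\begin{itemize}
\item On $\phi_1^\pm$, one finds $\mathscr{L}_{0,0}\phi_1^\pm=0$; this is exactly the kernel computation of Lemma \ref{lem:B0}.
\item On the zero modes, $\mathscr{L}_{0,0}\phi_0^-=0$ and $\mathscr{L}_{0,0}\phi_0^+=-\phi_0^-$, as in \eqref{eigenfunc f+0}.
\end{itemize}
Inverting $\mathscr{L}_{0,0}-\lambda$ on this nilpotent Jordan structure yields \eqref{A11-0}, \eqref{A11}, and \eqref{A12}. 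For \eqref{A12}, I will check that $(\mathscr{L}_{0,0}-\lambda)(-\lambda^{-1}\phi_0^++\lambda^{-2}\phi_0^-)=\lambda^{-1}\phi_0^-+\phi_0^+-\lambda^{-1}\phi_0^-=\phi_0^+$.

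For $\mathcal U$, the same computation gives $\mathscr{L}_{0,0}\phi_{-1}^+=-2c_0\phi_{-1}^-$ and $\mathscr{L}_{0,0}\phi_{-1}^-=2c_0\phi_{-1}^+$. The signs should be checked against the definitions; the modes with frequency $-1$ have eigenvalues $\pm\im\, 2c_0$. The $2\times2$ matrix of $\mathscr{L}_{0,0}-\lambda$ has determinant $\lambda^2+4c_0^2$, and inverting it gives \eqref{A13}.

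For (iii), on each $\mathcal W_k$, $k\ge2$, write $\mathscr{L}_{0,0}$ in the $\{\cos,\sin\}$ representation. For a $2\times2$ operator matrix with commuting Fourier-multiplier entries,
\[
\mathscr{L}_{0,0}^{-1}=\bigl(c_0^2\partial_x^2+(1-\kappa\partial_x^2+b\partial_x^4)G_0\bigr)^{-1}\begin{bmatrix} c_0\partial_x & -G_0\\ 1-\kappa\partial_x^2+b\partial_x^4 & c_0\partial_x\end{bmatrix}
\]
by the adjugate formula, since the determinant symbol is $-c_0^2k^2+(1+\kappa k^2+bk^4)k\tanh(\tth k)$. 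By the non-resonance assumption $(\tth,\kappa,b)\notin\fR$ (the argument of Lemma \ref{lem:B0}), this symbol is nonzero for every $k\ge2$. It grows like $bk^5$, so it is bounded below by $c\langle k\rangle^5$ uniformly in $k\ge2$. Hence $\mathscr{L}_{0,0}:\mathcal W_Y\to\mathcal W_{L^2}$ is bijective, with the inverse gaining four derivatives in the first component and one in the second, which matches $Y=H^4\times H^1$. Equivalently, the eigenvalues $\lambda_k^\pm(0)$, $k\ne0,1$, stay at distance at least $\delta(\tth)>0$ from $0$. For $|\lambda|<\delta$, the Neumann series $(\mathscr{L}_{0,0}-\lambda)^{-1}=\mathscr{L}_{0,0}^{-1}\sum_n(\lambda\mathscr{L}_{0,0}^{-1})^n$ then converges in $\mathcal L(\mathcal W_{L^2},\mathcal W_Y)$. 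This gives \eqref{A14}, with $\varphi_\phi(\lambda)=\mathscr{L}_{0,0}^{-1}\sum_{n\ge1}\lambda^{n-1}\mathscr{L}_{0,0}^{-n}\phi$ analytic in $\lambda$.

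The main obstacle is the uniform lower bound on the determinant symbol over $k\ge2$, with the correct anisotropic weights. Convergence of the series needs $\mathscr{L}_{0,0}^{-1}$ bounded on $\mathcal W_{L^2}$, which holds since $L^2\hookleftarrow Y$. I will handle the bound by splitting into finitely many low modes, where non-resonance gives nonvanishing, and large $k$, where $bk^5$ dominates. Since the adjugate entries are $O(k^4)$ and $O(k)$ against a determinant of size $k^5$, the required mapping into $H^4\times H^1$ follows.
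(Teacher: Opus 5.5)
Your proposal is correct and follows the same route as the paper: it uses invariance of the Fourier blocks, evaluates $\mathscr{L}_{0,0}$ directly on $\phi_0^\pm,\phi_{\pm1}^\pm$ to invert the Jordan block and the $2\times2$ block, and applies the adjugate formula on $\mathcal{W}_Y$ followed by a Neumann series. You are more explicit than the paper about the non-resonance condition $(\tth,\kappa,b)\notin\fR$ and the $\langle k\rangle^5$ lower bound on the determinant symbol, which is what makes $\mathscr{L}_{0,0}^{-1}:\mathcal{W}_{L^2}\to\mathcal{W}_Y$ bounded; the paper leaves both implicit.
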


\begin{proof}
    By inspection the spaces $\mathcal{V}_{0,0}, \mathcal{U}$ and $\mathcal{W}_k$ are invariant under $\mathscr{L}_{0,0}$ and, by Fourier series, they decompose $Y=H^4(\mathbb{T},\mathbb{C})\times H^1(\mathbb{T},\mathbb{C})$. Formulas \eqref{A11}-\eqref{A12} follow using that $\phi_1^+, \phi_1^-, \phi_0^-$ are in the kernel of $\mathscr{L}_{0,0}$, and $\mathscr{L}_{0,0} \phi_0^+ = -\phi_0^-$. Formula \eqref{A13} follows using that $\mathscr{L}_{0,0} \phi_{-1}^+ = -2c_0 \phi_{-1}^-$ and $\mathscr{L}_{0,0} \phi_{-1}^- = 2c_0 \phi_{-1}^+$. Let us prove item $(iii)$. Let $\mathcal{W} := \mathcal{W}_{Y}$. The operator $(\mathscr{L}_{0,0} - \lambda \operatorname{Id})|_{\mathcal{W}}$ is invertible for any 
$$\lambda \notin \left\{ \pm \im c_0 k \pm \im\sqrt{|k|\tanh(\tth| k|)\left(1+\kappa k^2+b k^4\right)}, k \geq 2, k \in \mathbb{N} \right\}$$
and 
$$
(\mathscr{L}_{0,0}|_{\mathcal{W}})^{-1} = \left( c_0^2 \partial^2_{x} + (1-\kappa\pa_x^2+b \pa^4_{x}) G_0\right)^{-1} \begin{bmatrix} c_0 \partial_x & -G_0 \\ 1-\kappa\pa_x^2+b\pa^4_{x} & c_0 \partial_x \end{bmatrix} \Big|_{\mathcal{W}}.
$$
By Neumann series, for any $\lambda$ such that 
$$|\lambda| \| (\mathscr{L}_{0,0}|_{\mathcal{W}})^{-1} \|_{ \mathcal{L}(\mathcal{W}, Y)} < 1$$
we have 
$$
(\mathscr{L}_{0,0}|_{\mathcal{W}} - \lambda)^{-1} = (\mathscr{L}_{0,0}|_{\mathcal{W}})^{-1} (\operatorname{Id} - \lambda (\mathscr{L}_{0,0}|_{\mathcal{W}})^{-1})^{-1} 
= (\mathscr{L}_{0,0}|_{\mathcal{W}})^{-1} \sum_{k \geq 0} ((\mathscr{L}_{0,0}|_{\mathcal{W}})^{-1} \lambda)^k.
$$
Formula \eqref{A14} follows with 
$$\varphi_\phi(\lambda, x) := (\mathscr{L}_{0,0}|_{\mathcal{W}})^{-1} \sum_{k \geq 1} \lambda^{k-1} [(\mathscr{L}_{0,0}|_{\mathcal{W}})^{-1}]^k \phi.$$ 
\end{proof}

To prove Lemma \ref{expansion of the basis F}, we shall also use the following formulas obtained by \eqref{A6}, \eqref{mD}, and \eqref{eigenfunc of mathcall L00 2}:

\begin{equation} \label{A15}
    \begin{aligned}
        &\mathscr{L}'_{0,0} \phi^+_1=\vet{2\left(\ch/\ckb\right)^{-\frac{1}{2}}\sin(2x)}{b^{[1]}_{\tth,\kappa,b}+b^{[2]}_{\tth,\kappa,b}\cos(2x)},\quad \mathscr{L}'_{0,0} \phi^-_1=\vet{2\left(\ch/\ckb\right)^{-\frac{1}{2}}\cos(2x)}{-b^{[2]}_{\tth,\kappa,b}\sin(2x)},\\
        &\mathscr{L}'_{0,0} \phi^+_0=\vet{2\ch^{-1}\ckb\sin(x)}{\left(\ch^2+\ch^{-2}\right)\ckb^2\cos(x)}, \quad \mathscr{L}'_{0,0}\phi^-_0=\vet{0}{0},\\
        &\dot{\mathscr{L}}_{0,0} \phi^+_1=\vet{-\im b^{[3]}_{\tth,\kappa,b}\cos(x)}{-2\im\ch^{\frac{1}{2}}\ckb^{-\frac{1}{2}}(2b+\kappa)\sin(x)}, \quad \dot{\mathscr{L}}_{0,0} \phi^-_1=\vet{\im b^{[3]}_{\tth,\kappa,b} \sin(x)}{-2\im\ch^{\frac{1}{2}}\ckb^{-\frac{1}{2}}(2b+\kappa)\cos(x)},\\
        &\dot{\mathscr{L}}_{0,0} \phi^+_0=\vet{0}{0}, \quad \dot{\mathscr{L}}_{0,0}\phi^-_0=\vet{0}{0},
    \end{aligned}
\end{equation}
where
\begin{equation}  \label{b1 b2 b3}
\begin{aligned}
b^{[1]}_{\tth,\kappa,b}&:=\frac{\ckb^\frac{3}{2}(\ch^4-1)}{2\ch^{\frac{3}{2}}}, \ \ 
    b^{[2]}_{\tth,\kappa,b}:=\frac{\ckb^2\ch^4+29b+5\kappa-1}{2\ch^{\frac{3}{2}}\ckb^{\frac{1}{2}}}, \ \ 
b^{[3]}_{\tth,\kappa,b}:=\ch^{-\frac{1}{2}}\ckb^{\frac{1}{2}}\left(\ch^2+\tth(1-\ch^4)\right).
\end{aligned}
\end{equation}
We finally calculate $P'_{0,0} \phi^\sigma_k$ and $\dot{P}_{0,0} \phi^\sigma_k$.

\begin{lemma} One has
    \begin{equation} \label{A16}
        \begin{aligned}
    P'_{0,0}\phi^+_1&=\vet{\alpha_{\tth,\kappa,b}\cos(2x)}{\beta_{\tth,\kappa,b}\sin(2x)},\quad P'_{0,0}\phi^-_1=\vet{-\alpha_{\tth,\kappa,b}\sin(2x)}{\beta_{\tth,\kappa,b}\cos(2x)}, \quad 
            P'_{0,0}\phi^+_0= \delta_{\tth,\kappa,b}\phi^+_{-1}, \\
            P'_{0,0}\phi^-_0& =\vet{0}{0}, \quad \dot{P}_{0,0}\phi^+_0=\vet{0}{0}, \quad \dot{P}_{0,0}\phi^-_0=\vet{0}{0}, \quad 
            \dot{P}_{0,0}\phi^+_1=\im\frac{\gamma_{\tth, \kappa}}{4}\phi^-_{-1}, \quad  \dot{P}_{0,0}\phi^-_1=\im\frac{\gamma_{\tth, \kappa}}{4}\phi^+_{-1},
        \end{aligned}
    \end{equation}
where $\alpha_{\tth,\kappa,b}$,  $\beta_{\tth,\kappa,b}$, ${\gamma_{\tth, \kappa,b}}$ and $\delta_{\tth, \kappa,b}$ are defined in \eqref{47 coefficients}.
\end{lemma}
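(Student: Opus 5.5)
We compute each vector directly from the contour-integral formulas \eqref{A3} and \eqref{A4}, i.e. $P'_{0,0}\phi=\frac{1}{2\pi\im}\oint_\Gamma(\mathscr L_{0,0}-\lambda)^{-1}\mathscr L'_{0,0}(\mathscr L_{0,0}-\lambda)^{-1}\phi\,\de\lambda$ and its analogue with $\dot{\mathscr L}_{0,0}$. There are three steps.

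The first step is to apply the inner resolvent to the basis vectors. By \eqref{A11-0}--\eqref{A12}, the inner resolvent acting on $\phi_k^\sigma\in\mathcal V_{0,0}$ gives $-\lambda^{-1}\phi_k^\sigma$, plus the extra term $\lambda^{-2}\phi_0^-$ when $\phi_k^\sigma=\phi_0^+$. Since $\mathscr L'_{0,0}\phi_0^-=0$ and $\dot{\mathscr L}_{0,0}\phi_0^\pm=0$ by \eqref{A15}, this extra term is always killed. This gives at once $P'_{0,0}\phi_0^-=0$ and $\dot P_{0,0}\phi_0^\pm=0$.

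The second step handles the remaining cases. Each integrand reduces to $-\lambda^{-1}(\mathscr L_{0,0}-\lambda)^{-1}w$, where $w$ is one of the explicit vectors in \eqref{A15}. We decompose $w$ along $\mathcal V_{0,0}\oplus\mathcal U\oplus\mathcal W_Y$.

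For $\phi_0^+$, the vector $w=\mathscr L'_{0,0}\phi_0^+$ lies in the first Fourier modes. We split it into its $\phi_1^\pm$ and $\phi_{-1}^\pm$ components.
\begin{itemize}
\item The $\mathcal V_{0,0}$ part contributes $\lambda^{-2}$ times a vector. Its residue at $0$ vanishes, since there is no $\lambda^{-1}$ term.
\item The $\mathcal U$ part, via \eqref{A13}, contributes $-\lambda^{-1}(\lambda^2+4c_0^2)^{-1}(\cdots)$. Its residue is $\frac{1}{4c_0^2}$ times the $2c_0$-rotated component.
\end{itemize}
This yields a multiple of $\phi_{-1}^+$, and we identify the multiple as $\delta_{\tth,\kappa,b}$ by matching with \eqref{47 coefficients}.

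For $\phi_1^\pm$ under $\mathscr L'_{0,0}$, the vector $w$ has a $\cos 2x$/$\sin2x$ part lying in $\mathcal W_2$ and a constant part $b^{[1]}\phi_0^-$. The constant part gives $\lambda^{-2}$, hence zero residue. On $\mathcal W_2$ we use \eqref{A14}, whose residue equals $-(\mathscr L_{0,0}|_{\mathcal W})^{-1}w$. Concretely, on the mode $k=2$ we solve the $2\times2$ system with determinant $-4c_0^2+(1+4\kappa+16b)\,2\tanh(2\tth)$. Using $\tanh 2\tth=2\ch^2/(1+\ch^4)$, this determinant is proportional to $r_{\tth,\kappa,b}$, which explains the denominators in $\alpha_{\tth,\kappa,b}$ and $\beta_{\tth,\kappa,b}$.

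For $\dot P_{0,0}\phi_1^\pm$, the vector $\dot{\mathscr L}_{0,0}\phi_1^\pm$ is first-mode and purely imaginary. We project it onto $\phi_1^\mp$, which gives zero residue, and onto $\phi_{-1}^\mp$, which via \eqref{A13} gives the residue $\im\frac{\gamma_{\tth,\kappa,b}}{4}\phi_{-1}^\mp$.

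The third step is the bookkeeping of the projections. For this we use the symplectic pairing, or equivalently Fourier coefficients with the normalizations $(\ch/\ckb)^{\pm1/2}$, together with the identity $c_0^2=\ckb^2\ch^2$.

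The main obstacle is the $\mathcal W_2$ inversion together with the algebraic simplification to the stated closed forms. Here we must track the $\kappa$- and $b$-dependent term $b^{[2]}_{\tth,\kappa,b}$ and reduce $\tanh 2\tth$ rationally in $\ch$. To control sign and normalization errors, we will check the result against the $b=0$ gravity--capillary formulas of \cite{HM2025}.
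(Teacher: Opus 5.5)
Your route is the paper's: the contour formulas \eqref{A3}--\eqref{A4}, the resolvent actions \eqref{A11-0}--\eqref{A14} on $\mathcal V_{0,0}$, $\mathcal U$ and $\mathcal W_2$, and the residue theorem. Your $\mathcal W_2$ system checks out: its determinant is $-4c_0^2+2(1+4\kappa+16b)\tanh(2\tth)=-\frac{4\ch^2}{1+\ch^4}r_{\tth,\kappa,b}$, and it reproduces $\alpha_{\tth,\kappa,b}$ and $\beta_{\tth,\kappa,b}$. Your $\phi_0^+$ computation also checks out, since the residue coefficient equals $\delta_{\tth,\kappa,b}$.

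There is one bookkeeping slip, in the $\dot P_{0,0}\phi_1^\pm$ step. By \eqref{A15},
$\dot{\mathscr L}_{0,0}\phi_1^+=-\im b^{[3]}_{\tth,\kappa,b}\vet{\cos x}{0}-2\im(\ch/\ckb)^{1/2}(2b+\kappa)\vet{0}{\sin x}$.
This vector lies in $\im\,\mathrm{span}\{\phi_1^+,\phi_{-1}^+\}$, not in the span of $\phi_1^-$ and $\phi_{-1}^-$. Projected along $\phi^-_{\pm1}$ as you wrote, it gives zero. Moreover, \eqref{A13} applied to a $\phi^-_{-1}$-component would produce a residue along $\phi^+_{-1}$, not $\phi^-_{-1}$, so the step as stated is internally inconsistent.

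The correct bookkeeping runs as follows:
\begin{itemize}
\item The $\phi_1^+$-component gives a $\lambda^{-2}$ term, hence zero residue.
\item The $\phi_{-1}^+$-component is $c\,\phi_{-1}^+$ with $c=-\frac{\im}{2}(\ch/\ckb)^{-1/2}b^{[3]}_{\tth,\kappa,b}+\im(\ch/\ckb)(2b+\kappa)$.
\item The $2c_0$ entry of \eqref{A13} rotates this component into the residue $-\frac{c}{2c_0}\phi^-_{-1}=\im\frac{\gamma_{\tth,\kappa,b}}{4}\phi^-_{-1}$.
\end{itemize}
The case of $\phi_1^-$ is symmetric: $\dot{\mathscr L}_{0,0}\phi_1^-$ lies in $\im\,\mathrm{span}\{\phi_1^-,\phi_{-1}^-\}$, its $\phi^-_{-1}$-coefficient is $-c$, and the residue is $\im\frac{\gamma_{\tth,\kappa,b}}{4}\phi^+_{-1}$. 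This matches the paper's decomposition of $\vet{\cos x}{0}$ and $\vet{0}{\sin x}$ along $\phi^+_{\pm1}$.
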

\begin{proof}
    We first calculate $P'_{0,0} \phi^+_1$. By \eqref{A3}, \eqref{A11-0}, \eqref{A11}, and \eqref{A15} we deduce
    \begin{align*}
        P'_{0,0} \phi^+_1=-\frac{1}{2\pi \im}\oint_{\Gamma} \frac{1}{\lambda} \left(\mathscr{L}_{0,0}-\lambda\right)^{-1} \vet{2\left(\ch/\ckb\right)^{-\frac{1}{2}}\sin(2x)}{b^{[1]}_{\tth,\kappa,b}+b^{[2]}_{\tth,\kappa,b}\cos(2x)} \de \lambda.
    \end{align*}
We note that 
\begin{equation*}
    \vet{2\left(\ch/\ckb\right)^{-\frac{1}{2}}\sin(2x)}{b^{[1]}_{\tth,\kappa,b}+b^{[2]}_{\tth,\kappa,b}\cos(2x)}=b^{[1]}_{\tth,\kappa,b}\phi^-_0+\vet{2\left(\ch/\ckb\right)^{-\frac{1}{2}}\sin(2x)}{b^{[2]}_{\tth,\kappa,b}\cos(2x)} , \qquad b^{[1]}_{\tth,\kappa,b},\,b^{[2]}_{\tth,\kappa,b} \mbox{ in } \eqref{b1 b2 b3} \ . 
\end{equation*}
Therefore by \eqref{A11} and \eqref{A14} there is an analytic function $\lambda \mapsto \varphi(\lambda,\cdot)\in Y=H^4(\mathbb{T},\mathbb{C})\times H^1(\mathbb{T},\mathbb{C})$ so that, by exploiting the identity $\tanh(2\tth)=2\ch^2/(1+\ch^4)$ in applying \eqref{A14}, we obtain
\begin{align*}
    P'_{0,0} \phi^+_1=-\frac{1}{2\pi \im} \oint_\Gamma \frac{1}{\lambda}\left(-\frac{b^{[1]}_{\tth,\kappa,b}}{\lambda}\phi^-_0+\vet{-\alpha_{\tth,\kappa,b}\cos(2x)}{-\beta_{\tth,\kappa,b}\sin(2x)}+\lambda\varphi(\lambda)\right) \de \lambda.
\end{align*}
Thus, by means of the Residue Theorem we obtain the first identity in \eqref{A16}. Similarly, one may calculate $P'_{0,0} \phi^-_1$. By \eqref{A3}, \eqref{A11}, and \eqref{A15}, one has $P'_{0,0}\phi^-_0=0$. Next, we calculate $P'_{0,0}\phi^+_0$. By \eqref{A3}, \eqref{A11}, \eqref{A12}, and \eqref{A15} we get 
\begin{align} \label{Pprimefp0}
P'_{0,0} \phi^+_0 = -\frac{1}{2\pi \im} \oint_{\Gamma} \frac{1}{\lambda} (\mathscr{L}_{0,0} - \lambda)^{-1} \vet{2\ch^{-1}\ckb\sin(x)}{\left(\ch^2+\ch^{-2}\right)\ckb^2\cos(x)} \de \lambda.
\end{align}

Next, we decompose
\begin{equation} \label{alphabetac21}
    \begin{aligned}
       \vet{2\ch^{-1}\ckb\sin(x)}{\left(\ch^2+\ch^{-2}\right)\ckb^2\cos(x)}&= \underbrace{\frac{1}{2}\ckb^{\frac{3}{2}}\ch^{\frac{1}{2}}\left(\ch^2+3\ch^{-2}\right)}_{=: \alpha} \phi^-_{-1} + \underbrace{\frac{1}{2}\ckb^{\frac{3}{2}}\ch^{\frac{1}{2}}\left(\ch^2-\ch^{-2}\right)}_{=: \beta} \phi^-_1. 
    \end{aligned}
\end{equation}
By \eqref{A15}, \eqref{A13}, \eqref{Pprimefp0}, and \eqref{alphabetac21}, using the Residue Theorem, we get
\[
P'_{0,0} \phi^+_0 = -\frac{1}{2\pi \im} \oint_{\Gamma} 
\left( -\frac{2\alpha c_0}{\lambda (\lambda^2 + 4c_0^2)} \phi^+_{-1} 
- \frac{\alpha}{\lambda^2 + 4c_0^2} \phi^-_{-1} 
- \frac{\beta}{\lambda^2} \phi^-_{1} \right) \de \lambda= \frac{\alpha}{2 c_0} \phi^+_{-1}.
\]
Thus, we obtain the third identity in \eqref{A16}. Now, we calculate $\dot{P}_{0,0} \phi^+_1$. First, we have 
\begin{align*}
    \dot{P}_{0,0} \phi^+_1=\frac{1}{2\pi }\oint_\Gamma \frac{1}{\lambda}\left(\mathscr{L}_{0,0}-\lambda\right)^{-1} \vet{b^{[3]}_{\tth,\kappa,b} \cos(x)}{2\ch^{\frac{1}{2}}\ckb^{-\frac{1}{2}}(2b+\kappa)\sin(x)} \de \lambda,
\end{align*}
where $b^{[3]}_{\tth,\kappa,b}$ is in \eqref{A15}, and then, writing
\begin{align*}
    &\vet{\cos(x)}{0}=\frac{1}{2}\left(\frac{\ch}{\ckb}\right)^{-\frac{1}{2}}\left(\phi^+_1+\phi^+_{-1}\right)\,, \qquad \vet{0}{\sin(x)}=\frac{1}{2}\left(\frac{\ch}{\ckb}\right)^{\frac{1}{2}}\left(\phi^+_1-\phi^+_{-1}\right),
\end{align*}
and using \eqref{A13}, we conclude using again the residue theorem 
\begin{align*}
    \dot{P}_{0,0}\phi^+_1=\frac{\im}{4}\left(b^{[3]}_{\tth,\kappa,b}\ch^{-\frac{3}{2}}\ckb^{-\frac{1}{2}}-(2\kappa+4b)\ckb^{-2}\right)  \phi^-_{-1}.
\end{align*}
Similarly, we have
\begin{align*}
    \dot{P}_{0,0}\phi^-_1=\frac{\im}{4}\left(b^{[3]}_{\tth,\kappa,b}\ch^{-\frac{3}{2}}\ckb^{-\frac{1}{2}}-(2\kappa+4b)\ckb^{-2}\right) \phi^+_{-1}.
\end{align*}
Finally, in view of \eqref{A15}, we have
\begin{equation*}
    \begin{aligned}
        \dot{P}_{0,0}\phi^+_0=&\frac{1}{2\pi \im}\oint_\Gamma \left(\mathscr{L}_{0,0}-\lambda\right)^{-1}\dot{\mathscr{L}}_{0,0}\left(\frac{1}{\lambda^2}\phi^-_0-\frac{1}{\lambda}\phi^+_0\right) \de \lambda=0,\\
        \dot{P}_{0,0}\phi^-_0=&\frac{1}{2\pi \im}\oint_\Gamma \left(\mathscr{L}_{0,0}-\lambda\right)^{-1}\dot{\mathscr{L}}_{0,0}\left(\frac{-1}{\lambda}\phi^-_0\right) \de \lambda=0.
    \end{aligned}
\end{equation*}
In conclusion, all the formulas in \eqref{A16} are proven.
\end{proof}

So far we have obtained the linear terms of the expansions \eqref{43 f+1}, \eqref{44 f-1}, \eqref{45 f+0}, and \eqref{46 f-0}. We now provide further information about the expansion of the basis at $\mu = 0$. 

\begin{lemma} 
     The basis $\{ \phi_k^{\sigma}(0,\e), \quad k = 0,1, \quad \sigma = \pm \}$ is real. For any $\varepsilon$ we obtain $\phi_0^-(0,\varepsilon) \equiv \phi_0^-$. The property \eqref{48 f-0=01} holds.
\end{lemma}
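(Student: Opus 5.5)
At $\mu=0$ the operator $\mathscr L_{0,\varepsilon}=L_{0,\varepsilon}$ is real, since $\mathcal K_{0,\varepsilon}$ is a real operator. The plan is to derive all three claims from this, together with the reversibility and parity structure already established.

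\textbf{Reality.} By Lemma~\ref{properties of U and P}(iii), $P_{0,\varepsilon}$ and $U_{0,\varepsilon}$ are real operators. The vectors $\phi_k^\sigma$ in \eqref{eigenfunc of mathcall L00 2} are real. Hence each $\phi_k^\sigma(0,\varepsilon)=U_{0,\varepsilon}\phi_k^\sigma$ is real.

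\textbf{Parity.} By Lemma~\ref{F is symplectic and reversible} we have $\overline\rho\,\phi_k^\sigma(0,\varepsilon)=\sigma\phi_k^\sigma(0,\varepsilon)$. For a real vector, the parity structure \eqref{Parity structure} loses its imaginary parts. So $\phi_k^+(0,\varepsilon)$ has the form $[\mathit{even},\mathit{odd}]^\top$ and $\phi_k^-(0,\varepsilon)$ the form $[\mathit{odd},\mathit{even}]^\top$.

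\textbf{Averages of the first components.} It remains to control the space average of the even first components. I will use conservation of the mean of the first component. The first row of $\mathscr L_{0,\varepsilon}$ is $\partial_x\circ(c_0+q_\varepsilon)\,[\cdot]+|D|\tanh((\tth+\ttf)|D|)\,[\cdot]$, whose range has zero average. Thus the functional $\ell(f):=(f,\phi_0^+)$ satisfies $\ell\circ\mathscr L_{0,\varepsilon}=0$. Then $\ell((\mathscr L_{0,\varepsilon}-\lambda)^{-1}f)=-\lambda^{-1}\ell(f)$, and the contour integral \eqref{Projection P mu e} gives $\ell(P_{0,\varepsilon}f)=\ell(f)$.

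I will then show $\ell(U_{0,\varepsilon}f)=\ell(f)$ for all $f$. First, $\ell(P_{0,0}f)=\ell(f)$ and $\ell(\operatorname{Id}-P_{0,0})=0=\ell(\operatorname{Id}-P_{0,\varepsilon})$. Hence $\ell\circ(P_{0,\varepsilon}-P_{0,0})=0$, so $\ell$ annihilates every positive power of $P_{0,\varepsilon}-P_{0,0}$. Via the binomial series \eqref{binomial}, $\ell\circ(\operatorname{Id}-(P_{0,\varepsilon}-P_{0,0})^2)^{-1/2}=\ell$. Finally $\ell\big(P_{0,\varepsilon}P_{0,0}f+(\operatorname{Id}-P_{0,\varepsilon})(\operatorname{Id}-P_{0,0})f\big)=\ell(f)$.

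Consequently:
\begin{itemize}
\item $\phi_1^+(0,\varepsilon)$ has first component in $\mathit{even}_0$;
\item $\phi_0^+(0,\varepsilon)$ has first component equal to $1+\mathit{even}_0$;
\item the odd first components have automatically zero mean.
\end{itemize}
This gives \eqref{48 f-0=01} except for the claim on $\phi_0^-$.

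\textbf{The vector $\phi_0^-$.} Since $\phi_0^-=[0,1]^\top$ lies in $\ker\mathscr L_{0,\varepsilon}$ (by Lemma~\ref{237}, $U_1$), it satisfies $(\mathscr L_{0,\varepsilon}-\lambda)^{-1}\phi_0^-=-\lambda^{-1}\phi_0^-$. The contour integral then gives $P_{0,\varepsilon}\phi_0^-=\phi_0^-$. Also $P_{0,0}\phi_0^-=\phi_0^-$, so $(P_{0,\varepsilon}-P_{0,0})\phi_0^-=0$ and the square-root factor acts as the identity on $\phi_0^-$. Therefore $U_{0,\varepsilon}\phi_0^-=P_{0,\varepsilon}P_{0,0}\phi_0^-=\phi_0^-$.

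The main obstacle I anticipate is the square-root factor in $U_{0,\varepsilon}$: one must check that the mean-preservation and kernel arguments commute with it. Handling it at the level of the power series \eqref{binomial}, as above, should suffice.
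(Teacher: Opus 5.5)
Your proof is correct. For reality, parity and $\phi_0^-(0,\e)=\phi_0^-$ it follows the paper. You are in fact more explicit than the paper about the square-root factor in $U_{0,\e}$, which the paper leaves implicit.

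The genuine difference is in the zero-average claims of \eqref{48 f-0=01}.

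\textbf{The paper's route.} It first establishes $\phi_0^-(0,\e)=\phi_0^-$. It then reads off the averages from the symplectic relations of the basis in Lemma~\ref{F is symplectic and reversible}. Since $\mathcal{J}\phi_0^-(0,\e)=[1,0]^\top$, the identities $(\mathcal{J}\phi_0^-(0,\e),\phi_1^+(0,\e))=0$ and $(\mathcal{J}\phi_0^-(0,\e),\phi_0^+(0,\e))=1$ say exactly that the first components of $\phi_1^+(0,\e)$ and of $\phi_0^+(0,\e)-[1,0]^\top$ have zero mean.

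\textbf{Your route.} You work with the dual object. The mean functional $\ell=(\cdot,\phi_0^+)$ annihilates the range of $\mathscr{L}_{0,\e}$, that is, $\mathscr{L}_{0,\e}^*\phi_0^+=0$. You push this through the resolvent, the Riesz projector and the explicit formula for $U_{0,\e}$, and obtain $\ell\circ U_{0,\e}=\ell$.

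\textbf{How they relate.} The two statements are equivalent. Symplecticity of $U_{0,\e}$ (Lemma~\ref{properties of U and P}(ii)) together with $U_{0,\e}\phi_0^-=\phi_0^-$ gives $U_{0,\e}^*\mathcal{J}\phi_0^-=\mathcal{J}\phi_0^-$. This is precisely your identity $U_{0,\e}^*\phi_0^+=\phi_0^+$.

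\textbf{What each buys.} The paper's argument is a one-line consequence of structure already in place. However, it depends on first proving $\phi_0^-(0,\e)=\phi_0^-$ and on the symplecticity of $U_{\mu,\e}$ imported from \cite{BMV1}. Your argument uses only the conservative form of the first row of $\mathscr{L}_{0,\e}$ and the explicit form of $U_{0,\e}$. It is therefore independent of both the symplectic structure and the $\phi_0^-$ step. The cost is the extra bookkeeping through the binomial series for $(\operatorname{Id}-(P_{0,\e}-P_{0,0})^2)^{-1/2}$, which you handle correctly.
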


\begin{proof}
    The reality of the basis $\phi_k^\sigma(0,\varepsilon)$ is a consequence of Lemma \ref{properties of U and P}-(iii). Since, recalling \eqref{mathscr L mu e} and \eqref{mathcal B mu e}, $\mathscr{L}_{0,\varepsilon} \phi_0^- = 0$ for any $\varepsilon$ (cf. Lemma~\ref{237}), we deduce $(\mathscr{L}_{0,\varepsilon} - \lambda)^{-1} \phi_0^- = -\frac{1}{\lambda} \phi_0^-$ and then, using also the Residue Theorem,
\[
P_{0,\varepsilon} \phi_0^- = -\frac{1}{2\pi \im} \oint_\Gamma (\mathscr{L}_{0,\varepsilon} - \lambda)^{-1} \phi_0^- \, \de \lambda = \phi_0^-.
\]
In particular $P_{0,\varepsilon} \phi_0^- = P_{0,0} \phi_0^-$, for any $\varepsilon$ and we get, by \eqref{U transformation operators}, $\phi_0^-(0,\varepsilon) = U_{0,\varepsilon} \phi_0^- = \phi_0^-$, for any $\varepsilon$.

Let us prove property \eqref{48 f-0=01}. In view of \eqref{Parity structure} and since the basis is real, we know that
\[
\phi_k^+(0,\varepsilon) = \begin{bmatrix}\mathit{even}(x)\\ \mathit{odd}(x)\end{bmatrix}, \quad \phi_k^-(0,\varepsilon) = \begin{bmatrix}\mathit{odd}(x)\\ \mathit{even}(x)\end{bmatrix},
\]
for any $k = 0, 1$. By Lemma \ref{F is symplectic and reversible} the basis $\{\phi_k^\sigma(0,\varepsilon)\}$ is symplectic (cf. \eqref{basis is symplectic} and, since
\[
\mathcal{J} \phi_0^-(0,\varepsilon) = \mathcal{J} \phi_0^- = \begin{bmatrix}1\\0\end{bmatrix},
\]
for any $\varepsilon$, we get
\[
0 = (\mathcal{J} \phi_0^-(0,\varepsilon),\, \phi_1^+(0,\varepsilon)) = \left(\begin{bmatrix}1\\0\end{bmatrix}, \phi_1^+(0,\varepsilon)\right), \qquad 
1 = (\mathcal{J} \phi_0^-(0,\varepsilon),\, \phi_0^+(0,\varepsilon)) = \left(\begin{bmatrix}1\\0\end{bmatrix}, \phi_0^+(0,\varepsilon)\right).
\]
Thus the first component of both $\phi_1^+(0,\varepsilon)$ and $\phi_0^+(0,\varepsilon) - \begin{bmatrix}1\\0\end{bmatrix}$ has zero average, proving \eqref{48 f-0=01}.
\end{proof}

\begin{lemma} \label{fmu0 eq f0} 
For any small $\mu$, we have $\phi_0^+(\mu,0) \equiv \phi_0^+$ and $\phi_0^-(\mu,0) \equiv \phi_0^-$. Moreover, the vectors $\phi_1^+(\mu,0)$ and $\phi_1^-(\mu,0)$ have both components with zero space average.
\end{lemma}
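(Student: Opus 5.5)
At $\varepsilon=0$ the operator $\mathscr{L}_{\mu,0}$ is a Fourier multiplier. The plan is to exploit this to show that the spectral projector $P_{\mu,0}$, and hence the Kato transformation $U_{\mu,0}$, act diagonally on Fourier modes. Concretely, $\mathscr{L}_{\mu,0}=\mathcal{J}\mathcal{K}_{\mu,0}$ with $a_0=q_0=0$, $\mathtt{f}_0=0$, $\tau_{\mu,0}=(\partial_x+\im\mu)^2$ and $\beta_{\mu,0}=(\partial_x+\im\mu)^4$. It therefore leaves invariant each subspace
\[
\mathcal{Z}_k:=\Bigl\{\vet{a\,e^{\im kx}}{b\,e^{\im kx}}:\ a,b\in\mathbb{C}\Bigr\},\qquad k\in\mathbb{Z}.
\]
The resolvent $(\mathscr{L}_{\mu,0}-\lambda)^{-1}$ also commutes with the Fourier projectors $\Pi_k$ onto $\mathcal{Z}_k$. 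Integrating over $\Gamma$ in \eqref{Projection P mu e}, it follows that $P_{\mu,0}$ commutes with every $\Pi_k$.

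First I will treat the zero mode. We have $\Pi_0\phi_0^{\pm}=\phi_0^{\pm}$, so $P_{\mu,0}\phi_0^\pm\in\mathcal{Z}_0$. On $\mathcal{Z}_0$, using \eqref{D+mu} and $\mathrm{sgn}(0)=0$, the operator $\mathscr{L}_{\mu,0}$ is the $2\times2$ matrix
\[
\begin{pmatrix}0&\mu\tanh(\tth\mu)\\ -(1+\kappa\mu^2+b\mu^4)&0\end{pmatrix}.
\]
Its two eigenvalues are $\pm\im\sqrt{(1+\kappa\mu^2+b\mu^4)\mu\tanh(\tth\mu)}$, and for $\mu$ small both lie inside $\Gamma$. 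Hence $P_{\mu,0}|_{\mathcal{Z}_0}=\mathrm{Id}$, and in particular $P_{\mu,0}\phi_0^\pm=\phi_0^\pm=P_{0,0}\phi_0^\pm$.

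Next I will plug this into \eqref{U transformation operators}. Since $(P_{\mu,0}-P_{0,0})\phi_0^\pm=0$, we get $(\mathrm{Id}-(P_{\mu,0}-P_{0,0})^2)\phi_0^\pm=\phi_0^\pm$. Both $P_{\mu,0}$ and $P_{0,0}$ preserve $\mathcal{Z}_0$, and $(P_{\mu,0}-P_{0,0})|_{\mathcal{Z}_0}=0$. By the binomial series \eqref{binomial}, the square-root factor therefore acts as the identity on $\mathcal{Z}_0$. The bracket gives $P_{\mu,0}P_{0,0}\phi_0^\pm=P_{\mu,0}\phi_0^\pm=\phi_0^\pm$. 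This yields $U_{\mu,0}\phi_0^\pm=\phi_0^\pm$.

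For $\phi_1^\pm$, note that $\phi_1^\pm\in\mathcal{Z}_1\oplus\mathcal{Z}_{-1}$. Every operator in \eqref{U transformation operators} is built from $P_{\mu,0}$ and $P_{0,0}$, both of which commute with all $\Pi_k$. Hence $U_{\mu,0}$ commutes with $\Pi_0$, and
\[
\Pi_0\,\phi_1^\pm(\mu,0)=U_{\mu,0}\,\Pi_0\phi_1^\pm=0.
\]
So both components of $\phi_1^\pm(\mu,0)$ have zero average.

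The only point needing care is justifying that $P_{\mu,0}$ commutes with $\Pi_k$ as bounded operators on $Y$, and that the binomial series may be restricted to the invariant subspace $\mathcal{Z}_0$. Both are routine: the Fourier multiplier structure gives the commutation with the resolvent, and the series converges in $\mathcal{L}(Y)$ by Lemma~\ref{kato thm}. The remaining check is that the small eigenvalues on $\mathcal{Z}_0$ stay inside $\Gamma$, which holds by their continuity in $\mu$.
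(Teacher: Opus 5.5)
Your proof is correct. For $\phi_0^\pm(\mu,0)=\phi_0^\pm$ you follow the paper's argument: $\mathrm{span}\{\phi_0^+,\phi_0^-\}$ is invariant under $\mathscr{L}_{\mu,0}$, and both eigenvalues $\pm\im\sqrt{(1+\kappa\mu^2+b\mu^4)\mu\tanh(\tth\mu)}$ lie inside $\Gamma$. You also spell out the step from $P_{\mu,0}\phi_0^\pm=\phi_0^\pm$ to $U_{\mu,0}\phi_0^\pm=\phi_0^\pm$, which the paper leaves implicit.

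For the zero-average statement you take a genuinely different route. The paper uses the symplectic structure. Since the basis $\{\phi_k^\sigma(\mu,0)\}$ is symplectic and $\phi_0^\pm(\mu,0)=\phi_0^\pm$, one has $(\mathcal{J}\phi_0^\pm,\phi_1^\sigma(\mu,0))=0$. Because $\mathcal{J}\phi_0^+=\vet{0}{-1}$ and $\mathcal{J}\phi_0^-=\vet{1}{0}$, these relations say exactly that both components of $\phi_1^\sigma(\mu,0)$ have zero average. You instead use translation invariance at $\e=0$: $P_{\mu,0}$ and $P_{0,0}$ commute with every Fourier projector, hence so does $U_{\mu,0}$ (via the norm-convergent binomial series). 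Therefore $\Pi_0\phi_1^\pm(\mu,0)=U_{\mu,0}\Pi_0\phi_1^\pm=0$.

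The paper's argument is shorter once the symplectic-basis lemma is available and uses only the Hamiltonian structure, but its second half depends on the first half of the lemma. Yours is independent of both the first half and symplecticity. It also gives more: $\phi_1^\pm(\mu,0)$ contains only the harmonics $e^{\pm\im x}$. The cost is checking that the commutation with $\Pi_0$ survives the inverse square root in the definition of $U_{\mu,\e}$, which you correctly flag as routine.
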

\begin{proof}
The operator $\mathscr{L}_{\mu,0} = 
\begin{bmatrix}
c_0\partial_x & |D+\mu|\tanh(\tth|D+\mu|) \\
-1+\kappa(\pa_{x}+\im \mu)^2-b(\pa_{x}+\im \mu)^4 & c_0\partial_x
\end{bmatrix}$ 
leaves invariant the subspace $\mathcal{Z} := \text{span}\{ \phi_0^+, \phi_0^- \}$ since 
$\mathscr{L}_{\mu,0} \phi_0^+ = -(1+\kappa\mu^2+b\mu^4)\phi_0^-$ and $\mathscr{L}_{\mu,0} \phi_0^- = \mu\tanh(\tth\mu) \phi_0^+$. 
The operator $\mathscr{L}_{\mu,0}|_{\mathcal{Z}}$ has the two eigenvalues $\pm \im \sqrt{(1+\kappa\mu^2+b\mu^4)\mu\tanh(\tth\mu)}$, 
which, for small $\mu$, lie inside the loop $\Gamma$ around $0$ in \eqref{Projection P mu e}. Then, by \eqref{spectrum separated by Gamma}, 
we have $\mathcal{Z} \subseteq \mathcal{V}_{\mu,0} = \text{Rg}(P_{\mu,0})$ and 

\[
P_{\mu,0} \phi_0^\pm = \phi_0^\pm, \quad 
\phi_0^\pm (\mu,0) = U_{\mu,0} \phi_0^\pm = \phi_0^\pm, 
\quad \text{for any $\mu$ small.}
\]

The basis $\{ \phi_k^\sigma (\mu,0) \}$ is symplectic (cf. \eqref{basis is symplectic}). Then, since 
$\mathcal{J} \phi_0^+ = \begin{bmatrix} 0 \\ -1 \end{bmatrix}$ 
and $\mathcal{J} \phi_0^- = \begin{bmatrix} 1 \\ 0 \end{bmatrix}$, we have

\[
\begin{aligned}
0 &= \big( \mathcal{J} \phi_0^+ (\mu,0), \phi_1^\sigma (\mu,0) \big) 
= \Big( \begin{bmatrix} 0 \\ -1 \end{bmatrix}, \phi_1^\sigma (\mu,0) \Big), \quad 
0 = \big( \mathcal{J} \phi_0^- (\mu,0), \phi_1^\sigma (\mu,0) \big) 
= \Big( \begin{bmatrix} 1 \\ 0 \end{bmatrix}, \phi_1^\sigma (\mu,0) \Big),
\end{aligned}
\]
namely both the components of $\phi_1^\pm (\mu,0)$ have zero average.
\end{proof}

We finally consider the $\mu \varepsilon$ term in the expansion \eqref{the expandsion of f sigma mu}.

\begin{lemma} 
The derivatives $ (\partial_\mu \partial_\varepsilon \phi_k^{\sigma}(0,0) = \left( \dot{P}_{0,0}^{\prime} - \frac{1}{2} P_{0,0} \dot{P}_{0,0}^{\prime} \right) \phi_k^{\sigma}$ satisfy
\begin{equation} \label{A17}
\begin{aligned}
(\partial_\mu \partial_\varepsilon \phi_1^+(0,0)) &= \im \begin{bmatrix} \mathit{odd}(x) \\ \mathit{even}(x) \end{bmatrix}, & (\partial_\mu \partial_\varepsilon \phi_1^-(0,0)) &= \im \begin{bmatrix} \mathit{even}(x) \\ \mathit{odd}(x) \end{bmatrix}, \\
(\partial_\mu \partial_\varepsilon \phi_0^+(0,0)) &= \im \begin{bmatrix} \mathit{odd}(x) \\ \mathit{even}_0(x) \end{bmatrix}, & (\partial_\mu \partial_\varepsilon \phi_0^-(0,0)) &= \im \begin{bmatrix} \mathit{even}_0(x) \\ \mathit{odd}(x) \end{bmatrix}.
\end{aligned}
\end{equation}
\end{lemma}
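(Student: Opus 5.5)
We argue by parity bookkeeping, as in the proof of the analogous statement for pure gravity waves in \cite[Lemma A.4]{BMV1}; no explicit coefficients are needed. The formula $\partial_\mu\partial_\varepsilon\phi_k^\sigma(0,0)=(\dot P'_{0,0}-\tfrac12 P_{0,0}\dot P'_{0,0})\phi_k^\sigma$ comes from \eqref{A2} and \eqref{the expandsion of f sigma mu}. We then use the representation \eqref{A5a}--\eqref{A5c} of $\dot P'_{0,0}$ as contour integrals of products of resolvents $(\mathscr L_{0,0}-\lambda)^{-1}$ with $\dot{\mathscr L}_{0,0}$, $\mathscr L'_{0,0}$ and $\dot{\mathscr L}'_{0,0}$. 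In each product we track two properties: whether a vector is real or purely imaginary, and the parity structure of its two components (even/odd versus odd/even).

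\textbf{Step 1: parity rules.} From \eqref{A6} and \eqref{A8}, together with Lemmas \ref{lem:pa.exp}, \ref{tau} and \ref{beta}, we read off the following.
\begin{itemize}
\item $\mathscr L'_{0,0}$ is real. It maps $[\mathit{even};\mathit{odd}]$ to $[\mathit{odd};\mathit{even}]$ and vice versa, because $q_1,a_1$ are even and $\tau_1,\beta_1$ are reversibility-preserving.
\item $\dot{\mathscr L}_{0,0}=\im\times$(real operator). Its entries $\mathrm{sgn}(D)m(D)$, $2\kappa\partial_x$ and $-4b\partial_x^3$ are all odd-order, so it \emph{preserves} the type $[\mathit{even};\mathit{odd}]$.
\item $\dot{\mathscr L}'_{0,0}=\im\times$(real operator) and also swaps types, by the parity of $q_1$, $\tau_{1,j}$ and $b_{1,j}$.
\item $(\mathscr L_{0,0}-\lambda)^{-1}$ is a Fourier multiplier with real symbol structure. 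It preserves each type, up to the complex scalar $\lambda$, which disappears after residues are taken.
\end{itemize}
Counting factors, each of the three integrands in \eqref{A5a}--\eqref{A5c} contains exactly one type swap and exactly one factor of $\im$. Hence $\dot P'_{0,0}$ maps a real vector of type $[\mathit{even};\mathit{odd}]$ to $\im[\mathit{odd};\mathit{even}]$, and a real vector of type $[\mathit{odd};\mathit{even}]$ to $\im[\mathit{even};\mathit{odd}]$. The projector $P_{0,0}$ is real and preserves types, so the correction $-\tfrac12P_{0,0}\dot P'_{0,0}$ has the same structure. Applying this to $\phi_1^+,\phi_0^+$ (type $[\mathit{even};\mathit{odd}]$) and to $\phi_1^-,\phi_0^-$ (type $[\mathit{odd};\mathit{even}]$) gives the parity and reality claims in \eqref{A17}.

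A cross-check is available. The basis $\phi_k^\sigma(\mu,\varepsilon)$ is reversible, so the parity structure \eqref{Parity structure} holds for all $(\mu,\varepsilon)$. The coefficient of $\mu\varepsilon$ is purely imaginary because $\phi_k^\sigma(0,\varepsilon)$ and $\phi_k^\sigma(\mu,0)$ are real (Lemma~\ref{fmu0 eq f0} and \eqref{48 f-0=01}), while the dependence on $\mu$ enters only through $\im\mu$. Combined with \eqref{Parity structure}, this again yields the parity statement.

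\textbf{Step 2: zero averages.} The subscript $0$ in $\mathit{even}_0$ for $\phi_0^\pm$ still has to be justified. For $\phi_0^-$ we argue directly. By \eqref{A15}, $\mathscr L'_{0,0}\phi_0^-=0$ and $\dot{\mathscr L}_{0,0}\phi_0^-=0$, and $(\mathscr L_{0,0}-\lambda)^{-1}\phi_0^-=-\lambda^{-1}\phi_0^-$. Therefore only \eqref{A5c} contributes, through $\dot{\mathscr L}'_{0,0}\phi_0^-=[0;\im q_1]$, and \eqref{A5a} vanishes. The vector $[0;\im q_1]$ has the pure $\cos x$ mode, which lies in the span of $\phi_1^-$ and $\phi_{-1}^-$. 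The resolvent keeps it inside $\mathcal V_{0,0}\oplus\mathcal U$, whose first components carry only the modes $\pm1$. Hence the first component has zero average.

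For $\phi_0^+$, the average of the second component is zero for two reasons. First, $\dot P_{0,0}\phi_0^+=0$. Second, the symplectic identities $(\mathcal J\phi_0^+(\mu,\varepsilon),\phi_0^+(\mu,\varepsilon))=0$ and $(\mathcal J\phi_0^-,\phi_0^+)=1$ are exact. Differentiating them in $\mu\varepsilon$ and using the already known first-order terms from Lemma \ref{expansion of the basis F} kills the zero mode.

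The main obstacle is this last average computation for $\phi_0^+$: the contributions of \eqref{A5b} through $P'_{0,0}\phi_0^+=\delta\phi_{-1}^+$ must be checked to produce no constant mode. We will try the symplectic differentiation first. If it proves insufficient, we will compute the residues explicitly using \eqref{A13}, where the constant mode can only arise from $\phi_0^\pm$ components, which we will show cancel.
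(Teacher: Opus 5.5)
There is a genuine gap in Step 1. Your overall plan matches the paper's: imaginarity by counting factors of $\im$, then parity, then Fourier support for the $\mathit{even}_0$ claims. Your count of factors of $\im$ in \eqref{A5a}--\eqref{A5c} is also right. Together with $\overline{(\mathscr L_{0,0}-\lambda)^{-1}}=(\mathscr L_{0,0}-\bar\lambda)^{-1}$ and a conjugation-symmetric $\Gamma$, it shows that $\dot P'_{0,0}$ is purely imaginary. However, the type-swap bookkeeping rests on two false rules whose errors happen to cancel, so as written it does not prove the parity claims.

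First, $\dot{\mathscr L}'_{0,0}$ does not swap types. Being purely imaginary and reversible, it preserves them once the factor $\im$ is removed. In \eqref{A8} the entry $\im q_1$ sends an even first component to $\im\cdot\mathit{even}$, and $\dot{\mathscr L}'_{0,0}\phi_0^-=\im q_1^{[1]}[0;\cos x]$ has, up to $\im$, the same type as $\phi_0^-$.

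Second, the resolvent does not preserve types. $\mathscr L_{0,0}$ is real and anticommutes with the linear involution \eqref{rho involution}, so it swaps types. Hence the coefficient of $\lambda^n$ in the Laurent expansion of $(\mathscr L_{0,0}-\lambda)^{-1}$ preserves the type for $n$ odd and swaps it for $n$ even. The powers of $\lambda$ do not ``disappear'': they decide which type survives the residue. For example, $(\mathscr L_{0,0}-\lambda)^{-1}\phi_0^+=-\lambda^{-1}\phi_0^++\lambda^{-2}\phi_0^-$. Likewise, in \eqref{A5c}$\phi_0^-$ the only surviving term is a multiple of $\phi_{-1}^+$. It comes from the even-in-$\lambda$ (swapping) part of \eqref{A13}, not from $\dot{\mathscr L}'_{0,0}$.

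Get the parity globally instead, as the paper does through \eqref{Parity structure}. Since $\bar\rho$ commutes with $U_{\mu,\e}$, you have $\bar\rho\,\partial_\mu\partial_\e\phi_k^\sigma(0,0)=\sigma\,\partial_\mu\partial_\e\phi_k^\sigma(0,0)$. Writing this vector as $\im w$ with $w$ real forces $w=[\mathit{odd};\mathit{even}]$ for $\sigma=+$ and $w=[\mathit{even};\mathit{odd}]$ for $\sigma=-$. Your ``cross-check'' is essentially this argument, but it justifies imaginarity by a false fact. $\phi_1^\pm(\mu,0)$ is not real: see the $\im\frac{\mu}{4}\gamma_{\tth,\kappa,b}$ term in \eqref{43 f+1}. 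What holds, and suffices, is $\overline{\phi_k^\sigma(\mu,\e)}=\phi_k^\sigma(-\mu,\e)$, which follows from $\overline{\mathscr L_{\mu,\e}}=\mathscr L_{-\mu,\e}$.

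In Step 2, the obstacle you anticipate for $\phi_0^+$ does not exist. \eqref{A5b}$\phi_0^+=0$ identically, because the rightmost resolvent sends $\phi_0^+$ into $\mathrm{span}\{\phi_0^+,\phi_0^-\}$, which $\dot{\mathscr L}_{0,0}$ annihilates by \eqref{A15}. The factor $\mathscr L'_{0,0}\phi_0^+$ enters only through \eqref{A5a}, and it is harmless. It is a pure first harmonic, and all remaining factors are Fourier multipliers. The same holds for \eqref{A5c}$\phi_0^+$, since $\dot{\mathscr L}'_{0,0}\phi_0^\pm$ are first harmonics. Finally, $P_{0,0}$ maps first harmonics to first harmonics. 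This is exactly the paper's argument.

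Your $\phi_0^-$ argument should be phrased the same way. $\mathcal V_{0,0}\oplus\mathcal U$ is not first-harmonic, since it contains $\phi_0^+=[1;0]$. What matters is that the resolvent maps $\mathrm{span}\{\phi_1^-,\phi_{-1}^-\}$ into $\mathrm{span}\{\phi_{\pm1}^\pm\}$.

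Your symplectic alternative also works, and it avoids residues entirely. Differentiate $(\mathcal J\phi_0^+(\mu,\e),\phi_0^+(\mu,\e))=0$ in $\mu$ and $\e$ at $(0,0)$. The cross terms vanish because $\dot\phi_0^+(0,0)=\dot P_{0,0}\phi_0^+=0$. Skew-adjointness of $\mathcal J$ then gives $z=\bar z$ for $z:=(\mathcal J\phi_0^+,\partial_\mu\partial_\e\phi_0^+(0,0))$. Since that vector is purely imaginary, $z\in\im\R$, so $z=0$; this is precisely the zero average of the second component. The identity $(\mathcal J\phi_0^-(\mu,\e),\phi_0^-(\mu,\e))=0$, with $\dot\phi_0^-(0,0)=0$, gives the first-component claim for $\phi_0^-$ in the same way. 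This route uses the imaginarity, so Step 1 must be repaired first.
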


 \begin{proof}
 We prove that $ \dot{P}_{0,0}^{\prime} = \eqref{A5a} + \eqref{A5b} + \eqref{A5c}$ is purely imaginary. This follows since the operators in \eqref{A5a}, \eqref{A5b}, and \eqref{A5c} are purely imaginary because $\dot{\mathscr{L}}_{0,0}$ is purely imaginary, $\mathscr{L}_{0,0}^{\prime}$ in \eqref{A6} is real, and $\dot{\mathscr{L}}_{0,0}^{\prime}$ in \eqref{A8} is purely imaginary (as argued in Lemma \ref{properties of U and P}-(iii) of \cite{BMV1}). Then, when applied to the real vectors $\phi_k^{\sigma}$, for $k = 0,1$ and $\sigma = \pm$, they produce purely imaginary vectors.

The property \eqref{Parity structure} implies that $ (\partial_\mu \partial_\varepsilon \phi_k^{\sigma}(0,0)) $ has the claimed parity structure in \eqref{A17}. We shall now prove that $ (\partial_\mu \partial_\varepsilon \phi_0^+(0,0)) $ has zero average. We have, by \eqref{A12} and \eqref{A15},
\begin{equation}
\eqref{A5a} \phi_0^+ := \frac{1}{2\pi i} \oint_{\Gamma} \frac{1}{\lambda}(\mathscr{L}_{0,0} - \lambda)^{-1} \dot{\mathscr{L}}_{0,0}(\mathscr{L}_{0,0} - \lambda)^{-1} \vet{2\ch^{-1}\ckb\sin(x)}{\left(\ch^2+\ch^{-2}\right)\ckb^2\cos(x)} \de \lambda.
\end{equation}

Since the operators $(\mathscr{L}_{0,0} - \lambda)^{-1}$ and $\mathscr{L}_{0,0}$ are both Fourier multipliers, they preserve the absence of average in the vectors. Thus, \eqref{A5a} $\phi_0^+$ has zero average.
Next, \eqref{A5b} $\phi_0^+ = 0$ since $\dot{\mathscr{L}}_{0,0} \phi_0^{\pm} = 0$, cf. \eqref{A15}. Finally, by \eqref{A12} and \eqref{A8}, where $q_1(x) = q_1^{[1]} \cos(x)$, $\tau_{1,1}(x)=\tau_{1,1}^{[1]}\sin(x)$, and $b_{1,1}=b^{[1]}_{1,1}\sin(x)$
\begin{equation}
\eqref{A5c} \phi_0^+ := \frac{1}{2 \pi \im} \oint_{\Gamma} (\mathscr{L}_{0,0} - \lambda)^{-1} \left(\frac{-1}{\lambda}\vet{\im q_1(x)}{\im \kappa\, \tau_{1,1}(x)-\im b \,b_{1,1}(x)} +\frac{1}{\lambda^2}\vet{0}{\im q_1(x)}\right) \de \lambda
\end{equation}
is a vector with zero average. We conclude that $\dot{P}_{0,0}^{\prime} \phi_0^+$ is an imaginary vector with zero average, as well as $(\partial_\mu \partial_\varepsilon \phi_0^+(0,0))$ since $P_{0,0}$ sends zero average functions into zero average functions. Finally, by \eqref{Parity structure}, $(\partial_\mu \partial_\varepsilon \phi_0^+(0,0))$ has the claimed structure in \eqref{A17}.

We finally consider $(\partial_\mu \partial_\varepsilon \phi_0^-(0,0))$. By \eqref{A11} and $\mathscr{L}_{0,0}^{\prime} \phi_0^- = 0$ (cf. \eqref{A15}), we obtain
\begin{equation}
\eqref{A5a} \phi_0^- = \frac{1}{2\pi i} \oint_{\Gamma} \frac{1}{\lambda}(\mathscr{L}_{0,0} - \lambda)^{-1} \dot{\mathscr{L}}_{0,0} (\mathscr{L}_{0,0} - \lambda)^{-1} \mathscr{L}_{0,0}^{\prime} \phi_0^- \de \lambda = 0.
\end{equation}
Next, by \eqref{A11} and $\dot{\mathscr{L}}_{0,0} \phi_0^- = 0$, we get \eqref{A5b} $\phi_0^- = 0$. Finally, by \eqref{A11} and \eqref{A8},
\begin{equation}
\eqref{A5c} \phi_0^- = -\frac{1}{2\pi \im} \oint_{\Gamma} (\mathscr{L}_{0,0} - \lambda)^{-1} \frac{1}{\lambda} \begin{bmatrix} 0 \\ \im q_1(x)\end{bmatrix} \de \lambda
\end{equation} has zero average since $(\mathscr{L}_{0,0} - \lambda)^{-1}$ is a Fourier multiplier (and thus preserves average absence).

This completes the proof of Lemma \ref{expansion of the basis F}.
\end{proof}

\section{Matrix Representation of $\mathcal{K}_{\mu,\e}$ on $\mathcal{V}_{\mu,\e}$}\label{secD}

We decompose $\mathcal{K}_{\mu,\varepsilon}$ in \eqref{mathcal B mu e} as
\[
\mathcal{K}_{\mu,\varepsilon} = \mathcal{K}^\varepsilon +\mathcal{K}^{\flat} + \mathcal{K}^\sharp+\mathcal{K}^\tau+\mathcal{K}^\beta,
\]
where $\mathcal{K}^\varepsilon$, $\mathcal{K}^{\flat}$, $\mathcal{K}^\sharp$, $\mathcal{K}^\tau$, abd $\mathcal{K}^\beta$ are the self-adjoint and reversibility preserving operators:
\begin{align} \label{B epsilon}
\mathcal{K}^\varepsilon := \mathcal{K}_{0,\varepsilon} := 
\begin{bmatrix}
1 + a_\varepsilon(x)-\kappa\tau_{0,\e}+b\beta_{0,\e} & -(c_0 + q_\varepsilon(x))\partial_x \\
\partial_x \circ (c_0 + q_\varepsilon(x)) & |D| \tanh((\tth + \ttf)|D|)
\end{bmatrix}, 
\end{align}

\begin{align} \label{B b}
\mathcal{K}^{\flat} := 
\begin{bmatrix}
0 & 0 \\
0 & |D + \mu| \tanh((\tth + \ttf)|D + \mu|) - |D| \tanh((\tth + \ttf)|D|)
\end{bmatrix}, 
\end{align}

\begin{align} \label{B sharp}
\mathcal{K}^\sharp :=  
\begin{bmatrix}
0 & -\im\, \mu q_\varepsilon \\
\im\, \mu q_\varepsilon & 0
\end{bmatrix}, 
\qquad 
\mathcal{K}^\tau :=  
\begin{bmatrix}
-\kappa \left(\tau_{\mu,\e}-\tau_{0,\e}\right) & 0 \\
0 & 0
\end{bmatrix}, \qquad 
\mathcal{K}^\beta :=  
\begin{bmatrix}
b \left(\beta_{\mu,\e}-\beta_{0,\e}\right) & 0 \\
0 & 0
\end{bmatrix}, 
\end{align}
where $\beta_{\mu,\e}$ and $\tau_{\mu,\e}$ in \eqref{Sigma}, $\ttf$ in \eqref{def:ttf}, $a_\e(x)$ and $q_\e(x)$ in \eqref{SN1}. In view of \eqref{DtanhD}, the operator $\mathcal{K}^{\flat}$ is analytic in $\mu$.

\begin{lemma} [Expansion of $\mathsf{K}^\varepsilon$] \label{Expansion of B epsilon}
Let $\mathsf{K}^\varepsilon(\mu)$ be the matrix representation, in the basis
$\Phi_{\mu,\varepsilon}$ of $\mathcal V_{\mu,\varepsilon}$ defined in
\eqref{F basis set and f}, of the operator $\mathcal{K}^\varepsilon$ in
\eqref{B epsilon}. Then $\mathsf{K}^\varepsilon(\mu)$ is self-adjoint and
preserves the reversibility symmetry. Moreover, for $|\mu|$ and $|\varepsilon|$
sufficiently small, it has the expansion
\begin{equation} \label{Be90}
\begin{aligned}
\mathsf{K}^\varepsilon =
\left(
\begin{NiceArray}{cc|ccc}[code-for-first-col=\scriptstyle]
\mathsf{e}_{11} \varepsilon^2 + \zeta_{\tth,\kappa,b} \mu^2 + r_1(\varepsilon^3, \mu \varepsilon^3)
  & \im\, r_2(\mu \varepsilon^2)
  & \mathsf{f}_{11} \varepsilon + r_3(\varepsilon^3, \mu \varepsilon^2)
  & \im\, r_4(\mu \varepsilon^3)
  &  \\
- \im\, r_2(\mu \varepsilon^2)
  & \zeta_{\tth,\kappa,b} \mu^2
  & \im\, r_6(\mu \varepsilon)
  & 0
  &  \\
\hline
\mathsf{f}_{11} \varepsilon + r_3(\varepsilon^3, \mu \varepsilon^2)
  & - \im\, r_6(\mu \varepsilon)
  & 1 + r_8(\varepsilon^2, \mu \varepsilon^2)
  & \im\, r_9(\mu \varepsilon^2)
  & \\
- \im\, r_4(\mu \varepsilon^3)
  & 0
  & - \im\, r_9(\mu \varepsilon^2)
  & 0
  &  \\
\end{NiceArray}
\right)
\end{aligned}+\cO(\mu^2\e,\mu^3)\,,
\end{equation}
where $\mathsf{e}_{11}$, $\mathsf{f}_{11}$ are defined respectively in \eqref{e11 f11}, and
\begin{align} \label{zeta h}
    \zeta_{\tth,\kappa,b}:=\frac{1}{8}c_0 \left(1+\tth(\ch^{-2}-\ch^2)-(4b+2\kappa)\ckb^{-2}\right)^2.
\end{align}
\end{lemma}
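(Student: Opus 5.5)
Following the strategy of \cite[Lemma 4.4]{BMV3} and \cite{HM2025}, the plan is to compute the entries $(\mathcal K^\varepsilon \phi_k^\sigma(\mu,\varepsilon),\phi_{k'}^{\sigma'}(\mu,\varepsilon))$ by expanding both factors in $(\mu,\varepsilon)$. For $\mathcal K^\varepsilon$ we use the expansions of Lemmas \ref{lem:pa.exp}, \ref{tau}, \ref{beta} and \eqref{expfe}; for the basis vectors we use Lemma \ref{expansion of the basis F}.

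\textbf{Structure.} Self-adjointness and the real/imaginary alternation of the entries are inherited directly. Indeed, $\mathcal K^\varepsilon=\mathcal K_{0,\varepsilon}$ is self-adjoint and reversibility preserving, and the basis is symplectic and reversible, so Lemma \ref{matrix representation mathsf L} applies verbatim.

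\textbf{Exact entries.} These come before any expansion.
\begin{itemize}
\item Since $\phi_0^-(0,\varepsilon)=\phi_0^-$, and $\mathcal K^\varepsilon\phi_0^-=\bigl[\begin{smallmatrix}-(c_0+q_\varepsilon)\partial_x 1\\ |D|\tanh(\cdot)1\end{smallmatrix}\bigr]=0$, the entries of the fourth column vanish at $\mu=0$.
\item The correction to $\phi_0^-$ is $\mathcal O(\mu\varepsilon)$ by \eqref{46 f-0}. Because $\mathcal K^\varepsilon$ is self-adjoint and annihilates $\phi_0^-$, every pairing with the $\mathcal O(\mu\varepsilon)$ correction is of the form $(\mathcal K^\varepsilon\Phi,\cdot)$ against a vector that is itself small. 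This yields the zero entries $(4,2)$, $(4,4)$ up to $\mathcal O(\mu^2\varepsilon)$, and the bound $r_4(\mu\varepsilon^3)$ for entry $(1,4)$.
\item For $(1,4)$, the term linear in $\mu$ equals $\mu(\mathcal K^\varepsilon\phi_1^+(0,\varepsilon),\partial_\mu\phi_0^-(0,\varepsilon))$. Here we use that $\mathcal K^\varepsilon\phi_1^+(0,\varepsilon)$ lies in the range of $\mathcal K^\varepsilon$ restricted to the generalized kernel. By Lemma \ref{237}, $\mathcal J\mathcal K^\varepsilon$ vanishes there up to multiples of $\varepsilon\,\partial_\varepsilon c_\varepsilon\,\widetilde U_2=\mathcal O(\varepsilon^3)$, since $c_\varepsilon-c_0=\mathcal O(\varepsilon^2)$. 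This is what upgrades the naive $\mathcal O(\mu\varepsilon^2)$ to $\mathcal O(\mu\varepsilon^3)$.
\end{itemize}

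\textbf{Orders $\varepsilon$ and $\varepsilon^2$ at $\mu=0$.}
\begin{itemize}
\item $(\mathcal K^\varepsilon\phi_1^+(0,\varepsilon),\phi_1^+(0,\varepsilon))=\varepsilon^2\mathsf e_{11}+\mathcal O(\varepsilon^3)$. The $\varepsilon^0$ term vanishes because $\phi_1^+\in\ker\mathcal K_{0,0}$, and the $\varepsilon^1$ term vanishes by Fourier-mode parity ($\mathcal K_1$ maps mode $1$ to modes $0,2$).
\item The $\varepsilon^2$ term is $(\mathcal K_2\phi_1^+,\phi_1^+)+2(\mathcal K_1\phi_1^+,P'_{0,0}\phi_1^+)+(\mathcal K_0P'_{0,0}\phi_1^+,P'_{0,0}\phi_1^+)$, plus a term with the second-order basis correction. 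That last term drops because $\mathcal K_0\phi_1^+=0$. Collecting the contributions of $a_2,q_2,\tau_2,\beta_2,\mathtt f_\varepsilon$ and of $\alpha_{\tth,\kappa,b},\beta_{\tth,\kappa,b}$ gives $\mathsf e_{11}$.
\item The real entry $(1,3)$ equals $\varepsilon(\mathcal K_1\phi_0^+,\phi_1^+)+\varepsilon(\mathcal K_0 P'_{0,0}\phi_0^+,\phi_1^+)+\dots$, which yields $\mathsf f_{11}$. Entry $(3,3)$ is $1+\mathcal O(\varepsilon^2)$, since $(\mathcal K_0\phi_0^+,\phi_0^+)=1$ and the order-$\varepsilon$ term vanishes by averaging.
\item Oddness in $\varepsilon$ is handled by the symmetry $\varepsilon\mapsto-\varepsilon$, $x\mapsto x+\pi$ of the Stokes branch. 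This forces the $(1,1)$ and $(3,3)$ entries to be even in $\varepsilon$ and $(1,3)$ odd, so the remainders begin at $\varepsilon^3$ and $\varepsilon^3$ respectively.
\end{itemize}

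\textbf{Order $\mu$ and $\mu^2$.} At $\varepsilon=0$, the only nonvanishing contribution comes from $(\mathcal K_{0,0}\dot\phi,\dot\phi)$ with $\dot\phi_1^\pm=\mathrm i\frac{\gamma_{\tth,\kappa,b}}{4}\phi_{-1}^\mp$. Using $\mathcal K_{0,0}\phi_{-1}^\mp$ we find $\zeta_{\tth,\kappa,b}\mu^2=\frac{1}{8}c_0\gamma_{\tth,\kappa,b}^2\mu^2$ in entries $(1,1)$ and $(2,2)$.

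The mixed imaginary entries $r_2,r_6,r_9$ are obtained as follows. Terms linear in $\mu$ at $\varepsilon=0$ vanish because the $\mu$-derivative of the basis lies in $\mathcal U$, which is $\mathcal K_{0,0}$-orthogonal to $\mathcal V_{0,0}$. The $\mu\varepsilon$ terms are then controlled using \eqref{A17} together with parity and average arguments. For example, $(1,2)$ at order $\mu\varepsilon$ vanishes because the pairing of an $\mathit{even}$/$\mathit{odd}$ structure with mode $2$ functions against mode $1$ is zero.

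\textbf{Main obstacle.} The hardest step is the explicit second-order computation of $\mathsf e_{11}$. It requires tracking the new bending contributions $\beta_1,\beta_2$ (fourth-order variable coefficients) through $P'_{0,0}\phi_1^+$ and the resolvent on $\mathcal W_2$. I would organise this by mode: compute $\mathcal K_1\phi_1^+$ projected on modes $0$ and $2$, and $\mathcal K_2\phi_1^+$ projected on mode $1$, reusing \eqref{A15} and \eqref{b1 b2 b3}. I would then simplify with $\tanh(2\tth)=2\ch^2/(1+\ch^4)$ and verify the result against the known gravity--capillary value at $b=0$.
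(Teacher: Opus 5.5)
\textbf{The gap: the $(2,2)$ entry at $\mu=0$.} The claimed form $\zeta_{\tth,\kappa,b}\mu^2+\cO(\mu^2\varepsilon,\mu^3)$ for this entry requires $\mathsf{E}_{22}(0,\varepsilon)=(\mathcal{K}^\varepsilon\phi_1^-(0,\varepsilon),\phi_1^-(0,\varepsilon))$ to vanish \emph{identically} in $\varepsilon$, i.e.\ $\mathcal{K}^\varepsilon\phi_1^-(0,\varepsilon)=0$. You only evaluate this entry at $\varepsilon=0$, and no order-by-order expansion in $\varepsilon$ can produce an identity valid for all $\varepsilon$.

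This matters downstream: after the rescaling by $\mathsf{Q}$, one needs $\mu^{-1}\mathsf{E}_{22}$ to extend analytically to $\mu=0$.

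The paper gets this structurally. By Lemma~\ref{237}, $\mathscr{L}_{0,\varepsilon}^2=0$ on $\mathcal{V}_{0,\varepsilon}$, so the matrix \eqref{Le0} squares to zero. This forces $\mathsf{E}_{11}\mathsf{E}_{22}=\mathsf{E}_{22}\mathsf{F}_{11}=0$. Since $\mathsf{F}_{11}(0,\varepsilon)=\mathsf{f}_{11}\varepsilon+r(\varepsilon^3)$ with $\mathsf{f}_{11}\neq 0$, the alternative \eqref{E22=0 or E11=0=F11} leaves $\mathsf{E}_{22}(0,\varepsilon)\equiv 0$.

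You could also argue directly. The real vectors $v\in\mathcal{V}_{0,\varepsilon}$ with $\bar\rho v=-v$ form the real span of $\phi_1^-(0,\varepsilon)$ and $\phi_0^-$. This span must coincide with the span of the two $\bar\rho$-odd kernel vectors of Lemma~\ref{237} (the constant mode and the translation mode), so $\phi_1^-(0,\varepsilon)\in\ker\mathscr{L}_{0,\varepsilon}$.

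Your $(4,2)$ argument silently needs the same fact. Without $\mathcal{K}^\varepsilon\phi_1^-(0,\varepsilon)=0$, pairing $\mathcal{K}^\varepsilon\phi_1^-(\mu,\varepsilon)=\cO(\mu,\varepsilon)$ with the $\cO(\mu\varepsilon)$ correction of $\phi_0^-$ gives only $\cO(\mu^2\varepsilon,\mu\varepsilon^2)$, and $\mu\varepsilon^2$ is not covered by the $\cO(\mu^2\varepsilon,\mu^3)$ remainder. In the paper this fact is also what kills the second column of $\widetilde X$ in the $\mu$-linear analysis.

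\textbf{A secondary error: entry $(1,4)$.} Your conclusion $r_4(\mu\varepsilon^3)$ is right, but the justification is not. Lemma~\ref{237} does not make $\mathcal{J}\mathcal{K}^\varepsilon$ small on the generalized kernel: $L_{0,\varepsilon}U_4=-U_1+\dots$ is of order one. In fact
$\mathcal{K}^\varepsilon\phi_1^+(0,\varepsilon)=\mathsf{F}_{11}(0,\varepsilon)\mathcal{J}\phi_0^-+\mathsf{E}_{11}(0,\varepsilon)\mathcal{J}\phi_1^-(0,\varepsilon)=\varepsilon\,\mathsf{f}_{11}\bigl[\begin{smallmatrix}1\\0\end{smallmatrix}\bigr]+\cO(\varepsilon^2)$,
which is only $\cO(\varepsilon)$. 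Moreover $\varepsilon\partial_\varepsilon c_\varepsilon=\cO(\varepsilon^2)$, not $\cO(\varepsilon^3)$.

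The extra power actually comes from the zero average in \eqref{A17}. The vector $\partial_\mu\phi_0^-(0,\varepsilon)=\im\varepsilon\bigl[\begin{smallmatrix}\mathit{even}_0\\ \mathit{odd}\end{smallmatrix}\bigr]+\cO(\varepsilon^2)$ is orthogonal to $\varepsilon\,\mathsf{f}_{11}\bigl[\begin{smallmatrix}1\\0\end{smallmatrix}\bigr]$, and every remaining contribution is $\cO(\varepsilon^3)$. This is how the paper bounds $\widetilde X_{41}$.

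Your use of the symmetry $\varepsilon\mapsto-\varepsilon$, $x\mapsto x+\pi$ to get the $\varepsilon$-parity of $\mathsf{E}_{11},\mathsf{G}_{11},\mathsf{F}_{11}$ is a legitimate shortcut. It replaces the paper's explicit check that the $\varepsilon^2$-coefficient of $\mathsf{F}_{11}(0,\varepsilon)$ vanishes.
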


\begin{proof}
    We expand the matrix $\mathsf{K}^\varepsilon(\mu)$ as

\begin{equation} \label{expansion of B_e in mu}
\mathsf{K}^\varepsilon(\mu) = \mathsf{K}^\varepsilon(0) + \mu(\partial_\mu \mathsf{K}^\varepsilon)(0) + \frac{\mu^2}{2} (\partial_\mu^2 \mathsf{K}^0)(0) + \mathcal{O}(\mu^2 \varepsilon, \mu^3).
\end{equation}

\textbf{The matrix $\mathsf{K}^\varepsilon(0)$.}
The main result of this long paragraph is to prove that the matrix $\mathsf{K}^\varepsilon(0)$ has the expansion \eqref{Be(0)}. We start recalling that the self-adjoint $\mathsf{K}^\e(0)$ has the form 
\begin{align} \label{B epsilon 0}
    \mathsf{K}^{\varepsilon}(0) = 
\left(
\begin{array}{cccc}
\left( \mathcal{K}^{\varepsilon} \phi_1^{+}(\varepsilon),\ \phi_{1}^{+}(\varepsilon) \right) & 
\left( \mathcal{K}^{\varepsilon} \phi_1^{-}(\varepsilon),\ \phi_{1}^{+}(\varepsilon) \right) & 
\left( \mathcal{K}^{\varepsilon} \phi_0^{+}(\varepsilon),\ \phi_{1}^{+}(\varepsilon) \right) & 
\left( \mathcal{K}^{\varepsilon} \phi_0^{-}(\varepsilon),\ \phi_{1}^{+}(\varepsilon) \right) \\
\left( \mathcal{K}^{\varepsilon} \phi_1^{+}(\varepsilon),\ \phi_{1}^{-}(\varepsilon) \right) & 
\left( \mathcal{K}^{\varepsilon} \phi_1^{-}(\varepsilon),\ \phi_{1}^{-}(\varepsilon) \right) & 
\left( \mathcal{K}^{\varepsilon} \phi_0^{+}(\varepsilon),\ \phi_{1}^{-}(\varepsilon) \right) & 
\left( \mathcal{K}^{\varepsilon} \phi_0^{-}(\varepsilon),\ \phi_{1}^{-}(\varepsilon) \right) \\
\left( \mathcal{K}^{\varepsilon} \phi_1^{+}(\varepsilon),\ \phi_{0}^{+}(\varepsilon) \right) & 
\left( \mathcal{K}^{\varepsilon} \phi_1^{-}(\varepsilon),\ \phi_{0}^{+}(\varepsilon) \right) & 
\left( \mathcal{K}^{\varepsilon} \phi_0^{+}(\varepsilon),\ \phi_{0}^{+}(\varepsilon) \right) & 
\left( \mathcal{K}^{\varepsilon} \phi_0^{-}(\varepsilon),\ \phi_{0}^{+}(\varepsilon) \right) \\
\left( \mathcal{K}^{\varepsilon} \phi_1^{+}(\varepsilon),\ \phi_{0}^{-}(\varepsilon) \right) & 
\left( \mathcal{K}^{\varepsilon} \phi_1^{-}(\varepsilon),\ \phi_{0}^{-}(\varepsilon) \right) & 
\left( \mathcal{K}^{\varepsilon} \phi_0^{+}(\varepsilon),\ \phi_{0}^{-}(\varepsilon) \right) & 
\left( \mathcal{K}^{\varepsilon} \phi_0^{-}(\varepsilon),\ \phi_{0}^{-}(\varepsilon) \right)
\end{array}
\right),
\end{align}
where $\phi^{\sigma}_k(\e):=\phi^{\sigma}_k(0,\e)$, for $\sigma=\pm$, $k=0,1$. The matrix $\mathsf{K}^\varepsilon(0)$ is real, because the operator $\mathcal{K}^\varepsilon$ is real and the basis $\{\phi_k^{\pm}(0, \varepsilon)\}_{k=0,1}$ is real.
On the other hand, by \eqref{B are alternatively real or imaginary}, its matrix elements $(\mathsf{K}^\varepsilon(0))_{i,j}$ vanish for $i + j$ odd. In addition $\phi_0^{-}(0,\varepsilon) = 
\begin{bmatrix}
0 \\ 1
\end{bmatrix}
$ by \eqref{48 f-0=01}, and, by \eqref{B epsilon}, we get $\mathcal{K}^\varepsilon \phi_0^{-}(0,\varepsilon) = 0$, for any $\varepsilon$.
We deduce that the self-adjoint matrix $\mathsf{K}^\varepsilon(0)$ in \eqref{B epsilon 0} has the form
\begin{equation} \label{B_e 0}
\mathsf{K}^\varepsilon(0) =\left(
\begin{NiceArray}{cc|ccc}[code-for-first-col=\scriptstyle]
\mathsf{E}_{11}(0,\varepsilon)
  & 0
  & \mathsf{F}_{11}(0,\varepsilon)
  & 0
  &  \\
0
  & \mathsf{E}_{22}(0,\varepsilon)
  & 0
  & 0
  &  \\
\hline
\mathsf{F}_{11}(0,\varepsilon)
  & 0
  & \mathsf{G}_{11}(0,\varepsilon)
  & 0
  & \\
0
  & 0
  & 0
  & 0
  &  \\
\end{NiceArray}
\right),
\end{equation}
where $ \mathsf{E}_{11}(0,\varepsilon), \mathsf{E}_{22}(0,\varepsilon), \mathsf{G}_{11}(0,\varepsilon), \mathsf{F}_{11}(0,\varepsilon) $ are real.
We claim that \( \mathsf{E}_{22}(0,\varepsilon) \equiv 0 \) for any \( \varepsilon \). As a first step, following \cite{BMV1}, we prove that
\begin{align} \label{E22=0 or E11=0=F11}
    \text{either } \mathsf{E}_{22}(0,\varepsilon) \equiv 0, \quad \text{or} \quad \mathsf{E}_{11}(0,\varepsilon) \equiv 0 \equiv \mathsf{F}_{11}(0,\varepsilon).
\end{align}
Indeed, by Lemma~\ref{237}, the operator \( \mathscr{L}_{0,\varepsilon} \equiv  L_{0,\varepsilon} \) possesses, for any sufficiently small \( \varepsilon \neq 0 \), the eigenvalue 0 with a four dimensional generalized kernel 
\[
\mathcal{W}_{\varepsilon} := \text{span}\{ U_1, \tilde{U}_2, U_3, U_4 \},
\]
spanned by \( \varepsilon \)-dependent vectors. By Lemma \ref{kato thm} we obtain that \( \mathcal{W}_\varepsilon = \mathcal{V}_{0,\varepsilon} = \text{Rg}(P_{0,\varepsilon}) \) and by Lemma~\ref{237} we have \( \mathscr{L}_{0,\varepsilon}^2 = 0 \) on \( \mathcal{V}_{0,\varepsilon} \). Thus the matrix
\begin{equation} \label{Le0}
\mathsf{L}_\varepsilon(0) := \mathsf{J}_4 \mathsf{K}^\varepsilon(0) = 
\left(
\begin{NiceArray}{cc|ccc}[code-for-first-col=\scriptstyle]
0
  & \mathsf{E}_{22}(0,\varepsilon)
  & 0
  & 0
  &  \\
-\mathsf{E}_{11}(0,\varepsilon)
  & 0
  & -\mathsf{F}_{11}(0,\varepsilon)
  & 0
  &  \\
\hline
0
  & 0
  & 0
  & 0
  & \\
-\mathsf{F}_{11}(0,\varepsilon)
  & 0
  & -\mathsf{G}_{11}(0,\varepsilon)
  & 0
  &  \\
\end{NiceArray}
\right),
\end{equation}
which represents \( \mathscr{L}_{0,\varepsilon} : \mathcal{V}_{0,\varepsilon} \to \mathcal{V}_{0,\varepsilon} \), satisfies \( \mathsf{L}_\varepsilon^2(0) = 0 \), namely

\begin{equation}
\mathsf{L}_\varepsilon^2(0) = - 
\left(
\begin{NiceArray}{cc|ccc}[code-for-first-col=\scriptstyle]
(\mathsf{E}_{11}\mathsf{E}_{22})(0,\varepsilon)
  & 0
  & (\mathsf{E}_{22}\mathsf{F}_{11})(0,\e)
  & 0
  &  \\
0
  & (\mathsf{E}_{11}\mathsf{E}_{22})(0,\varepsilon)
  & 0
  & 0
  &  \\
\hline
0
  & 0
  & 0
  & 0
  & \\
0
  & (\mathsf{E}_{22}\mathsf{F}_{11})(0,\varepsilon)
  & 0
  & 0
  &  \\
\end{NiceArray}
\right)=0,
\end{equation}
which implies \eqref{E22=0 or E11=0=F11}. We now prove that the matrix \( \mathsf{K}^\varepsilon(0) \) defined in \eqref{B_e 0} expands as

\begin{equation} \label{Be(0)}
\mathsf{K}^\varepsilon(0) = 
\left(
\begin{NiceArray}{cc|ccc}[code-for-first-col=\scriptstyle]
\mathsf{e}_{11}\e^2+r(\e^3)
  & 0
  & \mathsf{f}_{11}\e+r(\e^3)
  & 0
  &  \\
0
  & 0
  & 0
  & 0
  &  \\
\hline
\mathsf{f}_{11}\e+r(\e^3)
  & 0
  & 1+r(\e^2)
  & 0
  & \\
0
  & 0
  & 0
  & 0
  &  \\
\end{NiceArray}
\right),
\end{equation}
where $\mathsf{e}_{11}$ and $\mathsf{f}_{11}$ are in \eqref{E110epsilon} and \eqref{F110e}. We expand the operator $\mathcal{K}^{\e}$ in \eqref{B epsilon} as 
\begin{equation} \label{Be B0 B1 B2}
\begin{aligned}
    &\mathcal{K}^{\e}=\mathcal{K}_{0}+\e \mathcal{K}_{1}+\e^2 \mathcal{K}_{2}+\cO(\e^3), \quad \mathcal{K}_{0}:=\begin{bmatrix}
1-\kappa\pa^2_{x}+b\pa^4_x&  -c_0\pa_x\\
        c_0\pa_x & ~~G_0
    \end{bmatrix},\\
    &\mathcal{K}_{1}:=\begin{bmatrix}
a_1(x)-\kappa\tau_1+b\beta_1&  -q_1(x)\pa_x\\
       \pa_x\circ q_1(x) & ~~0
    \end{bmatrix}, \quad \mathcal{K}_{2}:=\begin{bmatrix}
a_2(x)-\kappa\tau_2+b\beta_2&  -q_2(x)\pa_x\\
        \pa_x \circ q_2(x) & ~~ -\tf^{[2]}_2 \pa^2_x(1-\tanh^2(\tth|D|))
    \end{bmatrix},
\end{aligned}
\end{equation}
where the remainder term $\cO(\e^3)\in \mathcal{L}(Y,X)$, the functions $a_1(x)$, $q_1(x)$, $a_2(x)$, $q_2(x)$,  are given in \eqref{SN1}-\eqref{a[2]2}, the operators $\tau_1$, $\tau_2$ are given in \eqref{tauj} and $\beta_1$, $\beta_2$ are given in \eqref{betaj},  in view of \eqref{expfe}, $\tf^{[2]}_2=(\ckb^2(\ch^4-1)-2)/(4\ch^2)$.

Our goal here is to determine the expansion of $\mathsf{F}_{11}(0,\e)=\mathsf{f}_{11}\e+r(\e^3)$. By \eqref{43 f+1} we rewrite the real function $\phi^+_1(0,\e)$ as
\begin{equation} \label{expansion fp1 0 epsilon}
    \begin{aligned}
        & \phi^+_1(0,\e)=\phi^+_1+\e \phi^+_{1_1}+\e^2 \phi^+_{1_2}+\cO(\e^3),\\
        & \phi^+_1=\vet{\left(\ch/\ckb\right)^{\frac{1}{2}}\cos(x)}{\left(\ch/\ckb\right)^{-\frac{1}{2}}\sin(x)}, \quad \phi^+_{1_1}:=\vet{\alpha_{\tth,\kappa,b} \cos(2x)}{\beta_{\tth,\kappa,b}\sin(2x)}, \quad \phi^+_{1_2}:=\vet{\mathit{even}_0(x)}{\mathit{odd}(x)},
    \end{aligned}
\end{equation}
where both $\phi^+_{1_2}$ and $\cO(\e^3)$ are vectors in $H^4(\mathbb{T},\mathbb{C})\times H^1(\mathbb{T},\mathbb{C})$. Also, by \eqref{45 f+0} we split the real-valued function $\phi_0^+(0, \varepsilon)$ as
\begin{equation} \label{fp0 expansion}
\begin{aligned}
&\phi_0^+(0, \varepsilon) = \phi_0^+ + \varepsilon \phi_{0_1}^+ + \varepsilon^2 \phi_{0_2}^+ + \mathcal{O}(\varepsilon^3), \quad 
\phi_0^+ = \begin{bmatrix} 1 \\ 0 \end{bmatrix},\\
&\phi_{0_1}^+ := \delta_{\tth,\kappa,b}
\begin{bmatrix}
\left(\ch/\ckb\right)^{\frac{1}{2}} \cos(x) \\
- \left(\ch/\ckb\right)^{-\frac{1}{2}} \sin(x)
\end{bmatrix}, \quad
\phi_{0_2}^+ := 
\begin{bmatrix}
\mathit{even}_0(x) \\
\mathit{odd}(x)
\end{bmatrix}. 
\end{aligned}
\end{equation}
Let us expand $\mathsf{F}_{11}(0, \varepsilon)$.  
By \eqref{Be B0 B1 B2}, \eqref{expansion fp1 0 epsilon}, \eqref{fp0 expansion}, using that $\mathcal{K}_0$, $\mathcal{K}_1$ are self-adjoint and real, and $\mathcal{K}_0 \phi_1^+ = 0$, $\mathcal{K}_0 \phi_0^+ = \phi_0^+$, we obtain
\begin{align*}
\mathsf{F}_{11}(0, \varepsilon) 
&= \varepsilon \left[ \left( \mathcal{K}_1 \phi_1^+, \phi_0^+ \right) + \left( \phi_{1_1}^+, \phi_0^+ \right) \right] \\
&\quad + \varepsilon^2 \left[ \left( \mathcal{K}_2 \phi_1^+, \phi_0^+ \right) + \left( \mathcal{K}_1 \phi_1^+, \phi_{0_1}^+ \right) + \left( \mathcal{K}_1 \phi_0^+, \phi_{1_1}^+ \right) \right. \\
&\qquad \left. + \left( \phi_{1_2}^+, \phi_0^+ \right) + \left( \mathcal{K}_0 \phi_{1_1}^+, \phi_{0_1}^+ \right) \right] + r(\varepsilon^3).
\end{align*}
A straightforward calculation reveals that
\begin{align} \label{F110e}
    \mathsf{F}_{11}(0, \varepsilon) = \mathsf{f}_{11} \varepsilon + r(\varepsilon^3), \quad\text{with}\quad \mathsf{f}_{11} :=-\frac{1}{2}\left(\ch^{\frac{5}{2}}-\ch^{-\frac{3}{2}}\right)\ckb^{\frac{3}{2}}.
\end{align}
Since $\mathsf{f}_{11}$ is nonzero,  the second alternative in \eqref{E22=0 or E11=0=F11} is ruled out, implying $\mathsf{E}_{22}(0, \varepsilon) \equiv 0$.

Our next goal is to determine the expansion of $\mathsf{E}_{11}(0,\e)=\mathsf{e}_{11}\e^2+r(\e^3)$. Since $\mathcal{K}_0 \phi^+_1=-\mathcal{J}\mathscr{L}_{0,0} \phi^+_1=0$, and both $\mathcal{K}_0$, $\mathcal{K}_1$ are self-adjoint real operators, we obtain
\begin{equation}
\begin{aligned}
\mathsf{E}_{11}(0, \varepsilon) &= \left( \mathcal{K}^\varepsilon \phi_1^+(0,\varepsilon), \phi_1^+(0,\varepsilon) \right) \\
&= \varepsilon \left( \mathcal{K}_1 \phi_1^+, \phi_1^+ \right) + \varepsilon^2 \left[ \left( \mathcal{K}_2 \phi_1^+, \phi_1^+ \right) + 2 \left( \mathcal{K}_1 \phi_1^+, \phi^+_{1_1} \right) + \left( \mathcal{K}_0 \phi^+_{1_1}, \phi^+_{1_1} \right) \right] + \mathcal{O}(\varepsilon^3).
\end{aligned}
\end{equation}
A straightforward calculation reveals that
\begin{equation} \label{E110epsilon}
\begin{aligned}
\mathsf{E}_{11}(0, \varepsilon) &= \mathsf{e}_{11} \varepsilon^2 + r(\varepsilon^3), \\
\mathsf{e}_{11} &:= \Big(-b^2\ch^8+20 b^2\ch^4+69 b^2+5 b\ch^8 \kappa +8 b\ch^8-89 b\ch^4 \kappa -140 b\ch^4+90 b\kappa +78 b\\
&+6\ch^8 \kappa ^2+15\ch^8 \kappa +9\ch^8-25\ch^4 \kappa ^2-44\ch^4 \kappa -10\ch^4+21 \kappa ^2+30\kappa +9\Big)\Big(8\ch^3\ckb r_{\tth,\kappa,b}\Big)^{-1}.
\end{aligned}
\end{equation}
We pause to remark that $\mathsf{e}_{11}$ may vanish due to the bending effect, in contrast to the pure-gravity case of \cite{BMV3} and the gravity--capillary case of \cite{HM2025}, where $\mathsf{e}_{11}\neq0$. 

Next let us determine the expansion of $\mathsf{G}_{11}(0,\e)$. By \eqref{45 f+0} we split the real-valued function $\phi_0^+(0, \varepsilon)$ as
\begin{equation} 
\begin{aligned}
&\phi_0^+(0, \varepsilon) = \phi_0^+ + \varepsilon \phi_{0_1}^+ + \varepsilon^2 \phi_{0_2}^+ + \mathcal{O}(\varepsilon^3), \quad 
\phi_0^+ = \begin{bmatrix} 1 \\ 0 \end{bmatrix},\\
&\phi_{0_1}^+ := \delta_{\tth,\kappa,b}
\begin{bmatrix}
\left(\ch/\ckb\right)^{\frac{1}{2}} \cos(x) \\
- \left(\ch/\ckb\right)^{-\frac{1}{2}} \sin(x)
\end{bmatrix}, \quad
\phi_{0_2}^+ := 
\begin{bmatrix}
\mathit{even}_0(x) \\
\mathit{odd}(x)
\end{bmatrix}. 
\end{aligned}
\end{equation}
Since, by \eqref{eigenfunc of mathcall L00} and \eqref{Be B0 B1 B2}, $\mathcal{K}_0 \phi_0^+ = \phi_0^+$, using that $\mathcal{K}_0$, $\mathcal{K}_1$ are self-adjoint real operators, and $(\phi_0^+,\phi_0^+) = 1$, $(\phi_0^+, \phi_{0_1}^+)=0$, we have
\[
\mathsf{G}_{11}(0, \varepsilon) = (\mathcal{K}^\varepsilon \phi_0^+(0, \varepsilon), \phi_0^+(0, \varepsilon)) = 1 + \varepsilon(\mathcal{K}_1 \phi_0^+, \phi_{0}^+) + \varepsilon(\mathcal{K}_0 \phi_{0_1}^+, \phi_{0}^+)+
r(\varepsilon^2).
\]
By \eqref{Be B0 B1 B2} and Lemma~\ref{lem:pa.exp}, Lemma~\ref{tau}, and Lemma~\ref{beta}, one has $ \left(\mathcal{K}_1 \phi_0^+,\phi^+_0\right)=0$, and $(\mathcal{K}_0 \phi_{0_1}^+, \phi_{0}^+)=( \phi_{0_1}^+,\mathcal{K}_0\phi_{0}^+)=0$.
Therefore, we deduce $\mathsf{G}_{11}(0, \varepsilon) = 1 + r(\varepsilon^2)$.

The next step is to compute the terms linear in $\mu$ of $\mathsf{K}^\varepsilon(\mu)$. We have 
\begin{align} \label{X+X}
    \partial_\mu \mathsf{K}^\varepsilon(0) = \widetilde{X} + \widetilde{X}^* \quad \text{where} \quad 
\widetilde{X} := \left( \left( \mathcal{K}^\varepsilon \phi_k^\sigma(0,\varepsilon), \, (\partial_\mu \phi_{k'}^{\sigma'})(0,\varepsilon) \right) \right)_{k,k'=0,1;\, \sigma,\sigma'=\pm}. 
\end{align}
and now prove that 
\begin{equation} \label{X}
\widetilde{X} = 
\left(
\begin{NiceArray}{cc|ccc}[code-for-first-col=\scriptstyle]
\cO(\e^3)
  & 0
  & \cO(\e^2)
  & 0
  &  \\
\cO(\e^2)
  & 0
  & \cO(\e)
  & 0
  &  \\
\hline
\cO(\e^3)
  & 0
  & \cO(\e^2)
  & 0
  & \\
\cO(\e^3)
  & 0
  & \cO(\e^2)
  & 0
  &  \\
\end{NiceArray}
\right)\,.
\end{equation}
Indeed consider the matrix $\mathsf{L}_\varepsilon(0)$ in \eqref{Le0}, where $\mathsf{E}_{22}(0, \varepsilon) = 0$, and recall that it represents the action of the operator $\mathscr{L}_{0,\varepsilon} : \mathcal{V}_{0,\varepsilon} \to \mathcal{V}_{0,\varepsilon}$ in the basis $\{\phi_k^\sigma(0,\varepsilon)\}$;  we deduce that
\[
\mathscr{L}_{0,\varepsilon} \phi_1^-(0,\varepsilon) = 0,\quad \mathscr{L}_{0,\varepsilon} \phi_0^-(0,\varepsilon) = 0.
\]
Thus also $\mathcal{K}^\varepsilon \phi_1^-(0,\varepsilon) =\mathcal{K}^\varepsilon \phi_0^-(0,\varepsilon) = 0$, and the second and fourth columns of the matrix $\widetilde{X}$ in \eqref{X} are zero. Now we compute $\pa_\mu \phi_k^\sigma(0,\e)$.  
In view of \eqref{43 f+1}--\eqref{47 coefficients} and by denoting with a dot the derivative w.r.t.\ $\mu$, one has
\begin{equation} \label{dotfp1 dotfp0 dotfn1 dotfn0}
    \begin{aligned}
\dot{\phi}_1^+(0,\varepsilon) &= \frac{\im}{4} \gamma_{\tth,\kappa,b} 
\begin{bmatrix}
(\ch/\ckb)^{\frac{1}{2}}\sin(x) \\
(\ch/\ckb)^{-\frac{1}{2}} \cos(x)
\end{bmatrix}
+ \im\varepsilon 
\begin{bmatrix}
\mathit{odd}(x) \\
\mathit{even}(x)
\end{bmatrix}
+ \mathcal{O}(\varepsilon^2), \\
\dot{\phi}_1^-(0,\varepsilon) &= \frac{\im}{4} \gamma_{\tth,\kappa,b} 
\begin{bmatrix}
(\ch/\ckb)^{\frac{1}{2}}\cos(x) \\
-(\ch/\ckb)^{\frac{1}{2}}\sin(x)
\end{bmatrix}
+ \im\varepsilon 
\begin{bmatrix}
\mathit{even}(x) \\
\mathit{odd}(x)
\end{bmatrix}
+ \mathcal{O}(\varepsilon^2),\\
\dot{\phi}_0^+(0,\varepsilon) &= \im\varepsilon 
\begin{bmatrix}
\mathit{odd}(x) \\
\mathit{even}_0(x)
\end{bmatrix}
+ \mathcal{O}(\varepsilon^2),  \qquad
\dot{\phi}_0^-(0,\varepsilon) = \im\varepsilon 
\begin{bmatrix}
\mathit{even}_0(x) \\
\mathit{odd}(x)
\end{bmatrix}
+ \mathcal{O}(\varepsilon^2). 
\end{aligned}
\end{equation}
In view of \eqref{pa_t eta psi}, \eqref{43 f+1}-\eqref{48 f-0=01}, \eqref{Le0}, \eqref{E110epsilon}, \eqref{F110e}, and since $\mathcal{K}^\varepsilon \phi_k^\sigma(0, \varepsilon) = -\mathcal{J} \mathscr{L}_\varepsilon \phi_k^\sigma(0, \varepsilon)$, we have
\begin{equation} \label{Befp1 Befp0}
   \begin{aligned}
    \mathcal{K}^\varepsilon \phi_1^+(0, \varepsilon) &= \mathsf{E}_{11}(0,\varepsilon) \, \mathcal{J} \phi_1^-(0,\varepsilon) + \mathsf{F}_{11}(0,\varepsilon) \, \mathcal{J} \phi_0^- 
    = \varepsilon 
    \begin{bmatrix}
        \mathsf{f}_{11} \\
        0
    \end{bmatrix}
    + \varepsilon^2 \mathsf{e}_{11}
    \begin{bmatrix}
     (\ch/\ckb)^{-\frac{1}{2}} \cos(x) \\
       (\ch/\ckb)^{\frac{1}{2}} \sin(x)
    \end{bmatrix}
    + \mathcal{O}(\varepsilon^3),
    \\
    \mathcal{K}^\varepsilon \phi_0^+(0, \varepsilon) &= \mathsf{F}_{11}(0,\varepsilon) \, \mathcal{J} \phi_1^-(0,\varepsilon) + \mathsf{G}_{11}(0,\varepsilon) \, \mathcal{J} \phi_0^- 
    = 
    \begin{bmatrix}
        1 \\
        0
    \end{bmatrix}
    + \varepsilon \mathsf{f}_{11}
    \begin{bmatrix}
     (\ch/\ckb)^{-\frac{1}{2}} \cos(x) \\
       (\ch/\ckb)^{\frac{1}{2}} \sin(x)
    \end{bmatrix}
    + \mathcal{O}(\varepsilon^2).
\end{aligned} 
\end{equation}
We deduce \eqref{X} by \eqref{dotfp1 dotfp0 dotfn1 dotfn0} and \eqref{Befp1 Befp0}.

Next we compute the terms quadratic in $\mu$. By denoting with a double dot the double derivative with respect to $\mu$, we have
\begin{align} \label{ddotB00}
\partial_\mu^2 \mathsf{K}^0(0) = \left( \mathcal{K}_0 \phi_k^\sigma, \ddot{\phi}_{k'}^{\sigma'}(0,0) \right)
+ \left( \ddot{\phi}_k^\sigma(0,0), \mathcal{K}_0 \phi_{k'}^{\sigma'} \right)
+ 2 \left( \mathcal{K}_0 \dot{\phi}_k^\sigma(0,0), \dot{\phi}_{k'}^{\sigma'}(0,0) \right)
= : \widetilde{Y} + \widetilde{Y}^* + 2\widetilde{Z}. 
\end{align}
We observe that the first, second and fourth column of $\widetilde{Y}$ are zero, since
$\mathcal{K}_0 \phi^\sigma_k=-\mathcal{J}\mathscr{L}_{0,0} \phi^\sigma_k=0$ for $\phi_k^\sigma \in \{ \phi_1^+, \phi_1^-, \phi_0^- \}$. The third column is also zero by noting that $\mathcal{K}_0 \phi_0^+ = \phi_0^+$ and recalling \eqref{43 f+1}--\eqref{48 f-0=01}
\begin{align*}
\ddot{\phi}_1^+(0,0) =
\begin{bmatrix}
\mathit{even}_0(x) + \im\,\mathit{odd}(x) \\
\mathit{odd}(x) + \im\,\mathit{even}_0(x)
\end{bmatrix}, \quad
\ddot{\phi}_1^-(0,0) =
\begin{bmatrix}
\mathit{odd}(x) + \im\,\mathit{even}_0(x) \\
\mathit{even}_0(x) + \im\,\mathit{odd}(x)
\end{bmatrix}, \quad \ddot{\phi}_0^+(0,0) = \ddot{\phi}_0^-(0,0) = 0.
\end{align*}
This implies $\widetilde{Y}=0$. Finally, by \eqref{dotfp1 dotfp0 dotfn1 dotfn0}, we have $\dot{\phi}_0^+(0,0) = \dot{\phi}_0^-(0,0) = 0$. By the definition of the matrix $\widetilde{Z}$, whose entries involve the vectors $\dot \phi_k^\sigma(0,0)$,
this implies that the last two columns of $\widetilde{Z}$ vanish. By self-adjointness of $\mathcal{K}_0$, the corresponding
last two rows also vanish. Also, we observe that $\left( \mathcal{K}_0 \dot{\phi}_1^+(0,0), \dot{\phi}_1^-(0,0) \right)=0$. Therefore, we have
\begin{align} \label{Z}
\widetilde{Z} = 
\left(
\begin{NiceArray}{cc|ccc}[code-for-first-col=\scriptstyle]
\zeta_{\tth,\kappa,b}
  & 0
  & 0
  & 0
  &  \\
0
  & \zeta_{\tth,\kappa,b}
  & 0
  & 0
  &  \\
\hline
0
  & 0
  & 0
  & 0
  & \\
0
  & 0
  & 0
  & 0
  &  \\
\end{NiceArray}
\right),\qquad\text{with}\qquad\zeta_{\tth,\kappa,b}:=\frac{1}{8}c_0\gamma^2_{\tth,\kappa,b}.
\end{align}
In conclusion, \eqref{expansion of B_e in mu}, \eqref{X+X}, \eqref{X}, \eqref{ddotB00}, and \eqref{Z} imply \eqref{Be90}, using also the self-adjointness of $\mathcal{K}^{\e}$ and \eqref{B are alternatively real or imaginary}. 
\end{proof}

Next, we list the matrix representations of $\mathcal{K}^{\flat}$, $\mathcal{K}^{\sharp}$ and $\mathcal{K}^{\tau}$, respectively.

\begin{lemma}
    [Expansion of $\mathsf{K}^{\flat}$] \label{Expansion of B flat} Let $\mathsf{K}^{\flat}$ be the matrix representation, in the basis
$\Phi_{\mu,\varepsilon}$ of $\mathcal V_{\mu,\varepsilon}$ defined in
\eqref{F basis set and f}, of the operator $\mathcal K^\flat$ in
\eqref{B b}. Then $\mathsf{K}^{\flat}$ is self-adjoint and
preserves the reversibility symmetry. Moreover, for $|\mu|$ and $|\varepsilon|$
sufficiently small, it has the expansion
\begin{equation} \label{mathsfBb}
\mathsf{K}^{\flat} = 
\left(
\begin{NiceArray}{cc|ccc}[code-for-first-col=\scriptstyle]
-\frac{\mu^2}{4}\flat_{\tth,\kappa,b}
  & \im\,(\frac{\mu}{2}\hat{\flat}_{\tth,\kappa,b}+r_2(\mu\e^2))
  & 0
  & 0
  &  \\
-\im\,(\frac{\mu}{2}\hat{\flat}_{\tth,\kappa,b}+r_2(\mu\e^2))
  & -\frac{\mu^2}{4}\flat_{\tth,\kappa,b}
  & \im\, r_6(\mu\e)
  & 0
  &  \\
\hline
0
  & -\im\, r_6(\mu\e)
  & 0
  & 0
  & \\
0
  & 0
  & 0
  & \mu \tanh(\tth \mu)
  &  \\
\end{NiceArray}
\right)
+ \mathcal{O}(\mu^2 \varepsilon, \mu^3),
\end{equation}
where 
\begin{align} \label{bh440}
\flat_{\tth,\kappa,b}& := \gamma_{\tth,\kappa,b}\ch\ckb+\tth\ch^{-1}\ckb(1-\ch^4)\left(\gamma_{\tth,\kappa,b}-2(1-\tth\ch^2)\right)\,,\qquad\hat{\flat}_{\tth,\kappa,b}:=\ch^{-1}\ckb (\ch^2 + (1 - \ch^4)\tth). 
\end{align}
\end{lemma}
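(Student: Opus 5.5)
The approach is to expand $\mathsf{K}^\flat(\mu,\varepsilon)$ entrywise in the basis $\Phi_{\mu,\varepsilon}$, using the operator structure of $\mathcal K^\flat$ and the basis expansions of Lemma~\ref{expansion of the basis F}. First I write the multiplier
\[
\mathcal K^\flat_{22}=|D+\mu|\tanh((\tth+\ttf)|D+\mu|)-|D|\tanh((\tth+\ttf)|D|).
\]
By \eqref{DtanhD} it is analytic in $\mu$ and vanishes at $\mu=0$. Expanding via \eqref{D+mu}, it equals
\[
\mu\,\mathrm{sgn}(D)\,m(D)+\mu\,\Pi_0\tanh(\tth\mu)/\mu\cdot(\ldots)+\tfrac{\mu^2}{2}\,m_2(D)+\mathcal O(\mu^2\varepsilon,\mu^3).
\]
Here $m(D)$ is as in \eqref{mD}, $m_2(D)$ is the second $\xi$-derivative of $\xi\tanh(\tth\xi)$ at $|k|$, and the $\ttf=\mathcal O(\varepsilon^2)$ correction only contributes to the remainders. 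On the zero mode the operator is exactly $\mu\tanh((\tth+\ttf)\mu)$. Since $\mathcal K^\flat$ only acts on second components, every entry is $(\mathcal K^\flat_{22}\,(\phi_k^\sigma)_2,(\phi_{k'}^{\sigma'})_2)$. Self-adjointness and reversibility of $\mathsf K^\flat$ follow, as in Lemma~\ref{matrix representation mathsf L}, from $\mathcal K^\flat$ being self-adjoint and reversibility-preserving, because it is a real-symbol even-in-$(\xi+\mu)$ multiplier difference.

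For the entries, I proceed block by block.

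\emph{The $(0^-,0^-)$ entry.} By \eqref{46 f-0} the second component of $\phi_0^-(\mu,\varepsilon)$ is $1+\mathcal O(\mu\varepsilon)$ with a zero-average correction. Hence this entry equals $\mu\tanh(\tth\mu)+\mathcal O(\mu^2\varepsilon)$.

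\emph{Entries with $\phi_0^+$.} The second component of $\phi_0^+$ is $\mathcal O(\varepsilon)$ and carries no $\mu$-order-zero term. Entries between $\phi_0^+$ and $\phi_1^\pm$ are therefore $\mathcal O(\mu\varepsilon)$; parity then kills all but the stated $\im r_6(\mu\varepsilon)$ slot. Entries $(\phi_0^+,\phi_0^\pm)$ are $\mathcal O(\mu^2\varepsilon)$ or vanish by reversibility, since the $\mu$-linear term $\mathrm{sgn}(D)m(D)$ maps odd to even and $\phi_0^-$'s second component is constant.

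\emph{The $\phi_1^\pm$ block.} The $\mu$-linear term is $\mu(\mathrm{sgn}(D)m(D)(\phi_1^-)_2,(\phi_1^+)_2)$ at $\varepsilon=0$. It equals $\mu\,(\ch/\ckb)^{-1}m(1)\cdot(\text{sgn pairing of }\cos,\sin)$, which yields $\im\frac{\mu}{2}\hat\flat_{\tth,\kappa,b}$ after using $m(1)=\ch^2+\tth(1-\ch^4)$. Its $\varepsilon$-correction is even in $\varepsilon$ by the usual symmetry $\varepsilon\to-\varepsilon$ (translation by $\pi$), giving $r_2(\mu\varepsilon^2)$.

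The diagonal $\mu^2$ terms collect two contributions, which I compute at $\varepsilon=0$. The first is $\tfrac{\mu^2}{2}(m_2(D)\cdot,\cdot)$ on $\phi_1^\pm$. The second is the cross term $2\mu\,\mathrm{Re}(\mu\,\mathrm{sgn}(D)m(D)(\phi_1^\pm)_2,(\dot\phi_1^\pm)_2)$ coming from $\dot\phi_1^\pm=\im\frac{\gamma}{4}\phi_{-1}^\mp$. Summing them gives $-\frac{\mu^2}{4}\flat_{\tth,\kappa,b}$, and the off-diagonal quadratic terms vanish by reversibility.

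I expect the main obstacle to be the $\mu^2$ diagonal coefficient $\flat$. It requires careful bookkeeping of $m_2(1)$, which involves $\tth^2\ch^2(1-\ch^4)$-type terms. It also requires the sign conventions of $\mathrm{sgn}(D)$ acting on $e^{\pm\im x}$ inside $\phi_{-1}^\mp$, and care with the complex inner product conjugation. I would verify the result against the $\varepsilon=0$ eigenvalue expansion in the Remark after Theorem~\ref{Complete BF thm}, which pins down the combination $\zeta-\flat/4$ entering $\mathsf e_{22}$.
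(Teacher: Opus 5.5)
Your proposal is correct and follows essentially the same route as the paper, which defers to the proof of \cite[Lemma 4.5]{BMV3}. That route has three steps.
\begin{enumerate}
\item Taylor-expand the multiplier $g(D+\mu)-g(D)$, with $g(\xi):=\xi\tanh((\tth+\ttf)\xi)$, as $\mu\,g'(D)+\frac{\mu^2}{2}g''(D)+\cO(\mu^3)$, where $g'(D)=\mathrm{sgn}(D)m(D)$ at $\e=0$.
\item The $\mu$-linear part is purely imaginary while $\{\phi_k^\sigma(0,\e)\}$ is real, so every same-$\sigma$ entry vanishes at order $\mu$.
\item Evaluate $\hat\flat_{\tth,\kappa,b}$ and $\flat_{\tth,\kappa,b}$ at $\e=0$ using $m(1)=\ch^2+\tth(1-\ch^4)$, $g''(1)=2\tth(1-\ch^4)(1-\tth\ch^2)$ and $\dot\phi_1^\pm(0,0)=\im\frac{\gamma_{\tth,\kappa,b}}{4}\phi_{-1}^\mp$. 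This indeed yields $-\frac{\mu^2}{4}\frac{\ckb}{\ch}\bigl(\gamma_{\tth,\kappa,b}\,m(1)-g''(1)\bigr)=-\frac{\mu^2}{4}\flat_{\tth,\kappa,b}$.
\end{enumerate}

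In the write-up, fix three points:
\begin{itemize}
\item State the reality argument explicitly for the diagonal $\phi_1^\pm$ entries at every $\e$; otherwise $\mu\e^j$ terms are not excluded there.
\item Add the omitted $(\phi_0^-,\phi_1^\pm)$ entries. They are $\cO(\mu^2\e)$ because $\phi_1^\pm(\mu,0)$ has zero average by Lemma~\ref{fmu0 eq f0}.
\item Drop the separate $\Pi_0$ term in your displayed expansion. As written it would double-count the zero mode, since $\frac{\mu^2}{2}g''(0)=\tth\mu^2$ already reproduces $\mu\tanh(\tth\mu)$ to that order.
\end{itemize}
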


\begin{proof}
    The proof is analogous to that of \cite[Lemma 4.5]{BMV3}.
\end{proof}
\begin{lemma} [Expansion of $\mathsf{K}^{\sharp}$] \label{Expansion of B sharp}
Let $\mathsf{K}^{\sharp}$ be the matrix representation, in the basis
$\Phi_{\mu,\varepsilon}$ of $\mathcal V_{\mu,\varepsilon}$ defined in
\eqref{F basis set and f}, of the operator $\mathcal K^\sharp$ in
\eqref{B sharp}. Then $\mathsf{K}^{\sharp}$ is self-adjoint and
preserves the reversibility symmetry. Moreover, for $|\mu|$ and $|\varepsilon|$
sufficiently small, it has the expansion
\begin{align} \label{mathsf B sharp}
\mathsf{K}^{\sharp} =
\left(
\begin{NiceArray}{cc|ccc}[code-for-first-col=\scriptstyle]
0
  & \im\, r_2(\mu\e^2)
  & 0
  & \im\,\mu\e\ch^{-\frac{1}{2}}\ckb^{\frac{1}{2}}+\im\, r_4(\mu\e^2)
  &  \\
-\im\, r_2(\mu\e^2)
  & 0
  & -\im\, r_6(\mu\e)
  & 0
  &  \\
\hline
0
  & \im\, r_6(\mu\e)
  & 0
  & -\im\, r_9(\mu\e^2)
  & \\
-\im\, \mu \e \ch^{-\frac{1}{2}}\ckb^{\frac{1}{2}}-\im r_4(\mu\e^2)
  & 0
  & \im\, r_9(\mu\e^2)
  & 0
  &  \\
\end{NiceArray}
\right)
+ \mathcal{O}(\mu^2 \varepsilon)\,.
\end{align}
\end{lemma}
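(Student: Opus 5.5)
The expansion of $\mathsf K^\sharp$ will be obtained by computing its entries $(\mathcal K^\sharp\phi_k^\sigma(\mu,\varepsilon),\phi_{k'}^{\sigma'}(\mu,\varepsilon))$ through a Taylor expansion in $(\mu,\varepsilon)$, in the same way as \cite[Lemma 4.6]{BMV3}.

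\emph{Structural properties.} $\mathcal K^\sharp$ is self-adjoint, since $q_\varepsilon$ is real and the off-diagonal entries are $\mp\im\mu q_\varepsilon$. It is reversibility preserving, since $q_\varepsilon$ is even. Hence $\mathsf K^\sharp$ is self-adjoint and its entries are alternately real and purely imaginary, as in \eqref{B are alternatively real or imaginary}.

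\emph{Reduction to $\mu=0$.} Because $\mathcal K^\sharp=\mu\,\mathcal K^\sharp_1$ with $\mathcal K^\sharp_1:=\begin{bmatrix}0&-\im q_\varepsilon\\ \im q_\varepsilon&0\end{bmatrix}$, I will write
\[
\mathsf K^\sharp=\mu\big(\mathcal K^\sharp_1\phi_k^\sigma(0,\varepsilon),\phi_{k'}^{\sigma'}(0,\varepsilon)\big)+\mathcal O(\mu^2\varepsilon).
\]
The remainder is $\mathcal O(\mu^2\varepsilon)$ because $q_\varepsilon=\mathcal O(\varepsilon)$ and the basis is analytic in $\mu$. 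It therefore suffices to expand the entries of $\mathcal K^\sharp_1$ at $\mu=0$ in $\varepsilon$. Since $\mathcal K^\sharp_1=\mathcal O(\varepsilon)$, every entry is $\mathcal O(\mu\varepsilon)$. This is all that is claimed for the $r_6$ entries.

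\emph{Vanishing entries.} At $\mu=0$ the basis is real (Lemma \ref{expansion of the basis F}) and $\mathcal K^\sharp_1=\im\times$(real operator). Consequently $(\mathcal K^\sharp_1\phi_k^\sigma,\phi_{k'}^{\sigma'})$ is purely imaginary. Combined with the reversibility structure, the entries with $i+j$ even must vanish, since there they would have to be simultaneously real and imaginary. This produces the zero diagonal and the other zeros in the pattern.

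\emph{Leading coefficient and orders of the remainders.} The entry in position $(1,4)$ is $(\mathcal K^\sharp_1\phi_0^-,\phi_1^+)$, using $\phi_0^-(0,\varepsilon)=[0,1]^\top$. It equals $-\im(q_\varepsilon,(\phi_1^+)_1)$. At first order this is
\[
-\im\varepsilon\, q_1^{[1]}(\ch/\ckb)^{1/2}\tfrac12=\im\varepsilon\,\ch^{-1/2}\ckb^{1/2},
\]
which is the stated coefficient; the $\varepsilon^2$ correction goes into $r_4(\mu\varepsilon^2)$.

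For the $(1,2)$ entry $r_2$, I will show that the $\varepsilon$-order vanishes. At order $\varepsilon$ this entry is $\im(q_1\,[\,\cdot\,])$ integrated against products of first harmonics. These products contain only modes $0$ and $2$, and the mode-$0$ contributions cancel because $(\phi_1^+)_1(\phi_1^-)_2-(\phi_1^+)_2(\phi_1^-)_1$ is constant and $q_1$ has zero mean. Hence the entry is $\mathcal O(\mu\varepsilon^2)$.

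For the $(3,4)$ entry $r_9$, the order-$\varepsilon$ term is $(q_1,1)=0$ by zero average, so it too is $\mathcal O(\mu\varepsilon^2)$.

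The main obstacle is this bookkeeping. One must check that the $\varepsilon$-order contributions to $r_2$ and $r_9$ cancel by parity and zero-average arguments, using the expansions \eqref{43 f+1}–\eqref{46 f-0}; the rest is routine.
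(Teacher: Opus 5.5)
Your proposal is correct and is essentially the argument the paper uses by deferring to \cite[Lemma 4.6]{BMV3}. You write $\mathcal K^\sharp=\mu\,\mathcal K^\sharp_1$, reduce to the real basis $\phi_k^\sigma(0,\varepsilon)$ up to $\mathcal O(\mu^2\varepsilon)$, kill the same-$\sigma$ entries because they are both real and purely imaginary, and read off the surviving entries from the first-order expansions; your value $-\tfrac12 q_1^{[1]}(\ch/\ckb)^{1/2}=\ch^{-1/2}\ckb^{1/2}$ is the right one, and your extra remark about the constant combination in the $(1,2)$ entry is redundant (since $q_1$ is a pure first harmonic) but harmless.
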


\begin{proof}
    The proof is analogous to the one of \cite[Lemma 4.6]{BMV3}.
\end{proof}

\begin{lemma} [Expansion of $\mathsf{K}^{\tau}$] \label{Expansion of B s}
Let $\mathsf{K}^{\tau}$ be the matrix representation, in the basis
$\Phi_{\mu,\varepsilon}$ of $\mathcal V_{\mu,\varepsilon}$ defined in
\eqref{F basis set and f}, of the operator $\mathcal K^\tau$ in
\eqref{B sharp}. Then $\mathsf{K}^{\tau}$ is self-adjoint and
preserves the reversibility symmetry. Moreover, for $|\mu|$ and $|\varepsilon|$
sufficiently small, it has the expansion
\begin{equation}\label{mathsf B tau}
\begin{aligned} 
\mathsf{K}^{\tau} &=
\left(
\begin{NiceArray}{cc|ccc}[code-for-first-col=\scriptstyle]
\frac{\kappa\mu^2\ch}{2\ckb}(\gamma_{\tth,\kappa,b}+1)
  & \im\frac{\kappa\mu\ch}{\ckb}+\im \,r_2(\mu\e^2)
  & 0
  & \im \,r_4(\mu\e^2)
  &  \\
-\im\frac{\kappa\mu\ch}{\ckb}-\im \,r_2(\mu\e^2)
  & \frac{\kappa\mu^2\ch}{2\ckb}(\gamma_{\tth,\kappa,b}+1)
  & \im \,r_6(\mu\e)
  & 0
  &  \\
\hline
0
  & -\im \,r_6(\mu\e)
  & \kappa\mu^2
  & \im \,r_9(\mu\e^2)
  & \\
-\im \,r_4(\mu\e^2)
  & 0
  & -\im \,r_9(\mu\e^2)
  & 0
  &  \\
\end{NiceArray}
\right)+ \cO(\mu^2 \e,\mu^3),
\end{aligned}
\end{equation}
\end{lemma}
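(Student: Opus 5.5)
We compute the entries $(\mathcal K^\tau\phi_k^\sigma(\mu,\e),\phi_{k'}^{\sigma'}(\mu,\e))$ directly, following the template of Lemma~\ref{Expansion of B flat}. Self-adjointness of $\mathsf K^\tau$ is inherited from that of $\tau_{\mu,\e}$. Reversibility preservation follows because $\tau_{\mu,\e}$ is the Bloch conjugate of a real reversibility-preserving operator, so the real/imaginary alternation \eqref{B are alternatively real or imaginary} holds; only the leading coefficients then need to be computed. By \eqref{Sigma} we can write
\[
\tau_{\mu,\e}-\tau_{0,\e}=2\im\mu\,\pa_x-\mu^2+\e\bigl(\im\mu\,\tau_{1,1}+2\im\mu\,\tau_{1,2}\pa_x\bigr)+\cO(\mu\e^2,\mu^2\e).
\]
Hence $\mathcal K^\tau=\cO(\mu)$, and its only nonzero block is the $(1,1)$ block, so only first components of the basis vectors enter. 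Every entry is therefore $\cO(\mu)$, and it suffices to expand the basis via Lemma~\ref{expansion of the basis F} to first order in $\mu$ and $\e$.

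The $\mu\e^0$ and $\mu^2\e^0$ terms come from $\mathcal K^\tau_0:=-\kappa\begin{bmatrix}2\im\mu\pa_x-\mu^2&0\\0&0\end{bmatrix}$ applied to $\phi_k^\sigma+\mu\dot\phi_k^\sigma$. On $\phi_0^\pm$ the first components are constant or zero, so $\mathcal K^\tau_0\phi_0^+=\kappa\mu^2\phi_0^+$; this gives the entry $\kappa\mu^2$ in position $(3,3)$, while column $4$ vanishes at $\e=0$. For the block $\mathsf E$, the first component of $\phi_1^\pm$ is $(\ch/\ckb)^{1/2}$ times $\cos x$ or $-\sin x$. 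Then $(2\im\mu\pa_x\phi_1^-,\phi_1^+)$ gives $\im\mu\,\ch/\ckb$, which is the off-diagonal $\im\kappa\mu\ch/\ckb$ after multiplication by $-\kappa$ and the sign conventions. The diagonal entries $\tfrac{\kappa\mu^2\ch}{2\ckb}(\gamma_{\tth,\kappa,b}+1)$ are obtained as the sum of three contributions: the $-\mu^2$ term, which gives $\kappa\mu^2\ch/(2\ckb)$; and the two cross terms between $2\im\mu\pa_x$ and the $\im\tfrac{\mu}{4}\gamma_{\tth,\kappa,b}$ corrections in \eqref{43 f+1}--\eqref{44 f-1}, whose combined contribution is $\kappa\mu^2\gamma_{\tth,\kappa,b}\ch/(2\ckb)$. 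The $\mu^2$ contributions from $\ddot\phi$ pair $\cO(\mu)$ with $\cO(\mu^2)$ and are therefore $\cO(\mu^3)$.

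The $\e$-dependent entries are only claimed as remainders $r_2(\mu\e^2)$, $r_4(\mu\e^2)$, $r_6(\mu\e)$, $r_9(\mu\e^2)$, and these are the entries I expect to be the main obstacle. Mixed $1$--$0$ entries such as $(\mathcal K^\tau\phi_0^+,\phi_1^-)$ are generically of order $\mu\e$, so $r_6(\mu\e)$ is automatic. For $(1,2)$, $(1,4)$ and $(3,4)$ we must show that the $\mu\e$ coefficient vanishes. The plan has three steps. First, use the structure of the $\e$-corrections of the basis in \eqref{43 f+1}--\eqref{46 f-0}: these live on modes $\cos 2x$, $\sin 2x$, or $\phi_{-1}^\pm$. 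Second, use that $\tau_{1,j}$ carry only first harmonics. Third, project onto the modes paired by $(\cdot,\cdot)$. For $(1,4)$ we use $\phi_0^-=[0,1]^\top+\cO(\mu\e)$; its first component is $\cO(\mu\e)$, so the entry is $\cO(\mu^2\e)$. For $(3,4)$ the same argument gives $\cO(\mu^2\e)$. For $(1,2)$ the $\mu\e$ terms are $(2\im\mu\pa_x\phi_{1_1}^-,\phi_1^+)$, $(2\im\mu\pa_x\phi_1^-,\phi_{1_1}^+)$ and $\mu\e(\tau_{1,1}+2\tau_{1,2}\pa_x)$ tested between first harmonics. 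The first two vanish because $\phi_{1_1}^\pm$ are second harmonics, orthogonal to first harmonics. The third vanishes because $\tau_{1,j}\cos x$ times a first harmonic produces modes $0$ and $2$, which are orthogonal to the first harmonic. This gives $r_2(\mu\e^2)$.

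Finally, all terms of order $\mu^2\e$ and $\mu^3$ are absorbed into $\cO(\mu^2\e,\mu^3)$, and collecting everything yields \eqref{mathsf B tau}.
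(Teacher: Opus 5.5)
Your $\e=0$ computations are correct. This covers the diagonal entries $\frac{\kappa\mu^2\ch}{2\ckb}(\gamma_{\tth,\kappa,b}+1)$, the $(3,3)$ entry $\kappa\mu^2$, the $\cO(\mu^2\e)$ bound on column $4$, and the harmonic-orthogonality argument that kills the $\mu\e$ coefficient of the $(1,2)$ entry. There is, however, a genuine gap in the real-type entries $(1,1)$, $(2,2)$, $(3,3)$ and $(1,3)=(3,1)$.

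For these entries the statement allows no $r(\mu\e^n)$ correction at all: the only remainder is $\cO(\mu^2\e,\mu^3)$. Truncating the basis at first order in $\e$, as you propose, leaves an uncontrolled $\cO(\mu\e^2)$ error in every entry. That is harmless where $r_2(\mu\e^2)$, $r_4(\mu\e^2)$, $r_9(\mu\e^2)$ are allowed, but not in these entries, and your list of ``$\e$-dependent entries'' simply omits them. The alternation \eqref{B are alternatively real or imaginary} does not rescue this: it only says a $\mu\e^2$ coefficient there is real, not that it vanishes. The point matters downstream, since it is part of what gives $\mathsf{E}_{22}=\cO(\mu^2)$ in Lemma~\ref{B decomposition}, which the singular rescaling of Section~\ref{sec:BD} needs.

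The missing idea is the reality argument the paper uses for $\mathcal{K}^\beta_{12}$ in the proof of Lemma~\ref{Expansion of B beta}.
\begin{itemize}
\item Since $\mathcal{K}^\tau$ vanishes at $\mu=0$, the $\mu$-linear coefficient of each entry is, exactly in $\e$, $\bigl(\dot{\mathcal{K}}^\tau\phi_k^\sigma(0,\e),\phi_{k'}^{\sigma'}(0,\e)\bigr)$, with $\dot{\mathcal{K}}^\tau:=-\kappa\,\mathrm{diag}\bigl(\pa_\mu\tau_{\mu,\e}|_{\mu=0},0\bigr)$.
\item Because $\tau_\e$ is real, $\overline{\tau_{\mu,\e}}=\tau_{-\mu,\e}$, so $\dot{\mathcal{K}}^\tau$ is purely imaginary for every $\e$.
\item The basis $\phi_k^\sigma(0,\e)$ is real by Lemma~\ref{properties of U and P}-(iii). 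Hence this coefficient is purely imaginary, and for $\sigma=\sigma'$ it is also real by the alternation, so it vanishes identically in $\e$.
\end{itemize}
The real-type entries are then $\mu^2$ times their $\e=0$ value plus $\cO(\mu^2\e,\mu^3)$, and your $\e=0$ computation supplies that value.

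A minor slip: $(2\im\mu\pa_x\phi_1^-,\phi_1^+)=-\im\mu\ch/\ckb$, not $+\im\mu\ch/\ckb$. Multiplying by $-\kappa$ then gives the claimed $(1,2)$ entry directly, with no appeal to ``sign conventions''.
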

\begin{proof}
    The proof is analogous to the one of \cite[Lemma 4.8]{HM2025}.
\end{proof}

Finally, we consider the new term $\mathcal{K}^{\beta}$ that takes into the account the hydroelastic effect.

\begin{lemma} [Expansion of $\mathsf{K}^{\beta}$] \label{Expansion of B beta}
 Let $\mathsf{K}^{\beta}$ be the matrix representation, in the basis
$\Phi_{\mu,\varepsilon}$ of $\mathcal V_{\mu,\varepsilon}$ defined in
\eqref{F basis set and f}, of the operator $\mathcal K^\beta$ in
\eqref{B sharp}. Then $\mathsf{K}^{\beta}$ is self-adjoint and
preserves the reversibility symmetry. Moreover, for $|\mu|$ and $|\varepsilon|$
sufficiently small, it has the expansion
\begin{equation}\label{mathsf B s}
\begin{aligned} 
\mathsf{K}^{\beta} &=
\left(
\begin{NiceArray}{cc|ccc}[code-for-first-col=\scriptstyle]
\frac{b\mu^2\ch}{\ckb}(\gamma_{\tth,\kappa,b}+3)
  & \im\frac{2b\mu\ch}{\ckb}+\im \,r_2(\mu\e^2)
  & 0
  & \im \,r_4(\mu\e^2)
  &  \\
-\im\frac{2b\mu\ch}{\ckb}-\im \,r_2(\mu\e^2)
  & \frac{b\mu^2\ch}{\ckb}(\gamma_{\tth,\kappa,b}+3)
  & \im \,r_6(\mu\e)
  & 0
  &  \\
\hline
0
  & -\im \,r_6(\mu\e)
  & 0
  & \im \,r_9(\mu\e^2)
  & \\
-\im \,r_4(\mu\e^2)
  & 0
  & -\im \,r_9(\mu\e^2)
  & 0
  &  \\
\end{NiceArray}
\right)
+ \cO(\mu^2 \e,\mu^3),
\end{aligned}
\end{equation}
\end{lemma}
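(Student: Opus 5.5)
The plan is to follow the structure used for $\mathsf{K}^\tau$ (cf.\ \cite[Lemma 4.8]{HM2025}). Self-adjointness and reversibility of $\mathsf{K}^\beta$ are immediate. Indeed, $\beta_{\mu,\e}$ is symmetric, so $\mathcal K^\beta$ is self-adjoint. Moreover $\beta_\e$ is real and commutes with the reflection $x\mapsto -x$, because its coefficients $b_{j,\ell}$ have the parity of $\partial_x^\ell$ (even for even $\ell$, odd for odd $\ell$). Hence $\mathcal K^\beta$ commutes with $\overline\rho$, and Lemma \ref{matrix representation mathsf L} then gives the alternating real/imaginary pattern. For the expansion, I expand
\[
\beta_{\mu,\e}-\beta_{0,\e}=\sum_{\ell\ge1}\frac{(\im\mu)^\ell}{\ell!}\,\partial_D^\ell\,\mathrm{symb}(\beta_\e),
\]
where $\mathrm{symb}(\beta_\e)$ is the symbol of $\beta_\e$ and $\partial_D$ denotes differentiation with respect to $\partial_x$. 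Keeping terms up to order $\mu^2$ and $\mu\e$ gives
\[
\mathcal K^\beta=b\Big[\im\mu\big(4\partial_x^3+\e\,\partial_D\beta_1\big)-6\mu^2\partial_x^2\Big]+\cO(\mu\e^2,\mu^2\e,\mu^3).
\]
Only the $(1,1)$ operator entry is nonzero.

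First consider the entries involving $\phi_0^\pm$. Since $\mathcal K^\beta$ acts only on the first component, $\mathcal K^\beta\phi_0^-(\mu,\e)$ is controlled by the first component of $\phi_0^-(\mu,\e)$, which is $\cO(\mu\e)$ by \eqref{46 f-0}. With the extra factor $\mu$, this yields $(4,4)=\cO(\mu^2\e)$ and $(4,\cdot)$-type entries of order $\im r(\mu\e^2)$ once symmetry is used. For $\phi_0^+=[1,0]^T+\e\delta\,\phi^+_{-1}+\dots$, the constant mode is annihilated by $\partial_x^2$ and $\partial_x^3$. This leaves $(3,3)=\cO(\mu^2\e^2)$, and the $(1,3)$ entry vanishes at order $\mu$. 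The latter follows from parity and reality: $(1,3)$ must be real, while the leading $\mu$-part of the operator is purely imaginary and real-preserving on the real basis. So $(1,3)$ is $\cO(\mu^2\e)$. The entry $(2,3)$ is $\im r_6(\mu\e)$ because of the $\e\delta$ correction in $\phi_0^+$.

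Next, the $2\times2$ block $\mathsf E$. At order $\mu$ at $\e=0$, I compute $(\im 4b\mu\partial_x^3\phi_1^-,\phi_1^+)$. The first component of $\partial_x^3(-\sin x)$ is $\cos x$, and the first components of $\phi_1^\pm$ have amplitude $(\ch/\ckb)^{1/2}$, so this gives $\im\cdot4b\mu\cdot(\ch/\ckb)\cdot\tfrac12=\im\,2b\mu\ch/\ckb$. Diagonal $\mu$-terms vanish by reversibility. The $\mu\e$ corrections vanish by Fourier support: $\e$-terms carry modes $0$ and $2$ against mode $1$. This produces the $r_2(\mu\e^2)$ remainder. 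At order $\mu^2$ I collect three contributions:
\begin{itemize}
\item the direct term $(-6b\mu^2\partial_x^2\phi_1^\pm,\phi_1^\pm)=3b\mu^2\ch/\ckb$;
\item the cross term $2\,\mathrm{Re}\,(4\im b\mu\,\partial_x^3\,\phi_1^\pm,\ \mu\,\dot\phi_1^\pm)$, using $\dot\phi_1^\pm=\im\frac{\gamma}{4}\phi^\mp_{-1}$ from \eqref{43 f+1}--\eqref{44 f-1};
\item the term from $\dot\phi$ on both sides through the $\mu$-order operator.
\end{itemize}
Only the first components matter, and the cross term gives $b\mu^2\gamma\ch/\ckb$. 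The total is $\frac{b\mu^2\ch}{\ckb}(\gamma+3)$.

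The main obstacle is the bookkeeping of the $\mu^2$ diagonal coefficient. One must check that the $\ddot\phi_1^\pm$ contributions drop out. This relies on $\mathcal K^\beta=\cO(\mu)$, so that any pairing with $\ddot\phi$ is already $\cO(\mu^3)$. One must also get the sign of the cross term right from the relative phase of $\phi^\mp_{-1}$ against $\phi^\pm_1$, and match it against the $\kappa$-analogue $(\gamma+1)/2$ in Lemma \ref{Expansion of B s} as a consistency check.
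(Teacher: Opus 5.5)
Your proposal is correct and follows essentially the paper's route. The paper splits $\mathcal K^\beta$ in the same way into $4\im b\mu\,\partial_x^3$ (its $\mathcal K^\beta_{11}$), the $\cO(\mu\e)$ correction $\im b\mu\e\sum_j j\,b_{1,j}\partial_x^{j-1}$ (its $\mathcal K^\beta_{12}$, handled by the same reality/reversibility argument on the real basis at $\mu=0$), and $-6b\mu^2\partial_x^2$ (its $\mathcal K^\beta_2$). It then obtains, exactly as you do, $\im\,2b\mu\ch/\ckb$ off the diagonal and $3b\mu^2\ch/\ckb+b\mu^2\gamma_{\tth,\kappa,b}\ch/\ckb$ on the diagonal.

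There are two harmless slips. First, your third listed $\mu^2$ contribution ($\dot\phi$ on both sides) is in fact $\cO(\mu^3)$, because $\mathcal K^\beta$ is already $\cO(\mu)$. Second, your argument for the fourth row and column actually yields $\cO(\mu^2\e)$ rather than $\im\,r(\mu\e^2)$; this is simply absorbed into the remainder $\cO(\mu^2\e,\mu^3)$.
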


\begin{proof}
    We denote $\partial_{\mu}$ with a dot. Recalling \eqref{expfe} and Lemma~\ref{tau}, we write the operator $\mathcal{K}^{\beta}$ in \eqref{B sharp} as
    \begin{equation} \label{B beta expand}
    \begin{aligned}
        \mathcal{K}^\beta=\underbrace{\begin{bmatrix}
b \mu\dot{\beta}_{0,\e} & 0 \\
0 & 0
\end{bmatrix}}_{=:\mathcal{K}^\beta_1}+\underbrace{\begin{bmatrix}
\frac{1}{2}b \mu^2 \ddot{\beta}_{0,\e} & 0 \\
0 & 0
\end{bmatrix}}_{=:\mathcal{K}^\beta_2}+\cO(\mu^3)\,.
    \end{aligned}
    \end{equation}
We write the pure imaginary operator $\mathcal{K}^{\beta}_1$ in \eqref{B beta expand} as
\begin{align} \label{tau B11 B12}
    \mathcal{K}^{\beta}_1=\mathcal{K}^{\beta}_{11}+\mathcal{K}^{\beta}_{12},\quad \mathcal{K}^{\beta}_{11}:=4b\mu\im\begin{bmatrix}
\pa^3_x & 0 \\
0 & 0
\end{bmatrix},\quad\mathcal{K}^{\beta}_{12}:=b  \mu\e \im\begin{bmatrix}
\sum_{j=1}^4 j b_{1,j}\pa^{j-1}_x & 0 \\
0 & 0
\end{bmatrix}+\im \cO(\mu\e^2) \begin{bmatrix}
\operatorname{Id} & 0 \\
0 & 0
\end{bmatrix}.
\end{align}
In view of \eqref{43 f+1}-\eqref{46 f-0}, we have expansions as follows:
\begin{equation} \label{Lambda f pm 10 2}
\begin{aligned}
  \mathcal{K}^{\beta}_{11} \phi^+_1(\mu,\e)&= 4b \mu\im \vet{\ch^{\frac{1}{2}}\ckb^{-\frac{1}{2}}\sin(x)}{0}
+b\mu^2 \gamma_{\tth,\kappa,b}
\vet{\ch^{\frac{1}{2}}\ckb^{-\frac{1}{2}}\cos(x)}{0}
+32b  \alpha_{\tth,\kappa,b}\mu\e \im
\begin{bmatrix}
 \sin(2x) \\
0
\end{bmatrix} \\
& \quad
+\cO(\mu\e^2)\vet{\mathit{odd}(x)}{0}+ \mathcal{O}(\mu^3, \mu^2 \e),\\
 \mathcal{K}^{\beta}_{11}\phi^-_1(\mu,\e)&=4b \mu\im \vet{\ch^{\frac{1}{2}}\ckb^{-\frac{1}{2}}\cos(x)}{0}
- b\mu^2 \gamma_{\tth,\kappa,b}
\vet{\ch^{\frac{1}{2}}\ckb^{-\frac{1}{2}}\sin(x)}{0}
+32 b  \alpha_{\tth,\kappa,b}\mu\e \im
\begin{bmatrix}
 \cos(2x) \\
0
\end{bmatrix} \\
& \quad
+\cO(\mu\e^2)\vet{\mathit{even}(x)}{0}+ \mathcal{O}(\mu^3, \mu^2 \e),\\
 \mathcal{K}^{\beta}_{11} \phi^+_0(\mu,\e)&=4b \delta_{\tth,\kappa,b} \mu\e\im\vet{\ch^{\frac{1}{2}}\ckb^{-\frac{1}{2}}\sin(x)}{0}+\cO(\mu\e^2)\vet{\mathit{odd}(x)}{0}+\cO(\mu^2\e), \quad 
  \mathcal{K}^{\beta}_{11} \phi^-_0(\mu,\e)=\cO(\mu^2\e).
\end{aligned}
\end{equation}
A straightforward calculation reveals that the matrix
$\mathsf{K}^{\beta}_{11}$ representing the action of 
the operator $\mathcal{K}^{\beta}_{11}$ on the basis 
$\Phi_{\mu,\e}$ of $\mathcal{V}_{\mu,\e}$ in \eqref{F basis set and f}
admits the expansion
\begin{equation}\label{mathsf B s11} 
\begin{aligned} 
\mathsf{K}^{\beta}_{11} &=
\left(
\begin{NiceArray}{cc|ccc}[code-for-first-col=\scriptstyle]
\frac{b\gamma_{\tth,\kappa,b} \ch}{\ckb}\mu^2
  &\im\frac{2b\ch}{\ckb}\mu+\im \,r(\mu\e^2)
  & 0
  & 0
  &  \\
-\im\frac{2b\ch}{\ckb}\mu-\im \,r(\mu\e^2)
  & \frac{b\gamma_{\tth,\kappa,b}\ch}{\ckb}\mu^2
  & \im\, r(\mu\e)
  & 0
  &  \\
\hline
0
  & -\im\, r(\mu\e)
  & 0
  & 0
  & \\
0
  & 0
  & 0
  & 0
  &  \\
\end{NiceArray}
\right) + \mathcal{O}(\mu^2 \varepsilon,\mu^3).
\end{aligned}
\end{equation}
Consider now  $\mathcal{K}^{\beta}_{12}$, we have 
\begin{equation}
\left( \mathcal{K}^{\beta}_{12} \phi_k^\sigma (\mu, \varepsilon),\, \phi_{k'}^{\sigma'}(\mu, \varepsilon) \right)
= \left(\mathcal{K}^{\beta}_{12} \phi_k^\sigma (0, \varepsilon),\, \phi_{k'}^{\sigma'}(0, \e) \right)
+ \mathcal{O}(\mu^2 \varepsilon).
\end{equation}

The matrix entries $\left( \mathcal{K}^{\beta}_{12} \phi_k^\sigma (0, \varepsilon),\, \phi_{k'}^{\sigma}(0, \varepsilon) \right),\, k, k' = 0, 1,\, \sigma = \pm$ are zero, because they are simultaneously real by \eqref{B are alternatively real or imaginary}, and purely imaginary, being the operator $\mathcal{K}^{\beta}_{12}$ purely imaginary and the basis $\{ \phi_k^{\pm}(0, \varepsilon) \}_{k=0,1}$ real. Hence the matrix $\mathsf{K}^{\beta}_{12}$
 representing the action of 
the operator $\mathcal{K}^\beta_{12}$ on the basis 
$\Phi_{\mu,\e}$ of $\mathcal{V}_{\mu,\varepsilon}$
has the form
\begin{equation}
\mathsf{K}^{\beta}_{12} =
\left(
\begin{NiceArray}{cc|ccc}[code-for-first-col=\scriptstyle]
0 & \mathrm{i} a^{\beta}_{12} & 0 & \mathrm{i} b^{\beta}_{12} \\
-\mathrm{i} a^{\beta}_{12} & 0 & -\mathrm{i} c^{\beta}_{12} & 0 \\
\hline
0 & \mathrm{i} c^{\beta}_{12} & 0 & \mathrm{i} d^{\beta}_{12} \\
-\mathrm{i} b^{\beta}_{12} & 0 & -\mathrm{i} d^{\beta}_{12} & 0
\end{NiceArray}
\right)
+ \mathcal{O}(\mu^2 \varepsilon),
\end{equation}
where
\begin{equation}
\begin{aligned}
& \left( \mathcal{K}^{\beta}_{12} \phi_1^{-}(0, \e),\, \phi_1^{+}(0, \e) \right) := \mathrm{i} a^{\beta}_{12}, \quad \left( \mathcal{K}^{\beta}_{12} \phi_1^{-}(0, \e),\, \phi_0^{+}(0, \e) \right) := \mathrm{i} c^{\beta}_{12}, \\
& \left( \mathcal{K}^{\beta}_{12} \phi_0^{-}(0, \e),\, \phi_1^{+}(0, \e) \right) := \im b^{\beta}_{12}, \quad \left( \mathcal{K}^{\beta}_{12} \phi_0^{-}(0, \varepsilon),\, \phi_0^{+}(0, \e) \right) := \im d^{\beta}_{12},
\end{aligned}
\end{equation}
and $a^{\beta}_{12}, b^{\beta}_{12}, c^{\beta}_{12}, d^{\beta}_{12}$ are real numbers. As $\mathcal{K}^{\beta}_{12} = \mathcal{O}(\mu \varepsilon)$ in $ \mathcal{L}(Y, X)$, we deduce that $c^{\beta}_{12} = r(\mu \e)$. Let us compute the expansion of $a^{\beta}_{12}$, $b^{\beta}_{12}$ and $d^{\beta}_{12}$. In view of \eqref{43 f+1}-\eqref{46 f-0}, $\phi_1^{\pm}(0, \varepsilon) = \phi_1^{\pm} + \mathcal{O}(\varepsilon)$, $\phi_0^{+}(0, \varepsilon) = \phi_0^{+} + \mathcal{O}(\varepsilon)$, $\phi_0^{-}(0, \varepsilon) = \begin{bmatrix} 0 \\ 1 \end{bmatrix}$, where $\phi_k^\sigma$ are in \eqref{eigenfunc of mathcall L00 2}. Using \eqref{tau B11 B12}, we have
\begin{align*}
\mathcal{K}^{\beta}_{12} \phi_1^{-} =
\begin{bmatrix}
-\frac{5}{2}\im b\mu\e\ch^{-\frac{3}{2}}\ckb^{-\frac{1}{2}}(1+25\cos(2x)) \\
0
\end{bmatrix}+\cO(\mu\e^2),
\quad
\mathcal{K}^{\beta}_{12} \phi_0^{-} =
\begin{bmatrix}
0 \\
0
\end{bmatrix},
\end{align*}
and then
\begin{align*} 
a^{\beta}_{12}  = r(\mu \e^2), \quad 
b^{\beta}_{12} =  r(\mu \e^2), \quad d^{\beta}_{12}  = r(\mu \e^2).
\end{align*}
\color{black}
Next we consider the matrix
$\mathsf{K}^{\beta}_2$ representing the action of the operator
$ \mathcal{K}^{\beta}_2$ on the basis 
$\Phi_{\mu,\e}$.
Recalling \eqref{B beta expand}, we have 
\begin{equation}
\left( \mathcal{K}^{\beta}_2 \phi_k^\sigma (\mu, \varepsilon),\, \phi_{k'}^{\sigma'}(\mu, \varepsilon) \right)
= \left( -6b \mu^2\begin{bmatrix}
\pa^2_x & 0 \\
0 & 0
\end{bmatrix} \phi_k^\sigma (0, \varepsilon),\, \phi_{k'}^{\sigma'}(0, \varepsilon) \right)
+ \cO(\mu^3).
\end{equation}
The matrix entries $\left( \mathcal{K}^{\beta}_2 \phi_k^\sigma (0, \varepsilon),\, \phi_{k'}^{-\sigma}(0, \varepsilon) \right),\, k, k' = 0, 1,\, \sigma = \pm$ are zero, because they are simultaneously purely imaginary by \eqref{B are alternatively real or imaginary}, and real, being the operator $\mathcal{K}^{\beta}_2$ real and the basis $\{ \phi_k^{\pm}(0, \e) \}_{k=0,1}$ real. Hence $\mathcal{K}^{\beta}_2$, with respect to the basis $\Phi_{\mu,\e}$ of $\mathcal{V}_{\mu,\varepsilon}$ in \eqref{F basis set and f}, admits the expansion 
\begin{equation}
\mathsf{K}^{\beta}_2 =
\left(
\begin{NiceArray}{cc|ccc}[code-for-first-col=\scriptstyle]
a^\beta_2 & 0 & b^\beta_2 & 0 \\
0 & c^\beta_2 & 0 & d^\beta_2 \\
\hline
b^\beta_2 & 0 & e^\beta_2 & 0 \\
0 & d^\beta_2 & 0 & f^\beta_2
\end{NiceArray}
\right)
+ \cO(\mu^3).
\end{equation}
Using $\phi_k^{\sigma}(0, \varepsilon) = \phi_k^{\sigma} + \mathcal{O}(\varepsilon)$ defined in \eqref{43 f+1}-\eqref{46 f-0}, a straightforward calculation reveals that
\begin{align*}
a^{\beta}_2 = \frac{3b\mu^2\ch}{\ckb}+r(\mu^2\e), \quad b^{\beta}_2 =r(\mu^2\e),\quad c^{\beta}_2 = \frac{3b\mu^2\ch}{\ckb}+r(\mu^2\e), \quad d^{\beta}_2 =r(\mu^2\e),\quad e^{\beta}_2 =r(\mu^2\e),\quad f^{\beta}_2 =r(\mu^2\e).
\end{align*}
This proves \eqref{mathsf B s}. 
\end{proof}

Lemma \ref{Expansion of B epsilon}-Lemma \ref{Expansion of B beta} imply \eqref{B mu epsilon} where the matrix $\mathsf{E}$ has the form \eqref{E} and
\begin{equation*}
\begin{aligned}
\mathsf{e}_{12} &:=\mathsf{e}_{12}(\tth,\kappa,b):=\frac{(4b+2\kappa)\ch}{\ckb}+\hat{\flat}_{\tth,\kappa,b}\,,\\
\mathsf{e}_{22} &:=\mathsf{e}_{22}(\tth,\kappa,b):= 2\flat_{\tth,\kappa,b} - 8\zeta_{\tth,\kappa,b}-\frac{4\kappa\ch}{\ckb}\left(1+\gamma_{\tth,\kappa,b}\right)-\frac{8b\ch}{\ckb}\left(3+\gamma_{\tth,\kappa,b}\right)\,,
\end{aligned}
\end{equation*}
with $\flat_{\tth,\kappa,b}$, $\hat{\flat}_{\tth,\kappa,b}$ in \eqref{bh440} and $\zeta_{\tth,\kappa,b}$ in \eqref{zeta h}, $ \gamma_{\tth,\kappa,b}$ in \eqref{47 coefficients}. The term $\mathsf{e}_{22}$ has the expansion in \eqref{e11 f11}. 

Moreover, we obtain
\begin{equation} \label{F 2nd}
\begin{aligned} 
\mathsf{F} &:= \mathsf{F}(\mu,\e)=
\begin{pmatrix}
\mathsf{f}_{11} \varepsilon + r_3(\varepsilon^3, \mu \varepsilon^2, \mu^2 \varepsilon,\mu^3) &
\im \,\mu \varepsilon \ch^{-\frac{1}{2}}\ckb^{\frac{1}{2}} + \im\, r_4(\mu \varepsilon^2, \mu^2 \varepsilon,\mu^3) \\
\im\, r_6(\mu \varepsilon,\mu^3) &
r_7(\mu^2 \varepsilon,\mu^3)
\end{pmatrix}, 
\end{aligned}
\end{equation}
and
\begin{equation} \label{G 2nd}
\begin{aligned} 
\mathsf{G} &:= \mathsf{G}(\mu,\e)=
\begin{pmatrix}
1 +\kappa\mu^2+ r_8(\varepsilon^2, \mu^2 \varepsilon,\mu^3) &
- \im r_9(\mu \varepsilon^2, \mu^2 \varepsilon,\mu^3) \\
\im r_9(\mu \varepsilon^2, \mu^2 \varepsilon,\mu^3) &
\mu \tanh(\tth \mu) + r_{10}(\mu^2 \varepsilon,\mu^3)
\end{pmatrix} . 
\end{aligned}
\end{equation}
In order to deduce the expansion \eqref{F}-\eqref{G} of the matrices $\mathsf{F}$, $\mathsf{G}$ we exploit also the following lemma:
\begin{lemma} \label{Fmu0 eq 0 Gmu0}
At $ \varepsilon = 0 $ the matrices are $ \mathsf{F}(\mu, 0) = 0 $ and 
$
\mathsf{G}(\mu, 0) = \begin{pmatrix} 1+\kappa\mu^2+b\mu^4 & 0 \\ 0 & \mu \tanh(\tth\mu) \end{pmatrix}.
$
\end{lemma}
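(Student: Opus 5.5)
The plan is to use the explicit structure at $\varepsilon=0$, where everything is a Fourier multiplier. By Lemma~\ref{fmu0 eq f0}, $\phi_0^\pm(\mu,0)=\phi_0^\pm$ for all small $\mu$, so the basis vectors $\phi_0^+=[1,0]^\top$ and $\phi_0^-=[0,1]^\top$ are constant in $x$. Lemma~\ref{fmu0 eq f0} also shows that both components of $\phi_1^\pm(\mu,0)$ have zero space average. The operator to represent is
\[
\mathcal K_{\mu,0}=\begin{bmatrix} 1-\kappa(\partial_x+\im\mu)^2+b(\partial_x+\im\mu)^4 & -c_0\partial_x\\ c_0\partial_x & |D+\mu|\tanh(\tth|D+\mu|)\end{bmatrix}.
\]
This follows from \eqref{mathcal B mu e} with $a_0=q_0=\mathtt f_0=0$, $\tau_{\mu,0}=(\partial_x+\im\mu)^2$ and $\beta_{\mu,0}=(\partial_x+\im\mu)^4$. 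It is a Fourier multiplier, so it maps zero-average vectors to zero-average vectors and constant vectors to constant vectors.

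The first step is to compute $\mathsf G(\mu,0)$ directly. On constants, $(\partial_x+\im\mu)^2$ acts as $-\mu^2$ and $(\partial_x+\im\mu)^4$ acts as $\mu^4$. The entries $\mp c_0\partial_x$ annihilate constants, and $|D+\mu|\tanh(\tth|D+\mu|)$ acts as $\mu\tanh(\tth\mu)$ (using \eqref{D+mu} with $\Pi_0$, for $\mu>0$). Hence $\mathcal K_{\mu,0}\phi_0^+=(1+\kappa\mu^2+b\mu^4)\phi_0^+$ and $\mathcal K_{\mu,0}\phi_0^-=\mu\tanh(\tth\mu)\phi_0^-$. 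Since $(\phi_0^\pm,\phi_0^\pm)=1$ and $(\phi_0^+,\phi_0^-)=0$ in the scalar product \eqref{complex product}, this gives the claimed diagonal matrix $\mathsf G(\mu,0)$.

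The second step is to show $\mathsf F(\mu,0)=0$. The entries of $\mathsf F$ are $(\mathcal K_{\mu,0}\phi_0^{\sigma'},\phi_1^{\sigma})$. By the first step, $\mathcal K_{\mu,0}\phi_0^{\sigma'}$ is a constant vector. Pairing a constant vector with $\phi_1^\sigma(\mu,0)$ picks out only the averages of the components of $\phi_1^\sigma(\mu,0)$, and these vanish by Lemma~\ref{fmu0 eq f0}. Therefore all four entries vanish. As a consistency check, the remaining block $\mathsf F^*$ also vanishes by self-adjointness (or, equivalently, because $\mathcal K_{\mu,0}\phi_1^\sigma(\mu,0)$ has zero average).

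No step is genuinely hard; the only point needing care is Lemma~\ref{fmu0 eq f0} itself, which relies on $\mathcal Z=\mathrm{span}\{\phi_0^+,\phi_0^-\}$ being invariant under $\mathscr L_{\mu,0}$ with eigenvalues inside $\Gamma$, and that lemma is already available. One also needs that the relevant inner products are taken in the basis $\Phi_{\mu,0}$ exactly as in \eqref{Bmuepsilon matrix representation}, with no normalization issues. This holds because $(\phi_0^\pm,\phi_0^\pm)=1$ in the normalized product \eqref{complex product}.
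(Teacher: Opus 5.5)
Your proof is correct and is essentially the paper's own argument. You use Lemma~\ref{fmu0 eq f0} to identify $\phi_0^\pm(\mu,0)=\phi_0^\pm$ as constant eigenvectors of the Fourier multiplier $\mathcal K_{\mu,0}$, with eigenvalues $1+\kappa\mu^2+b\mu^4$ and $\mu\tanh(\tth\mu)$, which gives $\mathsf G(\mu,0)$; the zero averages of $\phi_1^\pm(\mu,0)$ then kill every entry of $\mathsf F(\mu,0)$, and you simply spell out the action on constants more explicitly than the paper does.
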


\begin{proof}
By Lemma \ref{fmu0 eq f0} and since $\mathcal{K}_{\mu,0} := \left[\begin{smallmatrix}
1-\kappa\left(\pa_x+\im \mu\right)^2+b\left(\pa_x+\im \mu\right)^4 & -\chk \partial_x \\
\chk\pa_x &|D + \mu| \tanh(\tth|D + \mu|)  
\end{smallmatrix}\right]$,  we obtain 
$\mathcal{K}_{\mu,0} \phi_0^+(\mu,0) = (1+\kappa\mu^2+b\mu^4)\phi_0^+$ and 
$\mathcal{K}_{\mu,0} \phi_0^-(\mu,0) = \mu \tanh(\tth\mu) \phi_0^-$,
for any \( \mu \). The lemma follows recalling \eqref{Bmuepsilon matrix representation} and the fact that 
\( \phi_1^+(\mu, 0) \) and \( \phi_1^-(\mu, 0) \) have zero space average by Lemma \ref{fmu0 eq f0}
\end{proof}

In view of Lemma \ref{Fmu0 eq 0 Gmu0} we deduce that the matrices \eqref{F 2nd} and \eqref{G 2nd} have the form \eqref{F} and \eqref{G}. This completes the proof of Lemma \ref{B decomposition}.

\section{Lie-Transform Block-Decoupling}\label{secF}
In this appendix we derive the expansion of
$\mathsf{L}^{(2)}_{\mu,\varepsilon}$ produced by the first structure-preserving Lie transform. Our argument follows the general
strategy of~\cite[Lemma 3.8]{BMV1}, but the hydroelastic coefficients and
the fourth-order bending contributions require a separate computation. We
therefore provide the details.

We split $\mathsf{L}^{(1)}_{\mu,\varepsilon}$ into its $2 \times 2$-diagonal and off-diagonal Hamiltonian and reversible matrices
\begin{equation} \label{L=D+R}
    \begin{aligned}
        &\mathsf{L}^{(1)}_{\mu,\varepsilon} = \mathsf{D}^{(1)} + \mathsf{R}^{(1)},\qquad \mathsf{D}^{(1)} := \begin{pmatrix} \mathsf{D}_1 & 0 \\ 0 & \mathsf{D}_0 \end{pmatrix}= \begin{pmatrix} \mathsf{J}_2 \mathsf{E}^{(1)} & 0 \\ 0 & \mathsf{J}_2 \mathsf{G}^{(1)} \end{pmatrix}, \quad 
\mathsf{R}^{(1)} := \begin{pmatrix} 0 & \mathsf{J}_2 \mathsf{F}^{(1)} \\ \mathsf{J}_2 [\mathsf{F}^{(1)}]^* & 0 \end{pmatrix}, 
    \end{aligned}
\end{equation}
and perform the Lie expansion
\begin{equation} \label{L2 mu e}
    \begin{aligned}
        \mathsf{L}^{(2)}_{\mu,\varepsilon} = \exp(\mathsf{S}^{(1)}) \mathsf{L}^{(1)}_{\mu,\varepsilon} \exp(-\mathsf{S}^{(1)}) &= \mathsf{D}^{(1)} + [\mathsf{S}^{(1)}, \mathsf{D}^{(1)}] + \frac{1}{2} [\mathsf{S}^{(1)}, [\mathsf{S}^{(1)}, \mathsf{D}^{(1)}]] + \mathsf{R}^{(1)} + [\mathsf{S}^{(1)}, \mathsf{R}^{(1)}]\\
        &+ \frac{1}{2} \int_0^1 (1 - t)^2 \exp\left(t \mathsf{S}^{(1)}\right) \mathrm{ad}_\mathsf{\mathsf{S}^{(1)}}^3\left(\mathsf{D}^{(1)}\right) \exp\left(-t \mathsf{S}^{(1)}\right) \, \de t\\
        &+ \int_0^1 (1 - t) \exp\left(t \mathsf{S}^{(1)}\right) \mathrm{ad}_\mathsf{\mathsf{S}^{(1)}}^2\left(\mathsf{R}^{(1)}\right) \exp(-t \mathsf{S}^{(1)}) \, \de t.
    \end{aligned}
\end{equation}
One may observe that
\begin{align*}
    \mathsf{R}^{(1)} + [\mathsf{S}^{(1)}, \mathsf{D}^{(1)}] = 0\qquad\text{if and only if}\qquad \mathsf{D}_1 \mathsf{V} - \mathsf{V} \mathsf{D}_0 = -\mathsf{J}_2 \mathsf{F}^{(1)}. 
\end{align*}
Recalling \eqref{mathsf V}, we know $\mathsf{V}$ solves the Sylvester equation $\mathsf{D}_1 \mathsf{V} - \mathsf{V} \mathsf{D}_0 = -\mathsf{J}_2 \mathsf{F}^{(1)}$, so the matrix $\mathsf{S}^{(1)}$ solves the homological equation $\mathsf{R}^{(1)}+[\mathsf{S}^{(1)}, \mathsf{D}^{(1)}]  = 0$. Therefore, identity \eqref{L2 mu e} simplifies to
\begin{align} \label{L2 mu epsilon}
\mathsf{L}_{\mu,\varepsilon}^{(2)} = \mathsf{D}^{(1)} + \frac{1}{2} [ \mathsf{S}^{(1)}, \mathsf{R}^{(1)} ]
+ \frac{1}{2} \int_0^1 (1 -t^2)\, \exp\left(t\mathsf{S}^{(1)}\right) \, \mathrm{ad}_{\mathsf{S}^{(1)}}^2 \left(\mathsf{R}^{(1)}\right) \, \exp\left(-t \mathsf{S}^{(1)}\right) \, \de t,
\end{align}
since
\[
\operatorname{ad}_{\mathsf{S}^{(1)}}^{n+1} \left(\mathsf{D}^{(1)}\right) = -\operatorname{ad}_{\mathsf{S}^{(1)}}^n \left(\mathsf{R}^{(1)}\right) \qquad\text{for}\qquad n\geq 1.
\]
Denoting $\mathrm{Sym}(A) := \frac{1}{2}( A + A^* )$, a direct calculation reveals that
\begin{align} \label{0.5 S R}
\frac{1}{2} [ \mathsf{S}^{(1)}, \mathsf{R}^{(1)} ]=
\begin{pmatrix}
\mathsf{J}_2 \widetilde{\mathsf{E}} & 0 \\
0 & \mathsf{J}_2 \widetilde{\mathsf{G}}
\end{pmatrix}, \qquad\text{with}\qquad \widetilde{\mathsf{E}} := \mathrm{Sym} ( \mathsf{J}_2 \mathsf{V} \mathsf{J}_2 [\mathsf{F}^{(1)}]^* ), 
\quad
\widetilde{\mathsf{G}} := \mathrm{Sym} ( \mathsf{V}^* \mathsf{F}^{(1)} ).
\end{align}
Also, we observe that 
\begin{align*}
    \mathsf{V}^* \mathsf{F}^{(1)}=\begin{pmatrix}
\cO(\mu\e^2) & \cO(\mu\e^2) \\
\cO(\mu\e^2) & \cO(\mu\e^2)
\end{pmatrix}
\end{align*}
and
\begin{equation*} 
\begin{aligned}
\mathsf{J}_2 \mathsf{V} \mathsf{J}_2 [\mathsf{F}^{(1)}]^* 
&= \begin{pmatrix}
v_{21} \mathsf{F}^{(1)}_{12} - v_{22} \mathsf{F}^{(1)}_{11} & \im ( v_{21} \mathsf{F}^{(1)}_{22} + v_{22} \mathsf{F}^{(1)}_{21} ) \\
\im ( v_{11} \mathsf{F}^{(1)}_{12} + v_{12} \mathsf{F}^{(1)}_{11} ) & - v_{11} \mathsf{F}^{(1)}_{22} + v_{12} \mathsf{F}^{(1)}_{21}
\end{pmatrix} 
= \begin{pmatrix}
\tilde{\mathsf{e}}_{11} \mu \varepsilon^2 + r(\mu \varepsilon^3, \mu^2 \varepsilon^2) & \im r(\mu \varepsilon^2) \\
\im r(\mu \varepsilon^2) & r(\mu \varepsilon^2)
\end{pmatrix},
\end{aligned}
\end{equation*}
where
\begin{align*}
    \tilde{\mathsf{e}}_{11} &:= - \textup{D}_{\tth,\kappa,b}^{-1} \left( \ch^{-1}\ckb+ \tth \mathsf{f}_{11}^2 + \mathsf{e}_{12} \mathsf{f}_{11} \ch^{-\frac{1}{2}}\ckb^{\frac{1}{2}} \right).
\end{align*}
Then, the self-adjoint and reversibility-preserving matrices $\widetilde{\mathsf{E}}$, $\widetilde{\mathsf{G}}$ have the form:
    \begin{equation}\label{tilde EG and tilde e11}
    \begin{aligned} 
\widetilde{\mathsf{E}} &= \begin{pmatrix}
\tilde{\mathsf{e}}_{11} \mu \varepsilon^2 + \tilde{r}_1 (\mu \varepsilon^3, \mu^2 \varepsilon^2) & \im\, \tilde{r}_2 (\mu \varepsilon^2) \\
- \im \,\tilde{r}_2 (\mu \varepsilon^2) & \tilde{r}_5 (\mu \varepsilon^2)
\end{pmatrix}, \quad
\widetilde{\mathsf{G}} = \begin{pmatrix}
\tilde{r}_8 (\mu \varepsilon^2) & \im\, \tilde{r}_9 (\mu \varepsilon^2) \\
- \im\, \tilde{r}_9 (\mu \varepsilon^2) & \tilde{r}_{10} (\mu \varepsilon^2)
\end{pmatrix}.
\end{aligned}
\end{equation}
 
Finally, we claim the last term in \eqref{L2 mu epsilon} is small. 

\begin{lemma} \label{high order terms lemma}
    The $4 \times 4$ Hamiltonian and reversibility matrix
\begin{align} \label{high order terms EG hat}
\frac{1}{2} \int_0^1 (1 - t^2) \exp\left(t \mathsf{S}^{(1)}\right) \mathrm{ad}_{\mathsf{S}^{(1)}}^2 \left(\mathsf{R}^{(1)}\right) \exp\left(-t \mathsf{S}^{(1)}\right) \, \de t
= \begin{pmatrix}
\mathsf{J}_2 \widehat{\mathsf{E}} & \mathsf{J}_2 \mathsf{F}^{(2)} \\
\mathsf{J}_2 [\mathsf{F}^{(2)}]^* & \mathsf{J}_2 \widehat{\mathsf{G}}
\end{pmatrix}\,,
\end{align}
where the $2 \times 2$ self-adjoint and reversible matrices $\widehat{\mathsf{E}}$, $\widehat{\mathsf{G}}$ have entries
\begin{align} \label{Eij Gij}
\widehat{\mathsf{E}}_{ij} = \widehat{\mathsf{G}}_{ij} = r(\mu \varepsilon^3), 
\quad i,j = 1,2,
\end{align}
and the $2 \times 2$ reversible matrix $\mathsf{F}^{(2)}$ admits an expansion as in \eqref{F2}.
\end{lemma}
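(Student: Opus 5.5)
The plan is to bound the integrand of \eqref{high order terms EG hat} directly, using only the size and block structure of $\mathsf{S}^{(1)}$ and $\mathsf{R}^{(1)}$. The structural statements follow from earlier facts. $\mathsf{S}^{(1)}$ and $\mathsf{R}^{(1)}$ are Hamiltonian and reversible, so their commutators are Hamiltonian and reversible as well. Conjugating by the flow $\exp(t\mathsf{S}^{(1)})$, which is symplectic and reversibility preserving (\cite[Lemma 3.8]{BMV1}), keeps a Hamiltonian reversible matrix in that class. Hence the integral equals $\mathsf{J}_4$ times a self-adjoint, reversibility-preserving matrix, and it can be written in the block form \eqref{high order terms EG hat}, with $\widehat{\mathsf E},\widehat{\mathsf G}$ self-adjoint and reversibility preserving.

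\textbf{Sizes of the entries.} By \eqref{mathsf V}, every entry of $\mathsf{V}$ is $\cO(\e)$. This uses the explicit inverse \eqref{mathsf A -1}, which is of order $\mu^{-1}$, and the fact that $\mathsf{F}^{(1)}=\cO(\mu\e)$ by \eqref{F1}. The entries of $\mathsf{V}$ are moreover analytic in $(\mu,\e)$ down to $\mu=0$, because the factor $\mu$ of $\mathsf F^{(1)}$ cancels the $\mu^{-1}$. Therefore $\mathsf{S}^{(1)}=\cO(\e)$, and $\exp(\pm t\mathsf{S}^{(1)})=\mathrm{Id}+\cO(\e)$ uniformly in $t\in[0,1]$. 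Next, $\mathsf{R}^{(1)}=\cO(\mu\e)$ by \eqref{F1}. Each application of $\mathrm{ad}_{\mathsf{S}^{(1)}}$ gains a factor $\e$, so $\mathrm{ad}^2_{\mathsf S^{(1)}}(\mathsf R^{(1)})=\cO(\mu\e^3)$. Conjugation preserves this order, and integrating in $t$ gives that the whole matrix in \eqref{high order terms EG hat} is $\cO(\mu\e^3)$. In particular $\widehat{\mathsf E}_{ij},\widehat{\mathsf G}_{ij}=r(\mu\e^3)$ and every entry of $\mathsf F^{(2)}$ is $r(\mu\e^3)$, which is \eqref{Eij Gij} and the sizes in \eqref{F2}.

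\textbf{Reality patterns.} The real/imaginary pattern of $\mathsf F^{(2)}$ in \eqref{F2} (real diagonal, $\im$-times-real off-diagonal) is forced by reversibility: a reversibility-preserving matrix has $\mathsf K_{ij}$ real for $i+j$ even and purely imaginary otherwise. The same pattern yields the form of $\widehat{\mathsf E}$ and $\widehat{\mathsf G}$. Analyticity in $(\mu,\e)$ follows from the analyticity of $\mathsf V$ and of $\mathsf F^{(1)}$, and the factor $\mu$ stays explicit because it is inherited linearly from $\mathsf R^{(1)}$.

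\textbf{Main obstacle.} The delicate step is the uniform claim $\mathsf V=\cO(\e)$ at $\mu=0$. The inverse $[\mathsf A^{(1)}]^{-1}$ is singular like $\mu^{-1}$, with a prefactor $\textup{D}_{\tth,\kappa,b}^{-2}$. So the argument must check that each component of $(-\mathsf F^{(1)}_{21},\mathsf F^{(1)}_{22},-\mathsf F^{(1)}_{11},\mathsf F^{(1)}_{12})$ genuinely carries a factor $\mu\e$. By \eqref{F1} this holds: $r_6(\mu\e)$, $r_7(\mu\e)$, $\mathsf f_{11}\mu\e+\ldots$, $\mu\e\ch^{-1/2}\ckb^{1/2}+\ldots$. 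It also uses $\textup{D}_{\tth,\kappa,b}\ne0$, which is the standing assumption $(\tth,\kappa,b)\notin\mathfrak D$. With both in hand, the product $[\mathsf A^{(1)}]^{-1}(\cdots)$ is analytic and of order $\e$.
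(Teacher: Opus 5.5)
Your proposal is correct and follows essentially the same route as the paper: both arguments rest on $\mathsf V=\cO(\e)$ (so $\mathsf S^{(1)}=\cO(\e)$) and $\mathsf F^{(1)}=\cO(\mu\e)$, which give $\mathrm{ad}^2_{\mathsf S^{(1)}}(\mathsf R^{(1)})=\cO(\mu\e^3)$, while conjugation by $\exp(\pm t\mathsf S^{(1)})$ only contributes a factor $1+\cO(\mu,\e)$. The paper makes one intermediate step explicit, writing $\mathrm{ad}^2_{\mathsf S^{(1)}}(\mathsf R^{(1)})$ as a block-off-diagonal matrix with $\widetilde{\mathsf F}=2(\mathsf M\mathsf J_2\widetilde{\mathsf G}-\widetilde{\mathsf E}\mathsf J_2\mathsf M)$, which your cruder count does not need; one wording fix is that $\mathsf S^{(1)}$ is reversibility-preserving (it commutes with $\rho_4$), not reversible, and this, together with the reversibility of $\mathsf R^{(1)}$, is what makes the iterated commutators and the conjugated integrand reversible.
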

\begin{proof}
Each matrix 
$\exp\left(t \mathsf{S}^{(1)}\right) 
\operatorname{ad}_{\mathsf{S}^{(1)}}^2 \left(\mathsf{R}^{(1)}\right) 
\exp\left(-t \mathsf{S}^{(1)}\right)$ 
is Hamiltonian and reversibility-preserving, and hence the representation in 
\eqref{high order terms EG hat} holds. To estimate its entries, we first compute 
$\operatorname{ad}_{\mathsf{S}^{(1)}}^2 \left(\mathsf{R}^{(1)}\right)$. 
Using the form of $\mathsf{S}^{(1)}$ in \eqref{S1} and the expression for 
$[\mathsf{S}^{(1)}, \mathsf{R}^{(1)}]$ in \eqref{0.5 S R}, we obtain $\operatorname{ad}_{\mathsf{S}^{(1)}}^2 \left(\mathsf{R}^{(1)}\right) = 
\begin{pmatrix}
0 & \mathsf{J}_2 \widetilde{\mathsf{F}} \\
\mathsf{J}_2 \widetilde{\mathsf{F}}^* & 0
\end{pmatrix}$,
where $\widetilde{\mathsf{F}} := 2 \left( \mathsf{M} \mathsf{J}_2 \widetilde{\mathsf{G}} 
- \widetilde{\mathsf{E}} \mathsf{J}_2 \mathsf{M} \right)$,
and $\widetilde{\mathsf{E}}$, $\widetilde{\mathsf{G}}$ are defined in 
\eqref{0.5 S R}. Since 
$\widetilde{\mathsf{E}}, \widetilde{\mathsf{G}} = \mathcal{O}(\mu \varepsilon^2)$ 
by \eqref{tilde EG and tilde e11}, and 
$\mathsf{M} = \mathsf{J}_2 \mathsf{V} = \mathcal{O}(\varepsilon)$ 
by \eqref{mathsf V}, we deduce that 
$\widetilde{\mathsf{F}} = \cO(\mu \varepsilon^3)$. Consequently, for any 
$t \in [0,1]$, $\exp\left(t \mathsf{S}^{(1)}\right) 
\operatorname{ad}_{\mathsf{S}^{(1)}}^2 \left(\mathsf{R}^{(1)}\right) 
\exp\left(-t \mathsf{S}^{(1)}\right) 
=
\operatorname{ad}_{\mathsf{S}^{(1)}}^2 \left(\mathsf{R}^{(1)}\right)
(1 + \mathcal{O}(\mu, \varepsilon))
$. In particular, the matrix $\mathsf{F}^{(2)}$ in 
\eqref{high order terms EG hat} has the same expansion as 
$\widetilde{\mathsf{F}}$, namely, $\mathsf{F}^{(2)} = \mathcal{O}(\mu \varepsilon^3)$, while the matrices $\widehat{\mathsf{E}}$ and $\widehat{\mathsf{G}}$ have 
entries as in \eqref{Eij Gij}.
\end{proof}

\footnotesize

\vspace{1em}
\noindent{\bf Data availability statement:}
Data will be made available on reasonable request\\

\noindent{\bf Conflict of interest:}
We do not have any conflict of interest.\\

\noindent{\bf Acknowledgments:}
T.-Y. Hsiao is  supported by the European Union  ERC CONSOLIDATOR GRANT 2023 GUnDHam, Project Number: 101124921. He would like to thank Vera Hur, Zhao Yang, and Alberto Maspero for their encouragement and helpful discussions. He is also grateful to Massimiliano Berti and Alberto Maspero for their illuminating lectures on Hamiltonian PDEs. Y. Zhang is supported by the European Union ERC STARTING GRANT 2020 GeoSub, Project Number: 945655.  Views and opinions expressed are however those of the authors only and do not necessarily reflect those of the European Union or the European Research Council. Neither the European Union nor the granting authority can be held responsible for them.

\end{document}